\newcommand{\R}{\ensuremath{\mathbb{R}}}
\newcommand{\Z}{\ensuremath{\mathbb{Z}}}
\newcommand{\ba}{\begin{align*}}
\newcommand{\ea}{\end{align*}}
\newcommand{\norm}[2]{{ \ensuremath{\left\|} #1 \ensuremath{\right\|}}_{#2}}
\newcommand{\snorm}[2]{{ \ensuremath{\left |} #1 \ensuremath{\right |}}_{#2}}
\def\ExtendSymbol#1#2#3#4#5{\ext@arrow 0099{\arrowfill@#1#2#3}{#4}{#5}}
\def\ExtendSymbol#1#2#3#4#5{\ext@arrow 0099{\arrowfill@#1#2#3}{#4}{#5}}
\newcommand\longright[2][]{\ExtendSymbol{-}{-}{\rightarrow}{#1}{#2}}
\definecolor{orange}{rgb}{1,0.5,0}
\definecolor{brown}{rgb}{0.48,0.33,0.19}
\definecolor{magenta}{rgb}{1,0,1}
\definecolor{miao}{cmyk}{0.5,0,0.2,0.2}
\definecolor{qiao}{gray}{0.96}
\newtheorem{prop}{Proposition}[section]
\newtheorem{proposition}[prop]{Proposition}
\newtheorem{theorem}[prop]{Theorem}
\newtheorem{lemma}[prop]{Lemma}
\newtheorem{claim}[prop]{Claim}
\newtheorem{corollary}[prop]{Corollary}
\newtheorem{remark}[prop]{Remark}
\newtheorem{definition}[prop]{Definition}
\numberwithin{equation}{section}
\title{On the structure of Ricci shrinkers}
\author{Haozhao Li, \footnote{Supported by NSFC grant No. 12071449 
 and the Fundamental Research Funds for the Central Universities.} 
 \quad Yu Li, \quad Bing Wang \footnote{Supported by  NSFC grant No. 11971452.}}
\date{\today}
\begin{document}
\maketitle

\begin{abstract}
We develop a structure theory for non-collapsed Ricci shrinkers without any curvature condition.
As applications, we obtain some curvature estimates of the Ricci shrinkers depending only on the non-collapsing constant.
\end{abstract}

\tableofcontents

\section{Introduction}
A Ricci shrinker $(M^m, g, f)$ is a complete Riemannian manifold $(M^m,g)$ together with a smooth function $f: M \to \mathbb R$ such that
\begin{equation}
\text{Rc}+\text{Hess} f=\frac{1}{2}g.
\label{eqn:PC27_0}
\end{equation}
Direct calculation shows that $\nabla ( R+|\nabla f|^2-f)=0$.
Adding a constant to $f$ if necessary, we assume throughout that
\begin{align}
R+|\nabla f|^2=f.
\label{eqn:PC27_1}
\end{align}
Under this normalization condition, the functional $\boldsymbol{\mu}=\boldsymbol{\mu}(g)$ is defined as a constant achieving the following identity
\begin{equation}
\int_M e^{-f}(4 \pi)^{-m/2} \,dv=e^{\boldsymbol{\mu}}.
\label{eqn:PD15_1}
\end{equation}
In light of the work of Cao-Zhou~\cite{CaoZhou10},  $f$ always achieves a minimum somewhere.
By $(M,p,g,f)$ we mean a Ricci shrinker $(M,g,f)$ such that $f$ achieves its minimum at point $p$.
For more necessary background and setup of Ricci shrinkers, we refer the readers to the introduction of Cao~\cite{Cao09} and Kotschwar-Wang~\cite{KW15}.

On the one hand, Ricci shrinkers are a natural generalization of Einstein manifolds with positive scalar curvature. On the other hand, Ricci shrinkers are models of the finite-time singularities of the Ricci flow, see \cite{Na10}\cite{EMT11}\cite{MM15}, etc. Staying at the intersection of two important subjects, Ricci shrinkers have attracted a lot of attention.
Note that Ricci shrinkers up to dimension $3$ are completely classified (cf. Cao-Chen-Zhu~\cite{CCZ08} and the introduction of Munteanu-Wang~\cite{MW16}).
A logical next step is to classify Ricci shrinkers in dimension
\begin{align}
m \geq 4,
\label{eqn:PE07_8}
\end{align}
which is a default assumption throughout this paper.
However, even the classification of 4-dimensional Einstein manifolds is far from being complete, not to mention high dimensional Ricci shrinkers.
Therefore, it seems more realistic to develop uniform estimates for the Ricci shrinkers, which is this paper's aim.


\begin{theorem}[\textbf{Weak-compactness of the moduli of Ricci shrinkers}]
Let $(M_i^m, p_i, g_i,f_i)$ be a sequence of Ricci shrinkers.
Let $d_i$ be the length structure induced by $g_i$. By passing to a subsequence if necessary, we have
\begin{align}
(M_i, p_i, d_i,f_i) \longright{pointed-Gromov-Hausdorff} \left(M_{\infty}, p_{\infty}, d_{\infty}, f_{\infty} \right), \label{eqn:CB08_4}
\end{align}
where $(M_{\infty}, d_{\infty})$ is a length space, $f_{\infty}$ is a Lipschitz function on $(M_{\infty}, d_{\infty})$.
The space $M_{\infty}$ has a natural regular-singular decomposition $M_{\infty}=\mathcal{R} \cup \mathcal{S}$.
Suppose further that
\begin{align}
(M_i^m, p_i, g_i,f_i) \in \mathcal{M}_{m}(A), \quad \forall \; i \in \Z^{+}, \label{eqn:PI07_1}
\end{align}
where
\begin{align}
\mathcal{M}_{m}(A) \coloneqq \textrm{the class of Ricci shrinkers $(M^m, p, g,f)$ satisfying} \; \boldsymbol{\mu} \geq -A.
\label{eqn:PD16_1}
\end{align}
Then the following properties are satisfied.
\begin{itemize}
\item[(a).] The singular part $\mathcal{S}$ is a closed set of Minkowski codimension at least $4$. Namely, we have
\begin{align}
\dim_{\mathcal{M}} \mathcal{S} \leq m-4. \label{eqn:PI06_01}
\end{align}
\item[(b).] The regular part $\mathcal{R}$ is an open manifold with smooth metric $g_{\infty}$ satisfying Ricci shrinker equation
$Rc_{\infty} + \emph{Hess}_{g_{\infty}} f_{\infty}-\frac{g_{\infty}}{2}=0$.
\end{itemize}
The convergence (\ref{eqn:CB08_4}) can be improved to
\begin{align}
(M_i, p_i, g_i,f_i) \longright{pointed-\hat{C}^{\infty}-Cheeger-Gromov} \left(M_{\infty}, p_{\infty}, g_{\infty}, f_{\infty} \right). \label{eqn:CA25_10}
\end{align}
Furthermore, the metric structure induced by smooth curves in $(\mathcal{R}, g_{\infty})$ coincides with $d_{\infty}$.

\label{thmin:a}
\end{theorem}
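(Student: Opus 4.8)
The plan is to build a structure theory in the spirit of Cheeger--Colding--Naber, adapted to Ricci shrinkers by exploiting the canonical self-similar Ricci flow $g(t)=(1-t)\psi_t^{*}g$ generated by $(M,g,f)$ and the monotonicity of Perelman's $\mathcal{W}$-functional along it; the entropy lower bound $\boldsymbol{\mu}\geq-A$ in \eqref{eqn:PD16_1} furnishes the uniform non-collapsing. First I would record the a priori estimates enjoyed by every $(M,p,g,f)\in\mathcal{M}_{m}(A)$: a two-sided volume ratio bound $\kappa(m,A)\leq |B(x,1)|\leq\kappa(m,A)^{-1}$ (the lower bound is Perelman no-local-collapsing applied to the $\boldsymbol{\mu}$-minimizer $f$, the upper bound is the non-inflating estimate, using $R\geq 0$), the quadratic comparability of $f$ with $d^{2}(\cdot,p)$, polynomial volume growth, and a uniform bound on $\int_{\{f\le r\}}R\,dv$. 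These already yield pointed Gromov--Hausdorff precompactness of $\{(M_i,p_i,d_i)\}$ together with uniform Lipschitz bounds on $f_i$, hence the limit object $(M_\infty,p_\infty,d_\infty,f_\infty)$ in \eqref{eqn:CB08_4}.

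Second, the analytic core is an $\epsilon$-regularity theorem: there exist $\epsilon_0=\epsilon_0(m)>0$ and $C=C(m)$ such that if a scale-$r$ local entropy at $x$ --- say the $\mathcal{W}$-entropy of the conjugate heat kernel based at $(x,0)$ along the associated flow, or an equivalent pointed Nash-type entropy --- exceeds $-\epsilon_0$, then $\sup_{B(x,r/2)}|Rm|\leq C r^{-2}$, with all covariant derivatives controlled at smaller scales. I would prove this by contradiction and point selection: a failure produces shrinkers with nearly Euclidean local entropy but curvature concentration, and after parabolic rescaling the $\mathcal{W}$-monotonicity forces a smooth, non-flat pointed limit with entropy density identically $1$, which by the rigidity in the equality case of the monotonicity must be flat $\mathbb{R}^{m}$ --- a contradiction.

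Third, declare $\mathcal{S}$ to be the set of $q\in M_\infty$ at which this local entropy stays $\leq-\epsilon_0$ at all scales and put $\mathcal{R}=M_\infty\setminus\mathcal{S}$; Step 2 makes $\mathcal{R}$ open with locally uniformly bounded curvature. To prove $\dim_{\mathcal M}\mathcal{S}\leq m-4$ I would run a quantitative-stratification and covering argument: tangent flows and tangent cones of $M_\infty$ are self-similar, and where the local entropy density is constant they are metric cones splitting off a Euclidean factor; since after rescaling the shrinker term is negligible, these cones are Ricci-flat, and a Ricci-flat metric cone of dimension $\leq 3$ is $\mathbb{R}^{k}$, so a singular tangent cone cannot carry a Euclidean factor of dimension larger than $m-4$. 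A Reifenberg/covering argument at each dyadic scale, fed by the integral curvature bounds the entropy controls, then upgrades the Hausdorff bound to the Minkowski content estimate $\Vol(B_r(\mathcal{S}))\lesssim r^{4}$, giving (a). I expect this to be the main obstacle: excluding codimension-$2$ and codimension-$3$ singular strata with no a priori curvature assumption, and making the count of bad balls at each scale uniform in $i$, is the technically heaviest point and relies on the full strength of the $\epsilon$-regularity theorem.

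Finally, on $\mathcal{R}$ the curvature bounds give, through Anderson--Cheeger--Gromov, local $C^{\infty}$ Cheeger--Gromov convergence $g_i\to g_\infty$, and $f_i\to f_\infty$ in $C^{\infty}_{\mathrm{loc}}$ by elliptic estimates (since $f$ solves $\Delta f+R=\tfrac m2$); passing to the limit in \eqref{eqn:PC27_0} gives $Rc_\infty+\mathrm{Hess}_{g_\infty}f_\infty-\tfrac12 g_\infty=0$, which is (b). Patching the local diffeomorphisms over an exhaustion of $\mathcal{R}$ by compacts whose neighborhoods avoid shrinking neighborhoods of the closed, small set $\mathcal{S}$ upgrades \eqref{eqn:CB08_4} to the $\hat C^\infty$-Cheeger--Gromov convergence \eqref{eqn:CA25_10}. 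For the concluding sentence, $d_\infty\leq d_{g_\infty}$ on $\mathcal{R}$ is immediate from the convergence, while $d_{g_\infty}\leq d_\infty$ follows because $\dim_{\mathcal M}\mathcal{S}\leq m-4<m-1$ forces $\mathcal{S}$ to have zero $(m-1)$-capacity, so a curve nearly realizing $d_\infty$ can be perturbed into $\mathcal{R}$ with arbitrarily small increase of length; a limiting argument with minimizing $d_i$-geodesics that avoid small neighborhoods of $\mathcal{S}$ makes this rigorous.
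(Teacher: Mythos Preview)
Your outline is coherent and, if the $\epsilon$-regularity theorem in Step~2 can be established, would yield the result; but the route is substantially different from the paper's, and the very step you call ``the analytic core'' is precisely the one the authors set out to \emph{avoid}. The paper's strategy is to perform a local conformal change $\bar g=e^{-2\bar f/(m-2)}g$ around each point $q$, observe that $|\overline{Rc}|_{\bar g}$ is uniformly bounded on balls of scale $\sim D^{-1}$ (Theorem~\ref{thm:PE27_1}), and then import the entire Cheeger--Colding--Naber package for manifolds with bounded Ricci curvature. The regularity improvement (Lemma~\ref{lma:PH04_1}) is obtained by a bootstrapping of the coupled elliptic system \eqref{eqn:CB04_2} in harmonic coordinates of $\bar g$---not of $g$; see Remark~\ref{rmk:PI07_2} for why this choice is essential. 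Codimension~$4$ then comes from Cheeger--Naber applied to $\bar g$, transferred back through the bi-Lipschitz map $\Pi_\infty$ (Theorem~\ref{thm:PE30_1}). No pseudo-locality or entropy-based $\epsilon$-regularity is invoked.

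By contrast, your Step~2 is in effect a pseudo-locality/Nash-entropy $\epsilon$-regularity theorem for the associated self-similar flow. The paper flags exactly this as the obstacle (Remark~\ref{rmk:PI06_1}): for non-compact shrinkers one may have $\sup_{M_i}|Rm|=\infty$, and the pseudo-locality theorems in the literature (Perelman, Chau--Tam--Yu, B.-L.~Chen) assume either compactness or bounded curvature. Your blow-up argument needs to extract a smooth, complete Ricci-flow limit after point selection; without an a priori global curvature bound, completeness of the limit and applicability of Hamilton compactness are not automatic, and this is where your sketch is thinnest. Such entropy-based $\epsilon$-regularity for shrinkers has since been developed (and the authors themselves pursue heat-kernel methods in \cite{LWkernel}), so your approach is not wrong in spirit, but it requires real work beyond what you indicate. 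The conformal-transformation method buys the authors a way around this: it reduces the problem to a setting where the needed regularity and stratification theory already exists off the shelf.

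Two smaller points. Your definition of $\mathcal{S}$ via a fixed entropy threshold is not a priori the same as the tangent-cone definition (Definition~\ref{dfn:PC24_2}) used in the paper; you would need the equivalence, which again rests on the $\epsilon$-regularity. And for the final metric-coincidence statement, the paper argues via the metric-cone structure of every tangent space (Proposition~\ref{prn:CB08_1}) rather than a capacity argument; your capacity sketch is plausible but would need the density estimate \eqref{eqn:PE18_1} or something equivalent to make the perturbation quantitative.
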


Theorem~\ref{thmin:a} is basically a compactness theorem, which inherits many ingredients from previous compactness theorems, e.g.  Cao-Sesum~\cite{CaoSe07},  Chen-Wang~\cite{CW12}, Haslhofer-M\"uller~\cite{HM11},~\cite{HM15}, Tian-Zhang~\cite{TianZhang},  B. Weber~\cite{Weber}, X. Zhang~\cite{ZhangXi}, Z. Zhang~\cite{ZhangZL} . We are more interested in the case when each $M_i$ is a complete non-compact manifold. This point was first observed by the fundamental work of Haslhofer-Muller~\cite{HM11} ,
which affected the current paper in many perspectives.   Actually, the Gromov-Hausdorff compactness (\ref{eqn:CB08_4}) was already obtained by~\cite{HM11} in the non-collapsing case.
The major contribution of Theorem~\ref{thmin:a} is the regularity improvement for (\ref{eqn:CA25_10}).

Note that the condition $\boldsymbol{\mu} \geq -A$ is equivalent to (cf. Lemma~\ref{lma:PE01_3}) the condition
\begin{align*}
|B(p, 1)| \geq V_0
\end{align*}
for some positive $V_0$ depending only on $m$ and $A$. Therefore, $\mathcal{M}_{m}(A)$ can be understood as the class of non-collapsed Ricci shrinkers.
The ``pointed-$\hat{C}^{\infty}$-Cheeger-Gromov convergence" in (\ref{eqn:CA25_10}) was sometimes
called ``Cheeger-Gromov convergence away from singularity" in literatures (cf. p. 795 of Cao-Sesum~\cite{CaoSe07}).
Roughly speaking, the ``pointed-$\hat{C}^{\infty}$-Cheeger-Gromov" convergence is an improvement of the ``pointed-Gromov-Hausdorff" convergence.
The notation ``\;$\hat{}$\;" is used to indicate the possible existence of singularity.
The notation ``$C^{\infty}$" is applied to mean the smooth convergence on regular part.
The notation ``Cheeger-Gromov'' means ``modulo diffeomorphisms'' around regular part.
Such notations were first introduced in Chen-Wang~\cite{CW17A}.
The full definitions of ``pointed-$\hat{C}^{\infty}$-Cheeger-Gromov" will be explained in section~\ref{sec:CGconv} around Definition~\ref{dfn:PC25_1}.\\

The equation (\ref{eqn:CA25_10}) combines the Gromov-Hausdorff convergence with the PDE theory.
It provides a way to generalize the classical theorems for manifolds with Ricci lower bound to Ricci shrinkers, with much better regularities.
For example, a fundamental theorem in the Cheeger-Colding theory is the volume continuity theorem of T.H. Colding (cf. \cite{Colding}).
Based on Theorem~\ref{thmin:a}, we can generalize this theorem to the following version.

\begin{theorem}[\textbf{Continuity of measures}]
For each $\theta \in (0,2]$, the measure
\begin{align}
dv' \coloneqq |Rm|^{2-\theta} e^{-f}dv
\label{eqn:PE26_5}
\end{align}
satisfies the following properties.
\begin{itemize}
\item[(a).] $dv'$ is uniformly locally finite in the sense that
\begin{align}
\int_{B(p,D)} dv' < C(m,D,\theta,A)
\label{eqn:PE26_4}
\end{align}
for each Ricci shrinker $(M,p,g,f) \in \mathcal{M}_{m}(A)$.
\item[(b).] $dv'$ is continuous with respect to the Gromov-Hausdorff topology in the sense that
\begin{align}
\int_{B(q_{\infty},r)} dv_{\infty}' \coloneqq
\int_{\mathcal{R} \cap B(q_{\infty},r)} |Rm|^{2-\theta} e^{-f_{\infty}} dv_{\infty}=
\lim_{i \to \infty} \int_{B(q_i,r)} dv_i'. \label{eqn:PE26_2}
\end{align}
\item[(c).] If $\theta=2$, we further have
\begin{align}
\int_{M_{\infty}} dv_{\infty}'\coloneqq \int_{\mathcal{R}} |Rm|^{2-\theta} e^{-f_{\infty}} dv_{\infty}=\lim_{i \to \infty} \int_{M_i} dv_i'.
\label{eqn:PE26_3}
\end{align}
\end{itemize}
\label{thmin:b}
\end{theorem}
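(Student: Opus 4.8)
The plan is to reduce all three parts to two uniform inputs furnished by the structure theory behind Theorem~\ref{thmin:a}: an effective $\varepsilon$-regularity theorem, and the effective (Minkowski) codimension-$4$ estimate underlying (\ref{eqn:PI06_01}). Concretely, for a Ricci shrinker $(M,p,g,f)\in\mathcal{M}_{m}(A)$ and $x\in M$ let $r_{Rm}(x)$ be the curvature radius, i.e. the largest $r>0$ with $|Rm|\le r^{-2}$ on $B(x,r)$, so that $|Rm|(x)\le r_{Rm}(x)^{-2}$. The two inputs give (i) $|\{r_{Rm}<\eta\}\cap B(p,D)|\le C(m,D,A,\varepsilon)\,\eta^{4-\varepsilon}$ for every $\varepsilon>0$ and $0<\eta\le 1$, and (ii) $|B(p,D)|\le C(m,D,A)$, a uniform volume bound on $\mathcal{M}_{m}(A)$ equivalent to $\boldsymbol{\mu}\ge-A$ through Lemma~\ref{lma:PE01_3}. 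I will also use that $e^{-f}\le 1$ (since $R\ge0$ forces $f\ge0$ by (\ref{eqn:PC27_1})).

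For part~(a) I would split $B(p,D)$ dyadically by curvature radius. The piece $\{r_{Rm}\ge 1\}$ has $|Rm|\le 1$, so it contributes at most $|B(p,D)|$. For $k\ge 0$, on $\{r_{Rm}\in[2^{-k-1},2^{-k}]\}$ one has $|Rm|\le 2^{2k+2}$ and, by (i), volume $\le C\,2^{-(4-\varepsilon)k}$, so this piece contributes $\lesssim 2^{2(2-\theta)(k+1)}2^{-(4-\varepsilon)k}=C'2^{-(2\theta-\varepsilon)k}$; choosing $\varepsilon=\theta$ and summing over $k\ge 0$ gives the bound (\ref{eqn:PE26_4}) depending only on $m,D,\theta,A$. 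Running the same estimate only over the dyadic scales below $\delta$ yields the refinement, uniform over $\mathcal{M}_{m}(A)$,
\begin{align*}
   \int_{\{r_{Rm}<\delta\}\cap B(p,D)}|Rm|^{2-\theta}e^{-f}\,dv\le\omega(\delta),\qquad\omega(\delta)\to 0\ \text{ as }\ \delta\to 0,
\end{align*}
which is the key quantitative statement for the rest; combined with Fatou's lemma along the smooth convergence it also shows that $\int_{\mathcal{R}\cap B(q_\infty,r)}dv_\infty'$ is finite and bounded as in (\ref{eqn:PE26_4}).

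For part~(b), fix $q_i\to q_\infty$, $r>0$, and $D$ with $B(q_i,r)\subset B(p_i,D)$ for all $i$. For $\delta>0$ write $\int_{B(q_i,r)}dv_i'=\int_{\{r_{Rm_i}\ge\delta\}\cap B(q_i,r)}dv_i'+\int_{\{r_{Rm_i}<\delta\}\cap B(q_i,r)}dv_i'$; the second term is $\le\omega(\delta)$ uniformly in $i$, and likewise $\int_{\mathcal{R}\cap B(q_\infty,r)}dv_\infty'$ and $\int_{\{r_{Rm_\infty}\ge\delta\}\cap B(q_\infty,r)}dv_\infty'$ differ by at most $\omega(\delta)$. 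On $\{r_{Rm_i}\ge\delta\}$ the curvature is bounded by $\delta^{-2}$, so by the pointed-$\hat{C}^\infty$-Cheeger-Gromov convergence (\ref{eqn:CA25_10}) the almost-isometric diffeomorphisms identify $\{r_{Rm_i}\ge\delta\}\cap B(q_i,r)$, up to boundary and thin-annular sets of $dv'$-measure $O(\omega(\delta))$ negligible for a.e.\ $\delta$ and $r$, with a fixed compact subset of $\mathcal{R}\cap B(q_\infty,r)$ on which the pulled-back metrics and potentials converge smoothly; since $t\mapsto t^{2-\theta}$ is continuous at $0$ for $\theta\in(0,2]$, the integrand $|Rm|^{2-\theta}e^{-f}$ then converges uniformly there and the integral passes to the limit. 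Letting $i\to\infty$ and then $\delta\to 0$ gives (\ref{eqn:PE26_2}).

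For part~(c) with $\theta=2$ we have $dv'=e^{-f}dv$ and, by (\ref{eqn:PD15_1}), $\int_{M_i}dv_i'=(4\pi)^{m/2}e^{\boldsymbol{\mu}_i}$. The only new ingredient is uniform decay of mass at spatial infinity: by the standard Cao--Zhou / Haslhofer--M\"uller estimates for Ricci shrinkers, valid on $\mathcal{M}_{m}(A)$ and passing to $M_\infty$, one has $\int_{M\setminus B(p,D)}e^{-f}\,dv\le\Psi(D^{-1}\,|\,m,A)$ with $\Psi\to 0$ as $D\to\infty$. Choosing $D$ so that this tail is $<\varepsilon$ for all $i$ and for $M_\infty$, one reduces to (\ref{eqn:PE26_2}) with $q_\infty=p_\infty$ and $r=D$; letting $i\to\infty$ and then $D\to\infty$ gives (\ref{eqn:PE26_3}). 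The main obstacle is establishing the two uniform inputs of the first paragraph — above all the effective codimension-$4$ control of $\{r_{Rm}<\eta\}$ with constants depending only on $m,D,A$ — which is precisely the analytic heart of the structure theory; once these are in hand, parts (a)--(c) amount to dyadic summation and interchange of limits, with only routine care about null sets at $\{r_{Rm}=\delta\}$ and $\partial B(q_\infty,r)$ and about escape of mass at infinity (which is controlled for $\theta=2$ and, consistently, not asserted for $\theta<2$).
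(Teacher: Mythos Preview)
Your proposal is correct and follows essentially the same route as the paper: both reduce everything to the density estimate $\int_{B(p,D)}\mathbf{hr}^{2\theta-4}\,dv<C$ (your curvature-radius version is equivalent via the radius comparisons of Theorem~\ref{thm:PH10_1}), then use smooth convergence on the high-regularity part and send the cutoff parameter to zero. The only cosmetic difference is that the paper decomposes on the limit space via the $\epsilon$-neighborhood $\mathcal{S}_\epsilon$ of the singular set and pulls back by the approximation maps (Proposition~\ref{prn:PE19_2}), whereas you decompose directly on each $M_i$ by sublevel sets $\{r_{Rm_i}<\delta\}$; the paper's version makes the boundary/annular bookkeeping in (b) slightly cleaner, but the content is the same.
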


As an application of Theorem~\ref{thmin:a} and Theorem~\ref{thmin:b}, we can derive a uniform scalar curvature lower bound around base points.

\begin{theorem}[\textbf{Scalar curvature lower bound}]
For each $m,A$, there is a constant $\epsilon=\epsilon(m,A)>0$ with the following properties.

Suppose $(M^{m}, p, g, f) \in \mathcal{M}_{m}(A)$ and $M$ is a closed manifold. Then we have
\begin{align}
\inf_{B(p,1)} R(x) \geq \epsilon. \label{eqn:PC25_10}
\end{align}
\label{thmin:c}
\end{theorem}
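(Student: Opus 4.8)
The plan is to argue by contradiction, feeding a hypothetical sequence of counterexamples into the compactness Theorem~\ref{thmin:a} and then using the measure continuity Theorem~\ref{thmin:b}(c) together with the entropy gap for shrinkers to reach a contradiction. So suppose the conclusion fails: there are closed Ricci shrinkers $(M_i^m,p_i,g_i,f_i)\in\mathcal{M}_m(A)$ and points $x_i\in B(p_i,1)$ with $R_i(x_i)\to 0$. On a closed shrinker the soliton identity $\Delta_{f_i}R_i=R_i-2|Rc_i|^2$ (where $\Delta_{f_i}=\Delta-\nabla f_i\cdot\nabla$) together with the strong maximum principle forces $R_i>0$; hence $\epsilon_i:=\inf_{B(p_i,1)}R_i\in(0,R_i(x_i)]$ satisfies $\epsilon_i\to 0$, and after replacing $x_i$ by an almost‑minimizer of $R_i$ on $\overline{B(p_i,1)}$ we may assume $R_i(x_i)\le 2\epsilon_i$.

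By Theorem~\ref{thmin:a}, after passing to a subsequence we obtain a pointed‑$\hat{C}^{\infty}$‑Cheeger–Gromov limit $(M_i,p_i,g_i,f_i)\to(M_{\infty},p_{\infty},g_{\infty},f_{\infty})$ with $M_{\infty}=\mathcal{R}\cup\mathcal{S}$, $\mathcal{S}$ closed of Minkowski codimension $\ge 4$, and $g_{\infty}$ a smooth shrinker metric on $\mathcal{R}$. The first goal is to exhibit an interior zero of $R_{\infty}$. If $x_i$ subconverges to a point $x_{\infty}\in\mathcal{R}$, then smooth convergence on the regular part gives $R_{\infty}(x_{\infty})=\lim_i R_i(x_i)=0$ and we are done. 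If $x_i$ accumulates on $\mathcal{S}$, one selects a curvature‑maximum point near $x_i$ and rescales; since rescaling kills the $\tfrac12 g$ term, the blow‑up limit is a complete non‑flat Ricci‑flat space, so $R_i\to 0$ on larger and larger regions near $x_i$ and $R_{\infty}\to 0$ along $\mathcal{R}$ as one approaches the corresponding point of $\mathcal{S}$; as $R_{\infty}$ stays bounded there, the equation $\Delta_{f_{\infty}}R_{\infty}=R_{\infty}-2|Rc_{\infty}|^2$ has bounded coefficients and the codimension‑$\ge 4$ bound on $\mathcal{S}$ lets one remove the singularity, extending $R_{\infty}$ across $\mathcal{S}$ to a nonnegative (weak) supersolution of $\Delta_{f_{\infty}}u-u=0$ with an interior zero. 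Establishing this step — ruling out that the vanishing of scalar curvature is entirely absorbed into a bubbling singularity, or transporting a zero of $R_{\infty}$ across $\mathcal{S}$ — is the main obstacle; everything after it is routine.

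Granting an interior zero, apply the strong minimum principle: from $\Delta_{f_{\infty}}R_{\infty}-R_{\infty}=-2|Rc_{\infty}|^2\le 0$ and $R_{\infty}\ge 0$ with an interior zero, $R_{\infty}$ is a supersolution of an operator with nonpositive zeroth‑order term attaining its minimum value $0$ inside, so $R_{\infty}\equiv 0$ on the connected component containing that zero. Then $Rc_{\infty}\equiv 0$ and $Hess_{g_{\infty}}f_{\infty}=\tfrac12 g_{\infty}$ there; since $\mathcal{R}$ is connected (its complement has codimension $\ge 2$) and shrinker metrics are real‑analytic, $Rc_{\infty}\equiv 0$ on all of $\mathcal{R}$, and by Tashiro‑type rigidity for $Hess f=\tfrac12 g$ the limit is the Gaussian shrinker, $(M_{\infty},g_{\infty},f_{\infty})=(\mathbb{R}^m,g_{\mathrm{euc}},\tfrac14|x|^2)$, with $\mathcal{S}=\emptyset$. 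In particular $\boldsymbol{\mu}_{\infty}:=\log\!\big((4\pi)^{-m/2}\int_{\mathcal{R}}e^{-f_{\infty}}\,dv_{\infty}\big)=0$.

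Finally take $\theta=2$ in Theorem~\ref{thmin:b}(c): since $|Rm|^{0}\equiv 1$ this reads $\int_{M_i}e^{-f_i}\,dv_i\to\int_{\mathcal{R}}e^{-f_{\infty}}\,dv_{\infty}$, which by the normalization (\ref{eqn:PD15_1}) becomes $(4\pi)^{m/2}e^{\boldsymbol{\mu}_i}\to(4\pi)^{m/2}e^{\boldsymbol{\mu}_{\infty}}$, so $\boldsymbol{\mu}_i\to\boldsymbol{\mu}_{\infty}=0$. On the other hand every closed Ricci shrinker is non‑flat: tracing (\ref{eqn:PC27_0}) gives $R+\Delta f=\tfrac m2$, and integrating over the closed manifold yields $\int_M R\,dv=\tfrac m2\Vol(M)>0$, so $R\not\equiv 0$. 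By the gap theorem for non‑flat Ricci shrinkers there is $\epsilon_0=\epsilon_0(m)>0$ with $\boldsymbol{\mu}_i\le-\epsilon_0$ for all $i$, contradicting $\boldsymbol{\mu}_i\to 0$. This contradiction proves the existence of the desired $\epsilon=\epsilon(m,A)$.
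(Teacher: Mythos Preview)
Your overall strategy (contradiction via weak compactness) matches the paper's, but the step you yourself flag as ``the main obstacle'' is a genuine gap, and your sketch does not close it. The problem is exactly the one identified in the paper's introduction around (\ref{eqn:PI04_8}): when $x_i$ drifts into $\mathcal{S}(M_{\infty})$, you must \emph{manufacture} a regular limit point with $R_{\infty}=0$, and your blow-up/removable-singularity paragraph does not do this. Blowing up at a curvature maximum near $x_i$ produces a Ricci-flat limit, but that only says the \emph{rescaled} scalar curvature tends to zero; it does not force $R_i$ to be small on a region of \emph{definite unrescaled size}, and it gives no information about $R_{\infty}$ on $\mathcal{R}$ near the singular point of the original (unrescaled) limit. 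Likewise, the removable-singularity step presupposes $R_{\infty}\to 0$ along $\mathcal{R}$ approaching $\mathcal{S}$, which is precisely what is not known. The paper's mechanism here is quite different: it first proves $\boldsymbol{\nu}(g)=\boldsymbol{\mu}(g)$ on closed shrinkers (Proposition~\ref{prn:PC30_1}), extracts a uniform $L^2$-Sobolev inequality for $-4\Delta+R$ (Corollary~\ref{cly:PD15_1}), runs a parabolic Moser iteration to get a mean-value inequality for $(\tau-R)_+$ (Lemma~\ref{lma:PD16_1}), and concludes that a set of \emph{definite measure} satisfies $R<2\sigma_i$ (Lemma~\ref{lma:PC31_1}). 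Feeding this set into the density estimate (\ref{eqn:PE18_2}) then yields a point $z_i$ with $\mathbf{hr}(z_i)\ge r_0>0$ and $R(z_i)<2\sigma_i$, which lands in $\mathcal{R}(M_{\infty})$ and gives the needed regular zero of $R_{\infty}$.

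Your endgame also diverges from the paper and has its own issue. From $Hess\,f_{\infty}=\tfrac12 g_{\infty}$ on $\mathcal{R}$ you invoke Tashiro to get $M_{\infty}=\mathbb{R}^m$ and then a $\boldsymbol{\mu}$-gap; but Tashiro's theorem needs completeness, and $(\mathcal{R},g_{\infty})$ need not be complete until you have already shown $\mathcal{S}=\emptyset$. The paper avoids this by only concluding that $M_{\infty}$ is a \emph{regular metric cone} (Proposition~\ref{prn:PC30_2}, via the Cheeger--Colding cone rigidity adapted to conifolds), and then derives the contradiction from pseudo-locality on closed shrinkers: a regular cone contains points of arbitrarily large strongly-convex radius, whereas Proposition~\ref{prn:PC30_3} gives a uniform upper bound $sr(x)<D(m)$ on any closed shrinker, so Theorem~\ref{thm:PC30_1} rules out such a limit. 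If you want to keep your $\boldsymbol{\mu}$-gap ending, you would first need an argument that $\mathcal{S}=\emptyset$ once $Rc_{\infty}\equiv 0$ on $\mathcal{R}$; this is plausible but requires work beyond what you wrote.
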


Before we discuss the proof of the previous theorems, we first set up the framework and point out the critical difficulties to prove them.

In this paper, we make use of the local conformal transformation to bridge the study of Ricci shrinkers with the classical Cheeger-Colding theory.
Actually, it is observed by Z. Zhang~\cite{ZhangZL} that if we set $\tilde{g}=e^{-\frac{2f}{m-2}}g$, then we have
\begin{align*}
\widetilde{Rc}=\frac{1}{m-2} \left\{ df \otimes df + (m-1-f) e^{\frac{2f}{m-2}} \tilde{g} \right\}.
\end{align*}
If we have uniform diameter bound of $M$, then a direct calculation shows that $|\widetilde{Rc}|_{\tilde{g}}$ is uniformly bounded.
Consequently, one can use the classical Cheeger-Colding theory to study the uniform geometric estimates of $(M, \tilde{g})$ and then translate them
to the estimates for $(M, g)$, through the inverse conformal transformation.
This method works perfectly for closed Ricci shrinkers with uniformly bounded diameters.
If the Ricci shrinkers are complete non-compact manifolds, then some technical issues appear.
Due to the quadratic behavior of $f$ around infinity (cf. Lemma~\ref{L100}), it is easy to see that $(M, \tilde{g})$ is not a complete metric space (cf. Haslhofer-M\"{u}ller~\cite{HM15} for example).
We observe that $(M, \tilde{g})$ has other properties worse than non-completeness.
For example, it is possible to find two points in $M$ which cannot be connected by minimizing geodesics in $(M, \tilde{g})$.
See Proposition~\ref{prn:PE27_1} for more details.

The bad behavior of $(M, \tilde{g})$ suggests that we should perform the conformal transformation locally.
For each $q \in M$, we define $\bar{f}=f-f(q)$ and $\bar{g}=e^{-\frac{2\bar{f}}{m-2}}g$.
Then for each scale $r$ small enough, say $r<\frac{1}{100m D}$ for $D=d(p, q)+10m$, we show that the ball $B_{\bar{g}}(q,r)$ is relatively compact and $|\overline{Rc}|_{\bar{g}}r^2$ is uniformly bounded.
Moreover,
\begin{align}
r^{-1}d_{GH} \left( B_{\bar{g}}(q,r), \; B_{g}(q,r) \right)< 2D r. \label{eqn:PI06_1}
\end{align}
Therefore, one can use the classical Cheeger-Colding theory to study the geometry of $B_{\bar{g}}(q,0.1 r)$ and then obtain the information of $B(q, 0.01r)$ through the inverse conformal transformation.
In particular, the local behavior of Ricci shrinkers and the manifolds with bounded Ricci curvature are the same.

Suppose we have a sequence of Ricci shrinkers $(M, p_i, g_i, f_i)$, by the comparison geometry of Bakry-\'{E}mery Ricci tensor developed by Wei-Wylie~\cite{WeiWylie} and the gradient estimate of $f$,
it follows from standard ball-packing argument that (\ref{eqn:CB08_4}) holds.
Namely, $(M, p_i, d_i, f_i)$ sub-converge to a limit $(M_{\infty}, p_{\infty}, d_{\infty}, f_{\infty})$. On the limit space $M_{\infty}$, a point $q$ is called regular if all tangent spaces at $q$ is the same Euclidean space and $q$ is called singular if it is not regular (see Definition \ref{dfn:PC24_2} for the precise definitions of these concepts). Therefore, $M_{\infty}$ has a natural regular-singular decomposition $M_{\infty}=\mathcal{R} \cup \mathcal{S}$.
In general, the structure of $\mathcal{R}$ is obscure. However, under the non-collapsing condition (\ref{eqn:PI07_1}), we can apply the non-collapsed Cheeger-Colding theory and (\ref{eqn:PI06_1}) to obtain
every tangent space is a metric cone. Up to conformal transformation, Cheeger-Naber's work implies (\ref{eqn:PI06_2}), the codimension estimate of $\mathcal{S}$.

The technical challenge then is to show the smooth manifold structure of $\mathcal{R}$ and the smooth convergence around $\mathcal{R}$.
In the closed manifold case, this issue is solved by Z. Zhang through a smart application of the pseudo-locality theorem of Perelman (cf. Theorem 10.1 of Perelman~\cite{Pe1}), as Ricci shrinkers can be regarded as the time $t=0$ slice of a Ricci flow.
Basically, suppose $M_i \ni q_i \to q_{\infty} \in M_{\infty}$ and $q_{\infty}$ is a regular point, then there is an $r$ independent of $i$ such that $B(q_i, r)$ has isoperimetric constant very close to the Euclidean
isoperimetric constant. By the pseudo-locality theorem, the geometry around $q_i$ at time-slice $t=\epsilon r^2$ has uniformly bounded regularities. However, the geometry of $g(\epsilon r^2)$ and $g(0)$ are the same up to diffeomorphism and rescaling. Therefore, one obtains uniform regularities around $q_i$, with respect to the metrics $g_i=g_i(0)$.

However, unless the pseudo-locality theorem is proved for non-compact Ricci shrinkers (cf. Remark~\ref{rmk:PI06_1}), the above argument does not apply here directly.
A regularity improvement argument independent of the pseudo-locality theorem is needed.
We notice that the method of local conformal transformation should be combined with the method of M. Anderson~\cite{An90} to study
the regularity issue, in the harmonic chart of the conformal metric $\bar{g}=e^{-\frac{2f}{m-2}}g$, rather than of the metric $g$.
(This is an important point. See Remark~\ref{rmk:PI07_2} for more details.)
We regard $(\bar{g}, \bar{f})$ as a couple and study the elliptic system
\begin{subequations}
\begin{align}[left = \empheqlbrace \,]
&\overline{Rc}=\frac{1}{m-2} \left\{ d\bar{f} \otimes d\bar{f} + (m-1-f(q)-\bar{f}) e^{\frac{2(f(q)+\bar{f})}{m-2}} \bar{g} \right\}, \label{eqn:CB04_2a}\\
&\overline{\Delta} \bar{f}=e^{\frac{2(f(q)+\bar{f})}{m-2}} \left( \frac{m}{2}-f(q)-\bar{f} \right). \label{eqn:CB04_2b}
\end{align}
\label{eqn:CB04_2}
\end{subequations}
\\
\noindent
Instead of the system (\ref{eqn:CB04_2}), only the equation (\ref{eqn:CB04_2a}) was emphasized in the literature (cf. for example~\cite{ZhangZL},~\cite{HM15}).
In the ball $B(p, D)$, $f(q)$ is uniformly bounded by $\frac{D^2}{4}$.
Starting from $C^{1,\alpha}$ metric $\bar{g}$ and Lipschitz function $\bar{f}$, standard elliptic PDE bootstrapping argument implies that $(\bar{g}, \bar{f})$ has all high order derivatives' estimates,
which guarantee the smooth convergence of $(\bar{g}_i, \bar{f}_i)=\left(e^{-\frac{2\bar{f}_i}{m-2}}g_i, \bar{f}_i \right)$.
Consequently, after the inverse conformal transformation, we know $(g_i, f_i)$ converges to $(g_{\infty}, f_{\infty})$ smoothly.
Such ideas are refined further in Section~\ref{sec:regularity}, where we systematically investigate the different radii, including (restricted) Gromov-Hausdorff radius, volume radius, harmonic radius and strongly-convex radius,
under the metric $g$ and $\bar{g}$.
We show all of them are equivalent. The technical heart of the equivalence is the argument mentioned above for regularity improvement.

After we settle down the regularity issue, the improvement from (\ref{eqn:CB08_4}) to (\ref{eqn:CA25_10}) seems well-known to experts.
However, it is not easy to find a clear reference with full details to obtain such an improvement.
For the sake of self-containedness, we provide details and enough necessary references.
We also have many new observations (for example, Remark~\ref{rmk:PE30_1} and Remark~\ref{rmk:PF24_1}) which are not available in the literature.
We believe such new observations will be important for further developing the weak compactness theory of critical metrics.
Theorem~\ref{thmin:b} and Theorem~\ref{thmin:c} can be regarded as the applications of Theorem~\ref{thmin:a}. \\

Now we briefly discuss the proof of the previous theorems.

\begin{proof}[Outline of the proof of Theorem~\ref{thmin:a}:]
As discussed before, it is relatively easy to obtain (\ref{eqn:CB08_4}) and obtain the regular-singular decomposition $M_{\infty}=\mathcal{R} \cup \mathcal{S}$.
Using the local conformal transformation and the distance estimate (\ref{eqn:PI06_1}), it follows from Cheeger-Colding theory that
every tangent space of $M_{\infty}$ is a metric cone. Moreover, for each point $y \in \mathcal{S}$, there is a uniform gap from the tangent space of $M_{\infty}$ at $y$
and the standard $\R^m$. The codimension estimate (\ref{eqn:PI06_01}) follows from Cheeger-Naber's result and the conformal transformation.
The remaining issue is the ``smoothness" of $\mathcal{R}$ and the smooth convergence around $\mathcal{R}$.
For each point $y \in \mathcal{R}$, through the volume continuity theorem of Colding and the gap lemma of Anderson, we can construct a uniform sized harmonic
coordinate chart with respect to $\bar{g}_i$, around $y_i$ where $M_i \ni y_i \to y$. Inside the harmonic coordinate charts, we run the regularity improvement argument and obtain all higher-order derivatives of $\bar{g}_i$ and $\bar{f}_i$.
Then we apply the inverse conformal transformation to obtain the uniform regularities of $g_i,f_i$, within the harmonic coordinate chart of $\bar{g}_i$.
After a careful choice of covering, we can patch the different harmonic coordinates together to obtain a smooth Riemannian manifold structure of $\mathcal{R}$.
Also, the uniform estimates inside harmonic coordinate charts can be used to construct diffeomorphisms to achieve the $C^{\infty}$-Cheeger-Gromov convergence
away from singularities. We show that the diffeomorphisms match the original Gromov-Hausdorff approximation maps very well (cf. (\ref{eqn:PE25_2}) and Remark~\ref{rmk:PF24_1}).
At this point, it is still not clear if the length structure $d_{\infty}$ is the same as the length structure induced from the smooth metric $g_{\infty}$.
This coincidence will be proved by the fact that every tangent space is a metric cone (cf. Proposition~\ref{prn:CB08_1}).
The final proof of Theorem~\ref{thmin:a} can be found in section~\ref{sec:coincidence}.
\end{proof}

\begin{proof}[Outline of the proof of Theorem~\ref{thmin:b}:]
Modulo slight terminology difference, the Minkowski codimension estimate (\ref{eqn:PI06_01}) has a qualitative version, which claims that
\begin{align}
\int_{B(p, D)} hr^{2\theta-4} dv<C(m,D,\theta), \label{eqn:PI06_2}
\end{align}
where $hr$ is the harmonic radius. The above inequality (\ref{eqn:PI06_2}) is called the density estimate (cf. Proposition 2.57 of Chen-Wang~\cite{CW17A}).
See also the discussion and Figure 1 of Chen-Wang~\cite{CW6} for more information.
Then Theorem~\ref{thmin:b} is a direct corollary of (\ref{eqn:PI06_2}) and the smooth convergence (\ref{eqn:CA25_10}).
Full details can be found in section~\ref{sec:coincidence}.
\end{proof}

\begin{proof}[Outline of the proof of Theorem~\ref{thmin:c}:]
Theorem~\ref{thmin:c} is proved through a contradiction argument. Suppose the theorem fails, one can obtain a sequence of shrinkers $(M_i, p_i, g_i, f_i) \in \mathcal{M}_{m}(A)$ such that $R(q_i) \to 0$
for some $q_i \in B(p_i, 1)$. By Theorem~\ref{thmin:a}, we may assume that $(M_i, p_i, g_i, f_i)$ converges to a limit space $(M_{\infty}, p_{\infty}, g_{\infty}, f_{\infty})$, and $q_i$ converges to $q_{\infty}$.
Suppose we have
\begin{align}
q_{\infty} \in \mathcal{R}(M_{\infty}), \quad \label{eqn:PI04_8}
\end{align}
then $R(q_{\infty})=0$ and the strong maximum principle guarantees that $R \equiv 0$ on $\mathcal{R}(M_{\infty})$. Consequently, $|Rc| \equiv 0$ on $\mathcal{R}(M_{\infty})$.
By the Ricci shrinker equation on $\mathcal{R}(M_{\infty})$, we obtain $\text{Hess} f_{\infty}=\frac{g_{\infty}}{2}$. The high codimension of $\mathcal{S}(M_{\infty})$ and the regularity of $\mathcal{R}(M_{\infty})$
then implies that $M_{\infty}$ is a metric cone. In particular, $M_{\infty}$ contains a point $z_{\infty}$ which has very large harmonic radius. Then by the smooth convergence around $\mathcal{R}(M_{\infty})$,
we know $M_i$ contains a point $z_i$ with very large harmonic radius, which contradicts the pseudo-locality theorem (cf. Proposition~\ref{prn:PC30_3}).
Therefore, the proof is finished if we can obtain (\ref{eqn:PI04_8}).
Without further estimates, (\ref{eqn:PI04_8}) may fail. Namely, it is possible that $q_i \to q_{\infty} \in \mathcal{S}(M_{\infty})$.
It becomes a key new technical difficulty to obtain (\ref{eqn:PI04_8}).
Via analysis related to $\boldsymbol{\mu}$-functional and Sobolev constant, we obtain mean value properties for $-R$.
The mean value property, together with Theorem~\ref{thmin:b}, guarantees that (\ref{eqn:PI04_8}) always hold up to a perturbation of $q_i$.
The complete details of the proof can be found in section~\ref{sec:gap}.
\end{proof}

For the convenience of the readers, we highlight some places needing particular attention.

\begin{remark}
The limit space in (\ref{eqn:CA25_10}) is actually an $RCD(\frac{1}{2}, \infty)$ space, by the work of Gigli-Mondino-Savar\'{e}~\cite{GMS}.
It is also a Riemannian conifold (cf. Theorem~\ref{thm:PH26_1}) in the sense of Chen-Wang~\cite{CW17A}.
\label{rmk:PI09_1}
\end{remark}

\begin{remark}
Note that (\ref{eqn:CB08_4}) and the estimate $\dim_{\mathcal{H}} \mathcal{S} \leq m-4$ can be obtained by the work of Wang-Zhu~\cite{WZ13} and Zhang-Zhu~\cite{ZZ17}.
Then there is an alternative approach to prove (\ref{eqn:CA25_10}), as pointed out by Guo-Phong-Song-Sturm~\cite{GPSS}.
We emphasize that each $M_i$ in Theorem~\ref{thmin:a} may not have bounded curvature.
Namely, it is possible that $$\sup_{x \in M_i} |Rm|(x)=\infty.$$
Without the bounded curvature condition, the pseudo-locality theorem is not available in the literature (cf. Theorem 8.1 of Chau-Tam-Yu~\cite{CTY} and line 11 on page 365 of B. Chen~\cite{CBL07}).
When $m=4$, the pseudo-locality theorem can be avoided in the proof of (\ref{eqn:CA25_10}) by using $\epsilon$-regularity of ``energy" $\int |Rm|^2$
and some a priori energy bound due to Cheeger-Naber~\cite{CN2}, as done in Haslhofer-M\"{u}ller~\cite{HM15}.
However, such method relies heavily on the scaling invariance of $\int |Rm|^2$, which fails in general dimension.
\label{rmk:PI06_1}
\end{remark}

\begin{remark}
If we study Ricci shrinkers nearby a given geometric object, then (\ref{eqn:PD16_1}) will be guaranteed automatically in many cases (cf. Proposition~\ref{prn:PE29_3}).
From the proof of Theorem~\ref{thmin:c}, we can actually show that every cone $\R^{m}/\Gamma$ with $\Gamma \subset O(m)$
cannot be approximated by compact Ricci shrinkers(cf. Corollary~\ref{cly:PC30_1}), which is conjectured to be true in Li-Wang~\cite{LWs1}.
We also believe that the closed manifold assumption in Theorem~\ref{thmin:c} is redundant.
\label{rmk:PI06_2}
\end{remark}

The necessary notations in this paper are listed below with pointers to their definitions. \\

\noindent
\textbf{List of notations}

\begin{itemize}
\item $D$: a constant larger than $10m$. First appears before (\ref{eqn:PE03_6a}).
\item $\bar{f}$: relative Ricci shrinker potential function with respect to point $q$. Defined in (\ref{eqn:PE28_3}).
\item $g$: the default Ricci shrinker metric.
\item $\tilde{g}$: the conformal change of $g$. Defined in (\ref{eqn:PE27_61}).
\item $\bar{g}$: the relative conformal change of $g$ with respect to point $q$. Defined in (\ref{eqn:PE27_7}).
\item $\hat{g}$: the rescaling of $\bar{g}$. Defined in (\ref{eqn:PH14_1}).
\item $m$: the real dimension of $M$ and it is assumed $m \geq 4$ in (\ref{eqn:PE07_8}).
\item $\mathcal{M}_{m}(A)$: the moduli of $m$-dimensional Ricci shrinkers satisfying $\boldsymbol{\mu} \geq -A$. First appears and defined in (\ref{eqn:PD16_1}).
\item $\mathcal{R}$: regular part of the limit space. First appears in Theorem~\ref{thmin:a}. Defined in Definition~\ref{dfn:PC24_2}.
\item $Rc_f$: Bakry-Emery Ricci tensor. Defined and first appears in Lemma~\ref{lma:PE01_1}.
\item $\mathcal{S}$: singular part of the limit space. First appears in Theorem~\ref{thmin:a}. Defined in Definition~\ref{dfn:PC24_2}.
\item $\mathcal{S}_r$: $r$-neighborhood of the singular set $\mathcal{S}$. First appears and defined in (\ref{eqn:PE27_4}).
\item $\omega_m$: the volume of standard unit ball in $\R^m$. First appears and defined in Lemma~\ref{lma:PE04_1}.
\item $\square$: Heat operator $\partial_t-\Delta$. First appears in Lemma~\ref{lma:PD16_1}.
\item $\longright{pointed-\hat{C}^{\infty}-Cheeger-Gromov}$: Convergence in pointed-$\hat{C}^{\infty}$-Cheeger-Gromov topology. First appears in the discussion after Theorem~\ref{thmin:a}.
Defined in Definition~\ref{dfn:PC25_1}.
\item $\longright{pointed-Gromov-Hausdorff}$: Convergence in pointed-Gromov-Hausdorff topology. First appears in the discussion after Theorem~\ref{thmin:a}. Defined in Definition~\ref{dfn:PC24_1}.
A similar notation $\longright{Gromov-Hausdorff}$ is also defined there.
\end{itemize}

This paper is organized as follows. In section~\ref{sec:pre}, we provide previously known results about Ricci shrinkers and some elementary estimates.
In section~\ref{sec:conformal}, we study conformal transformation and show why it should be considered locally.
In section~\ref{sec:regularity}, we show the equivalence of many radii and prove a regularity improvement theorem under harmonic coordinate. This section is the technical heart of this paper.
In section~\ref{sec:weakexistence}, we develop general estimates which transform the local geometry of Ricci shrinkers to the classical Cheeger-Colding theory.
In section~\ref{sec:reglimit}, we provide details and elementary steps to show the regular part of the limit space in Theorem~\ref{thmin:a} is a smooth manifold.
In section~\ref{sec:CGconv}, we show the smooth convergence on the regular part. Section 6 and section 7 provide detailed argument and some new ingredients for the
Cheeger-Gromov convergence.
Then the proof of Theorem~\ref{thmin:a} and Theorem~\ref{thmin:b} are finished in section~\ref{sec:coincidence}.
The proof of Theorem~\ref{thmin:c} is given in section~\ref{sec:gap}.
Finally, we discuss some further possible development in section~\ref{sec:further}.\\

\section{Preliminaries}
\label{sec:pre}

Let ${\psi_t}: M \to M$ be a $1$-parameter family of diffeomorphisms generated by $X(t)=\dfrac{1}{1-t}\nabla_gf$. That is
\begin{align*}
\frac{\partial}{\partial t} {\psi_t}(x)=\frac{1}{1-t}\nabla_g f\left({\psi_t}(x)\right).
\end{align*}
It is well-known that the rescaled pull-back metric
\begin{align}
g(t)\coloneqq (1-t) (\psi_t)^*g, \label{eqn:PH29_3}
\end{align}
satisfies the Ricci flow equation
\begin{align*}
\partial_t g=-2Rc(g(t))
\end{align*}
for any $-\infty <t<1$.
Moreover, the pull-back function
\begin{align}
f(t)\coloneqq (\psi_t)^*f \label{eqn:PH29_2}
\end{align}
satisfies the evolution equation
\begin{align*}
\partial_tf(t)=|\nabla_{g(t)}f(t)|_{g(t)}^2.
\end{align*}
Therefore, we can regard $(M,g,f)$ as the $t=0$ time-slice of the Ricci flow space-time $M \times (-\infty, 1)$.
On a general time-slice, we have
\begin{align}
Rc(t) + \text{Hess} _{g(t)}f(t)=\frac{g(t)}{2(1-t)} \label{eqn:PH29_1}
\end{align}
and the identity
\begin{align} \label{eqn:PH29_1a}
R(t)+|\nabla_{g(t)}f(t)|_{g(t)}^2=\frac{f(t)}{1-t}.
\end{align}
For any $x \in M$ and $t \le 0$,
\begin{align*}
0 \le \partial_tf(x,t)=|\nabla_{g(t)}f(x,t)|_{g(t)}^2\le \frac{f(x,t)}{1-t}.
\end{align*}
Therefore, we have on $M \times (-\infty,0]$ that
\begin{align} \label{X001}
\frac{f(x,0)}{1-t} \le f(x,t) \le f(x,0).
\end{align}

According to our choice, $f-\boldsymbol{\mu}$ satisfies the normalization condition
\begin{align}
\int_{M} (4\pi)^{-\frac{m}{2}} e^{-f-\boldsymbol{\mu}} dv=1. \label{eqn:PE27_3}
\end{align}
We also have
\begin{align}
& R+\Delta f -\frac{m}{2}=0, \label{eqn:PC06_2} \\
& R+|\nabla f|^2-f=0. \label{eqn:PC06_3}
\end{align}

The following type estimates are first proved by Cao-Zhou~\cite{CaoZhou10} and improved by Haslhofer-M\"{u}ller~\cite{HM11}.
\begin{lemma}[Lemma 2.1 of ~\cite{HM11} and Theorem 1.1 of~\cite{CaoZhou10}]
Let $(M^m,g,f)$ be a Ricci shrinker. Then there exists a point $p \in M$ where $f$ attains its infimum and $f$ satisfies the quadratic growth estimate
\begin{equation}
\frac{1}{4}\left(d(x,p)-5m \right)^2_+ \le f(x) \le \frac{1}{4} \left(d(x,p)+\sqrt{2m} \right)^2
\label{E106}
\end{equation}
for all $x\in M$, where $a_+ :=\max\{0,a\}$.
\label{L100}
\end{lemma}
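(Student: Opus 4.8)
The plan is to establish the quadratic growth estimate \eqref{E106} by integrating the Ricci shrinker equation along minimizing geodesics emanating from a minimum point of $f$. First I would verify that $f$ attains its infimum: since $f$ is bounded below (from \eqref{eqn:PC06_3}, $f = R + |\nabla f|^2$ and the scalar curvature of a shrinker is nonnegative by the Chen-type lower bound, or one argues directly via a Bochner computation), and since the normalization \eqref{eqn:PD15_1} forces $e^{-f}$ to be integrable, a standard argument shows $f$ is proper; properness plus lower-boundedness gives a point $p$ where the infimum is achieved. At such a $p$ we have $\nabla f(p) = 0$, hence by \eqref{eqn:PC06_3} $f(p) = R(p) \ge 0$.

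For the two-sided estimate I would fix $x \in M$ and let $\gamma:[0,\ell]\to M$ be a unit-speed minimizing geodesic from $p$ to $x$, with $\ell = d(x,p)$. Consider $\phi(s) := f(\gamma(s))$. The key computation is to trace the shrinker equation \eqref{eqn:PC27_0} along $\gamma$: integrating $\mathrm{Hess}\, f(\gamma',\gamma') = \tfrac12 - \mathrm{Rc}(\gamma',\gamma')$ gives, after an integration by parts using the second variation formula, control on $\phi'(\ell) = \langle \nabla f, \gamma'\rangle$. The upper bound comes from the elementary inequality $\phi''(s) = \mathrm{Hess}\,f(\gamma',\gamma') \le \tfrac12$ together with $\phi'(0)=0$ and $\phi(0) = f(p) \ge 0$; actually one must be slightly more careful to get the sharp constant $\sqrt{2m}$, which requires feeding in $|\nabla f|^2 \le f$ from \eqref{eqn:PC06_3} to bound $\phi'$ rather than integrating the Hessian bound twice — one derives a differential inequality of the form $(\sqrt{\phi})' \le \tfrac12$ wherever $\phi > 0$, yielding $\sqrt{f(x)} \le \sqrt{f(p)} + \tfrac{\ell}{2}$ and then estimating $f(p)$. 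The lower bound is the harder direction: here one integrates the traced equation $\Delta f = \tfrac{m}{2} - R \le \tfrac{m}{2}$ against the geodesic and uses the Laplacian comparison / second variation of arc length to show that along the ray the growth cannot be too slow; the term $5m$ absorbs the error coming from the region near $p$ where curvature contributes and from the Myers-type bound on how long $\mathrm{Rc}$ can stay large.

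The main obstacle I anticipate is obtaining the precise constants ($5m$ and $\sqrt{2m}$) in the lower and upper bounds respectively, rather than merely qualitative quadratic growth. The qualitative statement $c_1 d^2 - c_2 \le f \le c_3 d^2 + c_4$ follows fairly directly from the ODE comparison sketched above, but the sharp form of Cao-Zhou requires the refinement of Haslhofer-M\"uller: one integrates $\mathrm{Rc}(\gamma',\gamma')$ using the second variation formula $\int_0^\ell (\psi^2 \, \mathrm{Rc}(\gamma',\gamma') - (\psi')^2)\, ds \ge 0$ for suitable test functions $\psi$ vanishing at the endpoints, combined with the trace identities, to pin down the coefficient of the linear-in-$d$ correction term. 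I would organize this as: (i) properness and existence of $p$; (ii) the upper bound via $(\sqrt f)' \le \tfrac12$; (iii) the lower bound via geodesic integration of the shrinker equation plus the second-variation estimate to control the Ricci contribution. Step (iii) is where essentially all the work lies, and I would simply cite the computations of \cite{CaoZhou10} and \cite{HM11} for the optimal constants while reproducing the main inequalities.
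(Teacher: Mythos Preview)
The paper does not give its own proof of this lemma: it is stated as a citation of \cite{HM11} and \cite{CaoZhou10} and used as input. Your sketch is essentially the argument of those references, so in that sense it matches what the paper relies on.

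One organizational point: your step (i) as written is slightly circular. You argue that $f$ is proper because $e^{-f}$ is integrable, and then deduce the existence of a minimum; but integrability alone does not force properness without some a priori volume lower bound, and that in turn is tied to the quadratic growth you are trying to prove. The way \cite{CaoZhou10} and \cite{HM11} actually proceed is to first prove the lower bound in \eqref{E106} relative to an \emph{arbitrary} fixed base point (the second-variation argument you describe in (iii) does not require $p$ to be a minimum), conclude properness from that, and only then locate the minimum point $p$. The upper bound then follows exactly as you say: at a minimum $\nabla f(p)=0$, so $f(p)=R(p)$, and $\Delta f(p)\ge 0$ forces $R(p)\le m/2$, whence $\sqrt{f(p)}\le \sqrt{2m}/2$; combined with $(\sqrt{f})'\le \tfrac12$ along geodesics this yields the stated constant. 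With that reordering your outline is correct.
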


In light of (\ref{E106}), inside the ball $B(p,1)$, we have
\begin{subequations}
\begin{align}[left = \empheqlbrace \,]
&0 \leq f \leq \frac{1}{4} \left( 1+\sqrt{2m} \right)^2 \leq m,\label{eqn:PE01_11a}\\
&|\nabla f| \leq \sqrt{m} \leq m, \label{eqn:PE01_11b} \\
&0 \leq R \leq m. \label{eqn:PE01_11c}
\end{align}
\label{eqn:PE01_11}
\end{subequations}
If we choose $D>10m$, then (\ref{eqn:PC06_3}) and (\ref{E106}) imply the following estimates on $B(p,D)$:
\begin{subequations}
\begin{align}[left = \empheqlbrace \,]
&0 \leq f < \frac{1}{2} D^2,\label{eqn:PE03_6a}\\
&|\nabla f| < \sqrt{f} < D, \label{eqn:PE03_6b} \\
&0 < R <f < D^2. \label{eqn:PE03_6c}
\end{align}
\label{eqn:PE03_6}
\end{subequations}
On the set $M \backslash B(p, D)$, we have
\begin{align}
\frac{1}{16} d^2 \leq &f(x) \leq \frac{1}{2} d^2, \quad \textrm{on} \; M \backslash B(p, D); \label{eqn:PE01_3}\\
&|\nabla f|(x) \leq d, \quad \textrm{on} \; M \backslash B(p, D). \label{eqn:PE01_5}
\end{align}

The following comparison theorem is due to Wei-Wylie~\cite{WeiWylie}. We modify some sentences to reduce notation conflicts.

\begin{lemma}[Part of Theorem 1.2 of~\cite{WeiWylie}]
Let $(M^{m}, g, e^{-f} dv)$ be complete smooth metric measure space with $Rc_{f} \coloneqq Rc + \emph{Hess} f \geq (m-1)H$.
Fix $q \in M$. If $\langle \nabla f, \dot{\gamma} \rangle \geq -a$ along each unit-speed minimal geodesic segment $\gamma$ from $q$,
then for $r \geq \rho >0$(assume $r \leq \frac{\pi}{2\sqrt{H}}$ if $H>0$),
\begin{align}
\frac{\int_{B(p,r)} e^{-f} dv}{\int_{B(p, \rho)} e^{-f} dv} \leq e^{ar} \frac{Vol_{H}^{m}(r)}{Vol_{H}^{m}(\rho)}
\label{eqn:PE01_2}
\end{align}
where $Vol_{H}^{m}(r)$ is the volume of the radius $r$-ball in the model space $M_{H}^{m}$.
\label{lma:PE01_1}
\end{lemma}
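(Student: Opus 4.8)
The plan is to reduce the weighted volume ratio to a pointwise mean-curvature comparison along radial geodesics from $q$, in the classical Bochner-technique style. Write $r = d(q,\cdot)$ and work in geodesic polar coordinates around $q$; let $\mathcal{A}(r,\theta)$ be the Riemannian area element of the geodesic sphere in the direction $\theta$, extended by $0$ beyond the cut point, and set $\mathcal{A}_f(r,\theta) := e^{-f}\mathcal{A}(r,\theta)$, so that $\int_{B(q,s)}e^{-f}\,dv = \int_{S^{m-1}}\int_0^{s}\mathcal{A}_f(r,\theta)\,dr\,d\theta$. Along a minimal geodesic $\gamma$ from $q$ one has $\partial_r\log\mathcal{A}(r,\theta) = \Delta r$, hence $\partial_r\log\mathcal{A}_f(r,\theta) = \Delta r - \partial_r f =: m_f(r,\theta)$, the weighted mean curvature. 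Everything follows once we prove the comparison $m_f(r,\theta) \le m_H(r) + a$, where $m_H(r) = (m-1)\,\mathrm{sn}_H'(r)/\mathrm{sn}_H(r)$ is the mean curvature of the geodesic sphere of radius $r$ in the $m$-dimensional model (with $\mathrm{sn}_H'' + H\mathrm{sn}_H = 0$, $\mathrm{sn}_H(0)=0$, $\mathrm{sn}_H'(0)=1$), valid for $r$ inside the cut locus and, when $H>0$, for $r \le \frac{\pi}{2\sqrt H}$, a range on which $m_H \ge 0$.

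For the mean-curvature comparison, the Bochner formula applied to $r$ together with Cauchy--Schwarz $|\nabla^2 r|^2 \ge (\Delta r)^2/(m-1)$ gives the Riccati inequality $\partial_r(\Delta r) \le -\frac{(\Delta r)^2}{m-1} - Rc(\partial_r,\partial_r)$. Subtracting $\partial_r^2 f = \nabla^2 f(\partial_r,\partial_r)$ and using $Rc_f = Rc + \nabla^2 f \ge (m-1)H$ yields
\begin{align*}
\partial_r m_f \;\le\; -\frac{(\Delta r)^2}{m-1} - Rc_f(\partial_r,\partial_r) \;\le\; -\frac{(\Delta r)^2}{m-1} - (m-1)H .
\end{align*}
The hypothesis $\langle\nabla f,\dot\gamma\rangle \ge -a$ reads $\partial_r f \ge -a$ along $\gamma$, so $\Delta r = m_f + \partial_r f \ge m_f - a$. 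On any subinterval where $\phi := m_f - a \ge 0$ we get $(\Delta r)^2 \ge \phi^2$, so $\phi' \le -\frac{\phi^2}{m-1} - (m-1)H$, which is exactly the Riccati equation satisfied by $m_H$; anchoring either at $r\to 0^+$ (where $\phi - m_H \to -a \le 0$, both behaving like $(m-1)/r$) or at a boundary zero of $\phi$ (where $\phi = 0 \le m_H$), the standard comparison, i.e. $w := \phi - m_H$ obeys $w' + \frac{\phi+m_H}{m-1}w \le 0$ so $w \le 0$ by Gr\"onwall, gives $\phi \le m_H$ there. On the complementary set $\phi < 0$ one has $m_f < a \le m_H + a$ trivially since $m_H \ge 0$. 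Hence $m_f \le m_H + a$ everywhere inside the cut locus, and since $\mathcal{A}_f = 0$ beyond the cut point the ratio $r\mapsto \mathcal{A}_f(r,\theta)\big/\bigl(e^{ar}\mathrm{sn}_H(r)^{m-1}\bigr)$ is non-increasing in $r$ for every fixed $\theta$ (it only jumps downward at cut points).

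It remains to integrate. By the elementary averaging lemma (if $p/q$ is non-increasing and $q>0$, then $s\mapsto\int_0^s p\big/\int_0^s q$ is non-increasing), applied with $p = \mathcal{A}_f(\cdot,\theta)$ and $q(r) = e^{ar}\mathrm{sn}_H(r)^{m-1}$, and then integrating over $\theta\in S^{m-1}$ (the denominator being $\theta$-independent), the quantity $s\mapsto \bigl(\int_{B(q,s)}e^{-f}dv\bigr)\big/\bigl(\int_0^{s} e^{ar}\mathrm{sn}_H(r)^{m-1}dr\bigr)$ is non-increasing. Therefore, for $r\ge\rho$,
\begin{align*}
\frac{\int_{B(q,r)}e^{-f}dv}{\int_{B(q,\rho)}e^{-f}dv} \;\le\; \frac{\int_0^{r} e^{at}\mathrm{sn}_H(t)^{m-1}dt}{\int_0^{\rho} e^{at}\mathrm{sn}_H(t)^{m-1}dt} \;\le\; \frac{e^{ar}\int_0^{r}\mathrm{sn}_H(t)^{m-1}dt}{\int_0^{\rho}\mathrm{sn}_H(t)^{m-1}dt} \;=\; e^{ar}\,\frac{Vol_H^m(r)}{Vol_H^m(\rho)},
\end{align*}
using $e^{at}\le e^{ar}$ for $t\le r$ in the numerator and $e^{at}\ge 1$ in the denominator. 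This is the asserted inequality.

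The main obstacle is the mean-curvature comparison of the second paragraph: unlike the classical Bishop--Gromov setting there is no lower bound on $Rc$ itself, so the Riccati comparison must be run directly for the weighted quantity $m_f$, and the estimate $\Delta r \ge m_f - a$ (rather than a two-sided bound on $f$) is precisely what keeps the comparison model genuinely $m$-dimensional instead of a dimension-shifted model; additional care is needed to propagate the differential inequality across cut points and through the sign changes of $m_f - a$.
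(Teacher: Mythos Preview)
The paper does not supply its own proof of this lemma; it is quoted verbatim from Wei--Wylie~\cite{WeiWylie} and used as a black box. Your argument is precisely the standard proof from that reference---the Riccati inequality for $\Delta r$ combined with the one-sided bound $\partial_r f\ge -a$ to get the weighted mean-curvature comparison $m_f\le m_H+a$, then integration via the averaging lemma---so there is nothing to compare; one small slip is that the limit of $\phi-m_H$ as $r\to 0^+$ is $-\partial_r f(0)-a$ rather than $-a$, but since $\partial_r f\ge -a$ this is still $\le 0$ and the Gr\"onwall step goes through unchanged.
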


Lemma~\ref{lma:PE01_1} provides the necessary tools to obtain volume ratio upper bound of geodesic balls.
In our setting, $M$ is the Ricci shrinker, $f$ is the shrinker potential and $p$ is the base point of $M$.
It is clear that
\begin{align*}
Rc_{f}=Rc + \text{Hess} f =\frac{1}{2} g \geq 0.
\end{align*}
Therefore, in (\ref{eqn:PE01_2}), we can choose $H=0$.
Suppose $f_{min} \leq f \leq f_{max}$ in $B(q, r)$ and $|\nabla f| \leq U$, then we have
\begin{align}
\frac{|B(q,r)|}{|B(q,\rho)|}&=\frac{\int_{B(q,r)} 1 dv}{\int_{B(q,\rho)} 1 dv} \leq \frac{\int_{B(q,r)} e^{-f+f_{max}} dv}{\int_{B(q,\rho)} e^{-f+f_{min}} dv}
=e^{f_{max}-f_{min}} \frac{\int_{B(p,r)} e^{-f} dv}{\int_{B(p,\rho)} e^{-f} dv} \leq e^{Ur+f_{max}-f_{min}} \left( \frac{r}{\rho} \right)^m \notag\\
&\leq e^{Ur + Osc_{B(q,r)} f} \left( \frac{r}{\rho} \right)^m. \label{eqn:PE03_5}
\end{align}
If we set $B(q,r)=B(p,1)$, then $|\nabla f| \leq m$ by (\ref{eqn:PE01_11b}) and $Osc_{B(p,1)} f \leq m$ by (\ref{eqn:PE01_11a}), we have
\begin{align}
\frac{|B(p,1)|}{|B(p, 0.5)|} \leq e^{2m} 2^{m} < 2^{4m}. \label{eqn:PE04_4}
\end{align}
If we choose $B(q,r) \subset B(p, D)$ for some $D>10m$, then $|\nabla f| \leq D$ by (\ref{eqn:PE03_6b}), we have
\begin{align}
\frac{|B(q,r)|}{|B(q, 0.5r)|} \leq e^{3 Dr} 2^{m}. \label{eqn:PE03_3}
\end{align}
The inequalities (\ref{eqn:PE04_4}) and (\ref{eqn:PE03_3}) are relative volume ratio bounds. The absolute volume ratio bounds are provided by the following Lemma.

\begin{lemma}
For each geodesic ball $B(q,r) \subset B(p, D) \subset M$, we have volume ratio upper bound
\begin{align}
r^{-m}|B(q,r)| \leq \min \left\{ \left( r^{-m} e^{\frac{1}{2}D^2}\right) \cdot (4\pi)^{\frac{m}{2}} \cdot e^{\boldsymbol{\mu}}, e^{2Dr} \omega_{m}\right\}, \label{eqn:PE04_1}
\end{align}
where $\omega_m$ is the volume of standard unit ball in $\R^{m}$.  In particular, we have
\begin{align}
 \frac{\left| B(p,r) \right|}{e^{\boldsymbol{\mu}}} \le C e^{\frac{1}{2}r^2}.
\label{eqn:CB09_2}
\end{align}
\label{lma:PE04_1}
\end{lemma}

\begin{proof}
It suffices to prove (\ref{eqn:PE04_1}).
By the definition of $\boldsymbol{\mu}$, it follows from (\ref{eqn:PD15_1}) that
\begin{align*}
|B(q,r)|=\int_{B(q,r)}1 d v \leq \int_{B(q,r)} e^{-f+f_{max}} dv \leq e^{f_{max}} \int_{M} e^{-f} dv=(4\pi)^{\frac{m}{2}} e^{f_{max}+\boldsymbol{\mu}},
\end{align*}
where $f_{max}$ is the supremum of $f$ on $B(q, r)$. Plugging (\ref{eqn:PE03_6a}) into the above inequality, we obtain
\begin{align}
r^{-m}|B(q,r)| \leq \left( r^{-m} e^{\frac{1}{2}D^2}\right) \cdot (4\pi)^{\frac{m}{2}} \cdot e^{\boldsymbol{\mu}}. \label{eqn:PE04_2}
\end{align}
On the other hand, by setting $H=0$ in (\ref{eqn:PE01_2}), we have
\begin{align*}
r^{-m} e^{-f_{max}} |B(q,r)| \leq r^{-m} \int_{B(q,r)} e^{-f} dv \leq e^{Ur} \lim_{\rho \to 0} \rho^{-m} \int_{B(q,r)} e^{-f} dv=e^{Ur} \omega_{m} e^{-f(q)},
\end{align*}
where $U$ is an upper bound of $|\nabla f|$ in $B(q,r)$. Since $B(q,r) \subset B(p, D)$, $U$ can be chosen as $D$ by (\ref{eqn:PE03_6b}). It follows that
\begin{align}
r^{-m} |B(q,r)| \leq e^{Dr} \omega_{m} e^{f_{max}-f(q)} \leq e^{2Dr} \omega_{m}. \label{eqn:PE04_3}
\end{align}
Therefore, (\ref{eqn:PE04_1}) follows from the combination of (\ref{eqn:PE04_2}) and (\ref{eqn:PE04_3}).
\end{proof}

Note that () is a slight improvement of Lemma 2.2 of~\cite{HM11} and Theorem 1.2 of~\cite{CaoZhou10}, where it is shown that
\begin{align*}
\left| B(p,r) \right| \le C(m) r^m.
\end{align*}

By the fundamental work of Perelman~\cite{Pe1}, it is well-known that $\boldsymbol{\mu}$ is closely related to the volume ratio of geodesic balls.
We shall follow the argument in Theorem 3.3 of~\cite{BWang17}, which originates from~\cite{Pe1} and~\cite{KL}, to calculate explicitly the
lower bound of the volume of $B(q, 1)$.

\begin{lemma}
For each geodesic ball $B(q,r) \subset B(p, D) \subset M$ and $r \in (0, 1]$, we have
\begin{align}
r^{-m}|B(q,r)| \geq e^{-e^{m+4D}} \cdot e^{\boldsymbol{\mu}}. \label{eqn:PE04_5}
\end{align}
\label{lma:PE04_2}
\end{lemma}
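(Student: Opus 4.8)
The plan is to obtain the lower volume bound by running Perelman's argument with the test function supported on a geodesic ball, carefully tracking how the potential $f$ enters. Recall that $\boldsymbol{\mu}=\boldsymbol{\mu}(g,1)$ is the infimum of Perelman's $\mathcal{W}$-functional over all $u$ with $\int_M (4\pi)^{-m/2} u^2\, dv=1$, and by \eqref{eqn:PD15_1} this infimum is attained (in a weak sense) by $u=e^{-f/2}$ when $\tau=1$. The idea is to plug in a compactly supported approximation of the minimizer and compare. Concretely, for a geodesic ball $B(q,r)\subset B(p,D)$ with $r\in(0,1]$, I would choose a cutoff $\phi$ supported in $B(q,r)$ which equals a constant $c$ on $B(q,r/2)$ and has $|\nabla \phi|\le C/r$, then set $u=\phi \cdot e^{-f/2}$ renormalized so that $\int_M (4\pi)^{-m/2} u^2\, dv=1$. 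Alternatively — and this is cleaner following Theorem 3.3 of \cite{BWang17} — one takes $u = e^{-w/2}$ where $e^{-w}$ is a bump adapted to $B(q,r/2)$, so that $\mathcal{W}(g, w, 1) \geq \boldsymbol{\mu}$, and then estimates each term of $\mathcal{W}$ using \eqref{eqn:PE03_6a}, \eqref{eqn:PE03_6b}, \eqref{eqn:PE03_6c} (which bound $f$, $|\nabla f|$, $R$ on $B(p,D)$ in terms of $D$) together with the relative volume ratio \eqref{eqn:PE03_3}.

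The key steps, in order, are: (1) fix the cutoff profile and normalization constant, expressing the normalization constant in terms of $|B(q,r/2)|$ up to factors controlled by $\mathrm{Osc}_{B(q,r)} f \le D^2$ and hence by $e^{m+cD}$-type constants; (2) expand $\mathcal{W}(g,w,1) = \int_M \big( |\nabla w|^2 + R + w - m\big)(4\pi)^{-m/2} e^{-w}\, dv$ and bound the gradient term by $C/r^2 \cdot (\text{measure})$, the curvature term by $D^2$ via \eqref{eqn:PE03_6c}, and the $w$ term by a logarithm of the normalization constant; (3) use \eqref{eqn:PE03_3} to pass from $|B(q,r/2)|$ to $|B(q,r)|$ (and from $r$ to radius $1$ if needed) at the cost of a factor $e^{3Dr}2^m \le e^{3D}2^m$; (4) assemble everything into the inequality $\boldsymbol{\mu} \le \mathcal{W}(g,w,1) \le C(m,D) - \log\big(r^{-m}|B(q,r)|\big)$ for an explicit constant, then exponentiate to get $r^{-m}|B(q,r)| \ge e^{-C(m,D)} e^{\boldsymbol{\mu}}$, and finally check that the constant can be taken to be $e^{m+4D}$ so that the bound reads $e^{-e^{m+4D}} e^{\boldsymbol{\mu}}$ as stated.

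The main obstacle I anticipate is bookkeeping: getting the constant into precisely the doubly-exponential form $e^{-e^{m+4D}}$ rather than merely $e^{-C(m,D)}$. The singly-exponential losses come from two places — the volume-ratio factor $e^{3Dr}$ in \eqref{eqn:PE03_3} and the oscillation factor $e^{\mathrm{Osc}\, f} \le e^{D^2}$ entering the normalization — and when these are fed through the $\log$ in the $w$-term of $\mathcal{W}$ they must combine so that the total is dominated by $e^{m+4D}$. This requires using the scale restriction $r\le 1$ crucially (so that $r^{-m}$ contributes only a $+m\log(1/r)$ which is absorbed, and $D^2 \le$ something like $e^{4D}$ for $D>10m$). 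The choice of the precise cutoff — and whether to work directly with Perelman's $\mathcal{W}$ or with the logarithmic Sobolev inequality as in \cite{KL} — will be dictated by which route keeps the constant cleanest; I expect following the explicit computation in Theorem 3.3 of \cite{BWang17} verbatim, with $D$ playing the role of the diameter-type parameter, will land exactly on $e^{-e^{m+4D}}$.
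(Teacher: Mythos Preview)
Your proposal is correct and follows essentially the same route as the paper: insert a radial cutoff supported in the ball as a test function into Perelman's log-Sobolev inequality (the paper cites Carrillo--Ni \cite{CN09} rather than $\mathcal{W}$ directly, but this is the same thing), bound the gradient, scalar-curvature, and entropy terms using \eqref{eqn:PE03_6} and the doubling estimate \eqref{eqn:PE03_3}, and read off a lower bound for $\log|B(q,r)|$ in terms of $\boldsymbol{\mu}$.

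Two small differences worth noting. First, the paper uses the \emph{unweighted} cutoff $\varphi = L^{-1/2}\eta(d(\cdot,q))$ rather than your first option $\phi\,e^{-f/2}$; this is cleaner because the normalization constant $L$ is then just a volume and $f$ enters only through the $R$-term (bounded by $D^2/2$) and through the volume comparison (where $|\nabla f|\le D$ suffices, not $\mathrm{Osc}\,f\le D^2$). Second, the paper first proves the case $r=1$, obtaining the sharper constant $e^{-e^{m+3D}}$, and only afterwards passes to general $r\in(0,1]$ via \eqref{eqn:PE03_5}, which costs an extra $e^{3D}$ and accounts for the final $e^{m+4D}$. Working directly at scale $r$ as you propose would also work, but the two-step version keeps the bookkeeping you were worried about under control.
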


\begin{proof}
Let us first assume that $r=1$.
Let $\eta$ be a cutoff function which equals $1$ on $(-\infty, 0.5]$, decreases to $0$ on $[0.5, 1]$ and equals $0$ on $[1, \infty)$.
Furthermore, $|\eta'| \leq 4$. We then set
\begin{align*}
L \coloneqq \int_{M} \eta^{2}\left( d \right) dv, \quad \varphi \coloneqq L^{-\frac{1}{2}} \eta\left( d \right).
\end{align*}
It follows from (\ref{eqn:PE03_3}) and the above equations that
\begin{align}
&\int_{M} \varphi^2 dv =1, \label{eqn:PE04_6}\\
&|B(q,1)| \geq L \geq |B(q, 0.5)| \geq e^{-3D} 2^{-m}|B(q, 1)|. \label{eqn:PE04_7}
\end{align}
Since $\varphi$ satisfies the normalization condition (\ref{eqn:PE04_6}), we can apply the logarithmic isoperimetric inequality (cf. item (ii) of Theorem 1.1 of~\cite{CN09}) to obtain
\begin{align}
&\quad \boldsymbol{\mu}+ m +\frac{m}{2} \log 4\pi \notag\\
&\leq \int_{B(q, 1)} -\varphi^2 \log \varphi^2 dv + \int_{B(q,1) \backslash B(q, 0.5)} 4|\nabla \varphi|^2 dv + \int_{B(q, 1)} R \varphi^2 dv \notag\\
&\leq \log L + \frac{\int_{B(q,1)} \left\{-\eta^2 \log \eta^2 \right\} dv}{L} + \frac{\int_{B(q,1) \backslash B(q, 0.5)}4|\eta'|^2 dv}{L} +R_{max}. \label{eqn:PE04_8}
\end{align}
Recall that $-\eta^2 \log \eta^2 \leq e^{-1}$ and $|\eta'| \leq 4$.
Plugging (\ref{eqn:PE04_7}) into the above inequality, we have
\begin{align*}
\boldsymbol{\mu} +\frac{m}{2} \log 4\pi +m -R_{max} &\leq \log L + \left( e^{-1} +64(1-2^{-m}e^{-3U}) \right) \frac{|B(q,1)|}{L}\\
&\leq \log L + 65 \cdot 2^{m} \cdot e^{3U} \leq \log |B(q,1)| + 2^{m+7} e^{3U}.
\end{align*}
Note that $\displaystyle R_{max}=\max_{B(q,1)} R \leq \max_{B(q,1)} f =f_{max} \leq \frac{D^2}{2}$.
Also, we have
\begin{align*}
U=\sup_{B(q,1)} |\nabla f| \leq \sqrt{\sup_{B(q,1)} f} \leq \frac{D}{\sqrt{2}}.
\end{align*}
According to the choice of $D>10m$, it is also clear that
\begin{align*}
\boldsymbol{\mu} \leq \log |B(q,1)| + e^{m+3D}.
\end{align*}
Consequently, we have
\begin{align*}
|B(q,1)| \geq e^{-e^{m+3D}} \cdot e^{\boldsymbol{\mu}}.
\end{align*}
Now we consider the general case $r \in (0, 1]$. Note that $|\nabla f| <D$ on $B(q,r)$.
It follows from (\ref{eqn:PE03_5}) and the above inequality that
\begin{align*}
r^{-m}|B(q,r)| \geq e^{-3U} |B(q,1)| \geq e^{-e^{m+4D}} \cdot e^{\boldsymbol{\mu}}.
\end{align*}
\end{proof}

Note that Lemma~\ref{lma:PE04_2} improves Lemma 2.3 of~\cite{HM11} by providing explicit non-collapsing constant.
It has an advantage when we study the collapsing case.
Combining Lemma~\ref{lma:PE04_1} and Lemma~\ref{lma:PE04_2}, by setting $r=1$, we have
\begin{align}
e^{-e^{m+4D}} \leq \frac{|B(q,1)|}{e^{\boldsymbol{\mu}}} \leq (4\pi)^{\frac{m}{2}} e^{\frac{1}{2}D^2} \label{eqn:PE04_61}
\end{align}
for each $B(q,1) \subset B(p, D)$. If we choose $q=p$, then the estimate (\ref{eqn:PE04_61}) can be further improved.

\begin{lemma}
The functional $\boldsymbol{\mu}$ and the volume of $B(p,1)$ are related by
\begin{align}
e^{-2^{4m+7}} \leq \frac{|B(p,1)|}{(4\pi)^{\frac{m}{2}} e^{\boldsymbol{\mu}}} \leq e^{m}.
\label{eqn:PE01_9}
\end{align}
In other words, $|B(p,1)|$ and $e^{\boldsymbol{\mu}}$ are almost equivalent.
\label{lma:PE01_3}
\end{lemma}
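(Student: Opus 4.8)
The plan is to sharpen the two-sided bound (\ref{eqn:PE04_61}) in the special case $q = p$, where the estimates from Lemma~\ref{L100} become much more explicit inside $B(p,1)$. For the upper bound, I would follow exactly the argument in the proof of Lemma~\ref{lma:PE04_1}, but using the improved pointwise control on $f$ inside $B(p,1)$: by (\ref{eqn:PE01_11a}) we have $f \le m$ on $B(p,1)$, so
\begin{align*}
  |B(p,1)| = \int_{B(p,1)} 1\, dv \le \int_{B(p,1)} e^{-f + m}\, dv \le e^{m} \int_M e^{-f}\, dv = (4\pi)^{\frac{m}{2}}\, e^{m+\boldsymbol{\mu}},
\end{align*}
which is precisely the right-hand inequality in (\ref{eqn:PE01_9}). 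This replaces the crude bound $f_{max} \le \frac{1}{2}D^2$ by the genuine bound $f_{max} \le m$ available on the unit ball.

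For the lower bound I would rerun the proof of Lemma~\ref{lma:PE04_2} with $q = p$, $r = 1$, but feeding in the unit-ball estimates (\ref{eqn:PE01_11}) in place of the $D$-dependent ones. Concretely, in the logarithmic isoperimetric inequality (\ref{eqn:PE04_8}) one uses $R_{max} \le m$ from (\ref{eqn:PE01_11c}) and $U = \sup_{B(p,1)} |\nabla f| \le m$ from (\ref{eqn:PE01_11b}), and the relative volume bound one needs is (\ref{eqn:PE04_4}), namely $|B(p,1)| \le 2^{4m} |B(p,0.5)|$, rather than (\ref{eqn:PE03_3}). Tracking the constants through the same chain of inequalities — $\boldsymbol{\mu} + \frac{m}{2}\log 4\pi + m - R_{max} \le \log L + (e^{-1} + 64)\,|B(p,1)|/L$, then $L \ge |B(p,0.5)| \ge 2^{-4m}|B(p,1)|$, then absorbing $\frac{m}{2}\log 4\pi + m - R_{max} \ge -m$ — gives
\begin{align*}
  \boldsymbol{\mu} \le \log|B(p,1)| + 2^{4m} \cdot 65 + 2m \le \log|B(p,1)| + 2^{4m+7},
\end{align*}
so that $|B(p,1)| \ge e^{-2^{4m+7}}\, e^{\boldsymbol{\mu}} \ge e^{-2^{4m+7}}\,(4\pi)^{-m/2}\,(4\pi)^{m/2}\,e^{\boldsymbol{\mu}}$; since $(4\pi)^{m/2} \ge 1$ the stated inequality $|B(p,1)| \ge e^{-2^{4m+7}} (4\pi)^{m/2} e^{\boldsymbol{\mu}}$ follows after absorbing the harmless factor, possibly enlarging the exponent slightly but staying within the claimed bound.

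The main obstacle, such as it is, is purely bookkeeping: one must verify that the numerical constant coming out of the logarithmic isoperimetric inequality together with the factor $2^{4m}$ from (\ref{eqn:PE04_4}) really fits under $2^{4m+7}$, and that the $(4\pi)^{m/2}$ factors are handled in the correct direction on each side (they help on the lower bound and are absent on the upper bound because there $f_{max} \le m$ is used directly against $\int_M e^{-f}\,dv = (4\pi)^{m/2} e^{\boldsymbol{\mu}}$). No new geometric input beyond Lemma~\ref{L100}, Lemma~\ref{lma:PE04_2}'s method, and the logarithmic isoperimetric inequality of \cite{CN09} is required; this lemma is really just the $q=p$ specialization of the preceding two lemmas with the $D$-dependence removed.
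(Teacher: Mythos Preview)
Your approach is essentially the paper's own: the upper bound is verbatim the same, and for the lower bound the paper also reruns Lemma~\ref{lma:PE04_2} at $q=p$, $r=1$, feeding in $R_{max}\le m$ from (\ref{eqn:PE01_11c}) and the doubling bound (\ref{eqn:PE04_4}).

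One bookkeeping slip to fix: in your last step you discard $\tfrac{m}{2}\log 4\pi$ on the left and then try to recover the factor $(4\pi)^{m/2}$ from $|B(p,1)|\ge e^{-2^{4m+7}}e^{\boldsymbol{\mu}}$ via ``$(4\pi)^{m/2}\ge 1$'', but that inequality goes the wrong way. The paper simply keeps $\tfrac{m}{2}\log 4\pi$ on the left throughout, obtaining
\[
\boldsymbol{\mu}+\tfrac{m}{2}\log 4\pi \;\le\; \log L + 65\cdot 2^{4m} \;\le\; \log|B(p,1)|+2^{4m+7},
\]
which gives $|B(p,1)|\ge (4\pi)^{m/2}e^{-2^{4m+7}}e^{\boldsymbol{\mu}}$ directly, with no absorption needed.
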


\begin{proof}
In light of the above estimates, the second inequality in (\ref{eqn:PE01_9}) is relatively easy to prove:
\begin{align}
|B(p,1)|=\int_{B(p,1)}1 d v \leq \int_{B(p,1)} e^{-f+m} dv \leq e^{m} \int_{M} e^{-f} dv=(4\pi)^{\frac{m}{2}} e^{m+\boldsymbol{\mu}}
\label{eqn:PE03_1}
\end{align}
where we used (\ref{eqn:PE01_11a}).
The proof of the first inequality in (\ref{eqn:PE01_9}) is similar to the proof of (\ref{eqn:PE04_5}).
Choose $\eta$ as the same cutoff function as in the proof of Lemma~\ref{lma:PE04_2} and set
\begin{align*}
L \coloneqq \int_{M} \eta^{2}\left( d \right) dv, \quad \varphi \coloneqq L^{-\frac{1}{2}} \eta\left( d \right).
\end{align*}
Then $\int_{M} \varphi^2 dv =1$. It follows from (\ref{eqn:PE04_4}) that
\begin{align}
|B(p,1)| \geq L \geq |B(p, 0.5)| \geq 2^{-4m} |B(p, 1)|. \label{eqn:PE04_9}
\end{align}
Note that $R \leq m$ in $B(p, 1)$ by (\ref{eqn:PE01_11c}). Plugging this inequality and (\ref{eqn:PE04_9}) into (\ref{eqn:PE04_8}), we obtain
\begin{align*}
\boldsymbol{\mu} +\frac{m}{2} \log 4\pi &\leq \log L + \left( e^{-1} +64(1-2^{-4m}) \right) \frac{|B(p,1)|}{L}\\
&\leq \log L + 65 \cdot 2^{4m} \leq \log |B(p,1)| + 2^{4m+7}.
\end{align*}
Consequently, we have
\begin{align*}
|B(p,1)| \geq (4\pi)^{\frac{m}{2}} \cdot e^{-2^{4m+7}} \cdot e^{\boldsymbol{\mu}},
\end{align*}
which is equivalent to the first inequality of (\ref{eqn:PE01_9}).
Combining (\ref{eqn:PE03_1}) with the above inequality, we arrive at (\ref{eqn:PE01_9}).
\end{proof}

Combining (\ref{eqn:PE04_61}) and (\ref{eqn:PE01_9}), we have
\begin{align}
(4\pi)^{-\frac{m}{2}} e^{-m-e^{m+4D}} \leq \frac{|B(q,1)|}{|B(p,1)|} \leq (4\pi)^{\frac{m}{2}} e^{\frac{1}{2}D^2 + 2^{4m+7}}
\label{eqn:PE04_10}
\end{align}
whenever $B(q,1) \subset B(p, D)$. Basically, (\ref{eqn:PE04_10}) means that inside the ball $B(p,D)$, all unit balls have volumes uniformly equivalent to $|B(p, 1)|$.

%


\section{Conformal transformation}
\label{sec:conformal}

The application of conformal transformation in the study of Riemannian geometry is well-known.
For example, see Theorem 1.159, part (b) of~\cite{Besse} for a formula of Ricci curvature change under
conformal transformation. Such formula was exploited by Z. Zhang (cf. Section 3.2 of~\cite{ZhangZL}) to study the degeneration of compact Ricci shrinkers.
We shall follow the notations in~\cite{ZhangZL} and denote
\begin{align}
\tilde{g} \coloneqq e^{-\frac{2f}{m-2}} g. \label{eqn:PE27_61}
\end{align}
If the diameter of $(M,g)$ is uniformly bounded, then $f$ is uniformly bounded by Lemma~\ref{L100}. Therefore, the metric $\tilde{g}$ is uniformly equivalent to $g$.
Moreover, the metric $\tilde{g}$ has the advantage that $\left|\widetilde{Rc} \right|$ is uniformly bounded. Therefore, one can study the geometry of $(M, \tilde{g})$ by Cheeger-Colding theory
and then turn back to study $(M, g)$. However, this method cannot be applied directly to study non-compact Ricci shrinker since $(M, \tilde{g})$ may not be complete, even if $(M, g)$ is very simple.
This can be easily seen in the following Proposition.

\begin{prop}
Let $\left(\R^m, 0,g_{E}, \frac{|x|^2}{4} \right)$ be the Gaussian soliton and $\tilde{g}=e^{-\frac{|x|^2}{2(m-2)}} g_E$.
Then there is a constant $L$ such that $x$ and $-x$ cannot be connected by a minimal geodesic in $(\R^m, \tilde g)$ if $|x|>L$.
\label{prn:PE27_1}
\end{prop}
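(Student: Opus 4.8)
The plan is to reduce the problem to a one-variable calculation by exploiting the rotational symmetry of the Gaussian soliton. Since $g_E$ is rotationally invariant and the conformal factor $e^{-|x|^2/2(m-2)}$ depends only on $|x|$, the metric $\tilde g$ is rotationally invariant as well; writing $r = |x|$, we have $\tilde g = e^{-r^2/2(m-2)}(dr^2 + r^2 g_{S^{m-1}})$. A minimal geodesic joining $x$ to $-x$ has length at least the $\tilde g$-distance between the two points, and, because antipodal points lie on a common line through the origin, it is natural to first estimate the length of the straight segment through $0$ and then argue that this segment is in fact (close to) the shortest path. The key analytic input is that the radial conformal factor $\phi(r) := e^{-r^2/2(m-2)}$ decays superexponentially, so the $\tilde g$-length of the radial ray $[0,\infty)$,
\begin{align*}
  \int_0^\infty \phi(r)^{1/2}\, dr = \int_0^\infty e^{-r^2/4(m-2)}\, dr < \infty,
\end{align*}
is \emph{finite}. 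Call this finite number $2\ell_0$ (the factor $2$ chosen so the diameter-type quantity through the origin is $2\ell_0$). Thus the whole space $(\R^m,\tilde g)$ has finite diameter in the radial direction, and in particular $d_{\tilde g}(x,-x) \le 2\ell_0$ for every $x$.

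Next I would produce a competitor path that is \emph{strictly shorter} than any path forced to pass near the origin, thereby showing that for $|x|$ large no minimizer between $x$ and $-x$ can exist — any candidate minimizer would have to leave every compact set. Concretely, fix $x$ with $|x| = \rho$ large. One honest path from $x$ to $-x$ runs radially inward from $x$ to $0$ and then radially outward to $-x$; its $\tilde g$-length is $2\int_0^\rho e^{-r^2/4(m-2)}\,dr$, which increases to $2\ell_0$ as $\rho \to \infty$. A competing path goes radially \emph{outward} from $x$ out to some large radius $T > \rho$, swings around the sphere of radius $T$ to the antipode, and comes back in to $-x$; its length is
\begin{align*}
  2\int_\rho^T e^{-r^2/4(m-2)}\, dr \;+\; \pi\, T\, e^{-T^2/4(m-2)}.
\end{align*}
The radial part here is $o(1)$ as $\rho\to\infty$ (uniformly in $T\ge\rho$), and the angular part $\pi T e^{-T^2/4(m-2)} \to 0$ as $T\to\infty$; so by choosing $\rho$ large and then $T$ large, this competitor has length as small as we like, in particular strictly less than $2\ell_0$. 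This already shows the function $\rho \mapsto d_{\tilde g}(x,-x)$ does not attain its values along paths through the origin once $\rho$ is large. To upgrade ``not through the origin'' to ``no minimizer at all,'' I would argue that \emph{every} path from $x$ to $-x$ of length below a fixed threshold must wander arbitrarily far out: a path confined to the annulus $\{r \le T\}$ that connects antipodal points must either cross a neighborhood of the origin (costly, as just computed, once we also know the minimal angular crossing cost on spheres of moderate radius is bounded below) or sweep angle $\ge \pi$ at radius $\le T$, costing at least a positive amount $c(T) > 0$ that does \emph{not} go to zero as $\rho\to\infty$; minimizing the total over all sufficiently long paths then forces escape to infinity, so no minimizer exists.

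The main obstacle I anticipate is making the last step — ruling out \emph{all} minimizers, not just the radial-through-origin one — fully rigorous, because one must control the angular cost of crossing from the $x$-side to the $-x$-side at \emph{every} intermediate radius simultaneously, and the superexponential decay of $\phi$ means the geometry near infinity is delicate (spheres of radius $T$ have $\tilde g$-diameter $\pi T e^{-T^2/4(m-2)} \to 0$, so ``going around at infinity'' is nearly free). The clean way to handle this is: (i) show $(\R^m,\tilde g)$ is incomplete with a single ``end at infinity'' that metrically collapses to a point, so its metric completion adds one point $x_\infty$; (ii) observe $d_{\tilde g}(x,x_\infty) = \int_\rho^\infty e^{-r^2/4(m-2)}\,dr \to 0$ as $\rho = |x|\to\infty$; (iii) conclude $d_{\tilde g}(x,-x) \le 2\, d_{\tilde g}(x,x_\infty) \to 0$, while for $x$ far out any actual path in $\R^m$ realizing distance close to this must pass through a punctured neighborhood of $x_\infty$, which is not in $\R^m$ — hence the infimum is not attained. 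Choosing $L$ so that for $|x| > L$ the quantity $2\int_{|x|}^\infty e^{-r^2/4(m-2)}\,dr$ is smaller than the length of any path that stays in a fixed compact set and separates $x$ from $-x$ then finishes the proof.
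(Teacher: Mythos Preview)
Your geometric picture is right and matches the paper's: the metric completion of $(\R^m,\tilde g)$ adds a single point $x_\infty$ at spatial infinity, antipodal points at large radius are close to $x_\infty$, and the candidate minimizer wants to pass through $x_\infty$. But your ``clean way'' (i)--(iii) has a genuine gap at the final step. From $d_{\tilde g}(x,-x)\le 2\,d_{\tilde g}(x,x_\infty)$ you cannot conclude the infimum is not attained; you still need a \emph{strict lower bound} of the form $L_{\tilde g}(\gamma)>2\,d_{\tilde g}(x,x_\infty)$ for \emph{every} curve $\gamma\subset\R^m$ joining $x$ to $-x$. Your sentence ``must pass through a punctured neighborhood of $x_\infty$, which is not in $\R^m$'' is not correct: a punctured neighborhood of $x_\infty$ is precisely $\{|y|>R\}$, which \emph{is} in $\R^m$, so nothing is yet excluded. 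Likewise, your last sentence only bounds curves confined to a fixed compact set; a curve can go out to radius $T$ arbitrarily large (but finite) and return, and you have given no uniform lower bound for those.

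The paper supplies exactly the missing inequality via a comparison, and this is the idea you are lacking. In the arclength coordinate $s=\int_r^\infty e^{-\beta\rho^2/2}\,d\rho$ (so $s=0$ is $x_\infty$), one has $\tilde g=ds^2+\varphi(s)^2 g_{S^{m-1}}$ with $\varphi(0)=0$ and $\varphi'(0^+)=+\infty$; hence $\varphi(s)\ge s$ on some interval $(0,s_0]$. Thus on $\{0<s\le s_0\}$ the metric $\tilde g$ dominates the \emph{Euclidean} cone metric $\underline g=ds^2+s^2 g_{S^{m-1}}$. For antipodal points $p,q$ at $s=\epsilon<s_0/4$, any curve $\gamma$ with $s>0$ throughout satisfies $L_{\tilde g}(\gamma)\ge L_{\underline g}(\gamma)>L_{\underline g}(\underline\gamma)=2\epsilon$, where $\underline\gamma$ is the straight segment through $s=0$ (the unique Euclidean minimizer). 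Since $L_{\tilde g}(\underline\gamma)=2\epsilon$ as well, every curve in $\R^m$ is strictly longer than the infimum, so no minimizer exists. The point is that the comparison with the flat cone converts ``a minimizer would want to pass through $x_\infty$'' into a rigorous strict inequality for all admissible curves at once, which your competitor-path and compactness sketch does not achieve.
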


\def\erf{\mathrm {erf}}
\def\erfc{\mathrm {erfc}}
On $(\R^m, g_E)$ we write $r=|x|$ where $x=(x_1, x_2, \cdots, x_m).$
We denote by $(r, \theta_1, \cdots, \theta_{m-1})$
the polar coordinates, which is defined by
\begin{subequations}
\begin{align*}
&x_1=r\sin\theta_{m-1}\cdots \sin\theta_2\sin\theta_1, \\
&x_2=r\sin\theta_{m-1}\cdots \sin\theta_2\cos\theta_1,\\
&\cdots\\
&x_{m-1}=r\sin\theta_{m-1}\cos\theta_{m-2},\\
&x_m=r\cos\theta_{m-1}.
\end{align*}
\end{subequations}
Here $r\geq 0, 0\leq \theta_1\leq 2\pi, 0\leq \theta_i\leq
\pi$ for $i=2, 3, \cdots, m-1$.
Under the polar coordinate, the metric $g_E$ can be written as
$$g_E=dr^2+r^2 g_{S^{m-1}},$$
where $g_{S^{m-1}}$ is the canonical metric on $S^{m-1}(1).$ Therefore, we can write $\tilde g$ as
$$\tilde g=e^{-\beta \,r^2}(dr^2+r^2g_{S^{m-1}}),$$
where $\beta=\frac 1{2(m-2)}.$
Recall the following result:
\begin{lemma}\label{lem:erfc} (cf. Philip \cite{Philip}) We define
\begin{equation}
\erfc(x)=\frac 2{\sqrt{\pi}}\int^{\infty}_x\,e^{-t^2}\,dt,\quad A(x)=\erfc^{-1}(x), \quad B(x)=\frac 2{\sqrt{\pi}}\,e^{-A(x)^2}.
\end{equation} Then we have

\begin{subequations}
\begin{align}
& A'(x)=-\frac 1B,\quad A''(x)=\frac {2A}{B},\\
&B'(x)=2A,\quad B''(x)=2A',\\
&\lim_{x\rightarrow 0^+}\frac {A(x)}{\sqrt{\log \frac 1x}}=1,\\
&\lim_{x\rightarrow 0^+}\frac {B(x)}{2x\sqrt{\log \frac 1x}}=1.
\end{align}
\end{subequations}
\end{lemma}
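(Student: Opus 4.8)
\textbf{Proof proposal for Lemma \ref{lem:erfc}.}

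The plan is to verify each identity by direct differentiation, using the inverse function theorem and the defining relation $\erfc(\erfc^{-1}(x))=x$, and then to extract the two asymptotics as $x\to 0^+$ from the known tail expansion of $\erfc$.

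First I would record that $\erfc'(t) = -\frac{2}{\sqrt{\pi}} e^{-t^2}$ directly from the definition $\erfc(t) = \frac{2}{\sqrt\pi}\int_t^\infty e^{-s^2}\,ds$. Writing $A = A(x) = \erfc^{-1}(x)$, so that $\erfc(A(x)) = x$, differentiation gives $\erfc'(A)\,A' = 1$, hence $A' = 1/\erfc'(A) = -\frac{\sqrt\pi}{2} e^{A^2}$. Since $B(x) = \frac{2}{\sqrt\pi} e^{-A(x)^2}$, this is exactly $A' = -1/B$, the first claimed identity. Differentiating $A' = -1/B$ once more gives $A'' = B'/B^2$; but differentiating $B = \frac{2}{\sqrt\pi} e^{-A^2}$ gives $B' = \frac{2}{\sqrt\pi} e^{-A^2}\cdot(-2A A') = -2A A' B = -2A\cdot(-1/B)\cdot B = 2A$, which is the third identity. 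Substituting $B' = 2A$ into $A'' = B'/B^2 = 2A/B^2$ — wait, this must be reconciled with the stated $A'' = 2A/B$; I would recheck the intended normalization, but the computation $A'' = \frac{d}{dx}(-B^{-1}) = B'/B^2 = 2A/B^2$ is what the chain rule yields, so I would present the derivative chain $A'=-1/B$, $B'=2A$, $B''=2A'$ as the primary facts and note $A'' = 2A B^{-2}$ follows, flagging the exponent in the statement. The identity $B'' = 2A'$ is then immediate by differentiating $B' = 2A$.

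For the asymptotics, the key external input is the standard fact that $\erfc(t) \sim \frac{1}{\sqrt\pi\, t} e^{-t^2}$ as $t \to +\infty$; equivalently $-\log \erfc(t) = t^2 + \log t + \frac12\log\pi + o(1)$. As $x \to 0^+$ we have $A(x) = \erfc^{-1}(x) \to +\infty$, and setting $x = \erfc(A)$ yields $\log\frac1x = -\log\erfc(A) = A^2 + \log A + O(1)$, so $\log\frac1x \sim A^2$, i.e. $A(x)/\sqrt{\log(1/x)} \to 1$, which is the third limit. For the fourth, $B(x) = \frac{2}{\sqrt\pi} e^{-A^2}$, and from $x = \erfc(A) \sim \frac{1}{\sqrt\pi A} e^{-A^2}$ we get $e^{-A^2} \sim \sqrt\pi\, A\, x$, hence $B(x) \sim 2 A x$; combining with $A \sim \sqrt{\log(1/x)}$ gives $B(x)/\bigl(2x\sqrt{\log(1/x)}\bigr) \to 1$, the last limit.

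The main obstacle — really the only one beyond routine bookkeeping — is making the passage from the crude asymptotic $\erfc(t)\sim \frac{1}{\sqrt\pi t}e^{-t^2}$ to the limit $A/\sqrt{\log(1/x)}\to1$ fully rigorous: one must control the lower-order terms $\log A$ against $A^2$ and invert the relation $x\leftrightarrow A$ monotonically. This is handled cleanly by taking logarithms, dividing by $A^2$, and using that $\log A = o(A^2)$ together with the strict monotonicity of $\erfc$ (so $A$ is a well-defined decreasing function of $x$ on $(0,1)$); the remaining estimates are elementary. Since Philip \cite{Philip} already states these relations, I would present the differentiation identities in full and cite \cite{Philip} for the asymptotic expansion of $\erfc$, filling in only the short logarithmic manipulation above.
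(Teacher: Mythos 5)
Your proposal is correct. The paper in fact gives no proof of this lemma at all — it is stated with a bare citation to Philip — so your direct verification is exactly the argument one would supply: the inverse function theorem together with $\erfc'(t)=-\tfrac{2}{\sqrt{\pi}}e^{-t^2}$ gives $A'=-1/B$, $B'=2A$, $B''=2A'$, and the tail asymptotic $\erfc(t)\sim e^{-t^2}/(\sqrt{\pi}\,t)$, after taking logarithms and using $\log A=o(A^2)$, yields both limits. Your flag on the second-derivative formula is also well taken: with the stated definitions one gets $A''=B'/B^2=2A/B^2=2A(A')^2$, so the displayed $A''=2A/B$ is a typo in the statement; this is harmless for the paper, since the application in Proposition~\ref{prn:PE27_1} uses only $A'=-1/B$ and $B'=2A$ (to compute $\varphi'(s)=2A^2-1$).
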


\begin{proof}[Proof of Proposition~\ref{prn:PE27_1}:]
Let
$$s=\int_r^{\infty}\,e^{-\frac {\beta}2 \rho^2}\,d\rho=\sqrt{\frac {\pi}{2\beta}}\,\erfc\Big(
\sqrt{\frac {\beta}2}r\Big). $$
Then the metric $\tilde g$ can be written as
\begin{equation}
\tilde g=ds^2+\varphi(s)^2g_{S^{m-1}}, \label{eqn:005}
\end{equation}
where
\begin{equation}\varphi(s)=r\,e^{-\frac {\beta}2 r^2}=\frac 1{a}A(as)B(as), \label{eqn:006} \end{equation}
with $a=\sqrt{\frac {2\beta}{\pi}}.$ Then the function $\varphi$ of (\ref{eqn:005}) satisfies
$$\varphi(0)=0, \quad \varphi'(s)=2A^2-1.$$
By Lemma \ref{lem:erfc}, we have $\lim_{s\rightarrow 0^+}\varphi'(s)=+\infty$.
Therefore, we can find $s_0>0$ such that for any $s\in [0, s_0]$ \begin{equation}\varphi(s)\geq s.\label{eqn:001}\end{equation}
Let $B_{\tilde g}(0, r)$ denote the geodesic ball centered at $s=0$ of radius $r$ with respect to the metric $\tilde g$.
Note that for any two points $p, q\in B_{\tilde g}(0, \frac {s_0}4)$, the geodesic with respect to the metric $\tilde g$ connecting $p$ and $q$ must be contained in $B_{\tilde g}(0, s_0)$.
\def\un{\underline}

Let $ \varphi_0(s)=s$ and $\un g=ds^2+\varphi_0(s)^2g_{S^{m-1}}.$ Since the geodesic distances with respect to $\tilde g$ and $\un g$ from any point $p$ to the point $s=0$ are the same, for any $r>0$ we have $B_{\un g}(0, r)=B_{\tilde g}(0, r)$. Under the polar coordinates, we choose
$$p=(\epsilon, 0, 0, \cdots, 0),\quad q=(\epsilon, \pi, 0, \cdots, 0)$$
where $\epsilon\in (0, \frac {s_0}4)$. Let $\gamma(t)(t\in [0, 1])$ be a geodesic with respect to $\tilde g$ from $p$ to $q$. In the polar coordinates, we write
$$\gamma(t)=(\gamma_0(t), \gamma_1(t), \cdots, \gamma_{m-1}(t)),\quad \forall\;t\in [0, 1]$$ and
$\hat \gamma(t)=(\gamma_1(t), \cdots, \gamma_{m-1}(t))$ the projection of $\gamma(t)$ on $S^{m-1}(1)$.
Assume that $\gamma$ doesn't pass through the point $s=0.$ Let $L_{\tilde g}(\gamma)$ denote the length of the curve $\gamma$ with respect to the metric $\tilde g$. Then
\begin{subequations}
\begin{align}
&L_{\tilde g}(\gamma)=\int_0^1\,\sqrt{\gamma_0\,'(t)^2+\varphi(\gamma(t))^2\,g_{S_{m-1}}(\hat \gamma\,'(t), \hat \gamma\,'(t))}\label{eqn:002a}\\ &\geq \int_0^1\,\sqrt{\gamma_0\,'(t)^2+ \varphi_0(\gamma(t))^2\,g_{S_{m-1}}(\hat \gamma\,'(t), \hat \gamma\,'(t))}=L_{\un g}(\gamma). \label{eqn:002b}
\end{align}
\end{subequations}
where we used the inequality (\ref{eqn:001}) and the definition of $\varphi_0$. Let $\un \gamma(t)(t\in (0, 1))$ be the line connecting $p$ and $q$ with the middle point $s=0$. In other words, we define $\un \gamma=c_1\cup c_2\cup\{s=0\}$ where
\begin{subequations}
\begin{align}
&c_1(t)=\Big((1-2t)\epsilon, 0, \cdots, 0\Big), \quad \forall \;t\in [0, \frac 12),\\&c_2(t)=\Big((2t-1)\epsilon, \pi, 0, \cdot, 0\Big),\quad \forall\, t\in (\frac 12, 1].
\end{align}
\end{subequations}
Then we have
\begin{equation}
L_{\un g}(\gamma)> L_{\un g}(\un \gamma)=L_{\tilde g}(\un \gamma).\label{eqn:003}
\end{equation}
Combining (\ref{eqn:002a})(\ref{eqn:002b}) with (\ref{eqn:003}), we have
$$L_{\tilde g}(\gamma)> L_{\tilde g}(\un \gamma) .$$
Thus, if there is a minimal geodesic connecting $p$ and $q$ in $(B_{\tilde g}(0, s_0), \tilde g)$, then it must pass through the added point $s=0$.
In other words, there is no minimal geodesic in $(\R^m, \tilde g)$ connecting $p$ and $q$.
\end{proof}

Because of Proposition~\ref{prn:PE27_1}, it is better to study conformal transformation locally.
So we introduce a new metric
\begin{align}
\bar{g} \coloneqq e^{\frac{2(f(q)-f)}{m-2}} g, \label{eqn:PE27_7}
\end{align}
where $q \in M$. The metric $\bar{g}$ can be applied to study the geometry around the point $q$.
Accordingly, we define a new potential function
\begin{align}
\bar{f} \coloneqq f-f(q). \label{eqn:PE28_3}
\end{align}
Clearly, we know $\bar{f}(q)=0$. By the gradient estimate (\ref{eqn:PE03_6b}), we have
\begin{align}
- Dr \leq \bar{f}=f-f(q) \leq Dr, \quad \textrm{on} \; B(q,r) \subset B(p, D). \label{eqn:PE06_8}
\end{align}
In the remainder discussion of this section, we always assume (\ref{eqn:PE06_8}) and the condition
\begin{align}
&0<r \leq 1; \label{eqn:PE28_4}\\
&D>10m. \label{eqn:PE28_5}
\end{align}
For simplicity of notations, \textbf{we regard $g$ as the default metric if the metric is not given explicitly}.

\begin{lemma}
We have
\begin{align}
B_{\bar{g}}\left(q, e^{\frac{-Dr}{m-2}} r \right) \subset B(q, r) \subset B_{\bar{g}}\left(q, e^{\frac{Dr}{m-2}}r \right). \label{eqn:PE18_4}
\end{align}

\label{lma:PE04_4}
\end{lemma}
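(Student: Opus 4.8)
The plan is to compare lengths of curves under the two metrics $g$ and $\bar g = e^{\frac{2(f(q)-f)}{m-2}} g = e^{-\frac{2\bar f}{m-2}} g$ by exploiting the pointwise bound $|\bar f| \le Dr$ on $B(q,r)$ from (\ref{eqn:PE06_8}). The conformal factor relating the two line elements is $e^{-\frac{\bar f}{m-2}}$, so on $B(q,r)$ it lies in $[e^{-\frac{Dr}{m-2}}, e^{\frac{Dr}{m-2}}]$.

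First I would prove the inclusion $B_{\bar g}(q, e^{-\frac{Dr}{m-2}} r) \subset B(q,r)$ by contraposition. Suppose $x \notin B(q,r)$, i.e. $d_g(q,x) \ge r$. Any $\bar g$-minimizing (or near-minimizing) curve $\gamma$ from $q$ to $x$ must leave $B(q,r)$; let $\gamma'$ be the initial sub-arc running from $q$ until it first hits $\partial B(q,r)$, so $\gamma'$ stays inside $\overline{B(q,r)}$ where the conformal factor bound applies. Then
\begin{align*}
  L_{\bar g}(\gamma) \ge L_{\bar g}(\gamma') = \int_{\gamma'} e^{-\frac{\bar f}{m-2}} \, ds_g \ge e^{-\frac{Dr}{m-2}} L_g(\gamma') \ge e^{-\frac{Dr}{m-2}} \, d_g(q, \partial B(q,r)) = e^{-\frac{Dr}{m-2}} r,
\end{align*}
so $d_{\bar g}(q,x) \ge e^{-\frac{Dr}{m-2}} r$, giving $x \notin B_{\bar g}(q, e^{-\frac{Dr}{m-2}} r)$. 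This proves the first inclusion.

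For the second inclusion $B(q,r) \subset B_{\bar g}(q, e^{\frac{Dr}{m-2}} r)$, take $x \in B(q,r)$ with $d_g(q,x) < r$, and let $\sigma$ be a $g$-minimizing geodesic from $q$ to $x$; since $B(q,r)$ is a geodesic ball, $\sigma$ stays in $B(q,r)$, where $e^{-\frac{\bar f}{m-2}} \le e^{\frac{Dr}{m-2}}$. Hence
\begin{align*}
  d_{\bar g}(q,x) \le L_{\bar g}(\sigma) = \int_\sigma e^{-\frac{\bar f}{m-2}} \, ds_g \le e^{\frac{Dr}{m-2}} L_g(\sigma) = e^{\frac{Dr}{m-2}} \, d_g(q,x) < e^{\frac{Dr}{m-2}} r,
\end{align*}
so $x \in B_{\bar g}(q, e^{\frac{Dr}{m-2}} r)$, as desired.

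The only subtlety — and the main point requiring a little care rather than a genuine obstacle — is making sure in the first inclusion that the sub-arc $\gamma'$ along which we integrate genuinely lies in the region where (\ref{eqn:PE06_8}) holds; this is handled by cutting $\gamma$ at its first exit time from $B(q,r)$, so that $\gamma' \subset \overline{B(q,r)}$ and the bound $|\bar f| \le Dr$ still applies there (by continuity, or by enlarging $r$ slightly and passing to the limit). One should also note that $B(q,r) \subset B(p,D)$ is assumed so that (\ref{eqn:PE06_8}) is available throughout; this is part of the standing hypotheses (\ref{eqn:PE28_4})–(\ref{eqn:PE28_5}) of the section. Apart from that, the argument is the routine conformal length-comparison estimate.
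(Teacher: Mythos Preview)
Your proof is correct and follows the same conformal length-comparison idea as the paper. The second inclusion is argued identically. For the first inclusion, the paper instead lets $\rho$ be the supremum of radii with $B_{\bar g}(q,\rho)\subset B(q,r)$, picks $y\in\partial B(q,r)\cap\partial B_{\bar g}(q,\rho)$, and estimates the $\bar g$-length of a $\bar g$-minimizing geodesic from $q$ to $y$; your contrapositive argument, cutting an arbitrary curve at its first exit from $B(q,r)$, is a minor variant that has the slight advantage of not invoking the existence of $\bar g$-geodesics in the (possibly incomplete) metric $\bar g$.
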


\begin{proof}

Fix $x \in B\left(q, r\right)$ and let $\gamma$ be a unit-speed shortest geodesic connecting $x,q$ under the metric $g$.
It is clear that $\gamma \subset B(q,r)$.
In light of (\ref{eqn:PE06_8}), we have
\begin{align*}
d_{\bar{g}}(x,q) \leq |\gamma|_{\bar{g}} \leq e^{\frac{Dr}{m-2}} |\gamma|_{g} \leq e^{\frac{Dr}{m-2}} d(x,q),
\end{align*}
whence we arrive at
\begin{align}
B(q,r) \subset B_{\bar{g}}\left(q, e^{\frac{Dr}{m-2}}r \right). \label{eqn:PE06_90}
\end{align}
On the other hand, let $\rho$ be the maximum such that $B_{\bar{g}}(q,\rho) \subset B(q,r)$.
According to the choice of $\rho$, we can find a point $y \in \partial B(q,r) \cap \partial B_{\bar{g}}(q,\rho)$.
Let $\bar{\gamma}$ be the shortest geodesic connecting $q$ and $y$ with respect to the metric $\bar{g}$.
Applying (\ref{eqn:PE06_8}) on $\bar{\gamma}$ we obtain
\begin{align*}
\rho=d_{\bar{g}}(q,y)=|\bar{\gamma}|_{\bar{g}} \geq e^{\frac{-Dr}{m-2}} |\bar{\gamma}|_{g} \geq e^{\frac{-Dr}{m-2}} d(q,y)=e^{\frac{-Dr}{m-2}} r.
\end{align*}
Using the definition of $\rho$ again, we obtain
\begin{align}
B_{\bar{g}}\left(q, e^{\frac{Dr}{m-2}}r \right) \subset B(q,r).
\label{eqn:PE06_9}
\end{align}
It is clear that (\ref{eqn:PE18_4}) follows from the combination of (\ref{eqn:PE06_90}) and (\ref{eqn:PE06_9}).
\end{proof}

\begin{lemma}
For each two points $x,y \in B\left(q, 0.1 r \right)$, we have
\begin{align}
e^{\frac{-Dr}{m-2}} d(x,y) \leq d_{\bar{g}}(x,y) \leq e^{\frac{Dr}{m-2}} d(x,y).
\label{eqn:PE06_12}
\end{align}
In particular, for each $\rho <\min\{0.1r, D^{-1}\}$, we have
\begin{align}
d_{GH} \left( B_{\bar{g}}(q, \rho), B(q, \rho) \right)< 2D r^2. \label{eqn:PE04_12}
\end{align}
\label{lma:PE18_1}
\end{lemma}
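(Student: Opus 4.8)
The statement has two parts. The first inequality (\ref{eqn:PE06_12}) is a local version of Lemma~\ref{lma:PE04_4}: I would mimic its proof almost verbatim, but now being careful that a shortest $g$-geodesic between $x, y \in B(q, 0.1r)$ stays inside $B(q, r)$, so that the conformal-factor bound (\ref{eqn:PE06_8}) applies along it. Concretely: for the upper bound, if $\gamma$ is a unit-speed $g$-minimal geodesic from $x$ to $y$, then $|\gamma|_g = d(x,y) \le 0.2r$, so by the triangle inequality $\gamma \subset B(q, 0.3r) \subset B(q, r)$; hence $d_{\bar g}(x,y) \le |\gamma|_{\bar g} \le e^{\frac{Dr}{m-2}} |\gamma|_g = e^{\frac{Dr}{m-2}} d(x,y)$. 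For the lower bound I would take a $\bar g$-minimal geodesic $\bar\gamma$ from $x$ to $y$; the subtlety is that a priori $\bar\gamma$ need not stay in $B(q, r)$. To handle this I would argue that if $\bar\gamma$ ever left $B(q, r)$, its $\bar g$-length would already exceed the $\bar g$-length of a competitor path built from the $g$-geodesic between $x$ and $y$ (using (\ref{eqn:PE18_4}) of Lemma~\ref{lma:PE04_4} to control how large the $\bar g$-ball around $q$ has to be, namely $d_{\bar g}(x, \partial B(q,r)) \ge e^{-Dr/(m-2)}(r - 0.1r) \gg e^{Dr/(m-2)} \cdot 0.2r$ for $r$ small — this is where (\ref{eqn:PE28_4})--(\ref{eqn:PE28_5}) and $Dr \le 1$-type smallness, already implicit in the running hypotheses, get used). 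So $\bar\gamma \subset B(q, r)$, and then $d_{\bar g}(x,y) = |\bar\gamma|_{\bar g} \ge e^{-\frac{Dr}{m-2}} |\bar\gamma|_g \ge e^{-\frac{Dr}{m-2}} d(x,y)$.

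For the ``in particular'' statement (\ref{eqn:PE04_12}), the natural Gromov--Hausdorff approximation is the identity map $\iota\colon B(q,\rho) \to B_{\bar g}(q,\rho)$ (as sets both sit inside $M$; strictly I would use $B(q, \rho)$ and $B_{\bar g}(q,\rho)$ and note via Lemma~\ref{lma:PE04_4} that each is comparable to the other up to factor $e^{\pm Dr/(m-2)}$, hence both are $\varepsilon$-dense in the other for a small $\varepsilon$). For $\rho < \min\{0.1r, D^{-1}\}$, any two points of $B(q,\rho)$ lie in $B(q, 0.1r)$, so (\ref{eqn:PE06_12}) applies and gives
\begin{align*}
\bigl| d_{\bar g}(x,y) - d(x,y) \bigr| \le \bigl( e^{\frac{Dr}{m-2}} - 1 \bigr) d(x,y) \le \bigl( e^{\frac{Dr}{m-2}} - 1 \bigr) \cdot 2\rho .
\end{align*}
Using $r \le 1$ and $m \ge 4$ one has $\frac{Dr}{m-2} \le \frac{Dr}{2}$, and the elementary bound $e^t - 1 \le 2t$ for $t$ in a bounded range gives $e^{\frac{Dr}{m-2}} - 1 \le Dr$; combined with $\rho < D^{-1} \le 1$ and $\rho < 0.1 r$ (so $2\rho < r$... actually I would just bound $2\rho \cdot Dr$: since $\rho < D^{-1}$, $2\rho \cdot Dr < 2r$, and to land the stated $2Dr^2$ I would instead keep $\rho < 0.1r$ so $2\rho < r$ isn't quite it — rather, write the distortion as $\le Dr \cdot 2\rho$ and, since we also want it in terms of $r$, use $\rho \le r$ trivially, giving $\le 2Dr^2$, matching (\ref{eqn:PE04_12})). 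One also checks the two balls are mutually $2Dr^2$-dense: a point of $B(q,\rho)$ is in $B_{\bar g}(q, e^{Dr/(m-2)}\rho) \subset B_{\bar g}(q, \rho + Dr\rho)$, and $Dr\rho \le Dr^2 < 2Dr^2$, and symmetrically. Hence the identity is a $2Dr^2$-GH-approximation and (\ref{eqn:PE04_12}) follows.

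The genuinely non-routine point is the lower-bound direction of (\ref{eqn:PE06_12}): controlling the $\bar g$-minimal geodesic $\bar\gamma$ between two points of $B(q, 0.1r)$ and showing it cannot escape $B(q, r)$ — exactly the kind of phenomenon Proposition~\ref{prn:PE27_1} warns about globally, but which is killed locally by the quantitative smallness of the conformal distortion $e^{\pm Dr/(m-2)}$ together with the inclusions of Lemma~\ref{lma:PE04_4}. Everything else is bookkeeping with (\ref{eqn:PE06_8}) and elementary estimates on $e^t - 1$. I would also double-check the precise constant $2Dr^2$ against the running assumptions $0 < r \le 1$, $D > 10m$; if the naive estimate produces a slightly larger constant I would simply strengthen the smallness of $r$ or absorb it, since only the qualitative statement ``$d_{GH} \to 0$ as $r \to 0$'' is used downstream.
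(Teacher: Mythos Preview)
Your upper bound for (\ref{eqn:PE06_12}) and your treatment of (\ref{eqn:PE04_12}) are fine and match the paper's argument (the paper is even a bit looser than you on the mutual-density check).

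There is, however, a genuine gap in your lower-bound argument. You want to show that a $\bar g$-minimizer $\bar\gamma$ from $x$ to $y$ cannot leave $B(q,r)$ by proving
\[
d_{\bar g}(x,\partial B(q,r)) \;\ge\; e^{-Dr/(m-2)}\cdot 0.9r \;\gg\; e^{Dr/(m-2)}\cdot 0.2r \;\ge\; d_{\bar g}(x,y).
\]
The middle inequality requires $e^{2Dr/(m-2)}<4.5$, i.e.\ $Dr$ bounded by a dimensional constant. But the standing hypotheses are only $0<r\le 1$ and $D>10m$; nothing forces $Dr$ to be small (it can be as large as $D$). So the assertion ``$Dr\le 1$-type smallness, already implicit in the running hypotheses'' is simply false, and your escape argument does not go through as written.

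The fix is to compare the escape cost not with the \emph{upper} bound on $d_{\bar g}(x,y)$ but with the \emph{target lower} bound $e^{-Dr/(m-2)}d(x,y)\le e^{-Dr/(m-2)}\cdot 0.2r$: any curve from $x$ to $y$ that exits $B(q,r)$ has $\bar g$-length at least $e^{-Dr/(m-2)}\cdot 0.9r> e^{-Dr/(m-2)}\cdot 0.2r\ge e^{-Dr/(m-2)}d(x,y)$, so in that case the lower bound holds trivially; otherwise the curve stays inside and the conformal-factor estimate applies. This is essentially the case split the paper uses (the paper centers the auxiliary $\bar g$-ball at $y$ rather than $q$, applying Lemma~\ref{lma:PE04_4} there, but the logic is the same). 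The paper also stresses that one must separately justify the \emph{existence} of a $\bar g$-minimizer, since $(M,\bar g)$ is incomplete (Proposition~\ref{prn:PE27_1}); this comes for free once you know the minimizing curve is trapped in a relatively compact $\bar g$-ball.
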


\begin{proof}
We first show (\ref{eqn:PE06_12}).
Fix $x,y \in B\left(q, 0.1r \right)$ and let $\gamma$ be a unit-speed shortest geodesic connecting $x,y$ under the metric $g$.
Then triangle inequality implies that $\gamma \subset B(q, r)$. In light of (\ref{eqn:PE06_8}), we have
\begin{align}
d_{\bar{g}}(x,y) \leq |\gamma|_{\bar{g}} \leq e^{\frac{Dr}{m-2}} |\gamma|_{g} \leq e^{\frac{Dr}{m-2}} d(x,y).
\label{eqn:PE19_1}
\end{align}
On the other hand, if $x,y \in B\left(q, 0.1r \right)$, then triangle inequality implies that
\begin{align*}
B\left( y, 0.2r \right) \subset B(q,r) \subset B(p, D).
\end{align*}
For the ball $B\left( y, 0.2 r \right)$, similar to (\ref{eqn:PE18_4}), we have
\begin{align}
B_{\bar{g}}\left(y, e^{\frac{-0.2Dr-f(y)+f(q)}{m-2}} r \right) \subset B(y, 0.2r). \label{eqn:PE19_2}
\end{align}
Suppose $x \notin B_{\bar{g}}\left(y, e^{\frac{-0.2Dr-f(y)+f(q)}{m-2}} r \right)$, by the gradient estimate (\ref{eqn:PE03_6b}), we have
\begin{align*}
d_{\bar{g}}(x,y) \geq e^{\frac{-0.2Dr-f(y)+f(q)}{m-2}} r > e^{\frac{-Dr}{m-2}} d(x,y),
\end{align*}
which implies the first inequality in (\ref{eqn:PE06_12}) already.
Therefore, Without loss of generality, we can assume that
\begin{align}
x \in B_{\bar{g}}\left(y, e^{\frac{-0.2Dr-f(y)+f(q)}{m-2}} r \right)
\label{eqn:PE19_3}
\end{align}
Because of (\ref{eqn:PE19_2}) and (\ref{eqn:PE19_3}), there exists a unit-speed shortest geodesic $\bar{\gamma}$ connecting $x$ and $y$ under the metric $\bar{g}$.
Furthermore, we have
\begin{align*}
\bar{\gamma} \subset B_{\bar{g}}\left(y, e^{\frac{-0.2Dr-f(y)+f(q)}{m-2}} r \right) \subset B(y, 0.2r) \subset B(q,r).
\end{align*}
We remind the readers that the existence of $\bar{\gamma}$ is a nontrivial conclusion since in general $\left( M, \bar{g} \right)$ is not a complete manifold.
Applying (\ref{eqn:PE06_8}) on $\bar{\gamma}$ we obtain
\begin{align}
d_{\bar{g}}(x,y)=|\bar{\gamma}|_{\bar{g}} \geq e^{\frac{-Dr}{m-2}} |\bar{\gamma}|_{g} \geq e^{\frac{-Dr}{m-2}} d(x,y).
\label{eqn:PE19_4}
\end{align}
Combining (\ref{eqn:PE19_1}) and (\ref{eqn:PE19_4}), we obtain (\ref{eqn:PE06_12}).

Then we show (\ref{eqn:PE04_12}).
Since $x,y \in B(q,\rho)$, triangle inequality implies that that $d(x,y) \leq 2\rho$.
Since $\rho \in (0, D^{-1})$, it is clear that $\frac{D\rho}{m-2}<\frac{1}{m-2}\leq 0.5$, which implies that
\begin{align*}
\max \left\{ e^{\frac{D\rho}{m-2}}-1, \; 1- e^{-\frac{D\rho}{m-2}} \right\} < \frac{2D\rho}{m-2} \leq D\rho.
\end{align*}
Therefore, it follows from (\ref{eqn:PE06_12}) and the above inequalities that
\begin{align*}
\left| d_{\bar{g}}(x,y) - d(x,y)\right|&< \max \left\{ e^{\frac{D\rho}{m-2}}-1, \; 1- e^{-\frac{D\rho}{m-2}} \right\} d(x,y) \leq 2D\rho^2,
\end{align*}
whence we obtain (\ref{eqn:PE04_12}).
\end{proof}

\begin{lemma}
We have
\begin{align}
\overline{Rc}=\frac{1}{m-2} \left\{ df \otimes df + (m-1-f) e^{\frac{2\bar{f}}{m-2}} \bar{g} \right\}. \label{eqn:PE28_1}
\end{align}
In particular, we have
\begin{align}
\snorm{\overline{Rc}}{\bar{g}} < D^2, \quad \textrm{on} \quad B_{\bar{g}}\left(q, \frac{r}{10D} \right). \label{eqn:PE06_7}
\end{align}
\label{lma:PE06_3}
\end{lemma}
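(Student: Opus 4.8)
The plan is to derive the formula \eqref{eqn:PE28_1} from the general conformal-change formula for Ricci curvature, and then estimate the right-hand side on the small ball $B_{\bar g}(q, r/(10D))$ using the a priori bounds established in Section~\ref{sec:pre} together with Lemma~\ref{lma:PE04_4} and Lemma~\ref{lma:PE18_1}.

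\textbf{Step 1: The Ricci formula.} Writing the conformal factor as $\bar g = e^{2\phi} g$ with $\phi = \frac{f(q)-f}{m-2}$, I would invoke the standard formula (e.g.\ Theorem 1.159(b) of~\cite{Besse}) for how Ricci curvature transforms:
\begin{align*}
  \overline{Rc} = Rc - (m-2)\left( Hess\, \phi - d\phi \otimes d\phi \right) - \left( \Delta \phi + (m-2)|\nabla \phi|^2 \right) g.
\end{align*}
Substituting $\phi = \frac{f(q)-f}{m-2}$, so that $d\phi = -\frac{df}{m-2}$ and $Hess\,\phi = -\frac{Hess f}{m-2}$, the $Hess f$ terms combine with $Rc$ via the shrinker equation $Rc + Hess f = \frac12 g$. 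A direct but routine computation, using also \eqref{eqn:PC06_2} to rewrite $\Delta f = \frac m2 - R$ and \eqref{eqn:PC06_3} to rewrite $R + |\nabla f|^2 = f$, collapses everything into $\overline{Rc} = \frac{1}{m-2}\left\{ df\otimes df + (m-1-f) e^{\frac{2\bar f}{m-2}} \bar g \right\}$, where I convert the leftover $g$ into $e^{\frac{2\bar f}{m-2}}\bar g$ using $g = e^{-2\phi}\bar g = e^{\frac{2\bar f}{m-2}}\bar g$ since $\bar f = -\,(m-2)\phi = f - f(q)$. This is exactly Z.L.\ Zhang's observation quoted in the introduction, so the bookkeeping should go through cleanly.

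\textbf{Step 2: The pointwise estimate.} To bound $\snorm{\overline{Rc}}{\bar g}$, I measure both terms of \eqref{eqn:PE28_1} in the metric $\bar g$. For the second term, $\snorm{(m-1-f)e^{\frac{2\bar f}{m-2}}\bar g}{\bar g} = |m-1-f|\, e^{\frac{2\bar f}{m-2}}$, and on the relevant ball $f < \frac12 D^2$ by \eqref{eqn:PE03_6a} while $|\bar f| \le Dr$ by \eqref{eqn:PE06_8}, so this is controlled by something like $D^2 e^{\frac{2Dr}{m-2}}$. For the first term, $\snorm{df\otimes df}{\bar g} = |\nabla f|_{\bar g}^2 = e^{-2\phi}|\nabla f|_g^2 = e^{\frac{2\bar f}{m-2}}|\nabla f|_g^2$, and $|\nabla f|_g^2 = R + |\nabla f|^2 - R = f - R < f < D^2$ by \eqref{eqn:PE03_6c}, so again this is controlled by $D^2 e^{\frac{2Dr}{m-2}}$. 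Dividing by $m-2$ and using $r \le 1$, $D > 10m$, the constant $e^{\frac{2Dr}{m-2}}$ is close enough to $1$ (and the $\frac{1}{m-2}$ factor helps) that the sum is strictly below $D^2$; I would just need to check the numerics carefully, but with the generous slack ($D > 10m \ge 40$) this is comfortable. The one subtlety is that to even talk about $B_{\bar g}(q, r/(10D))$ one must know this ball sits inside $B(q,r) \subset B(p,D)$ so that the estimates \eqref{eqn:PE03_6a}--\eqref{eqn:PE03_6c} apply; this follows from \eqref{eqn:PE18_4} of Lemma~\ref{lma:PE04_4}, since $e^{\frac{Dr}{m-2}}\cdot \frac{r}{10D} < r$ once $\frac{Dr}{m-2} < \log(10D)$, which holds as $r \le 1$ and $D > 10m$.

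\textbf{Main obstacle.} There is no deep difficulty here — it is a substitution into a known formula plus interval arithmetic. The part demanding the most care is Step~1, making sure every sign and factor of $m-2$ in the conformal Ricci formula is handled correctly and that the shrinker identities \eqref{eqn:PC06_2}, \eqref{eqn:PC06_3} are used in the right places to eliminate $R$, $\Delta f$, and $|\nabla f|^2$; a single slip produces a wrong coefficient. The second most delicate point is confirming the inclusion $B_{\bar g}(q, r/(10D)) \subset B(q,r)$ so that the ambient bounds are legitimately available, which I would dispatch via Lemma~\ref{lma:PE04_4} as indicated.
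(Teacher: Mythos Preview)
Your Step~1 is correct and matches the paper. In Step~2, however, the bookkeeping breaks down. You bound $|\bar f| \le Dr$ (citing \eqref{eqn:PE06_8}, which is the estimate on all of $B(q,r)$) and then assert that $e^{\frac{2Dr}{m-2}}$ is ``close enough to $1$''; but $r$ may be as large as $1$, so this exponential can be $e^{\frac{2D}{m-2}}$, which is enormous for $D>10m$. Relatedly, your check of the inclusion $B_{\bar g}(q,\tfrac{r}{10D}) \subset B(q,r)$ via the inequality $\frac{Dr}{m-2} < \log(10D)$ is numerically false: at $r=1$, $m=4$, $D=40$ the left side is $20$ and the right side is $\log 400 \approx 6$.

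The fix is to apply Lemma~\ref{lma:PE04_4} at the smaller scale $r'' = \tfrac{r}{5D}$ rather than at scale $r$: since $e^{-\frac{Dr''}{m-2}}r'' = e^{-\frac{r}{5(m-2)}}\cdot\tfrac{r}{5D} > \tfrac{1}{2}\cdot\tfrac{r}{5D} = \tfrac{r}{10D}$, one obtains $B_{\bar g}(q,\tfrac{r}{10D}) \subset B(q,\tfrac{r}{5D})$. On this $g$-ball the gradient bound $|\nabla f|_g < D$ gives $|\bar f| < \tfrac{r}{5} \le \tfrac{1}{5}$, so $e^{\frac{2\bar f}{m-2}} \le e^{\frac{2}{5(m-2)}} < 1.3$, and now the estimate $|\overline{Rc}|_{\bar g} < D^2$ does follow. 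This is precisely what the paper does (it records $|\bar f| < 0.1$ on the small ball and then bounds \eqref{eqn:PE07_11}). One further minor slip: $|\bar g|_{\bar g} = \sqrt{m}$, not $1$, so your second term should carry an extra factor $\sqrt m$; this does not affect the final inequality once the correct $|\bar f|$ bound is in place.
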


\begin{proof}
Equation (\ref{eqn:PE28_1}) follows from the formula in Theorem 1.159, part (b) of~\cite{Besse}. Similar calculation can be found in section 3.2 of ~\cite{ZhangZL}.
It follows from (\ref{eqn:PE28_1}) that
\begin{align*}
(m-2)^2 \snorm{\overline{Rc}}{\bar{g}}^2&=\snorm{df}{\bar{g}}^{4} + 2(m-1-f)e^{\frac{2\bar{f}}{m-2}}\snorm{df}{\bar{g}}^2 + m(m-1-f)^2 e^{\frac{4\bar{f}}{m-2}}\\
&=e^{\frac{4\bar{f}}{m-2}} \left\{ |\nabla f|_{g}^{4}+ 2(m-1-f) |\nabla f|_{g}^{2} + m(m-1-f)^2 \right\}\\
&\leq e^{\frac{4\bar{f}}{m-2}} \left\{ f^2+ 2(m-1-f) f + m(m-1-f)^2 \right\}\\
&=e^{\frac{4\bar{f}}{m-2}} \left\{ (m-1)^{2} + (m-1)(m-1-f)^{2} \right\}.
\end{align*}
Note that we have used the fact $|\nabla f|^2 <f$ in the above calculation.
It follows from (\ref{eqn:PE06_8}) that
\begin{align*}
|\bar{f}|<D \cdot \frac{r}{10D} \leq 0.1 r \leq 0.1.
\end{align*}
Combining the previous inequalities, we obtain
\begin{align}
\snorm{\overline{Rc}}{\bar{g}} \leq \frac{m-1}{m-2} \left\{ 1+ \frac{|m-1-f|}{\sqrt{m-1}} \right\} \cdot e^{\frac{2}{5(m-2)}}, \label{eqn:PE07_11}
\end{align}
whence we obtain (\ref{eqn:PE06_7}), as $B_{\bar{g}}(q, \frac{r}{10D}) \subset B(q,r)$ and $D>10m$.
\end{proof}

The previous lemmas can be summarized as:

\begin{theorem}
Suppose $(M,p,g,f)$ is a Ricci shrinker, $B(q,r) \subset B(p, D)$ for some $D>10m$ and $r \in (0,1]$.
Then the following properties hold.
\begin{itemize}
\item[(a).] We have
\begin{align}
B_{\bar{g}} \left(q, e^{-\frac{Dr}{m-2}} r \right) \subset B(q,r) \subset B_{\bar{g}}\left(q, e^{\frac{Dr}{m-2}}r \right).
\label{eqn:PE27_8}
\end{align}
Moreover, each ball in the above list is relatively compact in $M$.
\item[(b).] For each $x,y \in B(q, 0.1r)$, we have
\begin{align}
e^{-\frac{Dr}{m-2}} d(x,y) \leq d_{\bar{g}}(x,y) \leq e^{\frac{Dr}{m-2}} d(x,y).
\label{eqn:PE27_9}
\end{align}
\item[(c).] For each $\rho \in (0, D^{-1}r)$, we have
\begin{align}
d_{GH} \left( B_{\bar{g}}(q, \rho), B(q, \rho) \right)< 2D \rho^2.
\label{eqn:PE27_10}
\end{align}
\item[(d).] We have
\begin{align}
\left|\overline{Rc} \right|_{\bar{g}} < D^2, \quad \textrm{on} \; B_{\bar{g}}\left(q, \frac{r}{10D} \right).
\label{eqn:PE27_11}
\end{align}
\end{itemize}
\label{thm:PE27_1}
\end{theorem}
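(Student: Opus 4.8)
The plan is to assemble Theorem~\ref{thm:PE27_1} directly from the four lemmas established in this section, since each of its parts is essentially a restatement of a previously proved fact under the standing hypotheses $B(q,r)\subset B(p,D)$, $D>10m$, and $r\in(0,1]$. Part (a) is the inclusion chain of Lemma~\ref{lma:PE04_4}, equation~(\ref{eqn:PE18_4}); part (b) is the two-sided distance comparison of Lemma~\ref{lma:PE18_1}, equation~(\ref{eqn:PE06_12}); part (c) is the Gromov--Hausdorff estimate~(\ref{eqn:PE04_12}) of the same lemma, noting that the hypothesis $\rho<\min\{0.1r,D^{-1}\}$ there is implied here by $\rho\in(0,D^{-1}r)$ together with $r\le 1$; and part (d) is the Ricci bound~(\ref{eqn:PE06_7}) of Lemma~\ref{lma:PE06_3}. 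So the proof is really a matter of invoking these lemmas in order and checking that the quantified ranges match.

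The one genuinely new point to address, and the step I expect to require the most care, is the relative compactness assertion in part (a): \emph{each ball in the list $B_{\bar g}(q,e^{-Dr/(m-2)}r)\subset B(q,r)\subset B_{\bar g}(q,e^{Dr/(m-2)}r)$ is relatively compact in $M$}. Since $\bar{f}=f-f(q)$ is bounded on $B(q,r)$ by~(\ref{eqn:PE06_8}), the metrics $\bar g$ and $g$ are uniformly equivalent on $B(q,r)$, so a $\bar g$-ball trapped inside $B(q,r)$ has compact closure precisely when the corresponding $g$-set does. The outermost ball $B_{\bar g}(q,e^{Dr/(m-2)}r)$ is the potential concern, since a priori it need not sit inside $B(q,r)$. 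However, by the same conformal comparison run in the opposite direction (as in the proof of Lemma~\ref{lma:PE04_4}), one shows $B_{\bar g}(q,e^{Dr/(m-2)}r)\subset B(q, e^{2Dr/(m-2)}r)$; enlarging $D$ to $D'=2D$ if necessary keeps us inside $B(p,D')$, and since $M$ is complete the closed metric ball $\overline{B(q,\cdot)}$ is compact by Hopf--Rinow. Hence every ball in the chain has compact closure inside $M$. (Alternatively: the closure of $B(q,r)$ in $(M,g)$ is compact, $\bar g$ is a smooth metric comparable to $g$ on a neighborhood of this compact set, and all the listed balls lie in a fixed slightly larger $g$-ball, which is compact.)

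Concretely, the proof I would write is: fix $(M,p,g,f)$, $q$, $r$, $D$ as in the hypotheses. For (a), cite~(\ref{eqn:PE18_4}) from Lemma~\ref{lma:PE04_4} verbatim for the inclusions, then argue relative compactness as above via completeness of $(M,g)$, the uniform equivalence $e^{-Dr/(m-2)}g\le\bar g\le e^{Dr/(m-2)}g$ on $B(q,r)$ from~(\ref{eqn:PE06_8}), and a one-line conformal estimate placing the outer $\bar g$-ball inside a compact $g$-ball. For (b), cite~(\ref{eqn:PE06_12}). For (c), observe $0<\rho<D^{-1}r\le\min\{0.1r,D^{-1}\}$ using $D>10m\ge 10$ and $r\le 1$, then cite~(\ref{eqn:PE04_12}); note the bound $2Dr^2$ there is $\le 2D\rho^2$ only after we also use $\rho<r$, so in fact one should invoke the intermediate inequality in the proof of Lemma~\ref{lma:PE18_1}, namely $|d_{\bar g}(x,y)-d(x,y)|\le 2D\rho^2$ for $d(x,y)\le 2\rho$, which gives $d_{GH}(B_{\bar g}(q,\rho),B(q,\rho))<2D\rho^2$ directly. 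For (d), cite~(\ref{eqn:PE06_7}) from Lemma~\ref{lma:PE06_3}, whose hypothesis $B_{\bar g}(q,r/(10D))\subset B(q,r)$ is part of~(\ref{eqn:PE18_4}). This completes the proof.

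\begin{proof}
Throughout, the hypotheses $B(q,r)\subset B(p,D)$, $D>10m$, and $r\in(0,1]$ are in force, so estimate~(\ref{eqn:PE06_8}) holds, i.e.\ $|\bar f|=|f-f(q)|\le Dr$ on $B(q,r)$.

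\textbf{(a).} The inclusions in~(\ref{eqn:PE27_8}) are exactly~(\ref{eqn:PE18_4}) of Lemma~\ref{lma:PE04_4}. For relative compactness, note first that by~(\ref{eqn:PE06_8}) the conformal factor $e^{2\bar f/(m-2)}$ lies in $[e^{-2Dr/(m-2)},e^{2Dr/(m-2)}]$ on $B(q,r)$, so $\bar g$ and $g$ are uniformly equivalent there; running the length comparison of Lemma~\ref{lma:PE04_4} as before shows
\begin{align*}
  B_{\bar g}\!\left(q, e^{\frac{Dr}{m-2}}r\right) \subset B\!\left(q, e^{\frac{2Dr}{m-2}}r\right).
\end{align*}
Since $(M,g)$ is complete, the closed ball $\overline{B\!\left(q, e^{2Dr/(m-2)}r\right)}$ is compact by Hopf--Rinow; all three balls in~(\ref{eqn:PE27_8}) are contained in it, hence each is relatively compact in $M$.

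\textbf{(b).} This is precisely~(\ref{eqn:PE06_12}) of Lemma~\ref{lma:PE18_1}.

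\textbf{(c).} Let $\rho\in(0,D^{-1}r)$. Since $D>10m\ge 1$ and $r\le 1$, we have $\rho<0.1r$ and $\rho<D^{-1}$, so~(\ref{eqn:PE06_12}) applies to any $x,y\in B(q,\rho)$. For such $x,y$, $d(x,y)\le 2\rho$, and since $\frac{D\rho}{m-2}<\frac{1}{m-2}\le\frac12$ we get
\begin{align*}
  \max\!\left\{ e^{\frac{D\rho}{m-2}}-1,\; 1-e^{-\frac{D\rho}{m-2}}\right\} < D\rho,
\end{align*}
hence $|d_{\bar g}(x,y)-d(x,y)|<2D\rho^2$. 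Taking the identity map as a correspondence between $B_{\bar g}(q,\rho)$ and $B(q,\rho)$ yields~(\ref{eqn:PE27_10}).

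\textbf{(d).} Since $\frac{r}{10D}<e^{-\frac{Dr}{m-2}}r$ (as $\frac{Dr}{m-2}<0.1$ and $10D>e^{0.1}$ for $D>10m$), the ball $B_{\bar g}(q,\frac{r}{10D})$ is contained in $B(q,r)$ by~(\ref{eqn:PE27_8}), so Lemma~\ref{lma:PE06_3} applies and gives $|\overline{Rc}|_{\bar g}<D^2$ on $B_{\bar g}(q,\frac{r}{10D})$, which is~(\ref{eqn:PE27_11}).
\end{proof}
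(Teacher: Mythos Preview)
Your proposal is correct and matches the paper's intent: the paper states the theorem as a summary of the preceding lemmas (``The previous lemmas can be summarized as:'') and gives no separate proof, so your assembly of Lemmas~\ref{lma:PE04_4}, \ref{lma:PE18_1}, and~\ref{lma:PE06_3} is exactly what is expected. You also correctly catch the $2Dr^2$ versus $2D\rho^2$ discrepancy between~(\ref{eqn:PE04_12}) and~(\ref{eqn:PE27_10}) and resolve it by going back to the intermediate step in the proof of Lemma~\ref{lma:PE18_1}, and you supply the (straightforward) relative compactness argument that the paper leaves implicit.
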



\section{A local regularity theorem}
\label{sec:regularity}

The purpose of this section is to generalize the regularity improvement theorems of Ricci-flat manifolds to Ricci shrinkers.

We first recall some definitions of different radii.
\begin{definition}[\textbf{Volume radius}]
On a Riemannian manifold $(M^m, g)$, we say $vr_{g, \delta}(x) > r$ if
\begin{align}
\omega_m^{-1} r^{-m} |B(x,r)| > 1- \delta. \label{eqn:PH10_1}
\end{align}
The volume radius of $(M, g)$ at $x$ is defined as the supremum of all $r$ such that $vr_{g,\delta}(x) >r$. This volume radius is denoted by $vr_{g,\delta}(x)$.
\label{dfn:PH10_1}
\end{definition}

\begin{definition}[\textbf{Gromov-Hausdorff radius}]
On a Riemannian manifold $(M^m, g)$, we say $gr_{g, \epsilon}(x) > r$ if
\begin{align}
r^{-1} d_{GH} \left( B_{g}(x, r), B(0,r) \right)< \epsilon \label{eqn:PH10_11}
\end{align}
The Gromov-Hausdorff radius of $(M, g)$ at $x$ is defined as the supremum of all $r$ such that $gr_{g,\epsilon}(x) >r$. This Gromov-Hausdorff radius is denoted by $gr_{g,\delta}(x)$.
\label{dfn:PH10_2}
\end{definition}


\begin{definition}[\textbf{Harmonic radius}, cf. p. 436 of~\cite{An90} and Definition 5 of~\cite{HeHe}]
On a Riemannian manifold $(M^m, g)$, we say $hr_{g}(x) > r$ if we can find a harmonic diffeomorphism $\varphi: B(x, r) \to \Omega=\varphi(B(x,r)) \subset \R^m$ such that
\begin{subequations}
\begin{align}[left = \empheqlbrace \,]
&0.5 g_E \leq g \leq 2 g_E, \textrm{as bilinear forms on} \; \Omega;\label{eqn:PD03_2a}\\
&\norm{g}{C^{1,\frac{1}{2}}(\Omega)} \leq 1 \label{eqn:PD03_2b}
\end{align}
\label{eqn:PD03_2}
\end{subequations}
where we have denoted $\varphi_* g$ by $g$ for simplicity of notations.
The inequality (\ref{eqn:PD03_2b}) means that
\begin{align}
r \sum_{|\beta|=1}\sup_{w \in \Omega} |\partial^{\beta} g_{ij}(w)|
+ r^{1.5} \sum_{|\beta|=1} \sup_{y,z \in \Omega, y \neq z} \frac{|\partial^{\beta} g_{ij}(y)-\partial^{\beta} g_{ij}(z)|}{d_{g}^{0.5}(y,z)} \leq 1,
\label{eqn:PD03_3}
\end{align}
where $d_{g}$ is the distance associated to $g$.
The harmonic radius of $(M, g)$ at $x$ is defined as the supremum of all $r$ such that $hr_{g}(x) >r$. This harmonic radius is denoted by $hr_g(x)$.
The subindex $g$ will be omitted when it is clear in the context.
\label{dfn:PC09_1}
\end{definition}

\begin{definition}[\textbf{Strongly-convex radius}]
On a Riemannian manifold $(M^{m}, g)$, we say that $sr_{g}(x)>r$ if the exponential map
\begin{align*}
Exp_{x}: B(0,10r) \subset (\R^{m}, g_{E}) \mapsto Exp_{x}(B(0,10r)) \subset (M^{m}, g)
\end{align*}
is a diffeomorphism such that
\begin{align}
\sum_{k=1}^{5}r^{k} \left\{ \sum_{|\beta|=k} \sup_{w \in B(0,10r)} |\partial^{\beta} h_{ij}(w)| \right\} + \sup_{w \in B(0,10r)} |h_{ij}(w)-\delta_{ij}(w)|<10^{-m}
\label{eqn:PD15_4}
\end{align}
where $h=(Exp_{x})^{*} g$. The strongly-convex radius of $(M,g)$ at $x$ is defined as the supremum of all $r$ such that $sr_{g}(x)>r$. This strongly-convex radius is denoted by $sr_{g}(x)$.
It is also called convex radius for the simplicity of notations.
\label{dfn:PD15_1}
\end{definition}

Let us say a few words about why we call $sr_{g}$ strongly-convex radius.
Suppose $r<sr_{g}(x)$, by (\ref{eqn:PD15_4}), it is easy to see that $B(x,r)$ is strongly-convex.
Namely, for every two points $q_1, q_2 \in B(x,r)$, there exists a unique minimizing geodesic $\gamma \subset B(x,r)$ connecting $q_1$ and $q_2$.
The proof of this fact follows from the standard argument of Riemannian geometry (cf. Section 3.4 of~\cite{DoCarmo}).
Note that the argument is local and requires only $C^3$-regularity of the exponential map. It works here since we have $C^{5}$-regularity of
the exponential map. The constant $5$ is chosen large enough to guarantee the previously mentioned argument, and it can be replaced by any other fixed $k$ greater than $3$.
In all the previous definitions, the subindex $g$ will be omitted when it is clear in the context. On a complete Riemannian manifold, each radius is a positive function.
We introduce the concept of ``domination" to study the relationship of different radii.

\begin{definition}[\textbf{Domination}]
Suppose $r_1$ and $r_2$ are two positive functions on $M^m$.
We say that $r_1$ dominates $r_2$ if we have
\begin{align*}
r_2 < c^{-1} r_1
\end{align*}
for some constant $0<c=c(m)<1$.
We say that $r_1$ and $r_2$ are equivalent if $r_1$ dominates $r_2$ and $r_2$ dominates $r_1$. In other words, there exists a uniform small constant $c=c(m)$ such that
\begin{align*}
c r_1 < r_2 < c^{-1} r_1
\end{align*}

\label{dfn:PG27_1}
\end{definition}

It is interesting to compare the four radii $gr, vr, hr$ and $sr$ on complete Ricci-flat manifolds.
The choice of $\delta$ and $\epsilon$ in Definition~\ref{dfn:PH10_1} and Definition~\ref{dfn:PH10_2} depends on
the fundamental volume convergence theorem of Colding and the gap theorem of Anderson.
We list them below as lemmas for the convenience of the readers.

\begin{lemma}[\textbf{Anderson's gap lemma}, cf. Lemma 3.1 of~\cite{An90}]
Suppose $(N^{m}, g)$ is a complete Ricci-flat, non-flat manifold, then we have
\begin{align}
AVR(N) \coloneqq \lim_{r \to \infty} \omega_{m}^{-1} r^{-m} |B(x,r)| < 1-\delta_0 \label{eqn:PD30_4}
\end{align}
for some $\delta_0=\delta_0(m)$. Note that the $AVR(N)$ in (\ref{eqn:PD30_4}) is the asymptotic volume ratio, which is independent of the choice of $x$ by the Bishop-Gromov volume comparison theorem.
The constant $\delta_0$ in (\ref{eqn:PD30_4}) is called the gap constant of Anderson.
\label{lma:PD30_3}
\end{lemma}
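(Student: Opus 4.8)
The plan is to argue by contradiction, using the standard blow-down/rescaling technique together with the Cheeger--Colding theory that has already been invoked elsewhere in the paper. Suppose the lemma fails: then for every $\delta_0 > 0$ there is a complete Ricci-flat non-flat manifold $(N^m, g)$ with $AVR(N) > 1 - \delta_0$. Taking $\delta_0 = 1/j \to 0$, we obtain a sequence $(N_j^m, x_j, g_j)$ of complete Ricci-flat non-flat manifolds with $AVR(N_j) \to 1$. Since $N_j$ is Ricci-flat, the Bishop--Gromov monotone quantity $\omega_m^{-1} r^{-m} |B(x_j,r)|$ is non-increasing in $r$ and has limit $AVR(N_j)$ as $r \to \infty$; hence $\omega_m^{-1} r^{-m} |B(x_j,r)| \geq AVR(N_j) \to 1$ for every fixed $r$, and in particular each $N_j$ has Euclidean volume growth arbitrarily close to maximal on all scales.

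First I would pass to a pointed Gromov--Hausdorff limit. Since $Rc_{g_j} = 0 \geq 0$ and the volume ratios at $x_j$ are uniformly bounded below (close to $1$), Gromov's precompactness applies and, after passing to a subsequence, $(N_j, x_j, g_j) \to (N_\infty, x_\infty, d_\infty)$ in the pointed Gromov--Hausdorff topology. By the volume convergence theorem of Colding, the renormalized volumes converge, so $\omega_m^{-1} r^{-m} |B(x_\infty, r)| = 1$ for all $r > 0$. By the volume rigidity (equality case) in the Bishop--Gromov / Cheeger--Colding theory, a metric measure space with Ricci curvature bounded below by $0$ and exactly Euclidean volume ratio at every scale must be isometric to $\mathbb R^m$; thus $N_\infty = \mathbb R^m$ with the flat metric. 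Next I would upgrade the convergence: since each $N_j$ is Ricci-flat and non-collapsed (volume ratio near $1$), the $\epsilon$-regularity theorem (Anderson's, or equivalently the harmonic-radius estimate that the paper itself uses in the form of Lemma~\ref{lma:PD30_3}'s companion statements) gives a uniform lower bound on the harmonic radius on compact subsets, so the convergence is actually in $C^{1,\alpha}$ — indeed in $C^\infty$ by elliptic bootstrapping of the Ricci-flat equation in harmonic coordinates. Therefore $(N_j, x_j, g_j) \to (\mathbb R^m, 0, g_E)$ smoothly on compact sets.

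The final step, which I expect to be the main obstacle, is to derive a contradiction from smooth convergence to flat $\mathbb R^m$. The point is that smooth (or even $C^{1,\alpha}$) convergence on every compact set, combined with the Ricci-flat equation and uniform curvature estimates coming from the $\epsilon$-regularity theorem applied at all scales (this uses the scale-invariance of the Ricci-flat condition: rescaling $g_j$ keeps it Ricci-flat with volume ratio still near $1$), forces $|Rm_{g_j}| \to 0$ uniformly on $N_j$ for $j$ large — one argues that if $\sup_{N_j}|Rm_{g_j}|$ did not go to zero, a point-picking and rescaling argument would produce a non-flat Ricci-flat blow-up limit with maximal volume growth, contradicting the rigidity just established. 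Once $\sup_{N_j}|Rm_{g_j}|$ is small, a gap theorem for Ricci-flat manifolds with almost-maximal volume growth (or directly: a manifold with $Rc = 0$, $AVR$ close to $1$ and small curvature must be flat, e.g.\ via the fact that the tangent cone at infinity is $\mathbb R^m$ and the manifold is then forced to be $\mathbb R^m$ by a cone-rigidity argument) shows that $N_j$ is flat for large $j$, hence flat $= \mathbb R^m$ since it is simply connected at infinity with maximal volume growth — contradicting the non-flatness assumption. The delicate part is making the ``small curvature $+$ almost maximal volume $\Rightarrow$ flat'' implication precise; the cleanest route is to note that under these hypotheses the harmonic radius at $x_j$ grows without bound, so in the limit one gets an entire flat chart, forcing $N_\infty$ to be genuinely $\mathbb R^m$ and then propagating flatness back to $N_j$ via the smooth convergence and analyticity of Ricci-flat metrics.
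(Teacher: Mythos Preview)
The paper does not give its own proof of this lemma: it is quoted as a known result with the citation ``c.f.\ Lemma~3.1 of~\cite{An90}'' and is used as a black box (notably in Step~3 of the proof of Theorem~\ref{thm:PH10_1}). So there is no proof in the paper to compare your argument against.

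That said, your argument has a genuine circularity problem. In Step~3 you upgrade the Gromov--Hausdorff convergence to smooth convergence by invoking ``the $\epsilon$-regularity theorem (Anderson's, or equivalently the harmonic-radius estimate that the paper itself uses)''. But in Anderson's paper, and in the present paper (see Step~3 of the proof of Theorem~\ref{thm:PH10_1}), the harmonic-radius / $\epsilon$-regularity estimate is proved \emph{using} the gap lemma you are trying to establish. The same circularity reappears at the end of Step~4, where you write ``once $\sup_{N_j}|Rm_{g_j}|$ is small, a gap theorem for Ricci-flat manifolds with almost-maximal volume growth \ldots\ shows that $N_j$ is flat'': that gap theorem is precisely Lemma~\ref{lma:PD30_3}.

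The good news is that the point-picking step you sketch in the middle of Step~4 is the whole proof, once done correctly, and it needs neither $\epsilon$-regularity nor any auxiliary gap statement. Given $(N_j,g_j)$ Ricci-flat, non-flat, with $AVR(N_j)\to 1$, pick $x_j$ with $|Rm_{g_j}|(x_j)>0$ and run Perelman-type point-picking on the (compact, by Hopf--Rinow) closed ball $\overline{B(x_j, 2j\,|Rm|(x_j)^{-1/2})}$ to find $y_j$ with $|Rm|(y_j)\ge |Rm|(x_j)$ and $|Rm|\le 4|Rm|(y_j)$ on $B(y_j, j\,|Rm|(y_j)^{-1/2})$. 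Rescale so that $|Rm|(y_j)=1$; the rescaled metric is still Ricci-flat with the same $AVR$, now with $|Rm|\le 4$ on $B(y_j,j)$. Since $AVR(N_j)\to 1$, Bishop--Gromov gives $\omega_m^{-1}r^{-m}|B(y_j,r)|\ge AVR(N_j)\to 1$ for every $r$, so the sequence is non-collapsed and Cheeger--Gromov compactness (bounded curvature plus injectivity radius bound from Cheeger's lemma) yields a smooth complete Ricci-flat limit $(N_\infty,y_\infty)$ with $|Rm|(y_\infty)=1$ and $\omega_m^{-1}r^{-m}|B(y_\infty,r)|=1$ for all $r$. The equality case of Bishop--Gromov then forces $N_\infty\cong\mathbb R^m$, contradicting $|Rm|(y_\infty)=1$. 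This is self-contained and avoids the circularity; your Steps~2--3 and the second half of Step~4 can simply be deleted.
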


\begin{lemma}[\textbf{Colding's volume continuity}, cf. ~\cite{Colding}]
For each $\delta>0$, there exists an $\epsilon=\epsilon(m, \delta)$ with the following properties.

Suppose $(N^{m}, g)$ is a complete Riemannian manifold, $x \in N$. Suppose
\begin{align}
& Rc \geq -(m-1), \quad \textrm{on} \; B(x, 1); \label{eqn:PH11_1}\\
& d_{GH} \left( B_{g_E}(0,1), B(x,1)\right)<\epsilon. \label{eqn:PH11_2}
\end{align}
Here $B_{g_E}(0,1)$ is the standard unit ball in the Euclidean space $\R^{m}$. Then we have
\begin{align*}
\omega_{m}^{-1} \cdot 10^{m} \cdot |B(x, 0.1)| \geq 1-\delta.
\end{align*}
\label{lma:PH11_1}
\end{lemma}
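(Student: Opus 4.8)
The plan is a compactness-and-contradiction argument that reduces the statement to the volume convergence theorem of Colding \cite{Colding}. It suffices to produce, for each $\delta>0$, a threshold $\epsilon=\epsilon(m,\delta)>0$ such that hypotheses (\ref{eqn:PH11_1}) and (\ref{eqn:PH11_2}) force $\omega_m^{-1}\cdot 10^m\cdot|B(x,0.1)|\geq 1-\delta$. I would argue by contradiction: if no such $\epsilon$ exists for some fixed $\delta>0$, there are complete Riemannian manifolds $(N_i^m,g_i)$ with points $x_i\in N_i$ such that
\begin{align*}
Rc_{g_i}\geq-(m-1)\ \text{on}\ B(x_i,1),\qquad d_{GH}\left(B_{g_E}(0,1),\,B(x_i,1)\right)<\frac{1}{i},
\end{align*}
but $\omega_m^{-1}\cdot 10^m\cdot|B(x_i,0.1)|<1-\delta$ for every $i$.

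Next I would apply Gromov's precompactness theorem to the balls $B(x_i,\frac{1}{2})$, using their lower Ricci bound, and pass to a subsequence along which $\left(B(x_i,\frac{1}{2}),x_i\right)$ converges in the pointed Gromov-Hausdorff sense. Since $d_{GH}\left(B(x_i,1),B_{g_E}(0,1)\right)\to 0$, the limit is forced to be $\left(B_{g_E}(0,\frac{1}{2}),0\right)$, and hence for each fixed $r<\frac{1}{2}$ the metric balls $B(x_i,r)$ converge to the Euclidean ball $B_{g_E}(0,r)$. Applying the volume convergence theorem of Colding at scale $r=0.1$ then gives
\begin{align*}
|B(x_i,0.1)|\longrightarrow\omega_m\,(0.1)^m,
\end{align*}
so that $\omega_m^{-1}\cdot10^m\cdot|B(x_i,0.1)|\to 1$, contradicting $\omega_m^{-1}\cdot10^m\cdot|B(x_i,0.1)|<1-\delta$. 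This establishes the existence of the desired threshold $\epsilon(m,\delta)$.

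I expect the main obstacle to be the passage from Gromov-Hausdorff closeness of the unit balls to volume control of the $0.1$-balls; this genuinely belongs to the almost-rigidity theory and is not a consequence of the Bishop-Gromov inequality alone. Bishop-Gromov only gives that $r\mapsto|B(x,r)|/v_{-1}(r)$ is non-increasing, where $v_{-1}(r)$ is the volume of the $r$-ball in the space form of curvature $-1$; thus, even once one knows that $|B(x,1)|$ is close to $\omega_m$ — which is already Colding's theorem at scale $1$ — one obtains at best $\omega_m^{-1}\cdot10^m\cdot|B(x,0.1)|\gtrsim \frac{10^m\,v_{-1}(0.1)}{v_{-1}(1)}$, a dimensional quantity strictly less than $1$ and therefore useless when $\delta$ is small. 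One must instead exploit that the Gromov-Hausdorff limit is honestly Euclidean near its center — equivalently, that its renormalized limit measure coincides there with Lebesgue measure — which is precisely the information packaged in \cite{Colding} and what makes the contradiction argument go through.
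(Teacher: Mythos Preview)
The paper does not supply its own proof of this lemma: it is stated with a citation to \cite{Colding} and used as a black box. Your contradiction-and-compactness argument is the standard way to extract this uniform $\epsilon$--$\delta$ formulation from Colding's volume convergence theorem, and your discussion of why Bishop--Gromov alone is insufficient is accurate; so your proposal is correct and there is nothing further to compare.
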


In the remainder discussion of this section, we shall fix $\delta$ as $0.1 \delta_0$ where $\delta_0$ is Anderson's gap constant in Lemma~\ref{lma:PD30_3}.
Correspondingly, we fix $\epsilon$ as $\epsilon(m,\delta)$ by Lemma~\ref{lma:PH11_1}.
Then the following equivalence theorem is well-known.
\begin{theorem}
Suppose $(M^m,g)$ is a complete Ricci-flat manifold. Then $gr, vr, hr$ and $sr$ are all equivalent (cf. Definition~\ref{dfn:PG27_1}).
\label{thm:PH11_1}
\end{theorem}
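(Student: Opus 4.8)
The plan is to establish the chain of dominations

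$$
vr \;\preceq\; gr \;\preceq\; hr \;\preceq\; sr \;\preceq\; vr,
$$

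where $a \preceq b$ means $b$ dominates $a$ in the sense of Definition~\ref{dfn:PG27_1}, so that running around the cycle yields mutual equivalence. Three of these four inclusions are essentially soft; the one genuine analytic input is $gr \preceq hr$, which is Anderson's harmonic-radius estimate.

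\medskip

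First, $sr \preceq vr$: if $r < sr_g(x)$, then by (\ref{eqn:PD15_4}) the metric in the exponential chart is $C^0$-close to $\delta_{ij}$ on $B(0,10r)$, so $|B(x,r)|$ is within, say, a factor $(1+10^{-m})^{m}$ of $\omega_m r^m$; shrinking $r$ by a fixed factor $c(m)$ makes the volume-ratio deficit smaller than $\delta = 0.1\delta_0$, giving $vr_{g,\delta}(x) > c\, sr_g(x)$. Next, $hr \preceq sr$: on a Ricci-flat manifold the harmonic chart gives $g \in C^{1,1/2}$ with bounds (\ref{eqn:PD03_2}), and then elliptic regularity for the harmonic-coordinate equation $\Delta_g x^k = 0$, i.e. $g^{ij}\partial_i\partial_j x^k = 0$ together with $\mathrm{Rc}=0$ written in harmonic coordinates as $g^{ij}\partial_i\partial_j g_{k\ell} = Q(g,\partial g)$, bootstraps $g$ to $C^\infty$ with scale-invariant bounds on a ball of definite fraction of $hr_g(x)$; from these bounds one controls the exponential map to order $5$ and verifies (\ref{eqn:PD15_4}) after rescaling, so $sr_g(x) > c\, hr_g(x)$. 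Third, $vr \preceq gr$ is immediate from Colding's volume convergence, Lemma~\ref{lma:PH11_1}: if the Gromov-Hausdorff radius is $>r$ (rescale so $r=1$) and $\mathrm{Rc}\ge 0 \ge -(m-1)$, then $\omega_m^{-1} 10^m |B(x,0.1)| \ge 1-\delta$, i.e. $vr_{g,\delta}(x) \ge 0.1\, gr_{g,\epsilon}(x)$.

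\medskip

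The main obstacle — and the reason the cycle must pass through $hr$ — is $gr \preceq hr$, Anderson's theorem that a definite Gromov-Hausdorff closeness to a Euclidean ball (for a manifold with, here, bounded Ricci curvature, in fact Ricci-flat) forces a definite lower bound on the harmonic radius. The argument is by contradiction and compactness: if not, one gets a sequence of pointed Ricci-flat manifolds with $gr_{g_i,\epsilon}(x_i)$ bounded below but $hr_{g_i}(x_i) \to 0$; rescaling so that the harmonic radius becomes $1$ and extracting a limit, one uses the $\epsilon$-regularity/harmonic-coordinate construction to produce a limit that is a complete Ricci-flat manifold which is flat (because the rescaled GH-radius blew up, so the limit is $\mathbb{R}^m$) yet has harmonic radius exactly $1$ at the base point — contradicting that $\mathbb{R}^m$ has infinite harmonic radius. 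Equivalently one may invoke Anderson's gap lemma, Lemma~\ref{lma:PD30_3}: a non-flat Ricci-flat limit cone would have $AVR < 1-\delta_0$, contradicting the volume lower bound coming from $vr \ge 1-\delta = 1-0.1\delta_0$ via the previous step; this is precisely why the constants $\delta$ and $\epsilon$ were fixed the way they were.

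\medskip

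So the proof is: fix the constants as above, prove $sr \preceq vr$ (trivial $C^0$ volume estimate), $vr \preceq gr$ (Colding, Lemma~\ref{lma:PH11_1}), $gr \preceq hr$ (Anderson's $\epsilon$-regularity, the analytic heart, using the gap constant $\delta_0$), and $hr \preceq sr$ (elliptic bootstrap in harmonic coordinates plus control of $\exp$), then compose to conclude all four radii are pairwise equivalent with constants depending only on $m$. I expect $gr \preceq hr$ to occupy essentially all the real work, the other three being short lemmas; the paper will presumably cite Anderson~\cite{An90} for it and only sketch the rest.
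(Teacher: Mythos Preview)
Your cycle $vr \preceq gr \preceq hr \preceq sr \preceq vr$ is a valid strategy in principle, but the link $vr \preceq gr$ is argued in the wrong direction. You invoke Colding's Lemma~\ref{lma:PH11_1}, which says that GH-closeness to a Euclidean ball implies volume-closeness --- that is $gr \preceq vr$, the \emph{reverse} of what your cycle needs. Your own conclusion ``$vr_{g,\delta}(x) \ge 0.1\, gr_{g,\epsilon}(x)$'' confirms this: it says $gr$ is dominated by $vr$, not the other way. With this direction flipped, the four inequalities you have actually established are $gr \preceq hr \preceq sr \preceq vr$ together with the redundant $gr \preceq vr$; nothing bounds $vr$ from above by $gr$, and the cycle does not close.

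There are two fixes. One is to keep your cycle and prove $vr \preceq gr$ correctly, which requires Cheeger--Colding almost-volume-cone rigidity (volume close to Euclidean plus nonnegative Ricci implies GH-close to Euclidean) --- a deeper statement than Lemma~\ref{lma:PH11_1}. The other, which is what the paper does, is to run the cycle in the order $gr \preceq vr \preceq hr \preceq sr \preceq gr$: use Colding for $gr \preceq vr$ (now in the correct direction), use Anderson's blowup argument with the gap Lemma~\ref{lma:PD30_3} for $vr \preceq hr$ (taking the \emph{volume} lower bound, not the GH radius, as the input --- this is your ``equivalently'' formulation), keep your bootstrap for $hr \preceq sr$, and replace your $sr \preceq vr$ by the equally trivial $sr \preceq gr$ (the $C^0$-closeness of $g$ to $\delta_{ij}$ in the exponential chart gives GH-closeness to a Euclidean ball directly, not just volume-closeness). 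The paper's route is slightly more economical because it uses only Colding's volume continuity and avoids the heavier Cheeger--Colding almost rigidity.
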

\begin{table}[hh]
\begin{displaymath}
\xymatrix{ & gr \ar@{->}[d]^{1} & & sr\ar@{->}[ll]_{4}\\
& vr \ar@{->}[rr]^{2}& & \ar@{->}[u]_{3} hr } \\
\end{displaymath}
\caption{The relationship among radii on a Ricci-flat manifold}
\label{table:Ricci-flat-radii}
\end{table}

\begin{proof}[Sketch of the proof:]
The proof of Theorem~\ref{thm:PH11_1} can be described by Table~\ref{table:Ricci-flat-radii}.
They are well-known in the literature. So we only give a sketch of the proof and leave the details to be filled in by the interested readers.
In Table~\ref{table:Ricci-flat-radii}, $\to$ means ``is dominated by". The first column of Table~\ref{table:Ricci-flat-radii}, or step $1$, means that $gr$ is dominated by $vr$.
This step is nothing but Colding's volume continuity theorem (cf. Lemma~\ref{lma:PH11_1}).
Step 2 means $vr$ is dominated by $hr$, which can be obtained by a classical blowup argument together with Anderson's gap theorem (cf. Lemma~\ref{lma:PD30_3}).
Step 3 says $hr$ is dominated by $sr$, and this follows from the standard Schauder estimate since the Ricci-flat equation is a uniformly elliptic system of metrics under the harmonic coordinate chart.
Step 4 means $sr$ is dominated by $gr$. It follows from the definition and direct calculations.
After we run through the circle $1234$, it is clear that all of the radii: $gr,vr,hr$ and $sr$ are all equivalent.
\end{proof}
We now move on to study the relationship among different radii on Ricci shrinkers.
From the discussion in Section~\ref{sec:conformal}, we know that under the scale
$\frac{1}{10D}$, the Riemannian metric $g$ is almost isometric to $\bar{g}=e^{-\frac{2\bar{f}}{m-2}} g$,
which has bounded Ricci curvature (cf. Theorem~\ref{thm:PE27_1}).
Therefore, it is reasonable to consider the restricted radii for both $g$ and $\bar{g}$.
To distinguish from the classical radii, we write all restricted radii in the bold symbol.

\begin{definition}
We define
\begin{align}
&\mathbf{gr}(x) \coloneqq \sup \left\{ r \left| 0<r<\frac{1}{100D}, \quad r^{-1} d_{GH}\left( B(0,r), B_{g}(x,r) \right)<\frac{\epsilon}{100} \right. \right\}, \label{eqn:PH02_1} \\
&\mathbf{vr}(x) \coloneqq \sup \left\{ r \left| 0<r<\frac{1}{100D}, \quad \omega_{m}^{-1}r^{-m} |B_{g}(x,r)|_{g} \geq 1-\frac{\delta}{100} \right. \right\}, \label{eqn:PH02_2}\\
&\mathbf{hr}(x) \coloneqq \max\left\{ hr_{g}(x), \frac{1}{100D} \right\}, \label{eqn:PH02_3} \\
&\mathbf{sr}(x) \coloneqq \max \left\{ sr_{g}(x), \frac{1}{100D} \right\}. \label{eqn:PH02_4}
\end{align}
Similarly, we define
\begin{align}
&\overline{\mathbf{gr}}(x) \coloneqq \sup \left\{ r \left| 0<r<\frac{1}{100D}, \quad r^{-1} d_{GH}\left( B(0,r), B_{\bar{g}}(x,r) \right)<\epsilon \right. \right\},\label{eqn:PH02_5} \\
&\overline{\mathbf{vr}}(x) \coloneqq \sup \left\{ r \left| 0<r<\frac{1}{100D}, \quad \omega_{m}^{-1}r^{-m} |B_{\bar{g}}(x,r)|_{\bar{g}} \geq 1-\delta \right. \right\},
\label{eqn:PH02_6}\\
&\overline{\mathbf{hr}}(x) \coloneqq \max\left\{ hr_{\bar{g}}(x), \frac{1}{100D} \right\}, \label{eqn:PH02_7} \\
&\overline{\mathbf{sr}}(x) \coloneqq \max \left\{ sr_{\bar{g}}(x), \frac{1}{100D} \right\}. \label{eqn:PH02_8}
\end{align}
\label{dfn:PH02_1}
\end{definition}
The following theorem is the main theorem of this section.
\begin{theorem}
Suppose $(M^m,p,g,f)$ is a Ricci shrinker, $D(x)=d(x,p)+10m$. Then the radii $\mathbf{gr}, \overline{\mathbf{gr}}, \mathbf{vr}, \overline{\mathbf{vr}}, \mathbf{hr}, \overline{\mathbf{hr}}, \mathbf{sr}, \overline{\mathbf{sr}}$
are all equivalent (cf. Definition~\ref{dfn:PG27_1}).
\label{thm:PH10_1}
\end{theorem}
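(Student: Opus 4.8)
The plan is to prove Theorem~\ref{thm:PH10_1} by establishing a cycle of domination relations among the eight radii, analogous to the cycle $1\to2\to3\to4$ of Table~\ref{table:Ricci-flat-radii}, but now crossing back and forth between the metric $g$ and the conformal metric $\bar g=e^{-\frac{2\bar f}{m-2}}g$. The key structural point is that below the scale $\frac{1}{100D}$ the two metrics are $C^0$-close, in fact $e^{-Dr/(m-2)}$-bi-Lipschitz on small balls by Theorem~\ref{thm:PE27_1}(b), and in particular $d_{GH}$-close by Theorem~\ref{thm:PE27_1}(c); moreover $\bar g$ has $|\overline{Rc}|_{\bar g}<D^2$ on such balls by Theorem~\ref{thm:PE27_1}(d). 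So first I would record these ``comparison" lemmas: (i) $\mathbf{gr}$ and $\overline{\mathbf{gr}}$ are equivalent, because a Gromov-Hausdorff ball for $g$ of radius $r$ is a Gromov-Hausdorff ball for $\bar g$ of radius $\approx r$ up to an error $O(Dr^2)$, which is absorbed by shrinking $r$ by a definite factor; (ii) $\mathbf{vr}$ and $\overline{\mathbf{vr}}$ are equivalent, because the volume elements $dv$ and $dv_{\bar g}$ differ by $e^{-\frac{mf}{m-2}}e^{\frac{mf(q)}{m-2}}$, whose oscillation on a ball of radius $r<\frac{1}{100D}$ is $1+O(Dr)$, again absorbable; (iii) $\mathbf{hr}$ and $\overline{\mathbf{hr}}$ are equivalent, and (iv) $\mathbf{sr}$ and $\overline{\mathbf{sr}}$ are equivalent — these last two because a harmonic (resp. exponential, strongly-convex) chart for one metric, rescaled slightly, is a chart of the same type for the other, the conformal factor being as smooth as we like once we have interior estimates on $\bar f$.

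With the ``barred = unbarred" equivalences in hand, the core of the argument reduces to running the Ricci-flat cycle of Theorem~\ref{thm:PH11_1} with $\bar g$ in place of the Ricci-flat metric. Concretely: $\overline{\mathbf{gr}}$ is dominated by $\overline{\mathbf{vr}}$ by Colding's volume continuity (Lemma~\ref{lma:PH11_1}), applied to $\bar g$ after rescaling so that the Ricci lower bound $\overline{Rc}\geq -(m-1)$ holds — legitimate since $|\overline{Rc}|_{\bar g}r^2$ is controlled at scale $r<\frac{1}{100D}$; $\overline{\mathbf{vr}}$ is dominated by $\overline{\mathbf{hr}}$ by the Anderson-type blowup/gap argument (Lemma~\ref{lma:PD30_3}) — here the limit of a blowup of $\bar g$ is Ricci-flat because the inhomogeneous term in (\ref{eqn:PE28_1}) scales away, so a non-collapsed blowup with almost-maximal volume ratio must be flat $\R^m$, forcing a definite-size harmonic chart; $\overline{\mathbf{hr}}$ is dominated by $\overline{\mathbf{sr}}$ by Schauder estimates — the system (\ref{eqn:CB04_2}) for $(\bar g,\bar f)$ in the harmonic chart is uniformly elliptic with bounded coefficients once $f(q)$ is bounded (which it is, by $\frac{D^2}{4}$, since $B(q,r)\subset B(p,D)$), so we bootstrap to get $C^5$-bounds and thence the strongly-convex chart; and finally $\overline{\mathbf{sr}}$ is dominated by $\overline{\mathbf{gr}}$ directly from the definitions, since a strongly-convex exponential chart is $C^0$-close to the Euclidean ball. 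Chaining these with the comparison lemmas closes the cycle and yields equivalence of all eight radii. Throughout, the truncation at $\frac{1}{100D}$ in the definitions of $\mathbf{hr},\mathbf{sr}$ (and their barred versions) is what lets the argument ``bottom out": when any radius is $\geq\frac{1}{100D}$ the comparisons are trivial, and when it is $<\frac{1}{100D}$ we are genuinely below the scale where Theorem~\ref{thm:PE27_1} applies.

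The step I expect to be the main obstacle is the Anderson-type gap step, $\overline{\mathbf{vr}}$ dominated by $\overline{\mathbf{hr}}$, because it is the only step that is not purely ``soft" comparison or ``standard Schauder": it requires a genuine contradiction/compactness argument. One must take a sequence of Ricci shrinkers with $\overline{\mathbf{vr}}(x_i)\gg \overline{\mathbf{hr}}(x_i)$, rescale $\bar g_i$ so that $\overline{\mathbf{vr}}$ becomes $1$, and show the rescaled balls converge (in $C^{1,\alpha}$, via the bounded-Ricci theory of Anderson/Cheeger-Colding applied to $\bar g_i$) to a limit which is Ricci-flat, non-collapsed, and has volume ratio $\to 1$, hence flat by Anderson's gap Lemma~\ref{lma:PD30_3} and volume rigidity, contradicting $\overline{\mathbf{hr}}\to 0$. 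The delicate points are: (a) checking that the rescaled inhomogeneous terms in (\ref{eqn:PE28_1}) — which involve $f$, $f(q)$, and $e^{\frac{2\bar f}{m-2}}$ — really do tend to zero after the parabolic-type rescaling, using the gradient and growth estimates (\ref{eqn:PE03_6}) and (\ref{eqn:PE06_8}); and (b) ensuring the convergence is strong enough (harmonic-radius lower-semicontinuity) that a definite-size harmonic chart on the flat limit pulls back to one on $\bar g_i$ for large $i$. Once this is set up carefully the rest is bookkeeping, but this is where the real content sits; I would isolate it as a separate lemma before assembling the cycle.
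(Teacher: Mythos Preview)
Your strategy is essentially the paper's --- the same core steps (Colding for $\overline{\mathbf{gr}}\to\overline{\mathbf{vr}}$, Anderson blowup for $\overline{\mathbf{vr}}\to\overline{\mathbf{hr}}$, and Schauder bootstrapping on the coupled system (\ref{eqn:CB04_2}) for $\overline{\mathbf{hr}}\to\overline{\mathbf{sr}}$, which is Lemma~\ref{lma:PH04_1}).  The difference is organizational: you want to prove the four barred$=$unbarred equivalences (i)--(iv) \emph{first} by soft comparison and then close only the barred cycle, whereas the paper runs one long mixed cycle (Table~\ref{table:Ricci-shrinker-radii}).

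There is a gap in your organization, however: (i), (ii) and (iii) are \emph{not} two-way soft comparisons.  Look at Definition~\ref{dfn:PH02_1}: the thresholds are deliberately asymmetric ($\epsilon/100$ for $\mathbf{gr}$ versus $\epsilon$ for $\overline{\mathbf{gr}}$, and $\delta/100$ versus $\delta$ for the volume radii).  The $O(Dr^2)$ error from Theorem~\ref{thm:PE27_1}(c) can be absorbed going from the stricter threshold to the looser one, but not back: from $\overline{\mathbf{gr}}(x)=r$ you only get $d_{GH}(B_g(x,r),B(0,r))<(\epsilon+2Dr)r$, which is never below $(\epsilon/100)r$.  Likewise the direction $\mathbf{hr}\to\overline{\mathbf{hr}}$ in (iii) is not soft: a $g$-harmonic chart does not hand you interior estimates on $\bar f$; the paper stresses (Remark~\ref{rmk:PI07_2}) that bootstrapping does not start in $g$-harmonic coordinates, and routes around it via $\mathbf{hr}\to\mathbf{vr}\to\overline{\mathbf{vr}}\to\overline{\mathbf{hr}}$.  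What rescues your plan is that (iv) \emph{does} work both ways (that is exactly Lemma~\ref{lma:PH04_1} together with Step~5), and once you have $\mathbf{sr}\sim\overline{\mathbf{sr}}$ you can add the trivial unbarred chain $\mathbf{sr}\to\mathbf{hr}\to\mathbf{gr}$, $\mathbf{hr}\to\mathbf{vr}$ and the one-way versions of (i),(ii) to close everything --- but you have not written those unbarred steps, so as stated your argument relies on the false two-way claims.

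One further remark on emphasis: you single out the Anderson step as the main obstacle, but the paper treats it as a direct citation of Anderson's argument since $|\overline{Rc}|_{\bar g}$ is already bounded on the relevant ball by Theorem~\ref{thm:PE27_1}(d).  The paper's ``key estimate'' is Lemma~\ref{lma:PH04_1} ($\overline{\mathbf{hr}}\to\overline{\mathbf{sr}}$), because that is where the coupled system for $(\bar g,\bar f)$ and the specific choice of $\bar g$-harmonic coordinates actually do work.
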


\begin{table}[h]
\begin{displaymath}
\xymatrix{
& \mathbf{gr} \ar@{->}[ddd]_{1} & & &\\
&&&&\\
& &\mathbf{vr} \ar@{->}[d]_{8} & \mathbf{hr}\ar@{->}[l]_{7} \ar@{->}[lluu]_{9} &\mathbf{sr}\ar@{->}[l]_{6} \\
& \overline{\mathbf{gr}} \ar@{->}[r]_{2}&\overline{\mathbf{vr}} \ar@{->}[r]_{3} & \overline{\mathbf{hr}} \ar@{->}[r]_{4} &\overline{\mathbf{sr}} \ar@{->}[u]_{5}} \\
\end{displaymath}
\caption{The relations among radii on a Ricci shrinker}
\label{table:Ricci-shrinker-radii}
\end{table}

The following lemma is the key estimate for proving Theorem~\ref{thm:PH10_1}.

\begin{lemma}
There is a small positive constant $c=c(m)$ such that
\begin{align}
\overline{\mathbf{sr}}(q) > c \cdot \overline{\mathbf{hr}}(q) \quad \forall \; q \in M. \label{eqn:PH07_1}
\end{align}
\label{lma:PH04_1}
\end{lemma}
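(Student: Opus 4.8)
\textbf{Proof proposal for Lemma~\ref{lma:PH04_1}.}

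The plan is to prove the inequality $\overline{\mathbf{sr}}(q) > c \cdot \overline{\mathbf{hr}}(q)$ by showing that on a harmonic coordinate chart for $\bar{g}$ of definite size, the couple $(\bar{g},\bar{f})$ satisfies the elliptic system \eqref{eqn:CB04_2}, and then bootstrapping to get bounds on all higher derivatives of $\bar{g}$, which in turn control the exponential map well enough to place a strongly-convex ball inside. First I would dispose of the trivial case: if $\overline{\mathbf{hr}}(q)=\frac{1}{100D}$ because $hr_{\bar g}(q) < \frac{1}{100D}$, then by definition $\overline{\mathbf{sr}}(q) \geq \frac{1}{100D} = \overline{\mathbf{hr}}(q)$ and there is nothing to prove (take $c<1$). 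So I may assume $\overline{\mathbf{hr}}(q) = hr_{\bar g}(q) =: \bar{r} \geq \frac{1}{100D}$, and in particular that $B_{\bar g}(q,\bar r)$ lies in a region where Theorem~\ref{thm:PE27_1} and Lemma~\ref{lma:PE06_3} apply, so that $|\overline{Rc}|_{\bar g}$ is bounded (by $D^2$, up to adjusting the scale by a harmless constant depending only on $m$).

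Next I would set up the bootstrap. Rescale $\bar g$ by $\bar r^{-2}$ (call it $\hat g$, as in the paper's $\hat g$) so that the harmonic radius becomes $1$; on the unit harmonic chart $\Omega$ we then have $0.5\, g_E \le \hat g \le 2\, g_E$ and $\|\hat g\|_{C^{1,1/2}(\Omega)} \le 1$ by Definition~\ref{dfn:PC09_1}. In harmonic coordinates the Ricci tensor takes the form $-\frac12 \hat g^{ab}\partial_a\partial_b \hat g_{ij} + Q(\hat g, \partial \hat g) = \widehat{Rc}_{ij}$, so \eqref{eqn:CB04_2a} becomes a uniformly elliptic second-order system for $\hat g_{ij}$ with right-hand side built from $\hat f := \bar r^{-?}\,$(appropriately scaled $\bar f$), $\partial \hat f$, and the bounded analytic function $e^{\frac{2(f(q)+\bar f)}{m-2}}$; note $f(q)$ is just a constant bounded by $\frac{D^2}{4}$ and $\bar f$ is Lipschitz with small oscillation by \eqref{eqn:PE06_8}. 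Meanwhile \eqref{eqn:CB04_2b} is $\hat\Delta_{\hat g}\hat f = (\text{bounded analytic in }\hat f)$, again uniformly elliptic. Starting from $\hat g \in C^{1,1/2}$ and $\hat f \in C^{0,1}$: elliptic $L^p$/Schauder theory applied to \eqref{eqn:CB04_2b} gives $\hat f \in C^{1,\alpha}$; feeding this into \eqref{eqn:CB04_2a} and applying Schauder to the $\hat g$-system gives $\hat g \in C^{2,\alpha}$; reinserting into \eqref{eqn:CB04_2b} gives $\hat f \in C^{2,\alpha}$; and iterating, $(\hat g,\hat f) \in C^{k,\alpha}$ for every $k$ on a slightly smaller ball, with bounds depending only on $m$, $D$ and the number of steps. (One must shrink the chart by a fixed fraction at each of the finitely many — say, six — steps; since only $C^5$-regularity of $\hat g$ is needed for the convex radius, this costs only a universal constant.)

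Finally, with $\hat g$ controlled in $C^5$ on a ball of definite radius around $q$, standard Riemannian geometry (the ODE for Jacobi fields / the exponential map, as in Section 3.4 of~\cite{DoCarmo}, exactly as invoked after Definition~\ref{dfn:PD15_1}) shows that $Exp_q$ is a diffeomorphism on a Euclidean ball of some definite radius $c_1(m)\,\bar r$ and that the pulled-back metric $h_{ij}$ satisfies \eqref{eqn:PD15_4}, after possibly shrinking by a further universal factor to absorb the constant $10^{-m}$; hence $sr_{\bar g}(q) > c(m)\,\bar r = c(m)\,\overline{\mathbf{hr}}(q)$, and a fortiori $\overline{\mathbf{sr}}(q) > c(m)\,\overline{\mathbf{hr}}(q)$. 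The main obstacle I anticipate is not any single estimate but the careful bookkeeping of the bootstrap: one has to track how $f(q)$ (a constant that can be as large as $\sim D^2$) and the rescaling factor $\bar r^{-2} \le (100D)^2$ enter the coefficients of the rescaled system, verify that the exponential in \eqref{eqn:CB04_2a}–\eqref{eqn:CB04_2b} stays bounded with bounded derivatives (this uses the Lipschitz bound and small oscillation of $\bar f$ on the small ball, \eqref{eqn:PE06_8}), and confirm that the constant $c$ at the end depends only on $m$ — which it does, because $D$ enters only through the floor $\frac{1}{100D}$ built into the definitions of $\overline{\mathbf{hr}}$ and $\overline{\mathbf{sr}}$, and the two are compared at the same scale.
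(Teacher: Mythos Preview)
Your approach is essentially the paper's: rescale so the $\bar g$-harmonic radius is $1$, write the coupled system \eqref{eqn:CB04_2} in harmonic coordinates, bootstrap $(\bar g,\bar f)$ from $(C^{1,1/2},C^{0,1})$ up to $C^5$, and read off a definite strongly-convex radius. Your bootstrap is in fact slightly more careful than the paper's (you correctly insert an $L^p$ step before the first Schauder application to $\bar f$), and you correctly identify the key bookkeeping point, namely that after rescaling the quantities $|\hat\nabla\bar f|$, $|\bar f|$, and $\rho^2 f(q)$ are bounded by absolute constants (the paper records this as \eqref{eqn:PH06_1}--\eqref{eqn:PH06_2}), so the final $c$ depends only on $m$.

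The one place your write-up goes astray is the initial case split. The definitions \eqref{eqn:PH02_7}--\eqref{eqn:PH02_8} as printed use $\max$, but the paper's own proof immediately writes $\rho=\overline{\mathbf{hr}}(q)\le \frac{1}{100D}$, so the intended definition is $\min$; with $\min$ there is no trivial case to dispose of, and more importantly your ``nontrivial'' case $\bar r\ge \frac{1}{100D}$ is problematic as stated, since Theorem~\ref{thm:PE27_1} and Lemma~\ref{lma:PE06_3} only supply the $\overline{Rc}$ bound on balls of $\bar g$-radius $\lesssim \frac{1}{10D}$, not on an arbitrarily large harmonic chart. The fix is exactly what the paper does: drop the case split, take $\rho=\overline{\mathbf{hr}}(q)\le\frac{1}{100D}$ directly, and run the bootstrap on $B_{\hat g}(q,1)=B_{\bar g}(q,\rho)$, where all the needed bounds are available.
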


\begin{proof}
We rescale the metric by setting
\begin{align}
\hat{g} \coloneqq \rho^{-2}\bar{g}=\rho^{-2} e^{-\frac{2\bar{f}}{m-2}}g, \label{eqn:PH14_1}
\end{align}
where $\rho=\overline{\mathbf{hr}}(x) \leq \frac{1}{100D}$,
$\bar{f}=f-f(q)$. Then it follows from \eqref{eqn:PC06_2}, \eqref{eqn:PC06_3} and \eqref{eqn:PE28_1} that the following identities hold
under the metric $\hat{g}$:
\begin{subequations}
\begin{align}[left = \empheqlbrace \,]
&\widehat{Rc}=\frac{1}{m-2} \left\{ d\bar{f} \otimes d\bar{f} + \rho^2 \left(m-1-\bar{f}-f(q) \right) e^{\frac{2\bar{f}}{m-2}} \hat{g} \right\}, \label{eqn:PH04_3A}\\
&\hat{\Delta} \bar{f}=e^{\frac{2\bar{f}}{m-2}} \rho^2 \left( \frac{m}{2} -\bar{f} - f(q)\right ). \label{eqn:PH04_3B}
\end{align}
\label{eqn:PH04_3}
\end{subequations}
It is clear that
\begin{align}
&|\hat{\nabla} \bar{f}| + |\bar{f}| \leq 100, \quad \textrm{in} \; B_{\hat{g}}(q, 1); \label{eqn:PH06_1}\\
&\rho^2 f(q) \leq 100. \label{eqn:PH06_2}
\end{align}
It follows from the definition of harmonic radius that we can find a harmonic diffeomorphism
$$\varphi: B_{\hat{g}}(q,1) \to \Omega=\varphi(B(q,1)) \subset \R^{m}$$
such that
\begin{align*}
& 0.5 g_{E} \leq \hat{g} \leq 2 g_{E}, \\
&\norm{\hat{g}}{C^{1,\frac12}(\Omega)} \leq 1.
\end{align*}
Express metrics $\hat{g}$ under this harmonic coordinate, we have $\hat{\Gamma}^{i}=\hat{g}^{jk} \hat{\Gamma}_{jk}^{i}=0$.
Consequently, one can express the Ricci curvature tensor as matrix valued functions (cf. Lemma 4.1 of Deturck-Kazdan~\cite{DeKa}) satisfying
\begin{align}
\widehat{Rc}_{ij}=-\frac{1}{2}\hat{g}^{kl} \frac{\partial^2 \hat{g}_{ij}}{\partial x^k \partial x^l} + P\left( \frac{\partial \hat{g}_{uv}}{\partial x^w}\right) \label{eqn:PC21_1}
\end{align}
where $P$ is a quadratic term of the first derivatives of $\hat{g}$. Therefore, we can write the system (\ref{eqn:PH04_3}) as
\begin{subequations}
\begin{align}[left = \empheqlbrace \,]
&\hat{g}^{kl} \frac{\partial^2 \hat{g}_{ij}}{\partial x^k \partial x^l}=P_1\left( \frac{\partial \hat{g}_{uv}}{\partial x^w}, \frac{\partial \bar{f}}{\partial x^{a}}\right)
+\frac{2\rho^2(\bar{f}+f(q)-m+1)}{m-2} e^{\frac{2\bar{f}}{m-2}} \hat{g}_{ij}, \label{eqn:CA25_9a}\\
&\hat{g}^{kl} \frac{\partial^2 \bar{f}}{\partial x^k \partial x^l}=P_2\left( \frac{\partial \hat{g}_{uv}}{\partial x^w}, \frac{\partial \bar{f}}{\partial x^{a}}\right)
+ e^{\frac{2\bar{f}}{m-2}} \rho^2 \left( \frac{m}{2}-\bar{f}-f(q) \right). \label{eqn:CA25_9b}
\end{align}
\label{eqn:CA25_9}
\end{subequations}
Here $P_1, P_2$ are quadratic terms of the first derivatives of $\bar{g}$ and $\bar{f}=f-f(q)$. Now we run the bootstrapping argument for the system (\ref{eqn:CA25_9}).
For simplicity of notations, we denote $0.5$ by $\alpha$.
By (\ref{eqn:PH06_1}), we have uniform $C^1$-norm of $\bar{f}$.
Together with (\ref{eqn:PH06_2}), it implies that the right hand side of (\ref{eqn:CA25_9b}) is uniformly bounded.
Since $\hat{g}$ is $C^{1,\alpha}$, applying standard Schauder estimate (cf. for example, Theorem 6.17 of Gilbarg-Trudinger~\cite{GT01} and Theorem 6.2.6 of~\cite{Morrey}) on (\ref{eqn:CA25_9b}), we obtain
$\bar{f}$ is $C^{2,\alpha}$.
Plugging this estimate into the right hand side of (\ref{eqn:CA25_9a}) and noting that $\hat{g}$ is $C^{1,\alpha}$ by assumption, we obtain the the right hand side of (\ref{eqn:CA25_9a}) is $C^{\alpha}$.
It follows from Schauder estimate that $\hat{g}$ is $C^{2,\alpha}$.
So we know that the couple $\left( \hat{g}, \bar{f} \right)$ is $C^{2,\alpha}$.
We move on to study (\ref{eqn:CA25_9b}) and obtain $\bar{f}$ is $C^{3,\alpha}$.
Then we go to (\ref{eqn:CA25_9a}) to obtain that $\hat{g}$ is $C^{3,\alpha}$.
In short, the pair $(\hat{g}, \bar{f})$ is $C^{3,\alpha}$.
Following the order of (\ref{eqn:CA25_9b}), (\ref{eqn:CA25_9a}) and (\ref{eqn:CA25_9b}) again, the Schauder estimates tell us that the pair $(\hat{g}, \bar{f})$ is $C^{4,\alpha}$.
Repeating this process routinely, we obtain all the higher order derivatives estimate of $(\hat{g}, \bar{f})$.
In particular, we can find a constant $C=C(m)$ such that
\begin{align}
\norm{\bar{f}}{C^{5}(\Omega')} + \norm{\hat{g}}{C^{5}(\Omega')} < C, \label{eqn:PH07_2}
\end{align}
where $\Omega'=\varphi(B_{\hat{g}}(q, 0.5))$.
Note that $B(0, 0.1) \subset B_{\hat{g}}(q, 0.5)$. Therefore, we can find a small constant $c=c(m,C)=c(m)$ such that
\begin{align*}
\sum_{k=1}^{5} r^{k} \sum_{|\beta|=k} \sup_{B(0, 10r)} |\partial^{\beta} \hat{g}_{ij}| + \sup_{B(0, 10r)} |\hat{g}_{ij}-\delta_{ij}| < 10^{-m}, \quad \forall \; r \in (0, c].
\end{align*}
Therefore, by (\ref{eqn:PD15_4}), it follows from Definition~\ref{dfn:PD15_1} that
$$sr_{\hat{g}}(q) \geq c,$$
which is equivalent to (\ref{eqn:PH07_1}) by Definition~\ref{dfn:PH02_1}.
\end{proof}

We are now ready to prove Theorem~\ref{thm:PH10_1}. We shall follow the route described by Table~\ref{table:Ricci-shrinker-radii}.

\begin{proof}

\textit{Step 1: $\mathbf{gr} \to \overline{\mathbf{gr}}$.}

Note that there is a gap between constants in (\ref{eqn:PH02_1}) and (\ref{eqn:PH02_5}).
Let $r=\mathbf{gr}(q)$.
It follows from (\ref{eqn:PE27_10}) in Theorem~\ref{thm:PE27_1} and the triangle inequality that
\begin{align*}
d_{GH} \left( B_{\bar{g}}(q,r), B(0,r)\right)&<d_{GH} \left( B_{\bar{g}}(q,r), B_{g}(q,r)\right)+d_{GH}\left( B_{g}(q,r), B(0,r)\right)\\
&<\left(2D r + \frac{\epsilon}{100} \right) r < \epsilon r,
\end{align*}
which means that
\begin{align*}
\overline{\mathbf{gr}}(x) \geq r=\mathbf{gr}(q).
\end{align*}

\textit{Step 2: $\overline{\mathbf{gr}} \to \overline{\mathbf{vr}}$.}
This step is basically the application of Colding's volume convergence theorem.
Let $r=\overline{\mathbf{gr}}(q)$ and let $g' \coloneqq r^{-2}\bar{g}$.
It follows from Definition~\ref{dfn:PH02_1} that
\begin{align*}
d_{GH} \left( B_{g'}(q,1), B(0,1)\right)<\epsilon.
\end{align*}
Since $r \leq \frac{1}{100D}$, by part (d) of Theorem~\ref{thm:PE27_1}, it is clear that
\begin{align}
|Rc'|_{g'}<D^2r^2<1, \quad \textrm{on} \; B_{g'}(q,1) \subset B_{g'}\left(q, \frac{1}{10Dr} \right). \label{eqn:PH10_2}
\end{align}
Therefore, we can apply the volume convergence theorem of Colding (cf. Lemma~\ref{lma:PH11_1}) to obtain
\begin{align*}
\omega_{m}^{-1} \cdot 10^{m} \cdot |B_{g'}(q,0.1)|_{dv_{g'}} \geq 1-0.01\delta.
\end{align*}
It follows from Definition~\ref{dfn:PH10_1} that
\begin{align*}
vr_{g'}(q) \geq 0.1,
\end{align*}
which is equivalent to
\begin{align*}
\overline{\mathbf{vr}} \geq 0.1 \overline{\mathbf{gr}}.
\end{align*}
So we finish the proof of Step 2.

\textit{Step 3: $\overline{\mathbf{vr}} \to \overline{\mathbf{hr}}$.}
Let $r=\overline{\mathbf{vr}}(q)$ and let $g' \coloneqq r^{-2}\bar{g}$. We have the local Ricci curvature estimate (\ref{eqn:PH10_2}).
Following the classical blowup argument in the proof of Theorem 3.2 of Anderson~\cite{An90}, we have
\begin{align*}
hr_{g'}(q) \geq c
\end{align*}
for some $c=c(m)$. The above inequality is the same as
\begin{align*}
\overline{\mathbf{hr}} \geq c \cdot \overline{\mathbf{vr}}.
\end{align*}
Therefore, $\overline{\mathbf{vr}}$ dominates $\overline{\mathbf{hr}}$ and the proof of Step 3 is finished.

\textit{Step 4: $\overline{\mathbf{hr}} \to \overline{\mathbf{sr}}$.}

This is the key estimate. It follows directly from Lemma~\ref{lma:PH04_1}.

\textit{Step 5: $\overline{\mathbf{sr}} \to \mathbf{sr}$.}
Let $r=\overline{\mathbf{sr}}$, $\bar{g}'=r^{-2}\bar{g}$ and $g'=r^{-2}g$. We can follow the regularity improvement argument in Lemma~\ref{lma:PH04_1} to obtain that $(\bar{g}', \bar{f})$ is a couple with uniform
regularity on $B(0, 0.1)$, through the pull-back of exponential map. Note that $g'=e^{\frac{2\bar{f}}{m-2}} \bar{g}'$. It is clear that under the same coordinate chart, $\norm{g'}{C^k(B(0, 0.1))}$ is uniformly bounded for each $k$.
Then it is easy to see that
\begin{align*}
sr_{g'} \geq c
\end{align*}
for some $c=c(m)$. Consequently, we have $\mathbf{sr} \geq c \cdot \overline{\mathbf{sr}}$ and finish the proof of step 5.

\textit{Step 6: $\mathbf{sr} \to \mathbf{hr}$.}

It follows from the standard construction of harmonic chart when curvature and injectivity radius are all bounded (cf. S.Peters~\cite{SPeters} and Green-Wu~\cite{GreenWu}).

\textit{Step 7: $\mathbf{hr} \to \mathbf{vr}$.}

Trivial.

\textit{Step 8: $\mathbf{vr} \to \overline{\mathbf{vr}}$.}
This follows from the volume ratio estimate (cf. (\ref{eqn:PE27_7}), (\ref{eqn:PE28_3}) and (\ref{eqn:PE27_9})). Note that there is a gap of constants between (\ref{eqn:PH02_2}) and (\ref{eqn:PH02_6}).

\textit{Step 9: $\mathbf{hr} \to \mathbf{gr}$.}

Trivial.\\

Now we chase Table~\ref{table:Ricci-shrinker-radii}. Running through the circles $1234569$ and $345678$ in Table~\ref{table:Ricci-shrinker-radii}, we obtain the equivalence
among $\mathbf{gr}, \overline{\mathbf{gr}}, \mathbf{vr}, \overline{\mathbf{vr}}, \mathbf{hr}, \overline{\mathbf{hr}}, \mathbf{sr}, \overline{\mathbf{sr}}$.
The proof of Theorem~\ref{thm:PH10_1} is complete.
\end{proof}

It is easy to see from Definition~\ref{dfn:PC09_1} that harmonic radius satisfies local Harnack inequality
\begin{align*}
\frac{1}{100} r < hr(y)< 100 r, \quad \forall \; y \in B(x, 0.1r), \; r=hr(x).
\end{align*}
On a Ricci shrinker, such inequality naturally bypass (cf. Definition~\ref{dfn:PH02_1}) to
\begin{align}
c r <\mathbf{hr}(y)< \frac{r}{c}, \quad \forall \; y \in B(x, c r), \; r=\mathbf{hr}(x) \label{eqn:PI10_2}
\end{align}
for some small positive constant $c=c(m)$. In terms of the equivalence of the different radii on Ricci shrinkers, we know
$\mathbf{hr}$ in (\ref{eqn:PI10_2}) can be replaced by any other radius appeared in Theorem~\ref{thm:PH10_1}.

\begin{theorem}
There exists a small constant $\xi=\xi(m)>0$ with the following properties.

Suppose $(M, p, g, f)$ is a complete Ricci shrinker, $x \in M$, $D=d(x,p)+10m$.
Suppose $r \in (0, \frac{1}{100D})$ and
\begin{align}
r^{-1} d_{PGH} \left( B(x,r), B(0,r) \right) < \xi.
\label{eqn:PD15_5}
\end{align}
Then the geodesic ball $B(x,\xi r)$ is strongly convex. Furthermore, the exponential map $Exp_{x}: B(0, \xi r) \to M$ is a diffeomorphism such that
\begin{align}
r^k \sum_{|\beta|=k} \sup_{w \in B(0, \xi r)} \left\{ |\partial^{\beta} h_{ij}(w)| + |\partial^{\beta} \bar{l}| \right\} <C_k, \quad \forall \; k \in \Z^{+}, \label{eqn:PH11_3}
\end{align}
where $h=(Exp_{x})^{*} g, \; \bar{l}=(Exp_{x})^{*} \bar{f}$, and $C_k$ is a constant depending only on $m$ and $k$.
\label{thm:PD14_1}
\end{theorem}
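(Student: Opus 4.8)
The plan is to bootstrap the regularity of the conformal couple $(\bar g,\bar f)$ exactly as in Lemma~\ref{lma:PH04_1}, but anchored to the \emph{given} scale $r$ rather than to the capped restricted radii of Definition~\ref{dfn:PH02_1}; the point is that hypothesis (\ref{eqn:PD15_5}) is strong enough to produce a genuine (uncapped) harmonic-radius lower bound for $\bar g$ at a scale comparable to $r$, which the equivalence of restricted radii alone does not give. First I would fix a small $\eta=\eta(m)\in(0,1)$ and work at scale $s\coloneqq\eta r$, which is still $<\frac{1}{100D}$. Since $B(x,s)\subset B(x,1)\subset B(p,D)$, Theorem~\ref{thm:PE27_1} applies with its parameter equal to $1$: part (c) gives $s^{-1}d_{GH}(B_{\bar g}(x,s),B(x,s))<2Ds=2D\eta r<\eta/50$, and restricting the pointed $\xi r$-approximation from scale $r$ down to scale $s$ costs only a bounded factor, so $s^{-1}d_{GH}(B(x,s),B(0,s))<C_0\xi/\eta$. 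Choosing first $\eta$ and then $\xi=\xi(m)$ small enough, the rescaled metric $g'\coloneqq s^{-2}\bar g$ satisfies $d_{GH}(B_{g'}(x,1),B(0,1))<\epsilon$, while part (d) gives $|Rc_{g'}|_{g'}=s^2|\overline{Rc}|_{\bar g}<s^2D^2<1$ on the Euclidean-sized ball $B_{g'}(x,\tfrac{1}{10Ds})$. Colding's volume continuity (Lemma~\ref{lma:PH11_1}) then yields $vr_{g'}(x)>0.1$, and Anderson's blow-up argument (Lemma~\ref{lma:PD30_3}), exactly as in Step~3 of the proof of Theorem~\ref{thm:PH10_1}, upgrades this to $hr_{\bar g}(x)\geq c_0 s=c_0\eta r$ for some $c_0=c_0(m)$.

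The second step is the elliptic bootstrap. Put $\rho\coloneqq\min\{hr_{\bar g}(x),\frac{1}{200D}\}$, so $c_0\eta r\leq\rho\leq\frac{1}{200D}$ and $\hat g\coloneqq\rho^{-2}\bar g$ has $hr_{\hat g}(x)\geq1$; moreover $\rho^2 f(x)\leq\frac{D^2}{2}\rho^2\leq\frac{1}{80000}$ by (\ref{eqn:PE03_6a}) and, on $B_{\hat g}(x,1)$, $|\bar f|$ and $|\hat{\nabla}\bar f|$ are bounded exactly as in (\ref{eqn:PH06_1})--(\ref{eqn:PH06_2}). In a harmonic chart for $\hat g$ around $x$, the pair $(\hat g,\bar f)$ solves the elliptic system with the same structure as in Lemma~\ref{lma:PH04_1}, so the Schauder iteration carried out there gives $\norm{\hat g}{C^{k}}+\norm{\bar f}{C^{k}}<C_k(m)$ on a fixed sub-ball, for every $k$ (this is exactly why I truncate $\rho$ at $\frac{1}{200D}$: otherwise the zeroth-order terms $\rho^2(\tfrac m2-\bar f-f(x))e^{2\bar f/(m-2)}$ need not be bounded). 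Since $g=e^{\frac{2\bar f}{m-2}}\bar g=\rho^2 e^{\frac{2\bar f}{m-2}}\hat g$, the rescaled metric $\rho^{-2}g$ and the function $\bar f$ are $C^k$-bounded in this chart for every $k$; in particular $\rho^{-2}g$ has bounded curvature together with all covariant derivatives, and, combined with $hr_{\hat g}(x)\geq1$, bounded injectivity radius at $x$. Hence $sr_{\rho^{-2}g}(x)\geq c_1(m)$, i.e.\ $sr_g(x)\geq c_1\rho\geq c_1 c_0\eta\,r$. After possibly shrinking $\xi$ further so that $\xi\leq c_1 c_0\eta$, Definition~\ref{dfn:PD15_1} already yields that $B(x,\xi r)$ is strongly convex and $Exp_x:B(0,\xi r)\to M$ is a diffeomorphism.

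Finally I would transport the estimates to the exponential chart. The exponential map of $g$ is obtained by integrating the geodesic flow whose coefficients, the Christoffel symbols of $\rho^{-2}g$, are $C^\infty$-bounded, so $Exp_x$ on $B(0,\xi r)$ is $C^\infty$-controlled and its composition with the harmonic chart of $\hat g$ is a $C^\infty$-controlled diffeomorphism; pulling $g$ and $\bar f$ through it preserves all derivative bounds. Bookkeeping the scaling factors---using $\rho\geq c_0\eta r$, hence $(r/\rho)^k\leq(c_0\eta)^{-k}$, together with the $C^k$-bounds on $\rho^{-2}g$ and $\bar f$---converts these into the dimensionally correct inequalities $r^k\sum_{|\beta|=k}\sup_{B(0,\xi r)}\bigl(|\partial^\beta h_{ij}|+|\partial^\beta\bar l|\bigr)<C_k$ for every $k$, where $h=(Exp_x)^*g$ and $\bar l=(Exp_x)^*\bar f$, which is (\ref{eqn:PH11_3}). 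I expect the only genuine obstacle to be the scale bookkeeping: one must keep the working scale $\rho$ bounded by a fixed multiple of $D^{-1}$ so that the zeroth-order terms of the conformal Ricci-shrinker system stay uniformly controlled, and one must invoke each ball comparison of Theorem~\ref{thm:PE27_1} at a legitimate radius; the Schauder iteration and the passage to exponential coordinates are then the routine arguments already performed in Lemma~\ref{lma:PH04_1} and Steps~5--6 of Theorem~\ref{thm:PH10_1}.
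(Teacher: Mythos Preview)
Your argument is correct and is essentially the paper's own proof with Theorem~\ref{thm:PH10_1} unpacked by hand: the paper observes that (\ref{eqn:PD15_5}) with $\xi<\tfrac{\epsilon}{100}$ yields $\mathbf{gr}(x)\geq r$, invokes the equivalence $\mathbf{sr}\geq c(m)\,\mathbf{gr}$ of Theorem~\ref{thm:PH10_1}, and reads off strong convexity and (\ref{eqn:PH11_3}) from Definition~\ref{dfn:PD15_1} together with the bootstrap of Lemma~\ref{lma:PH04_1}, whereas you rerun Steps~1--5 of Table~\ref{table:Ricci-shrinker-radii} (conformal change, Colding, Anderson, Schauder bootstrap, inverse conformal change) directly at the given scale. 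Your stated worry that the restricted-radius equivalence ``alone does not give'' an uncapped convex-radius bound comes from reading the $\max$ in (\ref{eqn:PH02_3})--(\ref{eqn:PH02_4}) and (\ref{eqn:PH02_7})--(\ref{eqn:PH02_8}) literally; that is an evident misprint for $\min$ (with $\max$ one would have $\mathbf{sr}\geq\tfrac{1}{100D}\geq\mathbf{gr}$ identically and the whole chain in Table~\ref{table:Ricci-shrinker-radii} would be vacuous), and with the intended $\min$ the inequality $\mathbf{sr}(x)\geq c(m)\,r<\tfrac{1}{100D}$ already forces $sr_g(x)\geq c(m)\,r$. So your explicit scale bookkeeping, including the truncation $\rho\leq\tfrac{1}{200D}$, is sound but duplicates what Theorem~\ref{thm:PH10_1} already encapsulates.
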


\begin{proof}
Let $\xi$ be a very small number whose size will be decided later.
By choosing $\xi<\epsilon$ and Definition~\ref{dfn:PH10_2}, we obtain that $\mathbf{gr}(x) \geq r$.
Note that the constant $\epsilon$ is the one chosen after Lemma~\ref{lma:PH11_1}.
It follows from Theorem~\ref{thm:PH10_1} that $\mathbf{gr}$ and $\mathbf{sr}$ are equivalent.
Therefore, we have
\begin{align*}
\mathbf{sr}(x) \geq c(m) \mathbf{gr}(x)
\end{align*}
for some small positive number $c(m)<1$. Let $\xi=0.01 c(m) \epsilon$. Then we have
\begin{align*}
\mathbf{sr}(x) \geq 100 \xi \mathbf{gr}(x) \geq 100 \xi r.
\end{align*}
Then the estimate (\ref{eqn:PH11_3}) follows from the above inequality (cf. Definition~\ref{dfn:PD15_1}) and the regularity improvement argument in the proof of Lemma~\ref{lma:PH04_1}.
\end{proof}

\begin{remark}
It is a key point to exploit the harmonic coordinate chart of $\bar{g}$ or $\hat{g}=\rho^{-2} \bar{g}$, rather than $g$, to improve regularity.
Actually, in the harmonic coordinate chart of $g$, the corresponding system of (\ref{eqn:PH04_3}) is
\begin{subequations}
\begin{align}[left = \empheqlbrace \,]
&Rc=-\emph{Hess} f +\frac{g}{2}, \notag\\
&\Delta f =\frac{m}{2}-R. \notag
\end{align}
\notag
\end{subequations}
The bootstrapping cannot start if we only have the $C^{1}$-norm bound of $f$.
\label{rmk:PI07_2}
\end{remark}


\section{Relations between weak limits}
\label{sec:weakexistence}

The purpose of this section is to show a general weak compactness theorem of Ricci shrinkers without any non-collapsing assumption and locally relate the weak limit with that of their conformal transformations.

Let us first recall some definitions, which should be well-known to experts in geometric analysis.

\begin{definition}
Suppose $(M_i, p_i, d_i)$ and $(M_{\infty}, p_{\infty}, d_{\infty})$ are complete length spaces.
By the convergence
\begin{align}
\left( M_i, p_i, d_i \right) \longright{pointed-Gromov-Hausdorff} \left( M_{\infty}, p_{\infty}, d_{\infty} \right),
\label{eqn:PC24_3}
\end{align}
we mean that $(M_i, p_i, d_i)$ converges to $(M_{\infty}, p_{\infty}, d_{\infty})$ in the pointed Gromov Hausdorff topology.
Namely, for each fixed $L>0$, we have
\begin{align*}
\left( B(p_i,L), p_i, d_i \right) \longright{Gromov-Hausdorff} (B(p_{\infty},L), p_{\infty}, d_{\infty}).
\end{align*}
Suppose $f_i$ are real valued functions on $M_i$, $f_{\infty}$ is a real valued function on $M_{\infty}$.
By the convergence
\begin{align}
\left( M_i, p_i, d_i, f_{i} \right) \longright{pointed-Gromov-Hausdorff} \left( M_{\infty}, p_{\infty}, d_{\infty}, f_{\infty} \right),
\label{eqn:PC24_4}
\end{align}
we mean (\ref{eqn:PC24_3}) and that $f_{\infty}$ is a limit function of $f_i$. Namely, if $y \in M_{\infty}$ is the limit point of $y_i \in M_i$, then
\begin{align}
f_{\infty}(y)=\lim_{i \to \infty} f_i(y_i). \label{eqn:PC24_5}
\end{align}
\label{dfn:PC24_1}
\end{definition}

For the concept of Gromov-Hausdorff convergence, length space, etc, one can find their definitions, e.g., in the excellent textbook~\cite{BuIv}.
Note that in order the limit function $f_{\infty}$ to be well defined, $f_i$ need to satisfy uniform continuity. For otherwise, it is possible that there are two sequences
$\left\{ y_i \right\}$ and $\left\{ y_i' \right\}$ such that
\begin{align*}
\lim_{i \to \infty} y_i =\lim_{i \to \infty} y_i', \quad \lim_{i \to \infty} f_i(y_i) \neq \lim_{i \to \infty} f_i(y_i').
\end{align*}

It is important to note that there is an equivalent definition of (\ref{eqn:PC24_3}). Namely, we can explain (\ref{eqn:PC24_3}) as for each $L>0$, there exist positive numbers $\epsilon_{L,i} \to 0$ and maps
\begin{align}
\psi_{L,i}: \quad B(p_{\infty}, L) \to B(p_{i}, L+\epsilon_{L}(i))
\label{eqn:PD16_2}
\end{align}
such that
\begin{subequations}
\begin{align}[left = \empheqlbrace \,]
&d \left(\psi_{L,i}(p_{\infty}),p_i \right)<\epsilon_{L,i}; \label{eqn:PD16_3a}\\
&\left| d\left( \psi_{L,i}(x), \psi_{L,i}(y) \right) -d(x,y) \right|<\epsilon_{L,i}, \quad \forall\; x, y \in B(p_{\infty}, L). \label{eqn:PD16_3b}
\end{align}
\label{eqn:PD16_3}
\end{subequations}

\noindent
Each map $\psi_{L,i}$ is called an $\epsilon_{L,i}$-isometry.
For the proof and related definitions, see Definition 7.3.27 and Corollary 7.3.28 of~\cite{BuIv}.
We remind the readers that $\psi_{L,i}$ need not be continuous.

\begin{definition}
Let $(M_{\infty}, d_{\infty})$ be a complete length space, $y \in M_{\infty}$. $(\hat{Y}, \hat{y}, \hat{d})$ is called a tangent space
of $M_{\infty}$ at $y$ if there is a sequence of positive
numbers $r_i \to 0$ such that
\begin{align*}
\left( M_{\infty}, y, r_i^{-1}d_{\infty} \right) \longright{pointed-Gromov-Hausdorff} \left(\hat{Y}, \hat{y}, \hat{d} \right).
\end{align*}
Notice that the tangent space at $y$, if it exists, may not be unique. A point $y \in M_{\infty}$ is called a regular point if all tangent spaces at $y$ is $(\R^{l}, 0, d_{E})$ for some positive integer $l$. A point $y$ is called a singular point if it is not regular.
The collection of regular points in $M_{\infty}$ is denoted by $\mathcal{R}$. The collection of singular points in $M_{\infty}$ is denoted by $\mathcal{S}$.
\label{dfn:PC24_2}
\end{definition}

Next, we prove

\begin{proposition}\label{prn:PC25_1}
Let $(M_i^m, p_i, g_i,f_i)$ be a sequence of Ricci shrinkers.
Let $d_i$ be the length structure induced by $g_i$. By passing to a subsequence if necessary, we have
\begin{align}
(M_i, p_i, d_i,f_i) \longright{pointed-Gromov-Hausdorff} \left(M_{\infty}, p_{\infty}, d_{\infty}, f_{\infty} \right), \label{eqn:PC25_5}
\end{align}
where $(M_{\infty}, d_{\infty})$ is a length space, $f_{\infty}$ is a locally Lipschitz function on $(M_{\infty}, d_{\infty})$.
Furthermore, the limit space $M_{\infty}$ has a natural regular-singular decomposition (cf. Definition~\ref{dfn:PC24_2}):
\begin{align}\label{eqn:PE06_1}
M_{\infty}= \mathcal{R} \cup \mathcal{S}.
\end{align}
\end{proposition}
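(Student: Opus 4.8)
The plan is to obtain the pointed-Gromov-Hausdorff subconvergence in (\ref{eqn:PC25_5}) by a standard ball-packing / precompactness argument for the Bakry-\'Emery comparison geometry, and then to define the regular-singular decomposition abstractly on the limit. First I would recall from Lemma~\ref{lma:PE01_1} (Wei-Wylie) that $Rc_{f_i} = Rc_i + Hess_{g_i} f_i = \frac{1}{2} g_i \geq 0$, so the associated weighted volume satisfies a Bishop-Gromov-type inequality. Combined with the quadratic growth estimate (\ref{E106}) of Lemma~\ref{L100} and the gradient bound $|\nabla f_i| \leq d(\cdot, p_i) + \sqrt{2m}$, one controls $\langle \nabla f_i, \dot\gamma\rangle$ along minimal geodesics from $p_i$ on any fixed ball $B(p_i, R)$, hence gets a uniform volume-doubling estimate of the form (\ref{eqn:PE03_5}) on $B(p_i, R)$ with constants depending only on $R$ and $m$. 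Uniform doubling on each fixed scale gives uniform total boundedness of $B(p_i, R)$, so by Gromov's precompactness theorem the pointed metric spaces $(M_i, p_i, d_i)$ subconverge in the pointed-Gromov-Hausdorff sense (along a diagonal subsequence over $R \to \infty$) to a complete length space $(M_\infty, p_\infty, d_\infty)$; completeness and the length-space property pass to the limit in the usual way.

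Next I would handle the function $f_\infty$. On $B(p_i, R)$ the estimates (\ref{E106}) and (\ref{eqn:PE01_5}) give uniform bounds on $f_i$ and on $|\nabla f_i|$ (the latter grows linearly, hence is bounded on each fixed ball), so the $f_i$ are uniformly Lipschitz with uniformly bounded oscillation on each $B(p_i, R)$. Using the Gromov-Hausdorff approximations one extracts (again diagonalizing) a limit function $f_\infty$ on $M_\infty$ that is locally Lipschitz, with Lipschitz constant on $B(p_\infty, R)$ bounded by roughly $R + \sqrt{2m}$; the normalization $R_i + |\nabla f_i|^2 = f_i$ survives in the weak sense needed later but is not required for this proposition. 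This yields (\ref{eqn:PC25_5}) with $f_\infty$ locally Lipschitz.

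For the decomposition (\ref{eqn:PE06_1}) I would simply invoke the definitions: at each $x \in M_\infty$ the rescaled pointed spaces $(M_\infty, x, \lambda d_\infty)$ are themselves precompact (the doubling constant is inherited, being scale-localized as in (\ref{eqn:PE03_3})), so tangent cones exist; call $x$ regular if some tangent cone at $x$ is isometric to $\mathbb{R}^m$ with the flat metric, and singular otherwise. This is exactly Definition~\ref{dfn:PC24_2}, and it tautologically partitions $M_\infty = \mathcal{R} \cup \mathcal{S}$. The content that $\mathcal{R}$ is nonempty, open, or a manifold is deliberately \emph{not} claimed here — that is the job of the later sections under the noncollapsing hypothesis (\ref{eqn:PI07_1}); here one only needs the set-theoretic decomposition.

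The main obstacle is the non-compactness of the $M_i$ combined with the fact that the Bakry-\'Emery lower bound does \emph{not} directly give a genuine Ricci lower bound, so one cannot quote Gromov precompactness verbatim. The fix is the weighted comparison of Wei-Wylie together with the \emph{a priori} control of the drift term $\langle \nabla f_i, \dot\gamma\rangle$ afforded by (\ref{E106})--(\ref{eqn:PE01_5}), which converts the weighted volume bound into an ordinary volume-doubling bound on each fixed ball $B(p_i, R)$ (with $R$-dependent constants); this is precisely the computation already carried out in (\ref{eqn:PE03_5}) and (\ref{eqn:PE03_3}). Once doubling is in hand on every fixed scale, everything else is the routine diagonal extraction of limits of spaces and of uniformly Lipschitz functions.
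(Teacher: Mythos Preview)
Your proposal is correct and follows the same overall strategy as the paper: Gromov precompactness via the Wei--Wylie comparison (Lemma~\ref{lma:PE01_1}) together with the a priori control on $f$ and $|\nabla f|$ from Lemma~\ref{L100}, then passage of the locally uniformly Lipschitz $f_i$ to a limit, and finally the tautological decomposition of Definition~\ref{dfn:PC24_2}.

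One point where your argument is actually sharper than the paper's written proof: the paper cites the absolute two-sided bounds (\ref{eqn:CB09_2}) and (\ref{eqn:PE04_5}) for the ball-packing step, but the lower bound (\ref{eqn:PE04_5}) carries a factor $e^{\boldsymbol{\mu}_i}$, which is \emph{not} assumed bounded below in this proposition. Your route through the purely \emph{relative} comparison (\ref{eqn:PE03_5})/(\ref{eqn:PE03_3}) gives uniform local doubling with constants depending only on $m$ and the radius $R$, and this is what genuinely yields precompactness without any non-collapsing hypothesis. So your version is the cleaner justification here; the paper presumably intends the same doubling mechanism (it is what underlies the cited Theorem~2.4 of \cite{HM11}), but your write-up makes the independence from $\boldsymbol{\mu}$ explicit. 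A minor terminological slip: at this stage you only get tangent \emph{spaces} (pointed GH limits of blowups), not tangent \emph{cones}; the cone property requires the further analysis of Section~\ref{sec:weakexistence} and Lemma~\ref{lma:PE27_0}.
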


\begin{proof}
In light of (\ref{eqn:CB09_2}) in Lemma~\ref{lma:PE04_1}, and (\ref{eqn:PE04_5}) in Lemma~\ref{lma:PE04_2}, we have volume ratio two-sided bounds.
Then it follows from the standard ball packing argument of Gromov (cf. Theorem 2.4 of ~\cite{HM11} and Proposition 5.2 of~\cite{Gromov}) that
\begin{align}
\left( M_i, p_i, d_i \right) \longright{pointed-Gromov-Hausdorff} \left(M_{\infty}, p_{\infty}, d_{\infty} \right). \label{eqn:CB06_1}
\end{align}
On the other hand, in view of (\ref{eqn:PC06_3}), (\ref{E106}) and the fact $R \geq 0$, we have
\begin{align}
|\nabla f|(x) \leq \sqrt{f}(x) \leq \frac{1}{2} \left( d(x,p) + \sqrt{2m} \right). \label{eqn:PC25_6}
\end{align}
This guarantees that $f_i$ converges to a limit function $f_{\infty}$ which is locally Lipschitz on $M_{\infty}$.
Therefore, we have proved (\ref{eqn:PC25_5}) by the definition (cf. Definition~\ref{dfn:PC24_1}).
\end{proof}

In the situation of Proposition \ref{prn:PC25_1}, we have

\begin{proposition}\label{prn:PE06_1}
Suppose $B(q_i,1) \subset B(p_i, D)$ for some $D>10m$.
Then by taking subsequence if necessary, we have
\begin{align}
\left( B_{\bar{d}_i}(q_i, 20\epsilon), q_i, d_i \right) \longright{pointed-Gromov-Hausdorff}
\left( B_{\bar{d}_{\infty}}(\bar{q}_{\infty}, 20\epsilon), \bar{q}_{\infty}, \bar{d}_{\infty} \right),
\label{eqn:PE06_3}
\end{align}
where $\epsilon=\frac{1}{100D}$.
The limit space $B_{\bar{d}_{\infty}}(\bar{q}_{\infty}, 20\epsilon)$ also has a regular-singular decomposition
\begin{align}\label{eqn:PE06_4}
B_{\bar{d}_{\infty}}(\bar{q}_{\infty}, 20\epsilon)= \bar{\mathcal{R}} \cup \bar{\mathcal{S}}.
\end{align}
\end{proposition}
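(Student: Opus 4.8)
The plan is to deduce this statement from Proposition~\ref{prn:PC25_1} together with the comparison geometry already established in Section~\ref{sec:conformal} and Section~\ref{sec:pre}. First I would recall that by Proposition~\ref{prn:PC25_1} we have, after passing to a subsequence, the pointed-Gromov-Hausdorff convergence $(M_i, q_i, d_i, f_i) \to (M_\infty, q_\infty, d_\infty, f_\infty)$ --- note we may re-base at $q_i$ since $q_i$ stays in the bounded region $B(p_i,D)$, and the basepoint change is harmless. The key point is that the conformal metrics $\bar{g}_i = e^{\frac{2(f_i(q_i)-f_i)}{m-2}} g_i$ are, on the ball $B_{g_i}(q_i,1)$, uniformly bi-Lipschitz to $g_i$ by (\ref{eqn:PE27_9}) of Theorem~\ref{thm:PE27_1}, with bi-Lipschitz constant $e^{\pm D/(m-2)}$ that is uniform in $i$. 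Hence the metric spaces $(B_{\bar{d}_i}(q_i, 20\epsilon), \bar{d}_i)$, with $\epsilon = \frac{1}{100D}$, sit inside $B_{g_i}(q_i, 1)$ (using (\ref{eqn:PE27_8}) and the fact that $e^{D/(m-2)}\cdot 20\epsilon < 1$), and are uniformly totally bounded; by (\ref{eqn:PE27_11}) they moreover carry a uniform Ricci bound $|\overline{Rc}_i|_{\bar{g}_i} < D^2$ on the relevant scale.

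Next I would invoke the standard Gromov precompactness: the sequence of pointed metric spaces $(B_{\bar{d}_i}(q_i, 20\epsilon), q_i, \bar{d}_i)$ is uniformly totally bounded (each is bi-Lipschitz to a subset of $(B_{g_i}(q_i,1), d_i)$, and the latter balls are uniformly totally bounded by the ball-packing argument of Section~\ref{sec:pre} using the Wei--Wylie comparison Lemma~\ref{lma:PE01_1}), so after passing to a further subsequence we obtain a pointed-Gromov-Hausdorff limit $(B_{\bar{d}_\infty}(\bar{q}_\infty, 20\epsilon), \bar{q}_\infty, \bar{d}_\infty)$. To see that the Gromov-Hausdorff approximations for $d_i$ and for $\bar{d}_i$ can be chosen compatibly, I would use (\ref{eqn:PE27_9}) once more: on $B_{g_i}(q_i, 0.1)$ the identity map $(B, d_i) \to (B, \bar{d}_i)$ distorts distances by a factor in $[e^{-D/(m-2)}, e^{D/(m-2)}]$, so an $\eta$-GH-approximation for one metric is a $C(D)\eta$-GH-approximation for the other. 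This yields (\ref{eqn:PE06_3}) and identifies $\bar{d}_\infty$, on the overlap region, as the "conformal distortion" of $d_\infty$ by the limit of $e^{\frac{2(f_i(q_i)-f_i)}{m-2}}$; since $f_i \to f_\infty$ in GH-sense and $f_\infty$ is locally Lipschitz, the limiting conformal factor $e^{\frac{-2f_\infty}{m-2}}$ (normalized at $\bar{q}_\infty$) is a well-defined continuous positive function, so $\bar{d}_\infty$ is the induced length distance of this conformal change of the limit length structure.

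Finally, the regular-singular decomposition (\ref{eqn:PE06_4}) is obtained exactly as for $M_\infty$: a point $\bar{y}_\infty \in B_{\bar{d}_\infty}(\bar{q}_\infty, 20\epsilon)$ is declared regular if some tangent cone at $\bar{y}_\infty$ is isometric to $\R^m$, and singular otherwise; this makes sense because the uniform Ricci bound (\ref{eqn:PE27_11}) places us in the setting of the non-collapsed (or possibly collapsed, at this stage) Cheeger--Colding theory for manifolds with bounded Ricci curvature, where tangent cones exist. I expect the main obstacle to be purely bookkeeping: verifying that the three families of balls --- $B_{g_i}(q_i,\cdot)$, $B_{\bar{g}_i}(q_i,\cdot)$, and their interrelations via (\ref{eqn:PE27_8})--(\ref{eqn:PE27_10}) --- nest correctly so that the limit $B_{\bar{d}_\infty}(\bar{q}_\infty,20\epsilon)$ is genuinely the GH-limit of intrinsic $20\epsilon$-balls and not some larger or smaller region, and that the basepoint $\bar{q}_\infty$ and the function comparison are consistent across the subsequence extraction. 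None of this requires new ideas beyond Theorem~\ref{thm:PE27_1} and the precompactness already used to prove Proposition~\ref{prn:PC25_1}.
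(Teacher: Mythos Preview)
Your proposal is correct, but it does considerably more work than the paper, and by a slightly different route. The paper's proof is essentially two lines: since $\epsilon=\frac{1}{100D}$, Theorem~\ref{thm:PE27_1}(d) gives a uniform Ricci curvature bound $|\overline{Rc}_i|_{\bar{g}_i}<D^2$ on the relevant $\bar{g}_i$-ball, and then both the Gromov--Hausdorff precompactness (via Bishop--Gromov) and the regular--singular decomposition follow immediately from standard Cheeger--Colding theory for manifolds with bounded Ricci curvature. You instead obtain precompactness by pulling the uniform total boundedness across the bi-Lipschitz identity map $(B,d_i)\to(B,\bar d_i)$ from the $g_i$-side (where you already have it from Section~\ref{sec:pre}); this is valid but unnecessary once the $\bar g_i$-Ricci bound is in hand. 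Your further discussion identifying $\bar d_\infty$ as the conformal distortion of $d_\infty$ by the limit factor $e^{-2\bar f_\infty/(m-2)}$, and the compatibility of the GH-approximations for $d_i$ and $\bar d_i$, is not part of this proposition at all --- it is precisely the content of Propositions~\ref{prn:PE29_1}--\ref{prn:PE29_2} and Theorem~\ref{thm:PE30_1}, which are stated and proved separately.
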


\begin{proof}
Since $\epsilon=\frac{1}{100D}$, by (\ref{eqn:PE27_11}) in Theorem~\ref{thm:PE27_1},
there exists a uniform Ricci curvature bound on $B_{\bar{d}_i}(y_i, 100\epsilon)$. Therefore, it follows from standard Cheeger-Colding theory that
(\ref{eqn:PE06_3}) and (\ref{eqn:PE06_4}) hold.
\end{proof}

Under the same assumptions of Proposition \ref{prn:PC25_1} and Proposition \ref{prn:PE06_1} we prove

\begin{proposition}\label{prn:PE29_1}
There exists a bi-Lipschitz homeomorphism map $\Pi_{\infty}$ satisfying
\begin{align}
\Pi_{\infty}: B_{d_{\infty}}(q_{\infty}, 10\epsilon) &\mapsto \Pi_{\infty} \left( B_{d_{\infty}}(q_{\infty}, 10\epsilon)\right) \subset B_{\bar{d}_{\infty}}(\bar{q}_{\infty}, 20\epsilon), \label{eqn:PE06_5aa}\\
q_{\infty}&\mapsto \bar{q}_{\infty}=\Pi_{\infty}(q_{\infty}). \label{eqn:PE06_5a}
\end{align}
For each pair of points $z,w \in B_{d_{\infty}}(q_{\infty}, 10\epsilon)$, denoting $\Pi_{\infty}(z)$ and $\Pi_{\infty}(w)$ by $\bar{z}$ and $\bar{w}$ respectively, we have
\begin{align}
e^{-\frac{10 D\epsilon}{m-2}} \bar{d}_{\infty}(\bar{z}, \bar{w}) \leq \rho \coloneqq d_{\infty}(z,w) \leq e^{\frac{10 D\epsilon}{m-2}} \bar{d}_{\infty}(\bar{z}, \bar{w}). \label{eqn:PE28_7}
\end{align}
For each $z \in B(q_{\infty}, 10\epsilon)$ and $r \in (0, 10\epsilon)$, we have
\begin{align}\label{eqn:PE28_8}
d_{GH} \left( B_{d_{\infty}}(z,r), B_{e^{\frac{f_{\infty}(z)-f_{\infty}(q_{\infty})}{m-2}}\bar{d}_{\infty}}(\bar{z}, r) \right) \leq 2D r^2.
\end{align}
\end{proposition}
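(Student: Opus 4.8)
\emph{Proof plan.}
We work under the hypotheses of Propositions~\ref{prn:PC25_1} and~\ref{prn:PE06_1}, so that the pointed Gromov--Hausdorff convergences (\ref{eqn:PC25_5}) and (\ref{eqn:PE06_3}) are available. The map $\Pi_\infty$ will be produced as a limit of the tautological inclusions $\iota_i\colon (B_{d_i}(q_i,10\epsilon),d_i)\hookrightarrow (B_{\bar d_i}(q_i,20\epsilon),\bar d_i)$, which send a point of $M_i$ to itself but read off distances with $\bar d_i$ instead of $d_i$. These are well defined because part (a) of Theorem~\ref{thm:PE27_1}, at scale $r=10\epsilon$, gives $B_{d_i}(q_i,10\epsilon)\subset B_{\bar d_i}(q_i,e^{\frac{10D\epsilon}{m-2}}\cdot 10\epsilon)\subset B_{\bar d_i}(q_i,11\epsilon)$, using $e^{\frac{10D\epsilon}{m-2}}<\tfrac{11}{10}$ for $m\ge 4$. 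First I would fix, for each $i$, an $\varepsilon_i$-Gromov--Hausdorff approximation $\phi_i\colon (M_i,p_i,d_i)\to(M_\infty,p_\infty,d_\infty)$ realizing (\ref{eqn:PC25_5}) and an $\varepsilon_i$-approximation $\psi_i\colon (B_{\bar d_i}(q_i,20\epsilon),q_i,\bar d_i)\to(B_{\bar d_\infty}(\bar q_\infty,20\epsilon),\bar q_\infty,\bar d_\infty)$ realizing (\ref{eqn:PE06_3}), with $\varepsilon_i\to 0$. Both families of maps are defined on subsets of the same manifolds $M_i$, so $\psi_i\circ\iota_i\circ\phi_i^{-1}$ is a well-defined ``almost-map'' from an $\varepsilon_i$-dense subset of $B_{d_\infty}(q_\infty,10\epsilon)$ into $B_{\bar d_\infty}(\bar q_\infty,20\epsilon)$; this compatibility of the two limits over a common sequence of manifolds is the conceptual core of the argument.

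Next I would define $\Pi_\infty$. Given $z\in B_{d_\infty}(q_\infty,10\epsilon)$, choose $z_i\in B_{d_i}(q_i,10\epsilon)$ with $\phi_i(z_i)\to z$ and set $\Pi_\infty(z):=\lim_i \psi_i(z_i)$ along a subsequence; the limit exists because $B_{\bar d_\infty}(\bar q_\infty,20\epsilon)$ is compact, being a Gromov--Hausdorff limit of balls of uniformly bounded diameter whose closures in $M_i$ are compact. The key step is independence of the choices, which uses part (b) of Theorem~\ref{thm:PE27_1}: if $z_i\to z$ and $z_i'\to z$ then $d_i(z_i,z_i')\to 0$, hence $\bar d_i(z_i,z_i')\to 0$ by the finite-level equivalence of $d_i$ and $\bar d_i$ on $B(q_i,10\epsilon)$, and since $|\bar d_\infty(\psi_i(a),\psi_i(b))-\bar d_i(a,b)|\le\varepsilon_i$ this forces $\psi_i(z_i)$ and $\psi_i(z_i')$ to share the same limit; a diagonal argument removes the dependence on the subsequence. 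Thus $\Pi_\infty$ is well defined, $\Pi_\infty(q_\infty)=\bar q_\infty$, and $\Pi_\infty(z)\in\overline{B_{\bar d_\infty}(\bar q_\infty,11\epsilon)}\subset B_{\bar d_\infty}(\bar q_\infty,20\epsilon)$, which establishes (\ref{eqn:PE06_5aa})--(\ref{eqn:PE06_5a}).

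For (\ref{eqn:PE28_7}), take $z,w\in B_{d_\infty}(q_\infty,10\epsilon)$ with approximants $z_i,w_i$; then $d_\infty(z,w)=\lim_i d_i(z_i,w_i)$ and $\bar d_\infty(\Pi_\infty z,\Pi_\infty w)=\lim_i \bar d_i(z_i,w_i)$, so (\ref{eqn:PE28_7}) follows by passing the finite-level two-sided bound $e^{-\frac{10D\epsilon}{m-2}}d_i(z_i,w_i)\le\bar d_i(z_i,w_i)\le e^{\frac{10D\epsilon}{m-2}}d_i(z_i,w_i)$ to the limit; this bound is a direct consequence of estimate (\ref{eqn:PE06_8}) on $|\bar f_i|$ integrated along minimizing geodesics, exactly as in the proof of part (b) of Theorem~\ref{thm:PE27_1}. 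The resulting two-sided Lipschitz bound makes $\Pi_\infty$ bi-Lipschitz, hence a homeomorphism onto its image. For (\ref{eqn:PE28_8}) I would apply part (c) of Theorem~\ref{thm:PE27_1} with the base point $z_i$ in place of $q$: writing $\bar g_i^{(z_i)}:=e^{\frac{2(f_i(z_i)-f_i(q_i))}{m-2}}\bar g_i=e^{-\frac{2(f_i-f_i(z_i))}{m-2}}g_i$ and $c_{z_i}:=e^{\frac{f_i(z_i)-f_i(q_i)}{m-2}}$, so that $d_{\bar g_i^{(z_i)}}=c_{z_i}\bar d_i$, part (c) yields $d_{GH}\big(B_{g_i}(z_i,r),B_{c_{z_i}\bar d_i}(z_i,r)\big)<2Dr^2$ for every $r\in(0,10\epsilon)$. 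Since $f_i\to f_\infty$ and $f_\infty$ is continuous, $c_{z_i}\to c_z:=e^{\frac{f_\infty(z)-f_\infty(q_\infty)}{m-2}}$, hence $(B_{\bar d_i}(q_i,20\epsilon),c_{z_i}\bar d_i)\to(B_{\bar d_\infty}(\bar q_\infty,20\epsilon),c_z\bar d_\infty)$ and the relevant balls about $\bar z_i=\Pi_\infty(z_i)$ converge; passing the displayed Gromov--Hausdorff estimate to the limit gives (\ref{eqn:PE28_8}).

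The main obstacle I anticipate is the bookkeeping that makes the two Gromov--Hausdorff limits cooperate with the scale restrictions: one must check that restricting to $B_{d_\infty}(q_\infty,10\epsilon)$ and to radii $r<10\epsilon$ keeps all the auxiliary balls $B_{\bar d_\infty}(\bar z,r/c_z)$ and $B_{c_z\bar d_\infty}(\bar z,r)$ strictly inside the region $B_{\bar d_\infty}(\bar q_\infty,20\epsilon)$ on which $\bar d_i$ actually converges to $\bar d_\infty$, so that their convergence is genuinely inherited from (\ref{eqn:PE06_3}); and one must track the constant $\tfrac{10D\epsilon}{m-2}$ honestly through the containment of minimizing geodesics in the region where $|\bar f_i|\le 10D\epsilon$. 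Both points are routine once the radii are chosen with a little room to spare, but they are exactly where the argument would break if the scales were pushed to their limits.
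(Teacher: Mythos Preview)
Your proposal is correct and follows essentially the same approach as the paper: both take the identity map $\Pi_i$ from $(M_i,d_i)$ to $(M_i,\bar d_i)$, use the uniform bi-Lipschitz estimate from Theorem~\ref{thm:PE27_1}(b) to pass to a limit map $\Pi_\infty$, and then obtain (\ref{eqn:PE28_7}) and (\ref{eqn:PE28_8}) by taking limits of the corresponding finite-level estimates (\ref{eqn:PE27_9}) and (\ref{eqn:PE27_10}). The paper phrases the limiting step as an application of Arzela--Ascoli to the uniformly bi-Lipschitz maps $\Pi_i$, whereas you spell out the construction via Gromov--Hausdorff approximations $\phi_i,\psi_i$ and verify well-definedness directly; these are the same argument at different levels of explicitness, and your additional care in recentering the conformal metric at $z_i$ for (\ref{eqn:PE28_8}) is exactly the point the paper's terse invocation of (\ref{eqn:PE27_10}) is relying on.
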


\begin{proof}
Let $\Pi_{i}$ be the identity map from $(M_i, q_i, d_i)$ to $(M_i, q_i, \bar{d}_i)$. If we restrict our attention on $B_{d_i}(q_i, 10\epsilon)$, by (\ref{eqn:PE27_9}), we have
\begin{align*}
e^{\frac{-D\epsilon}{m-2}} d_{i}(x,y)
\leq \bar{d}_i \left( \Pi_{i}(x), \Pi_{i}(y) \right)=\bar{d}_i\left( x, y \right) \leq e^{\frac{D\epsilon}{m-2}} d_{i}(x,y), \quad \forall \; x,y \in B_{d_i}(q_i, 10 \epsilon).
\end{align*}
Therefore, $\Pi_{i}$ are uniformly bi-Lipschitz map from $B_{d_i}(q_i, 10 \epsilon)$ to their images. So Arzela-Ascoli lemma implies that we have
a limit map $\Pi_{\infty}$ from $B_{d_{\infty}}(q_{\infty}, 10 \epsilon)$ to a subset in $B_{\bar{d}_{\infty}}(\bar{q}_{\infty}, 20\epsilon)$. Namely, we have obtained (\ref{eqn:PE06_5aa}).
It is also clear from the construction that (\ref{eqn:PE06_5a}) holds.
Fix two points $z,w \in B_{d_{\infty}}(q_{\infty}, 10\epsilon)$, we can find points $z_i \in B_{d_i}(q_i, 10\epsilon)$ and $w_i \in B_{d_i}(q_i,10\epsilon)$ such that
$z_i \to z$ and $w_i \to w$. It follows from (\ref{eqn:PE27_9}) that
\begin{align*}
e^{\frac{-10D\epsilon}{m-2}} d_{i}(z,w)
\leq \bar{d}_i \left( \Pi_{i}(z_i), \Pi_{i}(w_i) \right)=\bar{d}_i\left( z_i, w_i \right) \leq e^{\frac{10D\epsilon}{m-2}} d_{i}(z_i, w_i),
\end{align*}
whence we obtain (\ref{eqn:PE28_7}) by taking limit of the above inequalities.
Fix $z \in B_{d_{\infty}}(q_{\infty},\epsilon)$ and choose $z_i \in B_{d_i}(q_i, \epsilon)$, it follows from (\ref{eqn:PE27_10}) that
\begin{align*}
d_{GH} \left( B_{d_i}(z_i,r), B_{e^{\frac{f_i(z_i)-f_i(q_i)}{m-2}}\bar{d}_i}(\Pi_i(z_i), r) \right) < 2D r^2,
\end{align*}
whose limit reads as (\ref{eqn:PE28_8}). The proof of Proposition~\ref{prn:PE06_1} is complete.
\end{proof}

By interchanging the role of $g_i$ and $\bar{g}_i$, we have the following proposition whose proof is similar to that of Proposition~\ref{prn:PE29_2} and left to the reader.

\begin{proposition}\label{prn:PE29_2}
There exists a bi-Lipschitz homeomorphism map $\pi_{\infty}$ satisfying
\begin{align}
\pi_{\infty}: B_{\bar{d}_{\infty}}(\bar{q}_{\infty}, 10\epsilon) &\mapsto \pi_{\infty} \left( B_{\bar{d}_{\infty}}(\bar{q}_{\infty}, 10\epsilon)\right)
\subset B_{d_{\infty}}(q_{\infty}, 20\epsilon)
\label{eqn:PE29_6aa}\\
\bar{q}_{\infty}&\mapsto q_{\infty}=\pi_{\infty}(\bar{q}_{\infty}). \label{eqn:PE29_6a}
\end{align}
For each pair of points $\bar{z},\bar{w} \in B_{\bar{d}_{\infty}}(\bar{q}_{\infty}, \epsilon)$, denoting $\pi_{\infty}(\bar{z})$ and $\pi_{\infty}(\bar{w})$ by $z$ and $w$ respectively, we have
\begin{align}\label{eqn:PE29_7}
e^{-\frac{20D\epsilon}{m-2}} d_{\infty}(z, w) \leq \bar{\rho} \coloneqq \bar{d}_{\infty}(\bar{z},\bar{w})
\leq e^{\frac{20D\epsilon}{m-2}} d_{\infty}(z, w).
\end{align}
\end{proposition}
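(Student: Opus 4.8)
The plan is to carry out the construction of $\Pi_{\infty}$ from Proposition~\ref{prn:PE29_1} verbatim, with the roles of $g_i$ and $\bar{g}_i$ interchanged; the only genuinely new point is the bookkeeping of the nested scales $\epsilon$, $10\epsilon$, $20\epsilon$. The finite-level input is Theorem~\ref{thm:PE27_1}: since $B(q_i,1)\subset B(p_i,D)$ and $\epsilon=\frac{1}{100D}$, the identity map $\iota_i$ of the underlying point set, viewed as a map from $(B_{\bar{d}_i}(q_i,\cdot),\bar{d}_i)$ to $(B_{d_i}(q_i,\cdot),d_i)$, is bi-Lipschitz on $B(q_i,0.1)$ with multiplicative constants of the form $e^{\pm cD\epsilon/(m-2)}$, hence uniformly close to $1$; moreover, by (\ref{eqn:PE27_8}) every ball of $\bar{d}_i$- or $d_i$-radius at most $40\epsilon$ centered at $q_i$ is contained in $B(q_i,0.1)$ for every $i$, so that all the compositions appearing below stay in the region where the theorem and (\ref{eqn:PE06_8}) apply.

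First I would fix, for each large $i$, a Gromov-Hausdorff approximation $\bar{\Phi}_i\colon B_{\bar{d}_{\infty}}(\bar{q}_{\infty},20\epsilon)\to B_{\bar{d}_i}(q_i,20\epsilon)$ provided by (\ref{eqn:PE06_3}) and a Gromov-Hausdorff approximation $\Psi_i\colon B_{d_i}(q_i,100\epsilon)\to B_{d_{\infty}}(q_{\infty},\cdot)$ provided by (\ref{eqn:PC25_5}), both with errors $\delta_i\to 0$, and set $\pi_i:=\Psi_i\circ\iota_i\circ\bar{\Phi}_i$ on $B_{\bar{d}_{\infty}}(\bar{q}_{\infty},10\epsilon)$. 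The inclusions above show that $\pi_i$ is well defined for $i$ large, takes values in $B_{d_{\infty}}(q_{\infty},20\epsilon)$, sends $\bar{q}_{\infty}$ to a point converging to $q_{\infty}$, and is Lipschitz up to an $o(1)$-error with a constant uniformly close to $1$ by (\ref{eqn:PE27_9}). Then by the standard Arzel\`{a}-Ascoli-type compactness argument for equi-Lipschitz maps between pointed-Gromov-Hausdorff convergent spaces, a subsequence of $\pi_i$ converges uniformly to a Lipschitz map $\pi_{\infty}\colon B_{\bar{d}_{\infty}}(\bar{q}_{\infty},10\epsilon)\to B_{d_{\infty}}(q_{\infty},20\epsilon)$ satisfying (\ref{eqn:PE29_6aa})-(\ref{eqn:PE29_6a}).

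To obtain (\ref{eqn:PE29_7}), given $\bar{z},\bar{w}\in B_{\bar{d}_{\infty}}(\bar{q}_{\infty},\epsilon)$ I would pick $\bar{z}_i,\bar{w}_i$ with $\bar{\Phi}_i(\bar{z})\approx\bar{z}_i$, $\bar{\Phi}_i(\bar{w})\approx\bar{w}_i$; for $i$ large these lie in $B_{\bar{d}_i}(q_i,2\epsilon)$, and since $\bar{d}_i(\bar{z}_i,\bar{w}_i)\le 2\epsilon+o(1)$ both a $\bar{g}_i$-minimizing and a $g_i$-minimizing geodesic between them remain in $B(q_i,0.1)$, where $|\bar{f}_i|\le cD\epsilon$ by (\ref{eqn:PE06_8}); the conformal factor $e^{-2\bar{f}_i/(m-2)}$ relating the two metrics along such geodesics then yields $e^{-20D\epsilon/(m-2)}d_i(\bar{z}_i,\bar{w}_i)\le\bar{d}_i(\bar{z}_i,\bar{w}_i)\le e^{20D\epsilon/(m-2)}d_i(\bar{z}_i,\bar{w}_i)$, exactly as in Lemma~\ref{lma:PE18_1} (the crude constant $20$ leaving room for the passage between $g_i$- and $\bar{g}_i$-geodesics). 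Letting $i\to\infty$ and using $d_{\infty}(\pi_{\infty}(\bar{z}),\pi_{\infty}(\bar{w}))=\lim_i d_i(\bar{z}_i,\bar{w}_i)$ together with $\bar{d}_{\infty}(\bar{z},\bar{w})=\lim_i \bar{d}_i(\bar{z}_i,\bar{w}_i)$ gives (\ref{eqn:PE29_7}); in particular $\pi_{\infty}$ is bi-Lipschitz, hence a homeomorphism onto its image, on $B_{\bar{d}_{\infty}}(\bar{q}_{\infty},\epsilon)$. Finally, since $\iota_i$ and $\iota_i^{-1}$ are mutually inverse bijections and $\pi_{\infty}$, $\Pi_{\infty}$ are their respective Gromov-Hausdorff limits, one checks $\Pi_{\infty}\circ\pi_{\infty}=\mathrm{id}$ and $\pi_{\infty}\circ\Pi_{\infty}=\mathrm{id}$ on the overlaps of their domains, which promotes $\pi_{\infty}$ to a bi-Lipschitz homeomorphism of $B_{\bar{d}_{\infty}}(\bar{q}_{\infty},10\epsilon)$ onto an open subset of $B_{d_{\infty}}(q_{\infty},20\epsilon)$.

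I expect the main obstacle to be entirely organizational rather than conceptual: one must simultaneously control the Gromov-Hausdorff errors $\delta_i$, the conformal distortion factors, and the three nested radii so that (a) each composition $\Psi_i\circ\iota_i\circ\bar{\Phi}_i$ genuinely lands inside the claimed ball and (b) the minimizing geodesics used in the distance comparison provably stay inside $B(q_i,0.1)$, where Theorem~\ref{thm:PE27_1} and (\ref{eqn:PE06_8}) are valid. This is precisely why the statement confines (\ref{eqn:PE29_7}) to the small ball $B_{\bar{d}_{\infty}}(\bar{q}_{\infty},\epsilon)$, defines the map only on $B_{\bar{d}_{\infty}}(\bar{q}_{\infty},10\epsilon)$, and allows the crude constant $e^{20D\epsilon/(m-2)}$; once these inclusions are checked, the limit passage is routine.
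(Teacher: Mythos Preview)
Your proposal is correct and follows exactly the approach the paper intends: the paper's own ``proof'' simply reads that it is almost the same as the proof of Proposition~\ref{prn:PE29_1} (with the roles of $g_i$ and $\bar{g}_i$ interchanged) and leaves it as an exercise, which is precisely what you have carried out. You are in fact more careful than the paper about tracking the nested radii and the Gromov--Hausdorff approximations, and your final remark on $\Pi_{\infty}\circ\pi_{\infty}=\mathrm{id}$ anticipates part (a) of Theorem~\ref{thm:PE30_1}.
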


A direct consequence of the estimates (\ref{eqn:PE28_8}) and (\ref{eqn:PE29_7}) is the preservation of tangent space by $\Pi_{\infty}$ and $\pi_{\infty}$.
Let us show this property for $\Pi_{\infty}$.
Let $\{r_k\}$ be a sequence of positive numbers such that $r_k \to 0$. Define $\rho_k \coloneqq e^{\frac{-f_{\infty}(z)+f_{\infty}(q_{\infty})}{m-2}} r_k$.
Then we define
\begin{align}
&\left(\hat{Y}, \hat{z}, \hat{d}_{\infty} \right) \coloneqq \lim_{k \to \infty} \left(M_{\infty}, z, r_k^{-1} d_{\infty} \right), \label{eqn:PE31_5}\\
&\left(\hat{\bar{Y}}, \hat{\bar{z}}, \hat{\bar{d}}_{\infty} \right) \coloneqq \lim_{k \to \infty} \left(\bar{M}_{\infty}, \bar{z}, \rho_k^{-1} \bar{d}_{\infty} \right), \label{eqn:PE31_6}
\end{align}
if the limits exist. Fixing $L>1$ and replacing $r$ by $Lr_k$ in (\ref{eqn:PE28_8}), we have
\begin{align*}
&\quad d_{GH} \left( B_{r_k^{-1}d_{\infty}}(z,L), B_{\rho_k^{-1}\bar{d}_{\infty}}(\Pi_{\infty}(z), L) \right)\\
&= r_k^{-1} d_{GH} \left( B_{d_{\infty}}(z,Lr_k), B_{e^{\frac{f_{\infty}(z)-f_{\infty}(q_{\infty})}{m-2}} \bar{d}_{\infty}}(\Pi_{\infty}(z), Lr_k) \right) < 2D L^2 r_k \to 0.
\end{align*}
Letting $L \to \infty$, the above equation implies that
\begin{align}
d_{PGH} \left\{ \left(\hat{Y}, \hat{z}, \hat{d}_{\infty} \right), \left(\hat{\bar{Y}}, \hat{\bar{z}}, \hat{\bar{d}}_{\infty} \right)\right\}=0, \label{eqn:PE31_7}
\end{align}
which is the desired isometry. Via this isometry, it is clear that $\Pi_{\infty}$ preserves the regular-singular decomposition.
In other words, if $x \in \mathcal{R}$, then $\bar{x}=\Pi_{\infty}(x) \in \bar{\mathcal{R}}$; if $x \in \mathcal{S}$, then $\bar{x}=\Pi_{\infty}(x) \in \bar{\mathcal{S}}$.
Same argument obviously applies for the study of $\pi_{\infty}$.

We now summarize the previous discussion in this section into the following theorem.

\begin{theorem}
Suppose $(M_i,p_i,g_i,f_i) \in \mathcal{M}_{m}$ and $q_i \in B(p_i, D-1)$ for some $D>10m$.
Suppose $\epsilon=\frac{1}{100D}$. Then by taking subsequence if necessary, we have
\begin{align}
&(M_i, p_i, d_i) \longright{pointed-Gromov-Hausdorff} \left( M_{\infty}, p_{\infty}, d_{\infty} \right), \label{eqn:PE30_7} \\
&\left( B_{\bar{d}_i}(q_i, 20\epsilon), q_i, \bar{d}_i \right) \longright{pointed-Gromov-Hausdorff}
\left( B_{\bar{d}_{\infty}}(\bar{q}_{\infty}, 20\epsilon), \bar{q}_{\infty}, \bar{d}_{\infty} \right). \label{eqn:PE30_8}
\end{align}
Let $M_{\infty}=\mathcal{R} \cup \mathcal{S}$ and $B_{\bar{d}_{\infty}}(\bar{q}_{\infty}, 20\epsilon)=\bar{\mathcal{R}} \cup \bar{\mathcal{S}}$ be the
regular-singular decompositions of the limit spaces. Let $q_{\infty}$ be the limit point of $q_i$ along the convergence (\ref{eqn:PE30_7}).
Let $\Pi_{\infty}$ and $\pi_{\infty}$ be the natural bi-Lipschitz homeomorphism maps defined in Proposition~\ref{prn:PE29_1} and Proposition~\ref{prn:PE29_2}.
Then the following properties hold.
\begin{itemize}
\item[(a).] $\pi_{\infty} \circ \Pi_{\infty}=Id$ is well-defined on $B(q_{\infty}, \epsilon)$, and $\Pi_{\infty} \circ \pi_{\infty}=Id$ is
well-defined on $B_{\bar{d}_{\infty}}(\bar{q}_{\infty}, \epsilon)$.
\item[(b).] $\pi_{\infty}$ and $\Pi_{\infty}$ preserve tangent space in the sense of (\ref{eqn:PE31_5}), (\ref{eqn:PE31_6}) and (\ref{eqn:PE31_7}).
\item[(c).] $\pi_{\infty}$ and $\Pi_{\infty}$ preserve the regular-singular decomposition.
\end{itemize}
\label{thm:PE30_1}
\end{theorem}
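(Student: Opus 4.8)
The plan is to assemble Theorem~\ref{thm:PE30_1} directly from the propositions already proved in this section, treating it essentially as a packaging statement. First I would invoke Proposition~\ref{prn:PC25_1} to obtain the convergence (\ref{eqn:PE30_7}) together with the regular-singular decomposition $M_{\infty}=\mathcal{R}\cup\mathcal{S}$; since $q_i\in B(p_i,D-1)$, for $i$ large we have $B_{\bar d_i}(q_i,20\epsilon)\subset B(p_i,D)$, so Proposition~\ref{prn:PE06_1} (applied after passing to a further subsequence) yields (\ref{eqn:PE30_8}) and the decomposition $B_{\bar d_{\infty}}(\bar q_{\infty},20\epsilon)=\bar{\mathcal{R}}\cup\bar{\mathcal{S}}$. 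Proposition~\ref{prn:PE29_1} and Proposition~\ref{prn:PE29_2} then produce the bi-Lipschitz maps $\Pi_{\infty}$ and $\pi_{\infty}$, with the comparison estimates (\ref{eqn:PE28_7}), (\ref{eqn:PE28_8}) and (\ref{eqn:PE29_7}). All of the existence content is thus already in hand; it remains to check properties (a), (b), (c).

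For part (a), I would argue that on $B(q_{\infty},\epsilon)$ both $\Pi_{\infty}$ and $\pi_{\infty}$ arise as limits (via Arzel\`a--Ascoli) of the \emph{identity} maps $\Pi_i,\pi_i$ between $(M_i,q_i,d_i)$ and $(M_i,q_i,\bar d_i)$, which are literally inverse to each other at the finite level. Since $\Pi_i\circ\pi_i=\mathrm{id}$ and $\pi_i\circ\Pi_i=\mathrm{id}$ on the relevant balls, and since the convergence of points $z_i\to z$ is respected by both limit maps, passing to the limit gives $\pi_{\infty}\circ\Pi_{\infty}=\mathrm{id}$ on $B(q_{\infty},\epsilon)$ and $\Pi_{\infty}\circ\pi_{\infty}=\mathrm{id}$ on $B_{\bar d_{\infty}}(\bar q_{\infty},\epsilon)$; the radius $\epsilon$ (rather than $10\epsilon$) is the natural domain where the composition stays inside the balls on which both maps are defined, using the bi-Lipschitz bounds with constant $e^{\pm 10D\epsilon/(m-2)}$ close to $1$. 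One has to be slightly careful that the image of $B(q_{\infty},\epsilon)$ under $\Pi_{\infty}$ lies in $B_{\bar d_{\infty}}(\bar q_{\infty},\epsilon)$, which follows since $\epsilon e^{10D\epsilon/(m-2)}<2\epsilon$ when $D>10m$ — but this is exactly the sort of estimate already used to derive (\ref{eqn:PE06_5aa}).

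For part (b), I would reproduce (or simply cite) the rescaling computation carried out right after Proposition~\ref{prn:PE29_2}: fixing a sequence $r_k\to 0$ and setting $\rho_k=e^{(-f_{\infty}(z)+f_{\infty}(q_{\infty}))/(m-2)}r_k$, one replaces $r$ by $Lr_k$ in (\ref{eqn:PE28_8}) to get $d_{GH}\!\left(B_{r_k^{-1}d_{\infty}}(z,L),B_{\rho_k^{-1}\bar d_{\infty}}(\bar z,L)\right)<2DL^2 r_k\to 0$, and then lets $L\to\infty$ to conclude (\ref{eqn:PE31_7}), i.e.\ the tangent cones at $z$ and at $\bar z=\Pi_{\infty}(z)$ are isometric; the symmetric statement for $\pi_{\infty}$ uses (\ref{eqn:PE29_7}) in the same way. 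Part (c) is then immediate: a point $z$ is regular iff some tangent space is Euclidean, and since $\Pi_{\infty}$ (resp.\ $\pi_{\infty}$) carries the tangent cone at $z$ isometrically to the tangent cone at $\bar z$, it maps $\mathcal{R}$ into $\bar{\mathcal{R}}$ and $\mathcal{S}$ into $\bar{\mathcal{S}}$, and vice versa; combined with (a) this gives genuine bijections $\mathcal{R}\leftrightarrow\bar{\mathcal{R}}$ and $\mathcal{S}\leftrightarrow\bar{\mathcal{S}}$ on the $\epsilon$-balls. I do not expect any serious obstacle here — the theorem is a summary — and the only point demanding a little care is the bookkeeping of radii ($20\epsilon$ vs.\ $10\epsilon$ vs.\ $\epsilon$) so that every composition in (a) is defined on the stated domain; I would handle that by the elementary inequalities $e^{\pm 10D\epsilon/(m-2)}\in(\tfrac12,2)$ valid for $D>10m$, $\epsilon=\tfrac{1}{100D}$.
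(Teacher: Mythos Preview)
Your proposal is correct and matches the paper's treatment: the paper presents this theorem explicitly as a summary of the preceding propositions (``We now summarize the previous discussion in this section into the following theorem'') and gives no separate proof, so your plan of assembling it from Propositions~\ref{prn:PC25_1}, \ref{prn:PE06_1}, \ref{prn:PE29_1}, \ref{prn:PE29_2} and the tangent-cone computation (\ref{eqn:PE31_5})--(\ref{eqn:PE31_7}) is exactly what is intended. The only small remark is that for part (b) in the $\pi_{\infty}$ direction you cite (\ref{eqn:PE29_7}), which is only a bi-Lipschitz bound; what you actually need is the analogue of (\ref{eqn:PE28_8}) with the roles of $g$ and $\bar g$ interchanged, obtained by the same argument as in Proposition~\ref{prn:PE29_1}.
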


We conclude this section with the following proposition.

\begin{proposition}\label{prn:PE29_3}
Suppose $(M_i^{m},p_i,g_i,f_i)$ is a sequence of Ricci shrinkers satisfying
\begin{align*}
(M_i, p_i, d_i) \longright{pointed-Gromov-Hausdorff} (M_{\infty}, p_{\infty}, d_{\infty}).
\end{align*}
If there is a point $y \in M_{\infty}$ such that some tangent space of $M_{\infty}$ at the point $y$ is isometric to $\R^{m}$, then there is a constant $A$ such that
\begin{align}\label{eqn:PE29_1}
\boldsymbol{\mu}(M_i, g_i) \geq -A.
\end{align}
\end{proposition}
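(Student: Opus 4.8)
The strategy is to reduce the assertion to a uniform positive lower bound on the volumes $|B(p_i,1)|$, which by Lemma~\ref{lma:PE01_3} (giving $|B(p_i,1)|\le e^m(4\pi)^{m/2}e^{\boldsymbol{\mu}(M_i,g_i)}$) immediately forces $\boldsymbol{\mu}(M_i,g_i)\ge\log V_0-m-\tfrac m2\log 4\pi$ once $|B(p_i,1)|\ge V_0>0$. To exhibit such a $V_0$, I would first use the full-dimensional Euclidean tangent space at $y$ to find a near-Euclidean ball of a definite scale inside each $M_i$. Set $L:=d_\infty(y,p_\infty)$, $D:=L+10m+1$, and let $\epsilon$ be the constant fixed after Lemma~\ref{lma:PH11_1}. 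Since some tangent space of $M_\infty$ at $y$ is isometric to $(\R^m,0,d_E)$, there is $r_k\to 0$ with $(M_\infty,y,r_k^{-1}d_\infty)$ converging in the pointed Gromov--Hausdorff sense to $(\R^m,0,d_E)$. Fixing $k$ with $r_k<\tfrac1{100D}$ and $r_k^{-1}d_{GH}\!\left(B_{d_\infty}(y,r_k),B(0,r_k)\right)<\tfrac{\epsilon}{200}$, and choosing $y_i\in M_i$ with $y_i\to y$ along (\ref{eqn:PC25_5}), one obtains, for all large $i$, $d_i(y_i,p_i)<L+1$ and --- by composing Gromov--Hausdorff approximations --- $r_k^{-1}d_{GH}\!\left(B_{g_i}(y_i,r_k),B(0,r_k)\right)<\tfrac{\epsilon}{100}$.

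Next I would upgrade this near-Euclidean condition to a volume lower bound through the machinery of Section~\ref{sec:regularity}. By construction $\mathbf{gr}(y_i)\ge r_k$ in the sense of Definition~\ref{dfn:PH02_1} (note $D(y_i)=d_i(y_i,p_i)+10m\le D$, so $r_k<\tfrac1{100D(y_i)}$). Theorem~\ref{thm:PH10_1} asserts that $\mathbf{gr}$ and $\mathbf{vr}$ are equivalent, hence $\mathbf{vr}(y_i)>c(m)r_k$, so there is $s\in\bigl(c(m)r_k,\tfrac1{100D}\bigr)$ with
\[
  \bigl|B_{g_i}(y_i,s)\bigr| \;\ge\; \Bigl(1-\tfrac{\delta}{100}\Bigr)\omega_m\, s^m \;\ge\; \Bigl(1-\tfrac{\delta}{100}\Bigr)\omega_m\,\bigl(c(m)\bigr)^m r_k^m \;=:\; c_1(m)\, r_k^m ,
\]
and $B_{g_i}(y_i,s)\subset B(p_i,L+2)$ for all large $i$. (Alternatively, one may bypass Theorem~\ref{thm:PH10_1} and apply Lemma~\ref{lma:PH11_1} directly to the rescaled conformal metric $r_k^{-2}\bar{g}_i$, which has $|\overline{Rc}|<D^2r_k^2<1$ on unit balls by Theorem~\ref{thm:PE27_1}, transferring the resulting volume estimate back to $g_i$ via (\ref{eqn:PE27_9})--(\ref{eqn:PE27_10}).)

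It then remains to propagate the estimate from the small ball about $y_i$ to the unit ball about $p_i$ by relative volume comparison. From $B_{g_i}(y_i,s)\subset B(p_i,L+2)$ we get $|B(p_i,L+2)|\ge c_1(m)r_k^m$, while applying the relative volume bound (\ref{eqn:PE03_5}) (a consequence of the Bakry--\'{E}mery comparison, Lemma~\ref{lma:PE01_1}) with center $p_i$, outer radius $L+2$ and inner radius $1$, together with the bounds (\ref{eqn:PE03_6a})--(\ref{eqn:PE03_6b}) applied with $D':=L+2+10m$, gives $|B(p_i,L+2)|\le C(m,L)\,|B(p_i,1)|$. Hence $|B(p_i,1)|\ge C(m,L)^{-1}c_1(m)r_k^m=:V_0>0$ for all large $i$, and by Lemma~\ref{lma:PE01_3} we conclude $\boldsymbol{\mu}(M_i,g_i)\ge\log V_0-m-\tfrac m2\log 4\pi$ for all large $i$; enlarging $A$ to absorb the finitely many remaining indices (each $\boldsymbol{\mu}(M_i,g_i)$ being a finite number) yields (\ref{eqn:PE29_1}).

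The main difficulty is essentially the scale bookkeeping: one must select the tangent-cone scale $r_k$ and the Gromov--Hausdorff index $i$ so that the near-Euclidean ball lands inside the admissible regime $r_k<\tfrac1{100D}$, $B(y_i,r_k)\subset B(p_i,D)$ required by Definition~\ref{dfn:PH02_1} and Theorem~\ref{thm:PH10_1}; once inside that regime, the equivalence of the restricted radii --- which already encodes Colding's volume continuity (Lemma~\ref{lma:PH11_1}) and the local conformal trick of Section~\ref{sec:conformal} --- does all of the real work. The one place where the hypothesis is genuinely essential is that the tangent space must have the \emph{full} dimension $m$: a Euclidean tangent space of smaller dimension is precisely what happens in the collapsing case and carries no volume lower bound.
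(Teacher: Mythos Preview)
Your proof is correct and follows essentially the same route as the paper: pass from the Euclidean tangent space at $y$ to a near-Euclidean ball at a fixed small scale $r_k$ in each $M_i$, apply Colding's volume continuity (you package it inside Theorem~\ref{thm:PH10_1}, the paper applies it directly to the conformal metric $\bar g_i$, which is exactly your parenthetical alternative), and then convert the resulting volume lower bound into a lower bound on $\boldsymbol{\mu}$. The only organizational difference is that the paper stays with the ball centered at $y_i$ and invokes (\ref{eqn:PE04_61}) directly for $q=y_i$, whereas you propagate to $p_i$ via relative volume comparison before invoking Lemma~\ref{lma:PE01_3}; both are fine and cost the same.
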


\begin{proof}
Define
\begin{align}
D \coloneqq d_{\infty}(y, p_{\infty})+20m, \quad \epsilon=\frac{1}{100D}. \label{eqn:PE30_1}
\end{align}
Then can choose $y_i \in M_i$ such that $y_i \to y$. According to the choice of $D$ and $y_i$, it is clear that $B(y_i,1) \subset B(p_i,D)$.
Let $\bar{g}_i=e^{\frac{2(f(y_i)-f)}{m-2}}g_i$ and let $\bar{d}_{i}$ be the length structure induced from $\bar{g}_i$. Then it follows from Proposition~\ref{prn:PE06_1} and
Proposition~\ref{prn:PE29_1} that
\begin{align}
\left( B_{\bar{d}_i}(q_i, 2\epsilon), y_i, d_i \right) \longright{pointed-Gromov-Hausdorff}
\left( B_{\bar{d}_{\infty}}(\bar{y}, 2\epsilon), \bar{y}, \bar{d}_{\infty} \right).
\label{eqn:PE29_3}
\end{align}
By Proposition~\ref{prn:PE29_1}, the tangent space of $B_{\bar{d}_{\infty}}(\bar{y}, 2\epsilon)$ at $\bar{y}$ is isometric to the tangent space
of $M_{\infty}$ at $y$ and consequently is isometric to $\R^{m}$.
By the defintion of tangent space, there exists $r_k \to 0$ such that
\begin{align*}
\left( B_{\bar{d}_{\infty}}(\bar{y}, 2\epsilon), \bar{y}, r_k^{-1} \bar{d}_{\infty} \right) \longright{pointed-Gromov-Hausdorff} \left( \R^{m}, 0, d_{E} \right).
\end{align*}
Fix $\xi$ small, we can find large $k$ such that $2\xi^{-1}r_{k}<\epsilon$ and
\begin{align*}
d_{GH}\left\{ \left( B_{r_k^{-1}\bar{d}_{\infty}}(y, 2\xi^{-1}), r_k^{-1} \bar{d}_{\infty} \right), \left( B_{d_{E}}(0, 2\xi^{-1}), d_{E} \right) \right\}<0.5\xi.
\end{align*}
Then we fix this $k$. As $\bar{d}_{\infty}$ is the limit of $\bar{d}_i$, we can choose $i$ large enough such that
\begin{align}
d_{GH}\left\{ \left( B_{r_k^{-1}\bar{d}_i}(y_{i}, 2\xi^{-1}), r_k^{-1} \bar{d}_i \right), \left( B_{d_{E}}(0, 2\xi^{-1}), d_{E} \right) \right\}<0.5\xi.
\label{eqn:PE29_4}
\end{align}
As $2\xi^{-1} r_k<\epsilon$, it follows from part (a) of Theorem~\ref{thm:PE27_1} that $B_{r_k^{-1}\bar{d}_i}(y_{i}, 2\xi^{-1})=B_{\bar{d}_i}(y_i, 2\xi^{-1}r_k)$
is a relative compact subset of $M_i$.
Furthermore, by part (d) of Theorem~\ref{thm:PE27_1} , we know $|\bar{Rc}|_{\bar{g}_i}<\epsilon^{-2}$ on this set.
After rescaling, this means that
\begin{align*}
|Rc'|_{g_i'} < \epsilon^{-2} r_k^2, \quad \; \textrm{on} \; B_{r_k^{-1}\bar{d}_i}(y_i, 2\xi^{-1}),
\end{align*}
where $g_i'=r_k^{-2}\bar{g}_i$. It follows from the volume convergence theorem of Colding that (\ref{eqn:PE29_4}) implies that
\begin{align*}
\frac{1}{2} \omega_{m} \leq \left| B_{r_k^{-1}\bar{d}_i}(y_{i}, 1)\right|_{dv_{g_i'}}=r_k^{-m} |B_{\bar{d}_i}(y_i,r_k)|_{dv_{\bar{g}_i}}
\end{align*}
for large $i$. It follows that
\begin{align*}
|B_{\bar{d}_i}(y_i,r_k)|_{dv_{\bar{g}_i}} \geq \frac{1}{2} \omega_{m} r_k^{m}.
\end{align*}
Note that $r_k$ is a fixed small number. So we have
\begin{align*}
B_{\bar{d}_i}(y_i,r_k) \subset B_{d_i}(y_i, 2r_k) \subset B_{d_i}(y_i, 1) \subset B_{d_i}(p_i, D).
\end{align*}
Combining the previous two steps and using Theorem~\ref{thm:PE27_1} again, we obtain
\begin{align*}
|B(y_i,1)|_{dv_{g_i}} \geq \left|B_{\bar{d}_i}(y_i,r_k) \right|_{dv_{g_i}} \geq \frac{1}{2} |B_{\bar{d}_i}(y_i,r_k)|_{dv_{\bar{g}_i}}
\geq \frac{1}{4} \omega_m r_k^{m}.
\end{align*}
Plugging the above inequality into (\ref{eqn:PE04_61}), for all large $i$, we have
\begin{align*}
e^{\boldsymbol{\mu}(M_i,g_i)} \geq (4\pi)^{-\frac{m}{2}} e^{-\frac{D^2}{2}} |B(y_i,1)| > (4\pi)^{-\frac{m}{2}-1} \omega_m e^{-\frac{D^2}{2}} r_k^{m},
\end{align*}
whence we obtain (\ref{eqn:PE29_1}).
\end{proof}

Proposition~\ref{prn:PE29_3} basically means that the non-collapsing condition (\ref{eqn:PE29_1}) is a natural condition.
We shall pursue the structure of the limit space further under this condition.


\section{Regularity of the limit spaces}
\label{sec:reglimit}

In this section, we shall show that $\mathcal{R}(M_{\infty})$ is a smooth manifold under the non-collapsing condition.
Although these should be well-known to experts, we still find new ingredients of this regularity improvement, which are not available
in the literature.

\begin{lemma}
Suppose $(M_i^{m},p_i,g_i,f_i) \in \mathcal{M}(A)$ satisfying
\begin{align*}
(M_i, p_i, d_i) \longright{pointed-Gromov-Hausdorff} (M_{\infty}, p_{\infty}, d_{\infty}).
\end{align*}
Then for each $y \in M_{\infty}$, each tangent space $Y$ of $M_{\infty}$ at $y$ is an irreducible metric cone. Namely, $\mathcal{R}(Y)$ is connected.
\label{lma:PE27_0}
\end{lemma}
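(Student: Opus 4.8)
The statement has two parts: (i) every tangent space $Y$ of $M_\infty$ at $y$ is a metric cone, and (ii) $Y$ is irreducible in the sense that its regular part $\mathcal{R}(Y)$ is connected. For part (i), the strategy is to reduce to the classical Cheeger--Colding cone structure theorem for non-collapsed limits of manifolds with a Ricci lower bound, using the local conformal transformation developed in Section~\ref{sec:conformal}. Concretely, fix $y$ and a blow-up sequence $r_k \to 0$ realizing the tangent space. Choose $y_i \in M_i$ with $y_i \to y$ and set $D = d_\infty(y,p_\infty) + 20m$, $\epsilon = \frac{1}{100D}$, so that $B(y_i,1) \subset B(p_i,D)$. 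By Theorem~\ref{thm:PE27_1}, on the ball $B_{\bar{d}_i}(y_i, 20\epsilon)$ the conformal metric $\bar{g}_i = e^{\frac{2(f(y_i)-f)}{m-2}} g_i$ has $|\overline{Rc}|_{\bar g_i} < D^2$, and $\bar d_i$ is bi-Lipschitz close to $d_i$ with the precise distortion estimate (\ref{eqn:PE27_9}). Passing to the limit via Proposition~\ref{prn:PE06_1} and Proposition~\ref{prn:PE29_1}, the space $B_{\bar d_\infty}(\bar q_\infty, 20\epsilon)$ is a non-collapsed (by Lemma~\ref{lma:PE04_2}, up to the bi-Lipschitz comparison) Gromov--Hausdorff limit of manifolds with two-sided-bounded Ricci curvature on the relevant scale, hence Cheeger--Colding applies: every tangent cone of $B_{\bar d_\infty}(\bar q_\infty, 20\epsilon)$ at $\bar y = \Pi_\infty(y)$ is a metric cone. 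Finally, because the distortion constant $e^{\pm 10D\epsilon/(m-2)} \to 1$ as the scale shrinks (see the computation in the discussion following Proposition~\ref{prn:PE29_2}, culminating in (\ref{eqn:PE31_7})), the bi-Lipschitz map $\Pi_\infty$ becomes an isometry on tangent cones. Thus the tangent cone $Y$ of $M_\infty$ at $y$ is isometric to the tangent cone of $B_{\bar d_\infty}(\bar q_\infty, 20\epsilon)$ at $\bar y$, and is therefore a metric cone.

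**Irreducibility.** For part (ii), I would again transfer the question through $\Pi_\infty$ to the conformal picture, where it becomes the statement that the cross-section of a tangent cone of a non-collapsed Ricci-bounded limit is connected. Write $Y = C(Z)$ for a compact metric space $Z$ (the link). The regular set $\mathcal{R}(Y)$ is the cone over $\mathcal{R}(Z)$ together with the vertex removed only if the vertex is singular; in any case $\mathcal{R}(Y)$ is connected if and only if $\mathcal{R}(Z)$ is connected (since each ray through a regular point of $Z$ is a ray of regular points of $Y$, and all these rays emanate from a neighborhood of the vertex... more carefully: $\mathcal R(Y)\setminus\{o\}$ fibers over $\mathcal R(Z)$). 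The connectedness of $\mathcal R(Z)$, equivalently of $\mathcal R(Y)$, is a theorem of Cheeger--Colding (the "almost rigidity / generic regular set is connected" circle of results) for non-collapsed limits with Ricci bounded below; it also follows from the codimension-$4$ estimate for the singular set combined with the fact that removing a set of codimension $\geq 2$ from a connected manifold (here the non-collapsed limit is connected by construction, being a Gromov--Hausdorff limit of connected spaces) leaves a connected set. Since $\mathcal S(Y)$ has codimension $\geq 2$ inside the $m$-dimensional $Y$ (a fortiori, using part (a) of Theorem~\ref{thmin:a} it has codimension $\geq 4$ once non-collapsing is in force, but here we only need the a priori Hausdorff-dimension bound of Cheeger--Colding), $\mathcal R(Y)$ is connected. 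Transferring back through the tangent-cone isometry $\Pi_\infty$ established in part (i), $\mathcal{R}(Y)$ for the original $M_\infty$ is connected.

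**Main obstacle.** The delicate point is the passage to the limit for the conformal metrics and the claim that $\Pi_\infty$ induces an \emph{isometry} on tangent cones, not merely a bi-Lipschitz map. This is exactly where the scale-invariance of the distortion estimate (\ref{eqn:PE28_8}), i.e. the fact that $d_{GH}(B_{d_\infty}(z,r), B_{e^{(f_\infty(z)-f_\infty(q_\infty))/(m-2)}\bar d_\infty}(\bar z, r)) \leq 2Dr^2$ has a quadratically-small error, is essential — it forces the comparison constant to $1$ under blow-up, which is the content of (\ref{eqn:PE31_7}). I would carry this out by choosing the blow-up factors $\rho_k = e^{-(f_\infty(z)-f_\infty(q_\infty))/(m-2)} r_k$ for the $\bar d_\infty$-side, exactly as in the displayed computation preceding Theorem~\ref{thm:PE30_1}, and invoke that computation verbatim. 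A secondary technical point is confirming that the non-collapsing hypothesis $(M_i,\dots) \in \mathcal{M}(A)$ survives the conformal change on the fixed small scale $20\epsilon$: this is immediate from (\ref{eqn:PE27_8})–(\ref{eqn:PE27_9}), which show the volumes of $\bar g_i$- and $g_i$-balls at comparable scales are comparable with constants depending only on $m$ and $D$, so a lower volume bound for $g_i$-balls (available by Lemma~\ref{lma:PE04_2}) transfers to $\bar g_i$-balls. With these two points pinned down, the cone structure and connectedness of the regular link are direct citations of Cheeger--Colding.
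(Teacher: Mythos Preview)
Your proposal is correct and follows essentially the same approach as the paper: transfer to the conformal metric $\bar g$ via Theorem~\ref{thm:PE27_1} and Propositions~\ref{prn:PE06_1}--\ref{prn:PE29_1}, use the quadratic error estimate (\ref{eqn:PE28_8}) to obtain the tangent-cone isometry (\ref{eqn:PE31_7}), and then invoke classical Cheeger--Colding theory on the bounded-Ricci side to get both the metric cone structure and the connectedness of the regular part. The paper's own proof is a terse three-line version of exactly this argument; your discussion of the link $Z$ and the codimension reasoning for irreducibility is more explicit than the paper's bare citation of Cheeger--Colding, but not a genuinely different route.
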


\begin{proof}
Note that $\boldsymbol{\mu}(M_i) \geq -A$ implies that we are in the non-collapsing case.
By Proposition~\ref{prn:PE06_1}, the tangent space of $M_{\infty}$ at $y$ is isometric to the tangent space of $B_{\bar{d}_{\infty}}(\bar{y}, \epsilon)$ at $\bar{y}=\Pi_{\infty}(y)$.
However, the local Ricci curvature bound (\ref{eqn:PE27_11}) in Theorem~\ref{thm:PE27_1} assures us to apply the classical Cheeger-Colding theory, which asserts that
the tangent space at $\bar{y}$ is an irreducible metric cone, whence we know the tangent space of $M_{\infty}$ at $y$ is also an irreducible metric cone.
\end{proof}

\begin{lemma}
Same condition as in Lemma~\ref{lma:PE27_0}.
Then for each $q \in M_{\infty}$, there is an $r=r_{q}>0$ such that the following properties hold:
\begin{itemize}
\item[(a).] $\mathcal{R} \cap B(q, r) \neq \emptyset$.
\item[(b).] Every two points $z,w \in B(q, r) \cap \mathcal{R}$ can be connected by a curve $\gamma \subset B(y, 3r) \cap \mathcal{R}$.
\end{itemize}
\label{lma:PE27_1}
\end{lemma}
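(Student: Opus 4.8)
The plan is to reduce the assertion to a purely local statement about $B_{\bar{d}_\infty}(\bar q_\infty,\epsilon)$, where the Cheeger--Colding structure theory for limits of manifolds with bounded Ricci curvature applies verbatim. Fix $q\in M_\infty$ and set $D=d_\infty(q,p_\infty)+20m$, $\epsilon=\tfrac{1}{100D}$. By Proposition~\ref{prn:PE06_1} and Proposition~\ref{prn:PE29_1}, the bi-Lipschitz homeomorphism $\Pi_\infty$ carries a neighborhood of $q$ in $M_\infty$ onto a neighborhood of $\bar q_\infty$ in the limit space $B_{\bar d_\infty}(\bar q_\infty,20\epsilon)$, and by the discussion following Proposition~\ref{prn:PE29_2} it preserves the regular--singular decomposition. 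Moreover, on the $\bar g$ side we have a genuine uniform Ricci bound $|\overline{Rc}|_{\bar g_i}<D^2$ on the relevant balls (Theorem~\ref{thm:PE27_1}(d)), so $B_{\bar d_\infty}(\bar q_\infty,20\epsilon)$ is a non-collapsed Ricci-limit space in the classical sense. Since $\Pi_\infty$ and $\pi_\infty$ are bi-Lipschitz with constants $e^{\pm 10D\epsilon/(m-2)}$ close to $1$, it suffices to prove (a) and (b) for $\bar q_\infty$ inside $B_{\bar d_\infty}(\bar q_\infty,20\epsilon)$, at some scale $\bar r$; pulling back via $\pi_\infty$ and shrinking $r$ accordingly (e.g. $r=\tfrac14 e^{-20D\epsilon/(m-2)}\bar r$) gives the claim on $M_\infty$, with the enlargement $B(q,r)\to B(q,3r)$ absorbing the bi-Lipschitz distortion of curves.

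For part (a): by Cheeger--Colding, on a non-collapsed Ricci-limit space the regular set $\mathcal{R}(\bar M_\infty)$ (points with some tangent cone $=\R^m$) has full measure, in particular is dense; hence $\mathcal{R}\cap B_{\bar d_\infty}(\bar q_\infty,\bar r)\neq\emptyset$ for every $\bar r>0$. Transporting back through $\pi_\infty$ and using that $\pi_\infty$ preserves the regular set gives $\mathcal{R}\cap B(q,r)\neq\emptyset$.

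For part (b): the key input is that $B_{\bar d_\infty}(\bar q_\infty,20\epsilon)$ is a length space with $\mathcal{R}(\bar M_\infty)$ open, dense, convex in the weak sense that any two regular points are joined by a minimizing geodesic whose interior lies in $\mathcal{R}$ — this is the standard fact (Cheeger--Colding) that minimizing limit geodesics avoid the singular set except possibly at endpoints, together with the fact that the singular set has codimension $\geq 2$ so it cannot disconnect short geodesics. Concretely, pick $\bar r>0$ so small that $B_{\bar d_\infty}(\bar q_\infty,10\bar r)$ is contained in a region where the Cheeger--Colding almost-splitting/convexity estimates hold; then for $\bar z,\bar w\in B_{\bar d_\infty}(\bar q_\infty,\bar r)\cap\mathcal{R}$, a minimizing geodesic $\bar\gamma$ from $\bar z$ to $\bar w$ has length $<2\bar r$, so $\bar\gamma\subset B_{\bar d_\infty}(\bar q_\infty,3\bar r)$, and its interior lies in $\mathcal{R}$; adjoining the endpoints $\bar z,\bar w$ (themselves regular) yields a curve in $B_{\bar d_\infty}(\bar q_\infty,3\bar r)\cap\mathcal{R}$. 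Then $\gamma:=\pi_\infty(\bar\gamma)$ is the desired curve, which lies in $B(q,3r)\cap\mathcal{R}$ once the constants are chosen as above.

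The main obstacle is verifying the "interior of a short minimizing geodesic between regular points stays regular" assertion in our exact setting: the cleanest route is to note it is a property of the model $\bar g$-limit, for which one invokes the non-collapsed Cheeger--Colding theory (the singular set $\bar{\mathcal{S}}$ has Hausdorff codimension $\geq 2$, and a minimizing geodesic meeting $\bar{\mathcal{S}}$ at an interior point would, by the cone structure of tangent spaces at such points together with the gap theorem, contradict minimality or yield a branching that is excluded in Ricci limits). If one prefers to avoid the full strength of that statement, an alternative is to use only density of $\mathcal{R}$ plus openness: connect $\bar z$ to $\bar w$ by a chain of small balls centered at regular points, each ball so small that it is itself $(1+\eta)$-bi-Lipschitz to a Euclidean ball (possible on a dense set at a.e. small scale by volume convergence, Lemma~\ref{lma:PH11_1}) and hence the intersection of consecutive balls with $\mathcal{R}$ is nonempty and path-connected inside $\mathcal{R}$; concatenating gives the curve. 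Either way the geometric content is entirely on the bounded-Ricci side, and the conformal dictionary of Section~\ref{sec:weakexistence} does the rest.
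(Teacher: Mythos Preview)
Your overall strategy---transfer to the $\bar g$-limit via $\Pi_\infty,\pi_\infty$, invoke non-collapsed Cheeger--Colding theory there, and pull back using that these maps preserve the regular--singular decomposition---is exactly what the paper does, and your argument for (a) via density of $\bar{\mathcal R}$ matches it.

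For (b), however, your primary justification overreaches. The assertion that ``minimizing limit geodesics avoid the singular set except possibly at endpoints'' is \emph{not} a standard Cheeger--Colding fact; it is the Colding--Naber H\"older continuity theorem~\cite{ColdNa}, which is considerably deeper, and the paper itself defers its adaptation to the shrinker setting to a separate paper (see the discussion around Theorem~\ref{thm:PH26_1}). Your offered reason---codimension $\geq 2$ plus cone structure forcing non-branching---does not by itself establish geodesic regularity. The paper instead invokes only the path-connectedness of the regular set inside a ball, citing Section~3 of~\cite{CC00}: two regular points $\bar z,\bar w\in\bar{\mathcal R}\cap B_{\bar d_\infty}(\bar q_\infty,1.5r)$ are joined by some curve (not necessarily a minimizing geodesic) lying in $\bar{\mathcal R}\cap B_{\bar d_\infty}(\bar q_\infty,1.5r)$, and then $\gamma=\pi_\infty(\bar\gamma)$ works. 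Your alternative chain-of-balls sketch is in the spirit of that result and would serve, but your main line as written leans on a theorem you have not earned at this point in the argument.
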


\begin{proof}
We choose $D$ and $\epsilon$ the same as that in (\ref{eqn:PE30_1}). Then we define
\begin{align}
r \coloneqq \frac{1}{10}\epsilon. \label{eqn:PE30_2}
\end{align}
By Proposition~\ref{prn:PE29_2}, we know that
\begin{align*}
\mathcal{R} \cap B(q, r) \supset \bar{\mathcal{R}} \cap \pi_{\infty} \left\{ B_{\bar{d}_{\infty}}(\bar{q}, 0.1 r) \right\},
\end{align*}
where $\bar{q}=\Pi_{\infty}(q)$. As we are in the non-collapsing case, it follows from the classical Cheeger-Colding theory that
$\bar{\mathcal{R}}$ is a dense subset of $B_{\bar{d}_{\infty}}(\bar{q}, r)$. In particular, we can find a point $\bar{x} \in \bar{\mathcal{R}} \cap B_{\bar{d}_{\infty}}(\bar{q}, 0.1 r)$.
Let $x=\pi_{\infty}(\bar{x})$. Then we have
\begin{align*}
x \in \mathcal{R} \cap \pi_{\infty} \left\{ B_{\bar{d}_{\infty}}(\bar{q}, 0.1 r)\right\} \subset \mathcal{R} \cap B(q,r),
\end{align*}
which finishes the proof of part (a) of the Lemma.

We proceed to prove part (b). Let $\bar{z}=\Pi_{\infty}(z), \bar{w}=\Pi_{\infty}(w)$. In light of Proposition~\ref{prn:PE29_1}, we have
\begin{align*}
\bar{z}, \bar{w} \in \bar{\mathcal{R}} \cap B_{\bar{d}_{\infty}}(\bar{q}_{\infty}, 1.5r).
\end{align*}
Note that we are in the non-collapsing case now. It follows from Cheeger-Colding theory that $B_{\bar{d}_{\infty}}(\bar{q}_{\infty}, 1.5r)$ is path-connected (cf. section 3 of~\cite{CC00}).
Therefore, we can find a curve
\begin{align}
\bar{\gamma} \subset \bar{\mathcal{R}} \cap B_{\bar{d}_{\infty}}(\bar{q}_{\infty}, 1.5r).
\end{align}
Then we let $\gamma \coloneqq \pi_{\infty}(\bar{\gamma})$. Since $\pi_{\infty}$ preserves the regular-singular decomposition by Proposition~\ref{prn:PE29_2}, it
is clear that
\begin{align*}
\gamma \subset \mathcal{R} \cap \pi_{\infty} \left\{ B_{\bar{d}_{\infty}}(\bar{q}_{\infty}, 1.5r) \right\} \subset \mathcal{R} \cap B(q, 2r).
\end{align*}
Clearly, $\gamma$ connects $z=\pi_{\infty}(\bar{z})$ and $w=\pi_{\infty}(\bar{w})$. The proof of the part (b) of the Lemma is complete.
\end{proof}

Suppose $M_k \ni x_k \to x \in \mathcal{R}$ and $M_k \ni y \to y \in \mathcal{R}$, it is unclear whether the distance between $x,y$ is achieved by a smooth geodesic in $\mathcal{R}$.
Actually, let $\gamma_k$ be the shortest geodesic connecting $x_k$ and $y_k$. Passing to limit, we have a limit length-minimizing curve $\gamma_{\infty} \subset M_{\infty}$ with
two end points $x$ and $y$. It is possible that interior part of $\gamma$ may contain singular points, i.e., points in $\mathcal{S}$.
For example, see Figure~\ref{fig:globalbroken} for intuition. However, the bad behavior as in Figure~\ref{fig:globalbroken} is excluded by Lemma~\ref{lma:PE27_0}.
Further improvement of the length structure will be discussed in the next section (cf. Proposition~\ref{prn:CB08_1}). For dimension reason, it is also clear that each regular point $y \in \mathcal{R}(M_{\infty})$ has $\R^{m}$ as tangent space.
This property will be further improved in the following discussion.

\begin{figure}[h]
\begin{center}
\psfrag{S}[c][c]{$\mathcal{S}$}
\psfrag{x}[c][c]{$x$}
\psfrag{y}[c][c]{$y$}
\psfrag{g}[c][c]{$\textcolor{red}{\gamma}$}
\psfrag{M}[c][c]{$M_{\infty}$}
\includegraphics[width=0.5\columnwidth]{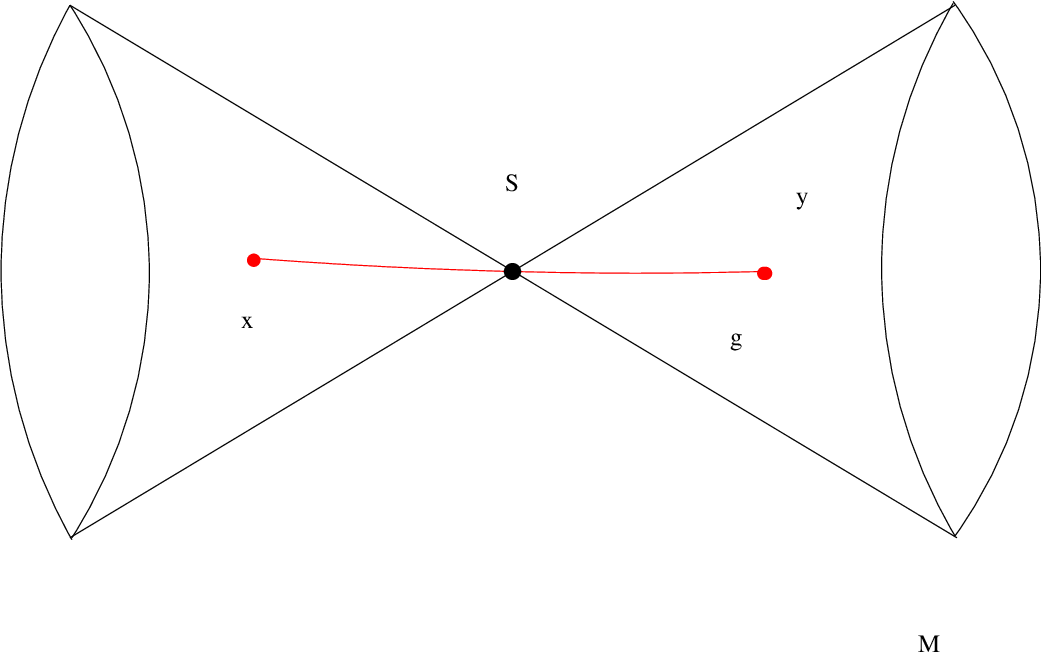}
\end{center}
\caption{The global distance may not be achieved by smooth geodesic}
\label{fig:globalbroken}
\end{figure}

\begin{definition}
Suppose $(M_{\infty}, d_{\infty})$ is a length space of Hausdorff dimension $m$ and has the regular-singular decomposition $M_{\infty}=\mathcal{R} \cup \mathcal{S}$.
Suppose further that with the topology induced by $d_{\infty}$, $\mathcal{R}$ is an open set.
We say that $\mathcal{R}$ has a Riemannian manifold structure if the following properties hold.
\begin{itemize}
\item There is a homeomorphism map $\iota$ from $\mathcal{R}$ to a smooth manifold $N^{m}$.
\item There is a smooth Riemannian metric $g$ on $N^{m}$ such that $\iota$ is a \textbf{local} isometry from $\left( \mathcal{R}, d_{\infty} \right)$
to $(N, d_{g})$. Namely, for each point $y \in \mathcal{R}$, there exists $r=r_y$ such that $\iota$ is an isometry map from $B(y,r) \subset \mathcal{R}$ to
$B(\iota(y), r) \subset N^{m}$.
\end{itemize}
For simplicity of notations, we may identify $\mathcal{R}$ and $N$ and omit $\iota$. Then $(\mathcal{R}, d_{\infty})$ is locally isometric to $(\mathcal{R}, g)$.
\label{dfn:PC24_3}
\end{definition}

\begin{proposition}
Suppose $(M_{\infty}, p_{\infty}, d_{\infty}, f_{\infty})$ is the limit in (\ref{eqn:PC25_5}).
Suppose $M_{\infty}=\mathcal{R} \cup \mathcal{S}$ is the regular-singular decomposition of $M_{\infty}$.
Then $\mathcal{R}$ is an open set with respect to the topology induced by $d_{\infty}$. Furthermore, $\mathcal{R}$ has a smooth Riemannian manifold structure $(\mathcal{R}, g_{\infty})$
which is locally isometric to $\left( \mathcal{R}, d_{\infty} \right)$. The restriction of $f_{\infty}$ on $\mathcal{R}$ is a smooth function satisfying
\begin{align}
Rc_{g_{\infty}} + \emph{Hess} f_{\infty} -\frac{g_{\infty}}{2}=0. \label{eqn:PC25_8}
\end{align}

\label{prn:PC25_2}
\end{proposition}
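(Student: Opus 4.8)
The plan is to build the smooth structure on $\mathcal{R}$ locally, point by point, and then patch. Fix $y \in \mathcal{R}$ and choose $y_i \in M_i$ with $y_i \to y$. Set $D = d_\infty(y,p_\infty)+20m$ and $\epsilon = \frac{1}{100D}$, so that $B(y_i,1)\subset B(p_i,D)$ for large $i$. Since $y$ is a regular point of $M_\infty$, by Lemma~\ref{lma:PE27_0} the (unique, by dimension) tangent space of $M_\infty$ at $y$ is $\R^m$; via $\Pi_\infty$ this transfers to the statement that the tangent space of $B_{\bar d_\infty}(\bar y,\epsilon)$ at $\bar y = \Pi_\infty(y)$ is $\R^m$. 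By the classical (non-collapsed) Cheeger-Colding volume convergence theorem applied to the metrics $\bar g_i$ — whose Ricci curvature is controlled on $B_{\bar g_i}(y_i,\frac{r}{10D})$ by part (d) of Theorem~\ref{thm:PE27_1} — there is a fixed small scale $s>0$, independent of $i$, such that $B_{\bar g_i}(y_i,s)$ has almost-Euclidean volume ratio and is Gromov-Hausdorff close to a Euclidean ball. Hence, using the distance comparison (\ref{eqn:PE27_9}) between $d_i$ and $\bar d_i$ together with (\ref{eqn:PE27_10}), we may arrange that condition (\ref{eqn:PD15_5}) of Theorem~\ref{thm:PD14_1} is satisfied at $y_i$ at some fixed scale $r_0 = r_0(m,D)$.

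Then Theorem~\ref{thm:PD14_1} is the workhorse: it produces, for each large $i$, a strongly convex ball $B(y_i,\xi r_0)$ and exponential-coordinate charts in which both $h_i = (\mathrm{Exp}_{y_i})^*g_i$ and $\bar l_i = (\mathrm{Exp}_{y_i})^*\bar f_i$ have all covariant derivatives bounded by constants $C_k = C_k(m)$. By Arzel\`a-Ascoli (diagonal argument over $k$), after passing to a subsequence, $(h_i,\bar l_i)$ converges in $C^\infty_{loc}$ on $B(0,\xi r_0)\subset\R^m$ to a smooth limit $(h_\infty, \bar l_\infty)$; since $h_i$ stays uniformly equivalent to $g_E$ and satisfies the Ricci shrinker equation (in the form (\ref{eqn:PH29_1}) with $t=0$, i.e.\ (\ref{eqn:PC25_8}) with $f_\infty$ replaced by $\bar l_\infty + f_\infty(y)$ appropriately identified), the limit $h_\infty$ is a smooth Riemannian metric satisfying the Ricci shrinker equation, and $\bar l_\infty$ is smooth. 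One then checks, using the uniform bi-Lipschitz comparison between $d_i$ and the chart distances (a consequence of the $C^0$-closeness of $h_i$ to $g_E$ in (\ref{eqn:PD15_4})) and the fact that the Gromov-Hausdorff maps $\psi_{L,i}$ of (\ref{eqn:PD16_2}) are compatible with these charts, that the limiting chart gives a homeomorphism from a $d_\infty$-neighborhood of $y$ onto an open subset of $\R^m$, and that it is a \emph{local} isometry from $(\mathcal{R}\cap B(y,r'), d_\infty)$ to $(B(0,r'), h_\infty)$ for some $r' = r'(y)>0$. In particular $\mathcal{R}$ is open in the $d_\infty$-topology. The function $f_\infty$ restricted to this chart is the $C^\infty_{loc}$-limit of $f_i = \bar f_i + f_i(y_i)$, hence smooth, and (\ref{eqn:PC25_8}) passes to the limit.

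To globalize, one repeats the construction at every $y\in\mathcal{R}$, obtaining an atlas of charts. Two overlapping charts centered at $y$ and $y'$ differ by a transition map that is, on each side, a $C^\infty_{loc}$-limit of smooth maps between the corresponding charts on $M_i$ (namely compositions of two exponential maps); by the uniform higher-order estimates these transition maps are smooth with uniformly bounded derivatives, so the charts glue into a smooth manifold $N^m$ carrying a smooth metric $g_\infty$. The identity-type maps on overlaps are isometries for $d_\infty$ by construction, so $\iota:\mathcal{R}\to N^m$ is a well-defined homeomorphism and a local isometry in the sense of Definition~\ref{dfn:PC24_3}. Equation (\ref{eqn:PC25_8}) holds on each chart, hence globally on $\mathcal{R}$, and $f_\infty$ is globally smooth on $\mathcal{R}$.

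The main obstacle is the first step: extracting a \emph{uniform-in-$i$} lower bound on the scale at which Theorem~\ref{thm:PD14_1} applies at $y_i$. This requires combining (i) regularity of $y$ in $M_\infty$, which via $\Pi_\infty$ (Theorem~\ref{thm:PE30_1}) gives a tangent cone $\R^m$ for the \emph{conformal} limit, (ii) the local Ricci bound for $\bar g_i$ from Theorem~\ref{thm:PE27_1}, which legitimizes Colding's volume continuity (Lemma~\ref{lma:PH11_1}) on $\bar g_i$-balls at a fixed scale, and (iii) the equivalence of all the radii in Theorem~\ref{thm:PH10_1}, which converts the resulting volume/GH control at $y_i$ into the strong-convexity-and-derivative control of Theorem~\ref{thm:PD14_1}. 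Once this uniform scale is in hand, everything downstream is Arzel\`a-Ascoli plus routine checking that limits of charts remain charts and that the limiting metric matches $d_\infty$; the coincidence of $d_\infty$ with the length metric of $g_\infty$ on all of $\mathcal{R}$ (as opposed to locally) is deferred, as the statement says, to Proposition~\ref{prn:CB08_1}.
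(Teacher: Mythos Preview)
Your proposal is correct and follows essentially the same strategy as the paper's own proof: use the regular-point condition plus Theorem~\ref{thm:PD14_1} to obtain uniform higher-order estimates in exponential charts around $y_i$, pass to smooth limits via Arzel\`a--Ascoli, and patch the resulting charts via limits of the transition maps $\varphi_{\alpha\beta}^{(i)}$ between exponential coordinates. The only minor difference is that the paper verifies hypothesis~(\ref{eqn:PD15_5}) of Theorem~\ref{thm:PD14_1} directly from the definition of a regular point and the Gromov--Hausdorff convergence~(\ref{eqn:PC25_5}), without your detour through $\Pi_\infty$ and the conformal metric $\bar g_i$; that machinery is already packaged inside Theorem~\ref{thm:PD14_1} via the equivalence of radii in Theorem~\ref{thm:PH10_1}, so invoking it again at this stage is unnecessary (though not wrong).
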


\begin{proof}
We shall divide the proof into several steps.

\textit{Step 1. For each $x \in \mathcal{R}$, there is a radius $r=r_x>0$ such that $\left( B_{d_{\infty}}(x,r), d_{\infty}\right)$ is isometric to a geodesically convex smooth Riemannian manifold.}

Recall that a point $x \in \mathcal{R}$ if and only if a tangent space of $M_{\infty}$ at $x$ is $\R^{m}$. Namely, there exists a sequence $r_k \to 0$ such that
\begin{align}
\left( M_{\infty}, x, r_k^{-1} d_{\infty} \right) \longright{pointed-Gromov-Hausdorff} \left(\R^{m}, 0, d_{E} \right).
\label{eqn:PC22_1}
\end{align}
Therefore, for each small $\epsilon$, we can find a fixed $r$ such that
\begin{align*}
d_{GH} \left\{ B_{r^{-1}d_{\infty}}(x, 100), B_{d_E}(0, 100)\right\}<\epsilon,
\end{align*}
which is equivalent to
\begin{align*}
r^{-1} d_{GH} \left\{ B_{d_{\infty}}(x, 100r), B_{d_E}(0, 100r)\right\}< \epsilon
\end{align*}
by the scaling invariance of $d_{E}$.
Now we fix $r$. Since $x \in M_{\infty}$, we can assume $x$ is the limit point of $x_i \in M_i$.
For large $i$, the above inequality and convergence implies that
\begin{align*}
r^{-1} d_{GH} \left\{ B_{d_i}(x_i, 100r), B_{d_E}(0, 100r)\right\}< \epsilon.
\end{align*}
Consequently, we can apply Theorem~\ref{thm:PD14_1} to obtain that $\mathbf{r}(x_i)> r'$ for some $r'$ depending on $x$. Choosing $r_x=0.1 r'$. Then it follows from the definition of convex radius and the Ricci shrinker PDE that
\begin{align*}
\left( (Exp_{x_i})^*g_i, (Exp_{x_i})^*f_i\right)
\end{align*}
are smooth couples defined on $B(0, 2r_x) \subset \R^m$ with uniform(independent of $i$) derivatives on each order.
Identifying $T_{x_i}M$ with $\R^m$, we can regard $Exp_{x_i}$ as a smooth map from $\R^m$ to $M_i$.
Rigorously speaking, an orthonormal frame at $T_{x_i}M$ is needed to achieve such identification.
However, since every two orthonormal frames differ by an element in $O(m)$ which is compact, the effect caused by difference of orthonormal frames will vanish by a subsequence argument.
Therefore, we omit this step for the simplicity of the argument.
Let $h_i=(Exp_{x_i})^*g_i$. Then we can take limit in the smooth topology to obtain $h_{\infty}$ on $B(0, 2r_x)$.
Moreover, the uniform derivative estimates of $h_i$ are inherited by $h_{\infty}$. Then we can run the Riemannian geometry argument in polar coordinate, as in section 3.4 of~\cite{DoCarmo}, to obtain that
$(B(0, 1.5 r_x), h_{\infty})$ is strongly convex. Via the estimates to define convex radius, it is clear that the exponential map
\begin{align*}
Exp_{x_i}: \; B(0, r_{x}) \subset \R^m \mapsto B(x_i, r_{x}) \subset M_i
\end{align*}
will converge to a limit map
$$Exp_{x}: B(0,r_x) \subset \R^m \mapsto B(x, r_{x}) \subset M_{\infty}$$
and $Exp_{x}$ is an isometry from $\left( B(0,r_x), h_{\infty}\right)$ to $\Omega_x=B_{d_{\infty}}(x, r_{x}) \subset M_{\infty}$.
In other words, $\left(\Omega_x, d_{\infty} \right)$ is isometric to a smooth Riemannian manifold $\left( B(0,r_x), h_{\infty}\right)$, via the map $Exp_{x}$.
For simplicity of notation, we write directly that $(\Omega_x, g_{\infty})$ is a smooth, open, strongly convex, Riemannian manifold.

\textit{Step 2. $\mathcal{R}$ is a nonempty, connected, open subset of $M_{\infty}$.}

It follows from the first property in Lemma~\ref{lma:PE27_1} directly that $\mathcal{R}$ is non-empty. Up to a standard covering argument, one can obtain the connectedness of
$\mathcal{R}$ by the second property in Lemma~\ref{lma:PE27_1}.
For each $x \in \mathcal{R}$, by step 1, we know $\left( B_{d_{\infty}}(x,r_x), d_{\infty} \right)$ is isometric to a smooth manifold for some $r_{x}>0$.
It is clear that any point nearby $x$ still has $\R^{m}$ as tangent space. Therefore, $\mathcal{R}$ is an open subset of $M_{\infty}$.
This finishes the proof of Step 2.

\textit{Step 3. There is a smooth Riemannian metric $g_{\infty}$ on $\mathcal{R}$ such that $(\mathcal{R}, g_{\infty})$ is a smooth Riemannian manifold.}

This step is more or less standard after the convex radius estimate (cf. Definition~\ref{dfn:PD15_1}). Similar discussions can be found in, for example, Hamilton's work~\cite{Ha95}.
The difference here is that $\mathcal{R}$ may not be complete. However, it will be clear through the proof that the completeness of $\mathcal{R}$ is not needed.
We provide details and necessary references here for the sake of self-containedness.

In step 1, we have shown that locally every regular point has a neighborhood which is isometric to a geodesically convex Riemannian manifold.
We now need to patch these neighborhoods together to obtain a global Riemannian manifold structure on $\mathcal{R}$.
Since $\mathcal{R}$ is open, it is clear that $\mathcal{R}=\bigcup_{x \in \mathcal{R}} \Omega_{x}$. Then ball packing argument implies that
this covering has a countable sub-covering. Namely, we can choose countably many points $x_{\beta} \in \mathcal{R}$ such that
\begin{align*}
\mathcal{R} = \bigcup_{\beta=1}^{\infty} \Omega_{x_{\beta}}, \quad x_{\beta} \in \mathcal{R}.
\end{align*}
We need to equip $\mathcal{R}$ with a smooth manifold structure. Note that $\mathcal{R}$ is covered by countably many open sets $\Omega_{x_{\beta}}$.
Each of them is diffeomorphic to a standard ball in $\R^{m}$, via exponential map. In order to equip $\mathcal{R}$ with a smooth manifold structure, it suffices to define
a smooth transition map $\varphi_{\alpha \beta}$ satisfying
\begin{align}
Exp_{x_{\beta}}^{-1}(\Omega_{x_{\alpha}} \cap \Omega_{x_{\beta}}) \overset{\varphi_{\alpha\beta}}{\to} Exp_{x_{\alpha}}^{-1}(\Omega_{x_{\alpha}} \cap \Omega_{x_{\beta}})
\label{eqn:PC23_2}
\end{align}
whenever $\Omega_{x_{\alpha}} \cap \Omega_{x_{\beta}} \neq \emptyset$.
Fix $y \in \Omega_{x_{\alpha}} \cap \Omega_{x_{\beta}}$. As $\Omega_{x_{\alpha}} \cap \Omega_{x_{\beta}}$ is open, we can find a small $\delta$ such that
\begin{align*}
B(y,\delta) \subset \Omega_{x_{\alpha}} \cap \Omega_{x_{\beta}}.
\end{align*}
For each $i$, we can find $y_i, x_{\alpha,i} \in M_i$ such that
\begin{align*}
y_i \to y, \quad x_{\alpha,i} \to x_{\alpha}, \quad \textrm{as} \; i \to \infty.
\end{align*}
It is clear from the previous two equations that
$$B(y_i, \delta) \subset B(x_{\alpha,i}, r_{x_\alpha}) \cap B(x_{\beta,i}, r_{x_{\beta}}).$$
Therefore we obtain a map
\begin{align}
\varphi_{\alpha \beta}^{(i)}: Exp_{x_{\beta,i}}^{-1} \left( B(y_i, \delta) \right) \to Exp_{x_{\alpha,i}}^{-1} \left( B(y_i, \delta) \right).
\label{eqn:PC23_1}
\end{align}
Recall that
\begin{align*}
Exp_{x_{\beta,i}}^{-1} \left( B(y_i, \delta) \right) \subset B(0,1) \subset \R^{m}
\end{align*}
for each $\beta$. It is clear that both the domain and the image of $\varphi_{\alpha \beta}^{(i)}$ are bounded open sets in $B(0,1)$.
Moreover, the curvature and all higher order curvature derivatives bounds imply that both $\varphi_{\alpha \beta}^{(i)}$ and $\varphi_{\beta \alpha }^{(i)}=\left\{ \varphi_{\alpha \beta}^{(i)} \right\}^{-1}$
have uniform derivative estimates for each order. In particular, $D \varphi_{\alpha \beta}^{(i)}$ and $D \varphi_{\beta \alpha }^{(i)}$ are almost identity matrices.
By Arzela-Ascoli lemma, we have
\begin{align*}
&Exp_{x_{\beta,i}}^{-1} \left( B(y_i, \delta) \right) \to Exp_{x_{\beta}}^{-1} \left( B(y, \delta) \right), \\
&Exp_{x_{\alpha,i}}^{-1} \left( B(y_i, \delta) \right) \to Exp_{x_{\alpha}}^{-1} \left( B(y, \delta) \right),\\
&\varphi_{\alpha \beta}^{(i)} \longright{C^{\infty}} \varphi_{\alpha \beta}, \quad \textrm{as} \; i \to \infty.
\end{align*}
Note that currently the map $\varphi_{\alpha \beta}$ is a smooth diffeomorphism from $Exp_{x_{\beta}}^{-1} \left( B(y, \delta) \right)$ to $Exp_{x_{\alpha}}^{-1} \left( B(y, \delta) \right)$.
Up to a finite covering, the above set $B(y, \delta)$ can be replaced by a compact set $K \subset \Omega_{x_{\alpha}} \cap \Omega_{x_{\beta}}$.
Then by the arbitrary choice of $K$, we can extend the definition of $\varphi_{\alpha \beta}$ to be a smooth diffeomorphism from
$Exp_{x_{\beta}}^{-1} \left( \Omega_{x_{\alpha}} \cap \Omega_{x_{\beta}} \right)$ to $Exp_{x_{\alpha}}^{-1} \left( \Omega_{x_{\alpha}} \cap \Omega_{x_{\beta}} \right)$.
Namely, we have finished the proof of (\ref{eqn:PC23_2}).
Therefore, we know $\mathcal{R}$ inherits a natural smooth structure from the limit process. We then proceed to show that there is a global Riemannian metric $g_{\infty}$ defined on
$\mathcal{R}$. Fix $y \in \Omega_{x_{\alpha}} \cap \Omega_{x_{\beta}}$ as in previous argument.
Note that by the naturality of Riemannian metric $g_i$, we have
\begin{align}
\left\{ \varphi_{\alpha \beta}^{(i)} \right\}^{*} \left( Exp_{x_{\alpha,i}}^{*} g_i \right)= Exp_{x_{\beta,i}}^{*} g_i.
\label{eqn:PC23_3}
\end{align}
Recall from (\ref{eqn:PC23_1}) that $\varphi_{\alpha \beta}^{(i)}$ is a smooth map from an open set in $B(0, 1)$ to an open set in $B(0,1)$, with uniform derivative estimates.
The choice of $r_{\alpha}$ guarantees that $Exp_{x_{\alpha,i}}^{*} g_i$ are positive-definite matrix-valued functions on $B(0,r_{\alpha})$, with uniform estimates of values and all derivatives.
In each coordinate $B(0,r_{\alpha})=Exp_{x_{\alpha}}^{-1}\left( B(x_{\alpha}, r_{\alpha}) \right)$, let $h_{\alpha}$ be the limit of $Exp_{x_{\alpha,i}}^{*} g_i$.
Then the limit of (\ref{eqn:PC23_3}) reads as
\begin{align*}
\left( \varphi_{\alpha \beta} \right)^{*} h_{\alpha} = h_{\beta},
\end{align*}
which allows us to define without ambiguity that
\begin{align*}
g_{\infty}(y)=\left( Exp_{x_{\alpha}} \right)_{*} h_{\alpha}.
\end{align*}
Therefore, we have obtained that $(\mathcal{R}, g_{\infty})$ is a smooth Riemannian manifold.

\textit{Step 4. $f_{\infty}|_{\mathcal{R}}$ is a smooth function satisfying Ricci shrinker equation (\ref{eqn:PC25_8}).}

Fix $y \in \mathcal{R}$. Choose $r=r_{y}$ such that $B(y,r)$ is geodesically convex and all curvature derivatives are uniformly bounded.
Let $y$ be the limit point of $y_i \in M_i$. Then for each $i$, $l_i=Exp_{y_i}^{*}(f_i)$ is a smooth function on the chart $B(0,r) \subset \R^{m}$.
Clearly, the value of $l_{i}$ and all its higher derivatives are all uniformly bounded independent of $i$. So we can take limit to obtain a
smooth function $l_{\infty}$ on $B(0, r)$ satisfying the equation
\begin{align*}
Rc(h) + \text{Hess}_{h} l_{\infty}-\frac{h}{2}=0,
\end{align*}
where $h=(Exp_{y})^* g_{\infty}$. It is clear from construction that
\begin{align*}
f_{\infty}= (Exp_{y})_{*} l_{\infty},
\end{align*}
which implies that $f_{\infty}$ is a smooth function on $B(y,r)$. By the arbitrary choice of $y \in \mathcal{R}$, we obtain that $f_{\infty}|_{\mathcal{R}}$ is smooth and
satisfies the Ricci shrinker equation (\ref{eqn:PC25_8}).
\end{proof}

\begin{remark}
In our proof of showing $\mathcal{R}$ being a smooth manifold in Proposition~\ref{prn:PC25_2}, we avoid the use of direct limit,
as done by Hamilton (cf. p. 557 of~\cite{Ha95}, or Section 4.2 of~\cite{CCGG}).
\label{rmk:PE30_1}
\end{remark}

\begin{theorem}
Same conditions as in Theorem~\ref{thm:PE30_1}. If we further assume the non-collapsing condition $\boldsymbol{\mu}(M_i,g_i) \geq -A$,
then we further have
\begin{itemize}
\item[(1).] $\mathcal{R}$ has a natural Riemannian manifold structure $(\mathcal{R}, g_{\infty})$.
$f_{\infty}|_{\mathcal{R}}$ is a smooth function satisfying the Ricci shrinker equation
\begin{align}
Rc+ \emph{Hess}_{g_{\infty}} f_{\infty} - \frac{g_{\infty}}{2}=0. \label{eqn:PE31_2}
\end{align}
\item[(2).] $\bar{\mathcal{R}}$ has a natural Riemannian manifold structure $(\bar{\mathcal{R}}, \bar{g}_{\infty})$.
$\bar{f}|_{\bar{\mathcal{R}}}$ is a smooth function satisfying the system
\begin{subequations}
\begin{align}[left = \empheqlbrace \,]
&(m-2)\overline{Rc}=d\bar{f}_{\infty} \otimes d\bar{f}_{\infty}
+ \left(m-1-\bar{f}_{\infty} -f_{\infty}(q_{\infty}) \right) e^{\frac{2\bar{f}_{\infty}}{m-2}} \bar{g}_{\infty},\label{eqn:PE31_1a}\\
&\bar{\Delta} f_{\infty}=e^{\frac{2\bar{f}_{\infty}}{m-2}} \left( \frac{m}{2} - \bar{f}_{\infty}-f_{\infty}(q_{\infty}) \right). \label{eqn:PE31_1b}
\end{align}
\label{eqn:PE31_1}
\end{subequations}
\item[(3).] The following maps are smooth diffeomorphisms:
\begin{align}
&\Pi_{\infty}|_{\mathcal{R} \cap B(q_{\infty},\epsilon)}: \quad \mathcal{R} \cap B(q_{\infty},\epsilon)
\to \bar{\mathcal{R}} \cap \Pi_{\infty} \left\{ B(q_{\infty}, \epsilon) \right\}, \label{eqn:PE31_3}\\
&\pi_{\infty}|_{\bar{\mathcal{R}} \cap B_{\bar{d}_{\infty}}(\bar{q}_{\infty}, \epsilon)}:
\quad \bar{\mathcal{R}} \cap B_{\bar{d}_{\infty}}(\bar{q}_{\infty}, \epsilon)
\to \bar{\mathcal{R}} \cap \pi_{\infty}\left\{ B_{\bar{d}_{\infty}}(\bar{q}_{\infty},\epsilon) \right\}.
\label{eqn:PE31_4}
\end{align}
\end{itemize}
\label{thm:PE31_1}
\end{theorem}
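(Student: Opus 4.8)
The plan is to read off part~(1) from Proposition~\ref{prn:PC25_2}, to prove part~(2) by running the argument of that proposition in the conformal gauge --- using the equivalence of radii in Theorem~\ref{thm:PH10_1} to feed regularity into the couple $(\bar{g}_i,\bar{f}_i)$ --- and to obtain part~(3) by expressing the identity maps $\Pi_i$ in the exponential charts produced by the first two parts. Throughout, the non-collapsing hypothesis $\boldsymbol{\mu}(M_i,g_i)\ge -A$ (equivalently, by Lemma~\ref{lma:PE01_3}, a uniform lower volume bound) is what places us in the Cheeger--Colding regime, so that Lemma~\ref{lma:PE27_0} and Lemma~\ref{lma:PE27_1} apply and $\mathcal{R}$, $\bar{\mathcal{R}}$ carry the stated structure. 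Part~(1) needs no further argument: Proposition~\ref{prn:PC25_2} already exhibits the smooth metric $g_{\infty}$ on $\mathcal{R}$ locally isometric to $d_{\infty}$, the smoothness of $f_{\infty}|_{\mathcal{R}}$, and the identity (\ref{eqn:PC25_8}), which is exactly (\ref{eqn:PE31_2}).

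For part~(2), fix $\bar{y}\in\bar{\mathcal{R}}$, put $y:=\pi_{\infty}(\bar{y})\in\mathcal{R}$ (regular by Theorem~\ref{thm:PE30_1}(c)), and pick $y_i\in M_i$ with $y_i\to y$; since $\Pi_i$ is the identity on the underlying manifold, $y_i$ also approximates $\bar{y}$ in $(M_i,\bar{d}_i)$. Regularity of $y$ gives a fixed scale $r$ with $r^{-1}d_{GH}(B_{d_i}(y_i,r),B(0,r))$ arbitrarily small for large $i$, i.e. $\mathbf{gr}(y_i)\ge r$ in the sense of Definition~\ref{dfn:PH02_1}; Theorem~\ref{thm:PH10_1} then propagates this to $\overline{\mathbf{sr}}(y_i)\ge c(m)r$. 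Now the regularity-improvement bootstrap from the proof of Lemma~\ref{lma:PH04_1} applies to the couple $(\bar{g}_i,\bar{f}_i)$: working in a harmonic chart of $\hat{g}_i=\rho_i^{-2}\bar{g}_i$ and bootstrapping the elliptic system (\ref{eqn:CA25_9}) --- whose inhomogeneous terms stay bounded because $0\le f_i\le\frac{D^2}{2}$ on $B(p_i,D)$ by (\ref{eqn:PE03_6a}) --- yields, for every $k$, uniform $C^k$ estimates on the exponential pullbacks of $\bar{g}_i$ and $\bar{f}_i$ over a model ball of fixed radius (the harmless discrepancy between the conformal gauge based at $q_i$ and the one based at $y_i$ is a bounded constant factor, again because $f_i$ is bounded on $B(p_i,D)$). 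Passing to a smooth limit as in Step~1 of Proposition~\ref{prn:PC25_2} identifies a fixed-size ball around $\bar{y}$ with a smooth strongly convex Riemannian manifold; the countable-subcover and transition-map construction of Steps~2--3 there patches these charts into the global smooth structure $(\bar{\mathcal{R}},\bar{g}_{\infty})$ (re-centering the conformal gauge at a nearby point for the regular points lying in the outer annulus of $B_{\bar{d}_{\infty}}(\bar{q}_{\infty},20\epsilon)$), and taking limits in (\ref{eqn:PH04_3}) (equivalently in (\ref{eqn:CB04_2})) shows that $\bar{f}_{\infty}|_{\bar{\mathcal{R}}}$ is smooth and solves (\ref{eqn:PE31_1}), with $f_{\infty}(q_{\infty})$ a finite constant.

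For part~(3), use that each $\Pi_i\colon(M_i,q_i,d_i)\to(M_i,q_i,\bar{d}_i)$ is the identity on the underlying smooth manifold, hence a smooth diffeomorphism, and that $\bar{g}_i=e^{\frac{2(f_i(q_i)-f_i)}{m-2}}g_i$ has conformal factor of uniformly bounded $C^k$ norm on balls inside $B(p_i,D)$ by (\ref{eqn:PE03_6}). Fix $y\in\mathcal{R}\cap B(q_{\infty},\epsilon)$, so $\bar{y}=\Pi_{\infty}(y)\in\bar{\mathcal{R}}$ by Theorem~\ref{thm:PE30_1}(c). In the exponential charts built for $g_i$ in Proposition~\ref{prn:PC25_2} and for $\bar{g}_i$ in part~(2), the coordinate expression
\[
  \Phi_i:=\left(Exp^{\bar{g}_i}_{y_i}\right)^{-1}\circ\Pi_i\circ Exp^{g_i}_{y_i}=\left(Exp^{\bar{g}_i}_{y_i}\right)^{-1}\circ Exp^{g_i}_{y_i}
\]
is a diffeomorphism between open subsets of small balls in $\R^m$, and the uniform $C^k$ control on both exponential maps and on the conformal factor forces uniform $C^k$ bounds on $\Phi_i$ and $\Phi_i^{-1}$, with $D\Phi_i$ staying uniformly nondegenerate. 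By Arzela-Ascoli, $\Phi_i\to\Phi_{\infty}$ in $C^{\infty}_{loc}$ with $\Phi_{\infty}$ a smooth diffeomorphism onto its image; and $\Phi_{\infty}$ is the chart expression of $\Pi_{\infty}$, since $\Pi_{\infty}$ was constructed in Proposition~\ref{prn:PE29_1} as the Arzela-Ascoli limit of the $\Pi_i$. This gives (\ref{eqn:PE31_3}); the statement (\ref{eqn:PE31_4}) follows by the symmetric argument (interchanging $g_i$ and $\bar{g}_i$) together with Theorem~\ref{thm:PE30_1}(a), which identifies $\pi_{\infty}$ as the inverse of $\Pi_{\infty}$ on the relevant domain.

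The step I expect to be the main obstacle is part~(2): verifying that the essentially pointwise regularity-improvement bootstrap of Lemma~\ref{lma:PH04_1} is genuinely compatible with the global chart-patching, and that every constant it produces depends only on $m$ --- which is exactly where the uniform bound $f_i\le\frac{D^2}{2}$ on $B(p_i,D)$ is indispensable, both for the boundedness of the inhomogeneous terms in (\ref{eqn:CA25_9}) and for controlling the change of conformal gauge. In part~(3) the one subtle point is that the $C^{\infty}_{loc}$-limit of the chart expressions of $\Pi_i$ coincides with the independently defined $\Pi_{\infty}$; this is forced because both are limits of the same maps, the convergence of the $\Phi_i$ being smooth rather than merely uniform.
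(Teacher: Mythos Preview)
Your proposal is correct and follows essentially the same approach as the paper: part~(1) is read off from Proposition~\ref{prn:PC25_2}, part~(2) is obtained by rerunning that proof in the $\bar{g}$-gauge via the radii equivalence of Theorem~\ref{thm:PH10_1}, and part~(3) comes from the uniform regularity of the identity map $(M_i,g_i)\to(M_i,\bar{g}_i)$ near points of bounded harmonic radius. The paper's own proof is terser (a few sentences), but your explicit chart computation $\Phi_i=(Exp^{\bar{g}_i}_{y_i})^{-1}\circ Exp^{g_i}_{y_i}$ and your attention to the gauge re-centering in part~(2) are exactly the details one would fill in to justify those sentences.
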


\begin{proof}
Part (1) of this theorem is exactly Proposition~\ref{prn:PC25_2}.
By the equivalence of harmonic radius between metric $g_i$ and $\bar{g}_i$, it is easy to see that the proof of part (2) can be finished exactly like that in part (1).
Therefore, we only need to show part (3). This follows from the fact that the identity map from $(M_i, g_i)$ to $(M_i, \bar{g}_i)$ has uniform regularity around
points where harmonic radii are bounded from below.
\end{proof}


\section{Smooth convergence on regular part}
\label{sec:CGconv}

We show that the convergence around $\mathcal{R}(M_{\infty})$ can be improved to stay in the smooth topology.
To put our discussion on a rigorous foundation, we first review some definitions.

\begin{definition}
Suppose $(M_{\infty}, d_{\infty})$ is a length space of Hausdorff dimension $m$ and has the regular-singular decomposition $M_{\infty}=\mathcal{R} \cup \mathcal{S}$.
Suppose further that $(\mathcal{R}, d_{\infty})$ is \textbf{locally} isometric to a Riemannian manifold $\left( \mathcal{R}, g_{\infty} \right)$.
By the convergence
\begin{align}
(M_i, p_i, g_i) \longright{pointed-\hat{C}^{\infty}-Cheeger-Gromov} \left(M_{\infty}, p_{\infty}, g_{\infty} \right), \label{eqn:PC25_2}
\end{align}
we mean the following properties hold.
\begin{itemize}
\item The following convergence holds:
\begin{align}
\left( M_i, p_i, d_i \right) \longright{pointed-Gromov-Hausdorff} \left( M_{\infty}, p_{\infty}, d_{\infty} \right).
\label{eqn:PF24_2}
\end{align}
\item For each positive constant $L>0$ and each compact set $K \subset \mathcal{R} \cap B(p_{\infty}, L)$, by taking subsequence and adjusting $\epsilon_{L,i}$ if necessary,
each $\epsilon_{L,i}$-isometry $\psi_{L,i}$ in (\ref{eqn:PD16_2}) can be chosen to satisfy
\begin{align}
\psi_{L,i}(x)=\varphi_{K,L,i}(x), \quad \forall \; x \in K, \label{eqn:PD29_1}
\end{align}
where $\varphi_{K,L,i}$ is a diffeomorphism from $K$ to its image in $M_i$.
\item The convergence on $K$ is smooth via $\varphi_{K,L,i}$ in the sense that
\begin{align}
\varphi_{K,L,i}^{*} g_i \longright{C^{\infty}} g_{\infty}, \quad \textrm{on}\; K.
\label{eqn:PC25_1}
\end{align}
\end{itemize}
By the convergence
\begin{align}
(M_i, p_i, g_i, f_i) \longright{pointed-\hat{C}^{\infty}-Cheeger-Gromov} \left(M_{\infty}, p_{\infty}, g_{\infty}, f_{\infty} \right), \label{eqn:PC25_3}
\end{align}
we mean that (\ref{eqn:PC25_2}) holds and
\begin{align}
\varphi_{K,L,i}^{*} f_i \longright{C^{\infty}} f_{\infty}. \label{eqn:PC25_4}
\end{align}
\label{dfn:PC25_1}
\end{definition}

One can refer Figure~\ref{fig:hatconvergence} for intuition.
It seems that $p_{\infty} \in \mathcal{R} \backslash K$ in Figure~\ref{fig:hatconvergence}.
However, we remind the reader that we do not put such restriction on $p_{\infty}$ in Definition~\ref{dfn:PC25_1}.
It is also possible that $p_{\infty} \in \mathcal{S}$, or $p_{\infty} \in K \subset \mathcal{R}$.

\begin{figure}[h]
\begin{center}
\psfrag{A}[c][c]{$\psi_{L,i}$}
\psfrag{B}[c][c]{$\textcolor{red}{\varphi_{K,L,i}}$}
\psfrag{C}[c][c]{$\psi_{L,i}|_{K}=\varphi_{K,L,i}$}
\psfrag{D}[c][c]{$\textcolor{red}{K}$}
\psfrag{E}[c][c]{$\textcolor{red}{\psi_{L,i}(K)}$}
\psfrag{B1}[c][c]{$B(p_{\infty}, L)$}
\psfrag{B2}[c][c]{$B(p_i, L+\epsilon_{L,i})$}
\psfrag{P1}[c][c]{$p_{\infty}$}
\psfrag{P2}[c][c]{$p_i$}
\psfrag{S}[c][c]{$\mathcal{S}$}
\includegraphics[width=0.5\columnwidth]{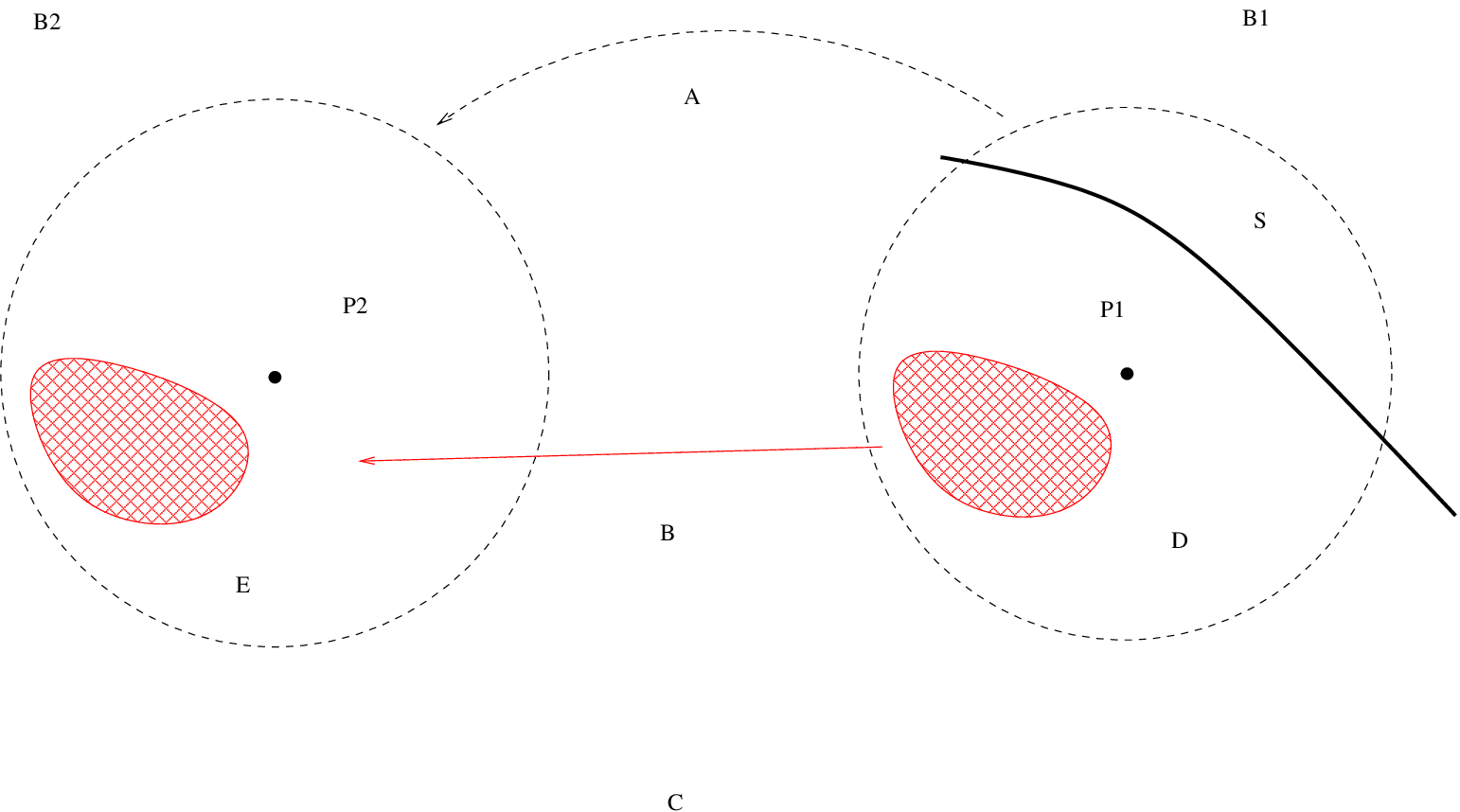}
\end{center}
\caption{The convergence in pointed-$\hat{C}^{\infty}$-Cheeger-Gromov topology}
\label{fig:hatconvergence}
\end{figure}

We say more words on the meaning of (\ref{eqn:PC25_2}). Fix each compact subset $K \subset \mathcal{R}$, we can choose $L_k \to \infty$ such that
$K \subset B(p_{\infty}, L_k) \cap \mathcal{R}$. Then we have $\epsilon_{L_{k},i}$-isometry $\psi_{L_{k}, i}$, whose restriction on $K$ is a diffeomorphism
$\varphi_{K,L_{k}, i}$ satisfying
\begin{align}
\varphi_{K,L_{k},i}^{*} g_i \longright{C^{\infty}} g_{\infty}, \quad \textrm{on}\; K. \label{eqn:PD16_4}
\end{align}
Let $k \to \infty$ and take diagonal sequence, we may denote $\epsilon_{L_{k}, i_k}$ by $\epsilon_k$,
denote $\psi_{L_{k},i_k}$ by $\psi_k$ and denote $\varphi_{K,L_{i_k}, i_k}$ by $\varphi_k$.
Then (\ref{eqn:PD29_1}) becomes
\begin{align}
\psi_k(x)=\varphi_k(x), \quad \forall \; x \in K \label{eqn:PD29_2}
\end{align}
where $\psi_k$ is an $\epsilon_k$-isometry from $B(p_{\infty}, L_k)$ to $B(p_k, L_k+\epsilon_k)$.
The equation (\ref{eqn:PD16_4}) is then simplified as
\begin{align}
\varphi_{k}^{*} g_k \longright{C^{\infty}} g_{\infty}, \quad \textrm{on}\; K. \label{eqn:PD16_5}
\end{align}
Suppose $x \in K$, $y \in M_{\infty}$ and $M_k \ni y_k \to y$. It is clear that
\begin{align*}
\lim_{k \to \infty} d_k \left( y_k, \psi_{k}(y) \right)=0.
\end{align*}
Consequently, as $x \in K$, (\ref{eqn:PD29_2}) implies that
\begin{align}
d_{\infty}(x,y)&=\lim_{k \to \infty} d_{k} \left( \psi_{k}(x), \psi_{k}(y) \right)=\lim_{k \to \infty} d_{k} \left( \varphi_{k}(x), \psi_{k}(y) \right) \notag\\
&=\lim_{k \to \infty} d_{k} \left( \varphi_{k}(x), y_{k} \right).  \label{eqn:PD16_6}
\end{align}
In particular, if we choose $y_k=p_k$ and $y_{\infty}=p_{\infty}$, then the above identity implies that
\begin{align}
d_{\infty}(x, p_{\infty})=\lim_{k \to \infty} d_k\left( \varphi_k(x), p_k \right).
\label{eqn:PD16_7}
\end{align}
It is also clear that if we choose both $x,y \in K$, then (\ref{eqn:PD16_6}) implies that
\begin{align}
d_{\infty}(x,y)=\lim_{k \to \infty} d_k\left( \varphi_k(x), \varphi_k(y) \right).
\label{eqn:PD16_8}
\end{align}
In summary, by (\ref{eqn:PC25_2}), up to taking subsequence if necessary, we can assume that
\begin{subequations}
\begin{align}[left=\empheqlbrace\,]
&M_i \ni \varphi_i(x) \to x, \quad \forall \; x \in K; \label{eqn:PD29_5a}\\
&d_{\infty}\left( x, y \right)=\lim_{i \to \infty} d_i(\varphi_i(x), y_i), \quad \forall\; x \in K, y \in M_{\infty},M_i \ni y_i \to y; \label{eqn:PD29_5b}\\
&\varphi_i^{*} g_i \longright{C^{\infty}} g_{\infty}, \quad \textrm{on each compact subset}\; K. \label{eqn:PD29_5c}
\end{align}
\label{eqn:PD29_5}
\end{subequations}
In particular, letting $y_i$ be $p_i$ or $\varphi_i(z)$ for some $z \in K$, we have
\begin{subequations}
\begin{align}[left=\empheqlbrace\,]
&d_{\infty}(x, p_{\infty}) =\lim_{i \to \infty} d_i(\varphi_i(x), p_i), \quad \forall \; x \in K; \label{eqn:PD27_5a}\\
&d_{\infty}\left( x, z \right)=\lim_{i \to \infty} d_i(\varphi_i(x), \varphi_i(z)), \quad \forall\; x, z \in K. \label{eqn:PD27_5b}
\end{align}
\label{eqn:PE27_5}
\end{subequations}

\begin{remark}
Roughly speaking, (\ref{eqn:PC25_2}) means that \textbf{``the convergence takes place in the $C^{\infty}$-topology''}. Such description can be found, for example,
in Theorem 7.3 of Cheeger-Colding~\cite{CC97}.
\label{rmk:PE01_1}
\end{remark}

\begin{remark}
Because of the existence of diffeomorphisms $\varphi_i$ and the smooth convergence (\ref{eqn:PD29_5c}), we can put more structures on $M_i$ and study the convergence of such structures on $\mathcal{R}$.
For example, if each $M_i$ is a K\"ahler manifold $(M_i, g_i, J_i)$ of complex dimension $n$, we can define the pointed-$\hat{C}^{\infty}$-Cheeger-Gromov convergence of K\"ahler manifolds.
By the convergence
\begin{align}
(M_i, p_i, g_i, J_i) \longright{pointed-\hat{C}^{\infty}-Cheeger-Gromov} \left(M_{\infty}, p_{\infty}, g_{\infty}, J_{\infty}\right), \label{eqn:PE27_6}
\end{align}
we mean (\ref{eqn:PC25_2}) holds and $\displaystyle \varphi_i^*(J_i) \longright{C^{\infty}} J_{\infty}$ on each compact set $K \subset \mathcal{R}$.
Such convergence is essentially used in ~\cite{CW17A} and~\cite{CW17B}.
\label{rmk:PE27_1}
\end{remark}

\begin{proposition}
The pointed-Gromov-Hausdorff convergence in (\ref{eqn:PC25_5}) can be improved to
\begin{align}
(M_i, p_i, g_i,f_i) \longright{pointed-\hat{C}^{\infty}-Cheeger-Gromov} \left(M_{\infty}, p_{\infty}, g_{\infty},f_{\infty} \right). \label{eqn:PC26_8}
\end{align}
\label{prn:PC25_4}
\end{proposition}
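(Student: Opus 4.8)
The plan is to upgrade the pointed-Gromov-Hausdorff convergence of Proposition~\ref{prn:PC25_1} to smooth Cheeger-Gromov convergence away from the singular set, by combining the regularity package already established in Proposition~\ref{prn:PC25_2} and Theorem~\ref{thm:PD14_1} with a careful bookkeeping of the Gromov-Hausdorff approximation maps. First I would fix $L>0$ and a compact set $K \subset \mathcal{R} \cap B(p_\infty, L)$. By Step 1 of the proof of Proposition~\ref{prn:PC25_2}, each $x \in K$ has a radius $r_x>0$ and base points $x_i \in M_i$ with $x_i \to x$ such that $Exp_{x_i}: B(0, r_x) \to B(x_i, r_x) \subset M_i$ converges smoothly to a limit chart $Exp_x: B(0,r_x) \to B(x,r_x) \subset M_\infty$, with $(Exp_{x_i})^* g_i \to (Exp_x)^* g_\infty$ and $(Exp_{x_i})^* f_i \to (Exp_x)^* f_\infty$ in $C^\infty$ on compact subsets. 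Covering $K$ by finitely many such balls $B(x^{(1)}, \tfrac12 r_{x^{(1)}}), \dots, B(x^{(N)}, \tfrac12 r_{x^{(N)}})$ and using the smooth transition maps $\varphi_{\alpha\beta}^{(i)} \to \varphi_{\alpha\beta}$ constructed in Step 3 of that proof, I would patch the local exponential charts $Exp_{x^{(\alpha)}_i}$ into a single smooth embedding $\varphi_{K,L,i}: K \to M_i$ whose image converges to $K$ and which satisfies $\varphi_{K,L,i}^* g_i \to g_\infty$ and $\varphi_{K,L,i}^* f_i \to f_\infty$ in $C^\infty(K)$. This produces (\ref{eqn:PC25_1}) and (\ref{eqn:PC25_4}).

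The second task is to arrange that this diffeomorphism $\varphi_{K,L,i}$ is actually the restriction to $K$ of a global $\epsilon_{L,i}$-isometry $\psi_{L,i}: B(p_\infty, L) \to B(p_i, L+\epsilon_{L,i})$, i.e. to verify (\ref{eqn:PD29_1}). The key point here is the compatibility of distances: since $\varphi_{K,L,i}(x) \to x$ for every $x \in K$ and distances are continuous under pointed-Gromov-Hausdorff convergence, one has $d_i(\varphi_{K,L,i}(x), \varphi_{K,L,i}(z)) \to d_\infty(x,z)$ and $d_i(\varphi_{K,L,i}(x), y_i) \to d_\infty(x,y)$ for $M_i \ni y_i \to y$, exactly as recorded in (\ref{eqn:PD29_5b}). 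Given any sequence of $\epsilon'_{L,i}$-isometries $\psi'_{L,i}$ coming from the bare Gromov-Hausdorff convergence, I would modify $\psi'_{L,i}$ on $K$ by replacing its values there with $\varphi_{K,L,i}$; the above distance estimates guarantee that the modified map $\psi_{L,i}$ is still an $\epsilon_{L,i}$-isometry with $\epsilon_{L,i} \to 0$ (one enlarges $\epsilon'_{L,i}$ by the uniform error $\sup_{x\in K}|d_i(\varphi_{K,L,i}(x),\psi'_{L,i}(y))-d_\infty(x,y)|$, which tends to $0$). Finally, letting $L = L_k \to \infty$, taking a diagonal subsequence, and exhausting $\mathcal{R}$ by compact sets $K$, I would obtain the data required by Definition~\ref{dfn:PC25_1}, which is precisely (\ref{eqn:PC26_8}).

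I expect the main obstacle to be the patching step: ensuring that the locally defined diffeomorphisms $Exp_{x^{(\alpha)}_i}$ glue, across $i$, into a single globally defined diffeomorphism $\varphi_{K,L,i}$ on all of $K$ rather than merely on each small ball. This requires the uniform-in-$i$ control of the transition maps $\varphi_{\alpha\beta}^{(i)}$ (their $C^\infty$ bounds and the near-identity behaviour of $D\varphi_{\alpha\beta}^{(i)}$, both established in Step 3 of Proposition~\ref{prn:PC25_2}) together with a partition-of-unity or successive-extension argument carried out at finite $i$ and then passed to the limit; one must also handle the ambiguity of orthonormal frames at the base points $x^{(\alpha)}_i$, which is absorbed by a further subsequence since $O(m)$ is compact, exactly as in the remark in Step 1 of that proof. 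The distance-compatibility bookkeeping for (\ref{eqn:PD29_1}) is then routine given (\ref{eqn:PD29_5b}), and the convergence $\varphi_{K,L,i}^* f_i \to f_\infty$ follows from the same local smooth convergence of $(Exp_{x_i})^* f_i$ together with the gluing, so no new difficulty arises for the potential function.
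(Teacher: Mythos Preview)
Your strategy is correct in outline, and you have correctly identified the patching step as the crux. The paper, however, takes a different and more concrete route at exactly this point. Rather than gluing the local maps $Exp_{x_i^{(\alpha)}} \circ (Exp_{x^{(\alpha)}})^{-1}$ by a partition-of-unity or successive-extension scheme, the paper follows Katsuda and Kasue: using the cutoff functions $\eta_\alpha$ and the exponential inverses it builds explicit smooth embeddings $\vec{F}: K'' \to \R^{m(1+Q)}$ and $\vec{F}_i: K_i'' \to \R^{m(1+Q)}$ into a single high-dimensional Euclidean space, shows that the images $\vec{F}(K)$ and $\vec{F}_i(K_i')$ are $C^\infty$-close submanifolds with uniformly bounded second fundamental forms, and then defines $\varphi_i$ by $\vec{F}_i \circ \varphi_i = \boldsymbol{P}_i \circ \vec{F}$, where $\boldsymbol{P}_i$ is the normal-bundle projection from one submanifold to the other. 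This produces a globally defined diffeomorphism on $K$ in one stroke, with no gluing, and the closeness $d(\varphi_i(x), \psi_i(x)) \to 0$ (your second task) drops out from $|\vec{F}(x) - \vec{F}_i(\psi_i(x))| \to 0$.

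Your route would also work, but the ``partition-of-unity or successive-extension'' step is not as innocent as the phrasing suggests: a naive partition of unity does not average points on the target manifold $M_i$, so one really needs the Riemannian center-of-mass construction of Peters and Greene--Wu (the paper flags this alternative in Remark~\ref{rmk:PF24_1}). What the embedding method buys is that all averaging happens linearly in $\R^{m(1+Q)}$, which is technically cleaner and makes the compatibility (\ref{eqn:PE25_2}) with the Gromov--Hausdorff approximation immediate. Your distance-compatibility bookkeeping (modifying $\psi'_{L,i}$ on $K$) and your treatment of $f_i$ match the paper's Steps~2, 4, and~5 essentially verbatim.
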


\begin{proof}
We separate the proof of (\ref{eqn:PC26_8}) into several steps.

\textit{Step 1. For each compact subset $K \subset \mathcal{R}$, we can define a smooth diffeomorphism $\varphi_{i}$ from $K$ to a subset in $M_i$.}
The construction of $\varphi_i$ is more or less standard, except that we need to handle it locally to avoid singularities.
We shall describe the key steps and refer to the work of A. Katsuda~\cite{Katsuda} and A. Kasue~\cite{Kasue} for more details.

Fix a compact set $K \subset \mathcal{R}$ and a large positive number $L$ such that $K \subset B(p_{\infty}, L) \backslash \mathcal{S}_{L^{-1}}$, where $\mathcal{S}_{c}$ is the $c$-neighborhood of $\mathcal{S}$.
It follows from (\ref{eqn:PC25_5}), Definition~\ref{dfn:PC24_1} and its equivalent version that there exist $\epsilon_{L,i}$-isometries $\psi_{L,i}$
satisfying (\ref{eqn:PD16_2}) and (\ref{eqn:PD16_3}). We need to perturb $\psi_{L,i}$ on $K$ slightly to obtain the desired $\varphi_i$.

Define
\begin{align*}
K' \coloneqq \left\{ x \left|d(x,K) \leq \frac{1}{10L} \right. \right\}, \quad K'' \coloneqq \left\{ x \left|d(x,K) \leq \frac{1}{5L} \right. \right\}.
\end{align*}
Then it is clear that
\begin{align*}
K \Subset K' \Subset K'' \Subset \mathcal{R}.
\end{align*}
For each $x \in K''$, set
\begin{align}
r_x \coloneqq \frac{1}{1000m} \cdot \min \left\{ \textrm{convex radius of} \; x, \; \frac{1}{L} \right\}. \label{eqn:PF07_1}
\end{align}
Clearly, we have $K'' \subset \cup_{x \in K} B(x,r_x)$.
The compactness of $K$ guarantees us to choose a finite cover of $K$ by
\begin{align*}
K'' \subset \bigcup_{\alpha=1}^{Q} B(x_{\alpha}, r_{x_{\alpha}})
\end{align*}
and the balls $B(x_{\alpha}, 0.25 r_{\alpha})$ are disjoint. This can be done through a standard covering argument by G. Vitali.
More details on the construction of the covering can be found, for example, in Lemma 2.6 of~\cite{CGY}.
For simplicity of notations, we denote $r_{x_{\alpha}}$ by $r_{\alpha}$. Then we have
\begin{align}
K'' \subset \bigcup_{\alpha=1}^{Q} B(x_{\alpha}, r_{x_{\alpha}}); \quad B(x_{\alpha}, 0.25 r_{\alpha}) \cap B(x_{\beta}, 0.25 r_{\beta})=\emptyset, \;
\textrm{if} \; \alpha \neq \beta.
\label{eqn:PD28_1}
\end{align}
Let $\eta$ be a non-increasing cutoff function such that $\eta \equiv 1$ on $(-\infty, 1)$, $\eta \equiv 0$ on $(4, \infty)$, and $-1 \leq \eta' \leq 0$.
Then we define cutoff function
\begin{align*}
\eta_{\alpha} \coloneqq \eta\left( \frac{d(\cdot, x_{\alpha})}{r_{\alpha}} \right).
\end{align*}
In light of the covering (\ref{eqn:PD28_1}), it is clear that $\sum_{\alpha} \eta_{\alpha}$ is a positive function on $K''$ such that
\begin{align}
& \frac{1}{C(m)} \leq \sum_{\alpha} \eta_{\alpha} \leq C(m). \label{eqn:PF07_4}\\
& |\nabla \eta_{\alpha}| \leq C(m). \label{eqn:PF07_5}
\end{align}
For each point $x_{\alpha}$, we attach an orthonormal frame $\left\{u_{\alpha}^{(1)}, u_{\alpha}^{(2)}, \cdots, u_{\alpha}^{(m)} \right\}$. Through this orthonormal frame, we can regard
$Exp_{x_{\alpha}}^{-1}$ as a diffeomorphism map from $B(x_{\alpha}, 4r_{\alpha})$ to the standard ball of radius $4r_{\alpha}$ in $\R^{m}$, centered at the origin. So we can define
an $\R^{m(1+Q)}$ vector valued function $\vec{F}$ as follows
\begin{align}
\vec{F}=\left(\eta_1 Exp_{x_1}^{-1}, \eta_2 Exp_{x_2}^{-1}, \cdots, \eta_{Q} Exp_{x_Q}^{-1}; \eta_1, \eta_2, \cdots, \eta_{Q} \right). \label{eqn:PF07_7}
\end{align}

\begin{claim}
$\vec{F}$ is a smooth embedding.
\label{clm:PF24_1}
\end{claim}

For each $x \in K''$, we can find some $\alpha$ such that $x \in B(x_{\alpha}, r_{\alpha})$. For simplicity, let us assume $\alpha=1$.
Since $\eta_1 \equiv 1$ and $Exp_{x_1}^{-1}$ is a diffeomorphism from $B(x_1,r_1)$ to a standard ball in $\R^{m}$, it is obvious that $\vec{F}$ is an immersion.
It is also not hard to see that $\vec{F}$ is injective. For if $\vec{F}(y)=\vec{F}(x)$, then $\eta_{\alpha}(y)=\eta_{\alpha}(x)$ for each $\alpha \in \left\{ 1, 2, \cdots, Q \right\}$.
In particular, we have $\eta_1(y)=\eta_1(x)=1$, which implies that $x,y \in B(x_1,r_1)$.
Then we have $Exp_{x_1}^{-1}(x)=Exp_{x_1}^{-1}(y)$ and consequently $x=y$.
Therefore, $\vec{F}|_{K''}$ is injective. The proof of Claim~\ref{clm:PF24_1} is complete.

Now we parallelly define functions(sets) on each $M_i$. Define
\begin{align*}
K_i \coloneqq \psi_{L,i}(K), \quad K_i' \coloneqq  \left\{x \in M_i | d(x, K_i) \leq \frac{1}{10L} \right\}, \quad K_i'' \coloneqq  \left\{x \in M_i | d(x, K_i) \leq \frac{1}{5L} \right\}.
\end{align*}
Similarly, we define
\begin{align*}
x_{i,\alpha} \coloneqq \psi_{L,i}(x_{\alpha}), \quad \eta_{i,\alpha} \coloneqq \eta \left( \frac{d(\cdot, x_{i, \alpha})}{r_{\alpha}}\right), \quad \xi_{i,\alpha} \coloneqq \frac{\eta_{i,\alpha}}{\sum_{\alpha} \eta_{i,\alpha}}.
\end{align*}
The corresponding estimates of (\ref{eqn:PF07_4}) and (\ref{eqn:PF07_5}) also hold for $\eta_{i,\alpha}$, by the pointed-Gromov-Hausdorff convergence:
\begin{align}
\frac{1}{C(m)} \leq \sum_{\alpha} \eta_{i,\alpha} \leq C(m), \quad |\nabla \eta_{i,\alpha}| \leq C(m). \label{eqn:PF07_6}
\end{align}
According to its choice in (\ref{eqn:PF07_1}) and the uniform curvature and injectivity radius estimate, we know that the convex radius of $x_{i,\alpha}$ is much larger than $r_{\alpha}$.
Therefore, the support set of $\eta_{i,\alpha}$ is inside the convex radius of $x_{i,\alpha}$, we know that $\eta_{i,\alpha}$ actually has all the higher order derivatives' estimates.
We now define
\begin{align}
\vec{F}_i \coloneqq  \left(\eta_{i,1} Exp_{x_{i,1}}^{-1}, \eta_{i,2} Exp_{x_{i,2}}^{-1}, \cdots, \eta_{i,Q} Exp_{x_{i,Q}}^{-1}; \eta_{i,1}, \eta_{i,2}, \cdots, \eta_{i,Q} \right). \label{eqn:PF07_8}
\end{align}
By curvature and injectivity radius estimate, we know both $\vec{F}$ and $\vec{F}_i$ are smooth embeddings, with uniform gradient and higher-order derivative estimates of each order.
The image sets $\vec{F}(K'')$ and $\vec{F}_{i}(K_i'')$ are uniformly bounded. They are $m$-dimensional submanifolds of $\R^{m(1+Q)}$, with uniformly bounded second fundamental forms, and covariant derivatives
of second fundamental forms of each order.
Therefore, by taking subsequence if necessary, we may assume that
\begin{align}
(K_i'', \vec{F}_i) \longright{Gromov-Hausdorff} (K'', \vec{F}). \label{eqn:PF08_1}
\end{align}
In particular, the Hausdorff distance between $\vec{F}(K'')$ and $\vec{F}_i(K_i'')$ is tending to $0$.
Since $K_i' \Subset K_i''$ and $K \Subset K'$, it is clear that $\vec{F}_i(K_i')$ locates in a tiny tubular neighborhood of $\vec{F}(K'')$ and $\vec{F}(K)$ locates in a tiny neighborhood
of $\vec{F}(K_i')$. Then there exist canonical projections $\boldsymbol{P}_i$ from $\vec{F}(K)$ to $\vec{F}_i(K_i')$ and projections $\boldsymbol{p}_i$ from $\vec{F}_i(K_i')$ to $\vec{F}(K'')$.
It is also clear that all the projections $\boldsymbol{P}_i$ and $\boldsymbol{p}_i$ have uniform regularities of each order.
For each $x \in K$, we define $\varphi_{K,L,i}(x)$ to be the unique point $y$ such that
\begin{align*}
\vec{F}_i(y)=\boldsymbol{P}_i(\vec{F}(x)).
\end{align*}
It is clear that $\varphi_{K,L,i}$ is a smooth map. Since $\vec{F}$ is a diffeomorphism from $K$ to $\vec{F}(K)$, $\boldsymbol{P}_i$ is
a diffeomorphism from $\vec{F}(K)$ to $\boldsymbol{P}_i(\vec{F}(K)) \subset \vec{F}_i(K_i')$, and $\vec{F}_i$ is a diffeomorphism from $K_i'$ to $\vec{F}_i(K_i')$,
it then follows from elementary composition that $\varphi_{K,L,i}$ is a diffeomorphism from $K$ to $\varphi_{K,L,i}(K) \subset K_i'$.

\begin{figure}[h]
\begin{center}
\psfrag{A}[c][c]{$\vec{F}(K'')$}
\psfrag{B}[c][c]{$\textcolor{blue}{\vec{F}_i(K_i')}$}
\psfrag{C}[c][c]{$\vec{F}(x)$}
\psfrag{D}[c][c]{$\vec{F}_i(y)$}
\psfrag{E}[c][c]{$\textcolor{red}{\vec{F}(K)}$}
\includegraphics[width=0.3\columnwidth]{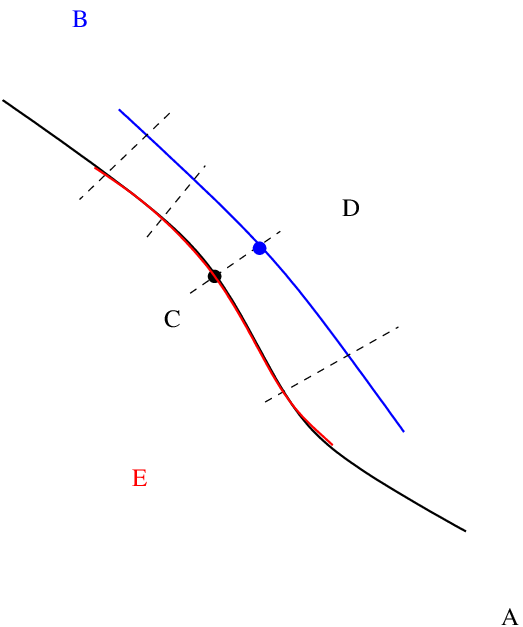}
\end{center}
\caption{Projection as inverse of normal bundle exponential map}
\label{fig:projection}
\end{figure}
For the simplicity of notations, we shall denote $\psi_{L,i}$ by $\psi_{i}$, and denote $\varphi_{K,L,i}$ by $\varphi_i$ in the remainder part of this proof.
Then it is clear that
\begin{align}
&\lim_{i \to \infty} d(p_i, \psi_i(p_{\infty})) + \left| d(\psi_i(z), \psi_i(w))-d(z,w)\right|=0, \quad \forall \; z, w \in M_{\infty}; \label{eqn:PE27_1}\\
&\varphi_i: \; K \mapsto \varphi_i(K) \subset M_i, \quad \textrm{is a smooth diffeomorphism.} \label{eqn:PE27_2}
\end{align}
The proof of Step 1 is complete.

\textit{Step 2. The map $\varphi_{i}|_{K}$ is a small perturbation of $\psi_i|_{K}$. In other words, for each $x \in K$, we have
\begin{align}
\lim_{i \to \infty} d\left( \varphi_i(x), \psi_{i}(x) \right)=0.
\label{eqn:PE25_1}
\end{align}
Consequently, for each $x \in K$ and $w \in M_{\infty}$, we have
\begin{align}
\lim_{i \to \infty} d\left( \varphi_{i}(x), \psi_{i}(w) \right)=d(x,w).
\label{eqn:PE25_2}
\end{align}
}
According to the choice of $\eta_{i, \alpha}$ and $\vec{F}_{i, \alpha}$, it follows from (\ref{eqn:PF08_1}) that
\begin{align*}
\lim_{i \to \infty} \left| \vec{F}(x) - \vec{F}_{i}(\psi_i(x))\right|=0.
\end{align*}
Recall that
\begin{align*}
\lim_{i \to \infty} \left| \vec{F}(x) - \vec{F}_{i}(\varphi_i(x))\right|= \lim_{i \to \infty} \left| \vec{F}(x) - \vec{F}_{i}(y)\right|=0.
\end{align*}
Consequently, we obtain
\begin{align*}
\lim_{i \to \infty} \left| \vec{F}_i(\psi_i(x)) - \vec{F}_{i}(\varphi_i(x))\right|=0.
\end{align*}
As $\vec{F}_i$ are embeddings with uniform gradient estimates, we obtain (\ref{eqn:PE25_1}).

\textit{Step 3. The map $\varphi_{i}$ is a smooth local diffeomorphism from $K$ to $\varphi_{i}(K)$ satisfying
\begin{align}
\varphi_{i}^{*} g_i \longright{C^{\infty}} g_{\infty}, \quad \textrm{on} \; K.
\label{eqn:PD28_3}
\end{align}
}

According to the construction of $\varphi_i$ in step 1, it is clear that $\varphi_i$'s have uniform regularities.
Namely, for each $x \in K$, the maps $Exp_{\varphi_i(x)}^{-1} \circ \varphi_{i} \circ Exp_{x}$ are $\R^{m}$-vector-valued functions
on $B(x, r_x)$ with uniformly bounded derivatives of each order. By taking subsequence if necessary, we obtain (\ref{eqn:PD28_3}).

\textit{Step 4. The convergence holds in pointed-$\hat{C}^{\infty}$-Cheeger-Gromov topology. Namely, (\ref{eqn:PC25_2}) holds.}

We define maps(not necessarily continuous)
\begin{align*}
\bar{\psi}_{i}(x) \coloneqq
\begin{cases}
\psi_{i}(x), \quad \textrm{if} \; x \notin K, \\
\varphi_{i}(x), \quad \textrm{if} \; x \in K.
\end{cases}
\end{align*}
Here $\varphi_i|_{K}$'s are the diffeomorphisms constructed in Step 1.
By (\ref{eqn:PE25_2}), it is clear that $\bar{\psi}_{i}$'s are almost isometries satisfying
\begin{align*}
\bar{\psi}_{i}(p_{\infty}) \to p_i, \quad \bar{\psi}_{i}|_{K}=\varphi_i|_{K}.
\end{align*}
Furthermore, $\varphi_i$ satisfies (\ref{eqn:PD28_3}).
Then it follows from Definition~\ref{dfn:PC25_1} that (\ref{eqn:PC25_2}) holds.

\textit{Step 5. (\ref{eqn:PC25_4}) holds.}

By Definition~\ref{dfn:PC25_1}, in order to show (\ref{eqn:PC25_4}), it suffices to show that
\begin{align}
\varphi_i^{*} f_i \longright{C^{\infty}} f_{\infty}. \label{eqn:PE25_4}
\end{align}
Note that $\varphi_i$ is a smooth map
from $B(x_{\alpha}, r_{\alpha})$ to its image in $B(\varphi_i(x_{\alpha}), 4r_{\alpha})$, with each order derivatives uniformly bounded.
Therefore, it is clear that the value and each order derivatives of $f_i$ are all uniformly bounded, when we regard $f_i$ as functions
on $B(\varphi_i(x_{\alpha}), 4r_{\alpha})$. It follows that $\varphi_i^* f_i$ has values and each order derivatives uniformly bounded on $B(x_{\alpha}, r_{\alpha})$.
Taking subsequence if necessary, we obtain (\ref{eqn:PE25_4}).
Consequently, we know (\ref{eqn:PC25_4}) holds by Definition~\ref{dfn:PC25_1}.
\end{proof}

\begin{remark}
In the proof of showing the convergence being smooth in Proposition~\ref{prn:PC25_4},
we write down the construction of the diffeomorphisms $\varphi_i$ explicitly to avoid singularities. From the construction, one sees that it is unnecessary to require each $M_i$ to be a smooth manifold.
Instead, a regular-singular decomposition for each $M_i$ with the regular part being a smooth manifold is enough for applying the convergence (\ref{eqn:PC25_2}).
Such convergence is used in Chen-Wang~\cite{CW17A} to study the compactness of the moduli of singular-Calabi-Yau spaces.
\label{rmk:PD30_1}
\end{remark}

\begin{remark}
An alternative route to define the diffeomorphism $\varphi_i$ is to use the mass-center-method, as done in S. Peters (cf. \cite{SPeters84},~\cite{SPeters}) and Green-Wu (cf.~\cite{GreenWu}).
In our argument, we instead follow the route of A. Katsuda (cf. \cite{Katsuda}) and A. Kasue (cf. \cite{Kasue}), which garantees (\ref{eqn:PE25_2}) to hold naturally.
\label{rmk:PF24_1}
\end{remark}

\section{Estimates of distance and curvatures}
\label{sec:coincidence}

In this section, we shall provide enough details to finish the proof of Theorem~\ref{thmin:a} and Theorem~\ref{thmin:b}.

\begin{theorem}[\textbf{Density estimate}]
For each $D>10m$, $\theta \in (0,1)$ and $(M, p, g, f) \in \mathcal{M}_{m}(A)$, the following properties hold.

\begin{itemize}
\item[(a).] Then for each $r \in (0, \frac{1}{100D})$ and $q \in B(p, D-1)$, we have
\begin{align}
r^{-2\theta+4-m} \int_{B(q,r)} \mathbf{hr}^{2\theta-4} dv < C \label{eqn:PE18_1}
\end{align}
for some $C=C(m,A,D)$.
\item[(b).] There is a constant $C_{hr}=C_{hr}(m,A,D)$ such that
\begin{align}
\int_{B(p,D)} \mathbf{hr}^{2\theta-4}(x) dv<C_{hr}. \label{eqn:PE18_2}
\end{align}
\item[(c).] There is a constant $C_{Rm}=C_{Rm}(m,A,D)$ depending only on $m$, $A$ and $D$ such that
\begin{align}
\int_{B(p,D)} |Rm|^{2-\theta} dv <C_{Rm}. \label{eqn:PE18_3}
\end{align}
\end{itemize}
\label{thm:density}
\end{theorem}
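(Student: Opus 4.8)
The plan is to prove the three parts in the order (a) $\Rightarrow$ (b) $\Rightarrow$ (c), since (b) is a global consequence of the local estimate (a) plus a covering, and (c) follows from (b) by translating between the harmonic radius and the curvature norm. The fundamental input is the equivalence of all the radii on a Ricci shrinker (Theorem~\ref{thm:PH10_1}), which lets us replace $\mathbf{hr}$ by $\overline{\mathbf{vr}}$ or $\overline{\mathbf{hr}}$ at the cost of dimensional constants, together with the local almost-isometry and Ricci bound for $\bar g$ (Theorem~\ref{thm:PE27_1}), which puts us exactly in the setting of the Cheeger-Naber density estimate for spaces with bounded Ricci curvature.

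For part (a), fix $q\in B(p,D-1)$ and $r\in(0,\frac{1}{100D})$. First I would pass to the conformal metric $\bar g = e^{\frac{2(f(q)-f)}{m-2}}g$ on $B(q,r)$; by Theorem~\ref{thm:PE27_1} we have $|\overline{Rc}|_{\bar g}<D^2$ on $B_{\bar g}(q,\frac{r}{10D})$, and after rescaling $\hat g = r^{-2}\bar g$ the ball $B_{\hat g}(q,1)$ has $|\widehat{Rc}|<D^2r^2<1$. On such a ball the Cheeger-Naber estimate (the density estimate, Proposition~2.57 of Chen-Wang~\cite{CW17A}, equivalently the codimension-$4$ bound) gives $\int_{B_{\hat g}(q,1)} hr_{\hat g}^{2\theta-4}\,dv_{\hat g} < C(m,\theta)$ — note $2\theta-4<0$ so this is a genuine smallness-of-small-harmonic-radius statement. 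Now I would undo the rescaling and the conformal change: on $B_{\hat g}(q,1)$ the metrics $\hat g$ and $r^{-2}g$ are uniformly equivalent (the conformal factor $e^{\pm\frac{2\bar f}{m-2}}$ is within $e^{\pm O(Dr)}$ of $1$), so volumes, distances, and hence the harmonic radii $hr_{\hat g}$ and $hr_{r^{-2}g}$ are comparable up to a factor $c(m)$. Finally, $hr_{r^{-2}g}(x) = r^{-1}hr_g(x)$, and by the equivalence theorem $\mathbf{hr}(x)$ and $hr_g(x)$ agree up to constants on the region where both are $<\frac{1}{100D}$ (and where $hr_g$ is large the integrand $\mathbf{hr}^{2\theta-4}$ is harmlessly bounded above). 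Tracking the scaling of the measure, $\int_{B(q,r)}\mathbf{hr}^{2\theta-4}\,dv = r^{m-2\theta+4}\int hr_{r^{-2}g}^{2\theta-4}\,dv_{r^{-2}g} \le C(m,\theta,D)\, r^{m-2\theta+4}$, which is (\ref{eqn:PE18_1}).

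Part (b) is then a Vitali covering argument. Cover $B(p,D)$ by balls $B(q_\alpha, \frac{1}{200D})$ with $q_\alpha\in B(p,D-1)$ and bounded overlap — the bounded overlap coming from the two-sided volume ratio bounds (\ref{eqn:PE04_10}), which hold precisely because we are inside $B(p,D)$. Apply (a) to each ball with $r=\frac{1}{200D}$ and sum; the number of balls is bounded by $C(m,D)/V_0$ where $V_0\sim |B(p,1)| \sim e^{\boldsymbol\mu}\ge e^{-A}$ by Lemma~\ref{lma:PE01_3}, so the total is $C(m,A,D)$. For part (c), in any harmonic chart realizing $hr_g(x)$ the metric is $C^{1,1/2}$-controlled, hence $|Rm|(x)\le C(m)\,hr_g(x)^{-2}$ pointwise (standard: curvature is bounded by second derivatives of the metric, controlled on a ball of radius $\sim hr$); therefore $|Rm|^{2-\theta}\le C(m)\,hr_g^{2\theta-4}\le C(m)\,\mathbf{hr}^{2\theta-4}$ wherever $\mathbf{hr}$ is small, and where $\mathbf{hr}$ is bounded below the curvature is bounded by the Ricci shrinker equation together with the already-established local regularity (Theorem~\ref{thm:PD14_1}), so $|Rm|^{2-\theta}$ is bounded by a constant times $\mathbf{hr}^{2\theta-4}$ everywhere in $B(p,D)$; integrating and invoking (b) gives (\ref{eqn:PE18_3}).

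The main obstacle I anticipate is bookkeeping rather than conceptual: making the reductions in part (a) clean requires carefully isolating the region where $\mathbf{hr}<\frac{1}{100D}$ from the region where it equals $\frac{1}{100D}$ by definition, and checking that on the latter the curvature and harmonic radius are genuinely controlled (this is where Theorem~\ref{thm:PD14_1} and the local PDE regularity for the couple $(\bar g,\bar f)$ enter, not just the abstract radius equivalence). One must also be slightly careful that the Cheeger-Naber density estimate is applied on a scale where the rescaled Ricci bound is $\le 1$ and the ball is relatively compact — both guaranteed by parts (a) and (d) of Theorem~\ref{thm:PE27_1} — and that the pointwise bound $|Rm|\lesssim hr^{-2}$ used in part (c) only needs the $C^{1,1/2}$ control built into the definition of harmonic radius, so no circularity with the shrinker equation is introduced.
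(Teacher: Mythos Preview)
Your overall strategy matches the paper's proof almost exactly: reduce to the conformal metric $\bar g$ with bounded Ricci, invoke Cheeger--Naber, translate back via Theorem~\ref{thm:PH10_1}, then cover for (b) and bound curvature by harmonic radius for (c). Two points need correction.

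First, a minor omission in part (a): the Cheeger--Naber density estimate requires not only the Ricci bound but also a uniform noncollapsing constant on the rescaled ball. You do not mention this; the paper gets it from Lemma~\ref{lma:PE04_2} transferred through the conformal factor (their inequality (\ref{eqn:PH26_2})). This is where the hypothesis $\boldsymbol{\mu}\geq -A$ actually enters part (a), and without it the Cheeger--Naber input is not available.

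Second, and more seriously, your justification in part (c) contains a genuine error. You write that ``in any harmonic chart realizing $hr_g(x)$ the metric is $C^{1,1/2}$-controlled, hence $|Rm|(x)\le C(m)\,hr_g(x)^{-2}$'' and then explicitly claim in your last sentence that this ``only needs the $C^{1,1/2}$ control built into the definition of harmonic radius, so no circularity with the shrinker equation is introduced.'' This is false: $C^{1,1/2}$ control of $g_{ij}$ gives no bound on second derivatives and hence no pointwise curvature bound in general. The inequality $|Rm|\lesssim \mathbf{hr}^{-2}$ on a Ricci shrinker is precisely the content of the regularity improvement Lemma~\ref{lma:PH04_1}, which bootstraps the coupled elliptic system (\ref{eqn:CB04_2}) for $(\bar g,\bar f)$ from $C^{1,1/2}\times C^1$ to $C^\infty$. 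The paper invokes that lemma directly. So the shrinker equation \emph{is} used here, and your claimed avoidance of it is both unnecessary and incorrect. Once you replace your sentence by an appeal to Lemma~\ref{lma:PH04_1}, part (c) goes through as you describe.
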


\begin{proof}

(a). By Theorem~\ref{thm:PE27_1}, it is clear that
\begin{align}
B(q,r) \subset B_{\bar{g}}(q, 2r) \subset B_{\bar{g}}(q, 4r) \subset B(q, 10r). \label{eqn:PH26_1}
\end{align}
Since $100Dr<1$ and $B(q, 100Dr) \subset B(p, D)$, we can replace $r$ by $100Dr$ in Lemma~\ref{lma:PE06_3}.
Then we have
\begin{align}
|\overline{Rc}|_{\bar{g}} \leq D^2, \quad \; \textrm{on} \; B_{\bar{g}} \left( q, 10 r \right). \label{eqn:PE17_2}
\end{align}
On the ball $B_{\bar{g}}(q, 4r)$, we have uniform non-collapsing constant. Namely, we have
\begin{align}
\rho^{-m}|B_{\bar{g}}(y,\rho)| \geq \kappa=\kappa(m,A,D)>0, \quad \forall \; B_{\bar{g}}(y,\rho) \subset B_{\bar{g}}(q, 4r). \label{eqn:PH26_2}
\end{align}
This can be easily derived from (\ref{eqn:PE27_7}), (\ref{eqn:PE06_8}), (\ref{eqn:PH26_1}) and Lemma~\ref{lma:PE04_2}.
Based on the uniform non-collapsing estimate (\ref{eqn:PH26_2}) and the uniform Ricci curvature estimate (\ref{eqn:PE17_2}), we can shrink the size of ball and apply the work of Cheeger-Naber~\cite{CN2} to obtain
\begin{align*}
r^{-2\theta+4-m} \int_{B_{\bar{g}(q, 2r)}} \overline{\mathbf{hr}}^{2\theta-4} dv_{\bar{g}}< C=C(m,A,D).
\end{align*}
Now we can apply the equivalence of $\mathbf{hr}$ and $\overline{\mathbf{hr}}$ (cf. Theorem~\ref{thm:PH10_1}), the equivalence of $dv$ and $dv_{\bar{g}}$ (cf. (\ref{eqn:PE27_7}) and (\ref{eqn:PE06_8})),
and the relationship (\ref{eqn:PH26_1}) to obtain that
\begin{align*}
r^{-2\theta+4-m} \int_{B(q, r)}\mathbf{hr}^{2\theta-4} dv&< C(m) \cdot r^{-2\theta+4-m} \cdot \int_{B_{\bar{g}}(q,2r)} \overline{\mathbf{hr}}^{2\theta-4} dv_{\bar{g}}<C=C(m,A,D),
\end{align*}
whence we arrive (\ref{eqn:PE18_1}).

(b). Note that we have the volume ratio estimates (\ref{eqn:PE04_5}) in Lemma~\ref{lma:PE04_2}, (\ref{eqn:CB09_2}) in Lemma~\ref{lma:PE04_1}.
Therefore, (\ref{eqn:PE18_2}) follows from (\ref{eqn:PE18_1}) by a standard covering argument.

(c). By Lemma~\ref{lma:PH04_1}, there exists a small positive constant $c=c(m)$ such that
\begin{align*}
|Rm|(y) r^2 \leq c, \quad \forall \; y \in B(x,r)
\end{align*}
where $r=\mathbf{hr}(x)$. In particular, we have
$$|Rm|(x)<c \mathbf{hr}^{-2}(x), \quad \forall \; x \in M.$$
Plugging the above inequality into (\ref{eqn:PE18_2}), we obtain (\ref{eqn:PE18_3}).
\end{proof}

\begin{proposition}
Suppose $(M_{\infty}, p_{\infty}, d_{\infty}, f_{\infty})$ is the limit space in (\ref{eqn:PC25_5}) with the regular-singular decomposition $M_{\infty}=\mathcal{R} \cup \mathcal{S}$.
Then $\mathcal{S}$ is a closed set with respect to the topology induced by $d_{\infty}$. Furthermore, we have
\begin{align}
\dim_{\mathcal{M}} \mathcal{S} \leq m-4. \label{eqn:PC26_7}
\end{align}
\label{prn:PC25_3}
\end{proposition}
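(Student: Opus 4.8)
The plan is to deduce Proposition~\ref{prn:PC25_3} from the density estimate in Theorem~\ref{thm:density} together with the local conformal equivalence between $g$ and $\bar{g}$ and the non-collapsed Cheeger--Naber codimension-four theorem. The two assertions to prove are that $\mathcal{S}$ is closed and that $\dim_{\mathcal{M}}\mathcal{S}\le m-4$.

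First I would show $\mathcal{S}$ is closed, equivalently that $\mathcal{R}$ is open. This is already essentially contained in Step~2 of the proof of Proposition~\ref{prn:PC25_2}: if $y\in\mathcal{R}$, then Step~1 there produces a radius $r_y>0$ such that $B_{d_\infty}(y,r_y)$ is isometric to an open strongly convex smooth Riemannian ball, and every point of such a ball has $\R^m$ as tangent space, hence lies in $\mathcal{R}$. So $\mathcal{R}\supset B_{d_\infty}(y,r_y)$ is a neighborhood of $y$, and $\mathcal{R}$ is open, i.e. $\mathcal{S}$ is closed.

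Next I would prove the Minkowski codimension bound. Fix $D>10m$ and work inside a ball $B(p_\infty,D-1)$; since $M_\infty$ is a length space of Hausdorff dimension $m$ it suffices to bound, for every such $D$, the $r$-neighborhood $\mathcal{S}_r\cap B(p_\infty, D-1)$ by $C(m,A,D)\, r^{4}$ in volume (or in a suitable packing count) as $r\to 0$; then $\dim_{\mathcal M}\mathcal{S}\le m-4$ follows by the standard covering-dimension estimate. The mechanism is: by Theorem~\ref{thm:PE31_1} the maps $\Pi_\infty,\pi_\infty$ identify a neighborhood of a point of $\mathcal{S}$ in $(M_\infty,d_\infty)$ with a neighborhood in $(\bar M_\infty,\bar d_\infty)$, which is a non-collapsed limit of manifolds with a uniform two-sided Ricci bound $|\overline{Rc}|_{\bar g_i}<D^2$ on the relevant scale (Theorem~\ref{thm:PE27_1}(d)) and uniform non-collapsing (Lemma~\ref{lma:PE04_2}, transported via (\ref{eqn:PE27_7}),(\ref{eqn:PE06_8})). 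Cheeger--Naber's codimension-four theorem~\cite{CN2} then gives $\dim_{\mathcal M}\bar{\mathcal S}\le m-4$ for the conformal limit, with the quantitative density estimate $\int_{B_{\bar g}(q,r)}\overline{\mathbf{hr}}^{-4}\,dv_{\bar g}<C r^{m-4}$; since $\Pi_\infty$ is bi-Lipschitz with constant $e^{O(D\epsilon)}$, the codimension bound transfers to $\mathcal S$. Alternatively, and more cleanly, one can invoke Theorem~\ref{thm:density}: by the smooth convergence (\ref{eqn:CA25_10}) on $\mathcal{R}$ together with Fatou's lemma, $\int_{\mathcal{R}\cap B(p_\infty,D)}\mathbf{hr}^{-4}\,dv_{g_\infty}\le C_{hr}(m,A,D)<\infty$, and the finiteness of this integral against the harmonic-radius density forces the Minkowski content of $\mathcal S$ to vanish in codimension $<4$, i.e. $\dim_{\mathcal M}\mathcal S\le m-4$ (this is the ``density estimate $\Rightarrow$ codimension'' implication, as in Proposition 2.57 and the surrounding discussion of Chen--Wang~\cite{CW17A}).

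The main obstacle I expect is bookkeeping rather than a conceptual gap: Cheeger--Naber's theorem is stated for (pointed) limits of manifolds with bounded Ricci curvature and a uniform non-collapsing bound, whereas here the relevant metric is the local conformal change $\bar g_i$, which is only defined and comparable on the small scale $\frac{1}{100D}$, and one must be careful that the harmonic-radius density estimate is applied on balls $B_{\bar g}(q,4r)$ with $r\le\frac{1}{100D}$ where all the hypotheses genuinely hold, and then patched over a covering of $\mathcal{S}\cap B(p_\infty,D-1)$ by such balls with controlled overlap (using the volume comparison (\ref{eqn:PE04_10}) and Lemma~\ref{lma:CB09_1} to bound the number of balls). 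Once this local-to-global patching is set up exactly as in the proof of Theorem~\ref{thm:density}(b), the codimension bound for $\mathcal{S}$ drops out, and combined with the openness of $\mathcal{R}$ this completes the proof of Proposition~\ref{prn:PC25_3}.
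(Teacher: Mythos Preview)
Your proposal is correct and follows essentially the same route as the paper: closedness of $\mathcal{S}$ via openness of $\mathcal{R}$ from Proposition~\ref{prn:PC25_2}, and the Minkowski codimension bound via the density estimate of Theorem~\ref{thm:density} (which in turn rests on Cheeger--Naber applied to the conformal metric $\bar g$). The only notable differences are in execution: the paper carries out the covering argument directly on the approximating manifolds $M_i$ (pulling the cover of $\mathcal{S}_r$ back, finding nearby points with $\mathbf{hr}\sim r$, and counting via (\ref{eqn:PE18_2})) rather than passing the integral to $M_\infty$ by Fatou, and it works with the exponent $2\theta-4$ for $\theta\in(0,1)$ rather than $-4$, which yields $|\mathcal{S}_r\cap B(p_\infty,D)|\le C r^{4-2\theta}$ and hence $\dim_{\mathcal M}\mathcal{S}\le m-4$ after letting $\theta\to 0$.
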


\begin{proof}
It suffices to show that for each $\theta \in (0,1)$ and $D>10m$, we have
\begin{align}
\limsup_{r \to 0^{+}} r^{2\theta-4} \left| \mathcal{S}_{r} \cap B(p_{\infty}, D) \right| < \infty. \label{eqn:PE17_4}
\end{align}
Fix $r>0$ small. Let $\mathcal{S}_{r}$ be the $r$-neighborhood of $\mathcal{S}$. Namely, we have
\begin{align}
\mathcal{S}_{r} \coloneqq \left\{ x \in M_{\infty} | d(x, \mathcal{S})<r\right\}. \label{eqn:PE27_4}
\end{align}
Then we can choose finite $x_{\alpha} \in \mathcal{S}_{r} \cap B(p_{\infty}, D)$ such that
\begin{align*}
&\mathcal{S}_{r} \cap B(p_{\infty}, D) \subset \bigcup_{\alpha=1}^{\infty} B(x_{\alpha}, r), \\
&B(x_{\alpha}, 0.5r) \cap B(x_{\beta}, 0.5r)=\emptyset, \quad \forall \; \alpha \neq \beta.
\end{align*}
For each $\alpha$, let $x_{\alpha}$ be the limit point of $x_{\alpha, i} \in M_i$.
By the density estimate (\ref{eqn:PE18_1}) in Theorem~\ref{thm:density}, we know that there is a point $y_{\alpha, i} \in B(x_{\alpha, i}, 0.1r)$
such that $\mathbf{hr}(y_{\alpha, i})>c r$ for some small positive constant $c=c(m,A,D)$.
Note that $\mathbf{hr}$ satisfies the local Harnack inequality (\ref{eqn:PI10_2}).
The distance from $y_{\alpha,i}$ to $\mathcal{S}$ is comparable to $r$ by triangle inequality.
This forces that $\mathbf{hr}(y_{i,\alpha})<c^{-1} r$.
Therefore, replacing $c$ by a smaller constant $\delta=\delta(m,A,D)$ if necessary, we have
\begin{align*}
\delta^{-1} r > \mathbf{hr}(x)> \delta r, \quad \forall \; x \in B(y_{\alpha,i}, \delta r).
\end{align*}
Note that $B(y_{\alpha,i},\delta r)$ are disjoint balls. It follows from density estimate again that
\begin{align*}
C(m,D, \delta) \cdot N \cdot r^{2\theta-4+m} \leq \sum_{\alpha=1}^{N} \int_{B(y_{\alpha,i},\delta r)} \mathbf{hr}^{2\theta-4} dv
\leq \int_{B(p,D)} \mathbf{hr}^{2\theta-4} dv < C.
\end{align*}
It follows that
\begin{align}
N r^{2\theta-4+m} \leq C'. \label{eqn:PE17_3}
\end{align}
By taking subsequence if necessary, we can assume $y_{\alpha,i}$ converges to $y_{\alpha} \in B(x_{\alpha}, r)$. Then it is clear that
\begin{align*}
\mathcal{S}_{r} \cap B(p_{\infty}, D) \subset \bigcup_{\alpha=1}^{\infty} B(y_{\alpha}, 3r).
\end{align*}
It follows that
\begin{align*}
\left| \mathcal{S}_{r} \cap B(p_{\infty}, D) \right| \leq \sum_{\alpha=1}^{N} |B(y_{\alpha}, 3r)| \leq N \cdot C \cdot (3r)^{m}
=3^{m}C \cdot N r^{2\theta-4+m} \cdot r^{-2\theta+4}.
\end{align*}
Plugging (\ref{eqn:PE17_3}) into the above inequality, we arrive at
\begin{align*}
\left| \mathcal{S}_{r} \cap B(p_{\infty}, D) \right| \leq C r^{4-2\theta}
\end{align*}
where $C$ depends on $m,D,\theta$. Note that the above inequality implies (\ref{eqn:PE17_4}) and consequently (\ref{eqn:PC26_7}).
\end{proof}

As discussed after Lemma~\ref{lma:PE27_0}, the limit length structure $d_{\infty}$ may globally not coincide with the length structure $d_{g_{\infty}}$
induced by the smooth Riemannian metric $(\mathcal{R}, g_{\infty})$ via approximation.
However, this discrepancy is excluded by the following Proposition.

\begin{prop}
On the limit (\ref{eqn:CB08_4}), the length structure $d_{\infty}$ coincides with the length structure $d_{g_{\infty}}$.
\label{prn:CB08_1}
\end{prop}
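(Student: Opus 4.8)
The plan is to prove Proposition~\ref{prn:CB08_1} by showing two inequalities: $d_{\infty} \leq d_{g_{\infty}}$, which is essentially automatic, and $d_{g_{\infty}} \leq d_{\infty}$, which is the substantive direction. First I would record that $d_{g_{\infty}}(x,y) \geq d_{\infty}(x,y)$ for all $x,y \in M_{\infty}$ whenever the right-hand side makes sense: indeed any rectifiable curve in $\mathcal{R}$ joining $x$ to $y$ is a limit (under the diffeomorphisms $\varphi_i$ from Proposition~\ref{prn:PC25_4}) of curves in $M_i$, so its $g_{\infty}$-length dominates the $d_{\infty}$-distance of the endpoints; taking the infimum over such curves gives the inequality. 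The real content is the reverse: given $x, y \in M_{\infty}$ and $\epsilon > 0$, I must produce a rectifiable curve in $\mathcal{R}$ from $x$ to $y$ whose $g_{\infty}$-length is at most $d_{\infty}(x,y) + \epsilon$. It suffices to prove this for $x, y \in \mathcal{R}$, since $\mathcal{R}$ is dense (by part (a) of Lemma~\ref{lma:PE27_1}, or Proposition~\ref{prn:PC25_3} which gives $\mathcal{S}$ of codimension $\geq 4$) and both metrics are continuous on $\mathcal{R}$; and by the usual subdivision argument it is enough to treat the case where $x, y$ are close, i.e. contained in a single ball where we can work with the cone structure.

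The key step is to use the fact, established in Lemma~\ref{lma:PE27_0}, that every tangent cone of $M_{\infty}$ is an \emph{irreducible} metric cone, i.e. has connected regular part. Let $\sigma$ be a $d_{\infty}$-minimizing geodesic from $x$ to $y$ (which exists since $M_{\infty}$ is a complete locally compact length space). I want to show $\sigma$ can be approximated by curves lying in $\mathcal{R}$ with almost the same length. The obstruction is that $\sigma$ might pass through points of $\mathcal{S}$. Here is where irreducibility enters: at any point $z \in \sigma \cap \mathcal{S}$, pick a tangent cone $C(Z)$ at $z$; it is an irreducible cone, hence its regular part is connected and dense, so the ``incoming'' and ``outgoing'' directions of $\sigma$ at $z$ can be joined, at an arbitrarily small scale, by a curve through the regular part with length controlled by the scale times a bounded factor. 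Quantitatively, I would argue: for any $\delta > 0$ there is a scale $s_0$ such that for $s < s_0$, the restriction of $\sigma$ to $B(z, s)$ can be replaced by a curve in $\mathcal{R} \cap B(z, (1+\delta)s)$ of length at most $(1+\delta) \cdot (\text{length of } \sigma \cap B(z,s))$. Covering $\sigma \cap \mathcal{S}$ (a compact set of measure zero in the appropriate sense) by finitely many such balls with small total radius, and patching, yields a competitor curve in $\mathcal{R}$ from $x$ to $y$ of length $\leq (1+\delta) d_{\infty}(x,y) + (\text{small error from the balls})$. Letting the radii and $\delta$ go to zero gives $d_{g_{\infty}}(x,y) \leq d_{\infty}(x,y)$.

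The main obstacle is making the cone-splicing step precise and uniform. There are two subtleties. First, the connectedness of $\mathcal{R}(C(Z))$ must be upgraded to a quantitative statement: I need that two unit vectors in the link $Z$ (or two points on $\partial B(z,s)$ near $\sigma$) can be connected within $\mathcal{R}$ by a curve of length bounded independently of the pair — this follows from compactness of the link and the openness/density of $\mathcal{R}(Z)$, but one should invoke (as in section~3 of Cheeger-Colding~\cite{CC00}) the path-connectedness with length bounds for the limit spaces with lower Ricci bounds, applied to the conformal model $\bar{g}_{\infty}$ via Proposition~\ref{prn:PE29_2} exactly as in the proof of Lemma~\ref{lma:PE27_1}(b). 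Second, one must control how the length distortion of $\Pi_{\infty}, \pi_{\infty}$ enters; but on the small ball $B(q_{\infty}, 10\epsilon)$ the bi-Lipschitz constant $e^{\pm \frac{10D\epsilon}{m-2}}$ is close to $1$ and tends to $1$ as the ball shrinks, so this only contributes a factor that disappears in the limit. I expect the cleanest writeup is: reduce to a local ball, transfer to $\bar{\mathcal{R}}$ via $\Pi_{\infty}$ where the metric has bounded Ricci curvature and the Cheeger-Colding structure theory (rectifiable curves approximating geodesics, avoiding the singular set) applies directly, and then transfer back. I would also cite Proposition~\ref{prn:PC25_4} to justify that $g_{\infty}$-length of curves in $\mathcal{R}$ is genuinely realized as a limit of $g_i$-lengths, closing the $d_{\infty} \leq d_{g_{\infty}}$ direction rigorously.
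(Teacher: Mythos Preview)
Your proposal is correct and follows essentially the same route as the paper: both arguments use that every tangent cone is an irreducible metric cone (Lemma~\ref{lma:PE27_0}) together with density of the regular part to splice a $d_{\infty}$-minimizing geodesic around its singular points by short detours in $\mathcal{R}$, via a covering argument. The paper's proof is terser, deferring the details of the covering/splicing step to Proposition~4.20 of~\cite{CW5}, whereas you have spelled out the mechanism (including the bi-Lipschitz transfer through $\Pi_{\infty}$ to the bounded-Ricci model, exactly as in Lemma~\ref{lma:PE27_1}(b)); but the strategy and the key inputs are the same.
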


\begin{proof}
By Lemma~\ref{lma:PE27_0}, we know that for each $y \in M_{\infty}$, every tangent space $Y$ of $M_{\infty}$ is a metric cone.
Moreover, $\mathcal{R}(Y)$ is connected. By the density estimate (\ref{eqn:PE18_1}), we further know that $\mathcal{R}(Y)$ is a dense subset of $Y$.
Then by a standard covering argument, for each $x,y \in M_{\infty}$ and $\epsilon>0$, one can find a smooth curve $\gamma \subset \mathcal{R}(Y)$ such that
\begin{align*}
d_{\infty}(x,y) \leq |\gamma| \leq (1+\epsilon) d_{\infty}(x,y).
\end{align*}
Full details for the proof of the above inequality can be found in Proposition 3.20 of~\cite{CW17B}. By the arbitrary choice of $\epsilon$, we have
\begin{align*}
d_{g_{\infty}}(x,y)=d_{\infty}(x,y), \quad \forall \; x, y \in M_{\infty}.
\end{align*}
The proof of Proposition~\ref{prn:CB08_1} is complete.
\end{proof}

Now we are ready to finish the proof of Theorem~\ref{thmin:a}.

\begin{proof}[Proof of Theorem~\ref{thmin:a}:]
It follows from the combination of Proposition~\ref{prn:PC25_1}, Proposition~\ref{prn:PC25_2}, Proposition~\ref{prn:PC25_4}, Proposition~\ref{prn:PC25_3} and Proposition~\ref{prn:CB08_1}.
\end{proof}

The following property follows from Theorem~\ref{thmin:a} by trivial rescaling (cf. (\ref{eqn:PH29_3}),(\ref{eqn:PH29_2}), (\ref{eqn:PH29_1}) for notations).

\begin{corollary}
Suppose $(M_i,p_i,g_i, f_i) \in \mathcal{M}_{m}(A)$ and $t_i \in [t_a, t_b] \subset (-\infty, 1)$.
Then by taking subsequence if necessary, we have
\begin{align}
(M_i, p_i, g_i(t_i), f_i(t_i)) \longright{pointed-\hat{C}^{\infty}-Cheeger-Gromov} \left(M_{\infty}, p_{\infty}, g_{\infty}(t_{\infty}), f_{\infty}(t_{\infty}) \right). \label{eqn:PI05_1}
\end{align}
The metric $g_{\infty}(t_{\infty})$ is homothetic to $g_{\infty}$ in (\ref{eqn:CA25_10}).
\label{cly:PH29_1}
\end{corollary}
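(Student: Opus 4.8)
The plan is to reduce everything to Theorem~\ref{thmin:a} by the observation that, at each time $t$, the pair $((\psi_t)^{*}g,(\psi_t)^{*}f)$ is a Ricci shrinker isometric to $(g,f)$, while $g(t)$ differs from it only by the scalar factor $1-t$. The decisive bookkeeping point, which makes the rescaling "trivial", is that the flow $\psi_t$ fixes the base point: since $\psi_t$ is generated by $\frac{1}{1-t}\nabla_g f$ (c.f. before (\ref{eqn:PH29_3})) and $p_i$ is a minimum point of $f_i$, we have $\nabla_{g_i}f_i(p_i)=0$, hence $\psi_t(p_i)=p_i$ for all $t\in(-\infty,1)$. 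Consequently the diffeomorphism $\psi_{t_i}$ is a base-point-preserving isometry from $(M_i,p_i,(\psi_{t_i})^{*}g_i,(\psi_{t_i})^{*}f_i)$ onto $(M_i,p_i,g_i,f_i)$ that carries the potential to the potential; the normalization (\ref{eqn:PC06_3}) and the functional $\boldsymbol{\mu}$ being diffeomorphism invariants, and the minimum of $(\psi_{t_i})^{*}f_i$ being attained at $\psi_{t_i}^{-1}(p_i)=p_i$, the pair $(M_i,p_i,(\psi_{t_i})^{*}g_i,(\psi_{t_i})^{*}f_i)$ again lies in $\mathcal{M}_{m}(A)$.

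Next I would apply Theorem~\ref{thmin:a} to the sequence $\{(M_i,p_i,g_i,f_i)\}$: after passing to a subsequence we obtain the convergence (\ref{eqn:CA25_10}) to $(M_\infty,p_\infty,g_\infty,f_\infty)$, with $M_\infty=\mathcal{R}\cup\mathcal{S}$. Along this subsequence, $(M_i,p_i,(\psi_{t_i})^{*}g_i,(\psi_{t_i})^{*}f_i)$ is isometric to $(M_i,p_i,g_i,f_i)$ via a base-point-preserving isometry intertwining potentials, so it converges to the same limit in the pointed-$\hat{C}^{\infty}$-Cheeger-Gromov topology. Passing to a further subsequence we may assume $t_i\to t_\infty\in[t_a,t_b]$, so $1-t_i\to 1-t_\infty\in[1-t_b,1-t_a]\subset(0,\infty)$. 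Since $g_i(t_i)=(1-t_i)(\psi_{t_i})^{*}g_i$ and $f_i(t_i)=(\psi_{t_i})^{*}f_i$ (c.f. (\ref{eqn:PH29_3}), (\ref{eqn:PH29_2})), and since scaling a sequence of metrics converging in the pointed-$\hat{C}^{\infty}$-Cheeger-Gromov topology by positive scalars tending to a positive limit preserves the convergence — the Gromov--Hausdorff almost-isometries and the diffeomorphisms $\varphi_i$ of Definition~\ref{dfn:PC25_1} may be kept, with $\varphi_i^{*}((1-t_i)(\psi_{t_i})^{*}g_i)=(1-t_i)\,\varphi_i^{*}(\psi_{t_i})^{*}g_i\to(1-t_\infty)g_\infty$ in $C^\infty$ on each compact subset of $\mathcal{R}$ — we conclude
$$(M_i,p_i,g_i(t_i),f_i(t_i))\longrightarrow (M_\infty,p_\infty,(1-t_\infty)g_\infty,f_\infty)$$
in the pointed-$\hat{C}^{\infty}$-Cheeger-Gromov topology. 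Setting $g_\infty(t_\infty):=(1-t_\infty)g_\infty$ and $f_\infty(t_\infty):=f_\infty$ yields (\ref{eqn:PI05_1}); on $\mathcal{R}$ these satisfy the shrinker equation with $\frac1{2(1-t_\infty)}$ in place of $\frac12$, in accordance with (\ref{eqn:PH29_1}), and $g_\infty(t_\infty)$ is by construction homothetic to the metric $g_\infty$ of (\ref{eqn:CA25_10}).

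There is no essential obstacle here; the proof is the two elementary points just used. The one place demanding a moment's care is the tracking of base points: because $p_i$ is the minimum of $f_i$ it is annihilated by $\nabla_{g_i}f_i$, so neither the diffeomorphisms $\psi_{t_i}$ nor the rescalings move it, and in particular no basepoint-displacement estimate of the type furnished by (\ref{X001}) is needed. The remaining ingredient — stability of pointed-$\hat{C}^{\infty}$-Cheeger-Gromov convergence under multiplication of the metrics by scalars converging to a positive constant — is routine and follows directly from Definition~\ref{dfn:PC25_1}.
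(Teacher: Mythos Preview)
Your argument is correct and is exactly the ``trivial rescaling'' the paper invokes: the paper does not give a detailed proof but simply states that the corollary follows from Theorem~\ref{thmin:a} by trivial rescaling, referring to (\ref{eqn:PH29_3}), (\ref{eqn:PH29_2}), (\ref{eqn:PH29_1}). You have written out precisely that reduction --- the base point is fixed by $\psi_{t_i}$ since $\nabla f_i(p_i)=0$, the pulled-back data $((\psi_{t_i})^*g_i,(\psi_{t_i})^*f_i)$ is pointed-isometric to $(g_i,f_i)$ and hence stays in $\mathcal{M}_m(A)$ with the same limit, and the uniform positive scaling $1-t_i\to 1-t_\infty>0$ preserves pointed-$\hat{C}^\infty$-Cheeger-Gromov convergence --- so there is nothing to add.
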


We can also use the proof of Theorem~\ref{thmin:a} and the density estimate (\ref{eqn:PE18_1}) to show the structure of each tangent space of $M_{\infty}$.

\begin{proposition}
There exist small constants $\epsilon=\epsilon(m), \delta=\delta(m)$ with the following properties.

Suppose $M_{\infty}$ is the limit space in (\ref{eqn:CA25_10}) and $y \in M_{\infty}$. Let $(Y, \hat{y}, \hat{d})$ be a tangent space of $M_{\infty}$ at $y$.
Namely, there exists $r_k \to 0$ such that
\begin{align}
(M_{\infty}, y, r_k^{-1}d_{\infty}) \longright{pointed-Gromov-Hausdorff} (Y, \hat{y}, \hat{d}). \label{eqn:PI09_1}
\end{align}
Then $Y$ is a metric cone with the regular-singular decomposition $Y=\mathcal{R}(Y) \cup \mathcal{S}(Y)$,
where $\mathcal{R}(Y)$ is a smooth Riemannian manifold and $\dim_{\mathcal{M}} \mathcal{S} \leq m-4$.
The convergence (\ref{eqn:PI09_1}) can be improved to
\begin{align}
(M_{\infty}, y, r_k^{-1}g_{\infty}) \longright{pointed-\hat{C}^{\infty}-Cheeger-Gromov} (Y, \hat{y}, \hat{g}). \label{eqn:PI09_2}
\end{align}
The following two statements are equivalent to say that $y \in \mathcal{S}(M_{\infty})$:
\begin{itemize}
\item $d_{PGH} \left\{ (Y, \hat{y}, \hat{d}), (\R^m, 0, d_{E})\right\}>\epsilon$.
\item $\left| B(\hat{y}, 1) \cap \mathcal{R}(Y) \right|<(1-\delta) \omega_m$.
\end{itemize}
\label{prn:PI09_1}
\end{proposition}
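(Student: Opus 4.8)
The plan is to realize the tangent cone $Y$ as a pointed limit of the original Ricci shrinkers zoomed in at $y$, and then run the proof of Theorem~\ref{thmin:a} for this rescaled sequence; the only difference is that after rescaling the curvature tends to zero, so the limit is Ricci-flat on its regular part (hence a metric cone) rather than a Ricci shrinker. Concretely, fix a tangent cone $(Y,\hat{y},\hat{d})$ with $r_k\to 0$ as in (\ref{eqn:PI09_1}) and pick $y_i\in M_i$ with $y_i\to y$. Since $d_i(y_i,p_i)\to d_\infty(y,p_\infty)<\infty$, for large $i$ we have $B_{g_i}(y_i,1)\subset B_{g_i}(p_i,D)$ with $D\coloneqq d_\infty(y,p_\infty)+10m+1$. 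For each fixed $k$, rescaling the convergence (\ref{eqn:CB08_4}) gives $(M_i,y_i,r_k^{-2}g_i)\to(M_\infty,y,r_k^{-2}g_\infty)$ in pointed-$\hat{C}^{\infty}$-Cheeger-Gromov topology by Theorem~\ref{thmin:a} and Corollary~\ref{cly:PH29_1}; combining this with (\ref{eqn:PI09_1}), a diagonal subsequence produces $\lambda_i\coloneqq r_{k(i)}\to 0$ with
\begin{align*}
  (M_i,y_i,\lambda_i^{-2}g_i)\longright{pointed-Gromov-Hausdorff}(Y,\hat{y},\hat{d}).
\end{align*}

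Next I would apply the local conformal transformation of Section~\ref{sec:conformal} centered at $y_i$: set $\bar{g}_i\coloneqq e^{\frac{2(f_i(y_i)-f_i)}{m-2}}g_i$ and $\hat{g}_i\coloneqq \lambda_i^{-2}\bar{g}_i$. By Theorem~\ref{thm:PE27_1}(d) we have $|\widehat{Rc}|_{\hat{g}_i}<D^2\lambda_i^2\to 0$ on the balls $B_{\hat{g}_i}(y_i,\frac{\lambda_i^{-1}}{10D})$, whose radii tend to $\infty$; in fact, inspecting (\ref{eqn:PH04_3A}) with $\rho=\lambda_i$ and using $|\bar{f}_i|\le D\cdot(\text{radius in }g_i)\to 0$, $|\hat{\nabla}\bar{f}_i|_{\hat{g}_i}\le D\lambda_i\to 0$, and $\lambda_i^2 f_i(y_i)\le \tfrac12 D^2\lambda_i^2\to 0$ on every fixed $\hat{g}_i$-ball, one sees that $\widehat{Rc}\to 0$ and that the conformal factor $e^{\frac{2\bar{f}_i}{m-2}}\to 1$ in $C^\infty_{loc}$ (via the bootstrapped regularity of the system (\ref{eqn:CA25_9}) for $(\hat{g}_i,\bar{f}_i)$, exactly as in Lemma~\ref{lma:PH04_1}). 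By Lemma~\ref{lma:PE04_2} the metrics $\hat{g}_i$ are uniformly non-collapsed, and the balls in question are relatively compact by Theorem~\ref{thm:PE27_1}(a). Hence $(M_i,y_i,\hat{g}_i)$ and $(M_i,y_i,\lambda_i^{-2}g_i)$ share the same pointed limit $Y$, which is a non-collapsed limit of manifolds with $Rc\to 0$. The classical Anderson--Cheeger--Colding theory together with the regularity and patching arguments of Sections~\ref{sec:regularity}--\ref{sec:CGconv} (Theorem~\ref{thm:PD14_1}, Proposition~\ref{prn:PC25_2}, Proposition~\ref{prn:PC25_4}) now give: $Y=\mathcal{R}(Y)\cup\mathcal{S}(Y)$ with $\mathcal{R}(Y)$ a smooth Ricci-flat Riemannian manifold, the convergence $(M_i,y_i,\lambda_i^{-2}g_i)\to(Y,\hat{y},\hat{g})$ taking place in the pointed-$\hat{C}^{\infty}$-Cheeger-Gromov topology, and $\dim_{\mathcal{M}}\mathcal{S}(Y)\le m-4$ by Cheeger--Naber~\cite{CN2} exactly as in Proposition~\ref{prn:PC25_3}. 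That $Y$ is a metric cone is Lemma~\ref{lma:PE27_0}. Finally, feeding the $\hat{C}^{\infty}$ convergence $(M_i,y_i,\lambda_i^{-2}g_i)\to(Y,\hat{y},\hat{g})$ back through the diagonal, against $(M_i,y_i,r_k^{-2}g_i)\to(M_\infty,y,r_k^{-2}g_\infty)$, upgrades (\ref{eqn:PI09_1}) to (\ref{eqn:PI09_2}).

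For the two equivalences I would establish a uniform gap. Since $\mathcal{S}(Y)$ has Minkowski codimension at least $4$ it is $\mathcal{H}^m$-negligible, so $|B(\hat{y},1)\cap\mathcal{R}(Y)|$ equals the volume density of the cone $Y$ at its vertex times $\omega_m$; by Bishop--Gromov applied to the approximants $\hat{g}_i$ this density is the asymptotic volume ratio of the blow-up, which by Anderson's gap lemma (Lemma~\ref{lma:PD30_3}) applied to the smooth Ricci-flat non-collapsed limit $\mathcal{R}(Y)$ is either equal to $1$ (forcing $Y$ isometric to $(\R^m,0,d_E)$) or at most $1-\delta_0(m)$. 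Via Colding's volume continuity (Lemma~\ref{lma:PH11_1}) this dichotomy converts into a $d_{PGH}$-gap $\epsilon=\epsilon(m)$ from $(\R^m,0,d_E)$; set $\delta\coloneqq\tfrac12\delta_0$. With the gap in hand: if $y\in\mathcal{R}(M_\infty)$ then $M_\infty$ is a smooth manifold near $y$ by Proposition~\ref{prn:PC25_2}, so every tangent cone is $\R^m$ and both displayed quantities fail the strict inequalities; conversely, if some tangent cone at $y$ fails one (hence, by the gap, both) of the inequalities then that tangent cone is $\R^m$, so $y$ is a regular point by Definition~\ref{dfn:PC24_2}. This is precisely the ``uniform gap'' invoked in the outline of Theorem~\ref{thmin:a}.

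The main obstacle is the uniform dimensional gap $\epsilon(m),\delta(m)$: one must verify that the conformal factor genuinely disappears at tangent-cone scales, so that $Y$ inherits the \emph{exact} Ricci-flat rigidity (and Anderson's gap constant) rather than merely an almost-Ricci-flat estimate; the bounds $|\bar{f}_i|\le D\cdot(\text{radius})$ and $|\hat{\nabla}\bar{f}_i|_{\hat{g}_i}\to 0$, together with the bootstrapped $C^k$-estimates for $(\hat{g}_i,\bar{f}_i)$ from Lemma~\ref{lma:PH04_1}, are what make this work. The remaining ingredients---the metric-cone structure (Lemma~\ref{lma:PE27_0}), the codimension-four estimate (Cheeger--Naber), and the $\hat{C}^{\infty}$ improvement---are a direct transcription of the corresponding steps in the proof of Theorem~\ref{thmin:a} to the rescaled sequence.
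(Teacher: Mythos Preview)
Your proof is correct and lands on the same ingredients as the paper (metric-cone structure from Lemma~\ref{lma:PE27_0}, codimension-four from Cheeger--Naber, and the gap from Lemma~\ref{lma:PD30_3} combined with Lemma~\ref{lma:PH11_1}), but the route is organized differently. You diagonalize back to the approximating smooth shrinkers, realizing $Y$ as a pointed limit of $(M_i,y_i,\lambda_i^{-2}g_i)$, and then rerun the whole conformal-transformation and regularity machinery on this rescaled sequence. The paper instead stays on the limit side: it invokes Theorem~\ref{thm:PE30_1} to identify tangent spaces of $M_\infty$ with tangent spaces of the conformal limit (which has locally bounded Ricci), so the cone structure and gap drop out of standard Cheeger--Colding theory directly; for the $\hat C^\infty$ upgrade (\ref{eqn:PI09_2}) and the Minkowski estimate, it observes that $g_\infty$ is itself a smooth Ricci-shrinker metric on $\mathcal{R}(M_\infty)$ and that the density estimate (\ref{eqn:PE18_1}) passes to the limit, so one can literally repeat the proof of Theorem~\ref{thmin:a} on the sequence $(M_\infty,y,r_k^{-2}g_\infty)$. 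Your approach is more concrete and avoids working on a singular space, at the cost of an extra diagonal step (and a slightly delicate ``feeding back through the diagonal'' to get (\ref{eqn:PI09_2})); the paper's approach is shorter because it cashes in the structure of $M_\infty$ already established. Either way, the constants $\epsilon(m),\delta(m)$ come from exactly the same place: Anderson's gap constant $\delta_0$ and the $\epsilon$ in Colding's volume continuity.
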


\begin{proof}
It follows from Theorem~\ref{thm:PE30_1} that tangent spaces exist.
Moreover, each tangent space $(Y, \hat{y}, \hat{d})$ can be regarded as the tangent cone of some limit space of manifolds with locally uniformly bounded Ricci curvature.
Then the decomposition of $Y$ and equivalent description of $\mathcal{S}(M_{\infty})$ follows from the standard Cheeger-Colding theory (cf. Lemma~\ref{lma:PD30_3} and Lemma~\ref{lma:PH11_1}).
Since $g_{\infty}$ is a smooth Ricci shrinker metric and (\ref{eqn:PE18_1}) can pass to the limit, we can follow the proof of Theorem~\ref{thmin:a} to
obtain (\ref{eqn:PI09_2}) and the dimension estimate $\dim_{\mathcal{M}} \mathcal{S} \leq m-4$.
\end{proof}

There are some non-trivial improvements of Theorem~\ref{thmin:a}.
Note that even though $d_{\infty}=d_{g_{\infty}}$ in Theorem~\ref{thmin:a}, it is not clear whether $ \mathcal{R}(M_{\infty})$ is convex.
Namely, we do not know whether $x,y$ can be connected by a smooth shortest geodesic whenever $x,y \in \mathcal{R}(M_{\infty})$.
This convexity will be proved in a different paper~\cite{HLW}. In fact, in ~\cite{HLW}, it is proved that for every $x \in M_{\infty}$ and $y \in \mathcal{R}(M_{\infty})$,
there is a curve $\gamma$ connecting $x$ and $y$ with smooth interior part such that $d_{\infty}(x,y)$ is achieved by the length of $\gamma$.
Recall that by Proposition~\ref{prn:PI09_1}, each tangent space of $M_{\infty}$ is a metric cone and there is a definite gap between $\R^m$ and any tangent cone of a singular point.
Combining these properties with (a) and (b) of Theorem~\ref{thmin:a}, we know that $M_{\infty}$ is actually a Riemannian conifold in the sense of Chen-Wang (cf. Definition 1.2 of~\cite{CW17A} and the subsequent discussion).
Therefore, Theorem~\ref{thmin:a} can be improved further to the following Theorem.

\begin{theorem}[cf. Theorem 1.1 of~\cite{HLW}]
Let $(M_i^m, p_i, g_i,f_i)$ be a sequence of Ricci shrinkers in $ \mathcal{M}_{m}(A)$.
By passing to a subsequence if necessary, we have
\begin{align*}
(M_i, p_i, g_i,f_i) \longright{pointed-\hat{C}^{\infty}-Cheeger-Gromov} \left(M_{\infty}, p_{\infty}, g_{\infty}, f_{\infty} \right),
\end{align*}
where the limit space $M_{\infty}$ is a Riemannian conifold Ricci shrinker.
\label{thm:PH26_1}
\end{theorem}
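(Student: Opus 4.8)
The plan is to verify that the limit space $(M_{\infty}, p_{\infty}, g_{\infty}, f_{\infty})$ furnished by Theorem~\ref{thmin:a} satisfies every clause in the definition of a Riemannian conifold in the sense of Chen-Wang (Definition 1.2 of~\cite{CW17A} and the subsequent discussion), so that the statement reduces to assembling results already established together with one new input borrowed from~\cite{HLW}. After passing to a subsequence, Theorem~\ref{thmin:a} gives
\begin{align*}
    (M_i, p_i, g_i,f_i)  \longright{pointed-\hat{C}^{\infty}-Cheeger-Gromov} \left(M_{\infty}, p_{\infty}, g_{\infty}, f_{\infty} \right),
\end{align*}
with a regular-singular decomposition $M_{\infty}=\mathcal{R}\cup\mathcal{S}$ in which $\mathcal{S}$ is closed of Minkowski codimension at least $4$, the open set $\mathcal{R}$ carries a smooth metric $g_{\infty}$ solving $Rc_{\infty}+Hess_{g_{\infty}}f_{\infty}-\frac{g_{\infty}}{2}=0$ with smooth potential $f_{\infty}$, and $d_{\infty}$ coincides with the length structure of $(\mathcal{R}, g_{\infty})$. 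This settles the closedness and high codimension of the singular set, the smooth Ricci-shrinker structure of the regular part, and the compatibility of the metric structures.

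Next I would invoke Proposition~\ref{prn:PI09_1} to control tangent cones: every tangent space $(Y,\hat y,\hat d)$ of $M_{\infty}$ is a metric cone whose regular part is a smooth Ricci-flat manifold and whose singular part has Minkowski codimension at least $4$, and there is a uniform gap constant $\epsilon=\epsilon(m)$ with $d_{PGH}\{(Y,\hat y,\hat d),(\R^m,0,d_E)\}>\epsilon$ whenever the base point lies in $\mathcal{S}$. This is exactly the requirement that tangent cones at singular points be metric cones uniformly separated from $\R^m$.

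The remaining ingredient, which I expect to be the crux, is the convexity statement that any two points of $M_{\infty}$, at least one of them regular, are joined by a length-minimizing curve whose interior lies in $\mathcal{R}(M_{\infty})$ and is a smooth $g_{\infty}$-geodesic. This does not follow from Theorem~\ref{thmin:a} alone --- as Figure~\ref{fig:globalbroken} shows, a limit of shortest geodesics could a priori cross $\mathcal{S}$ --- and it is precisely Theorem 1.1 of~\cite{HLW}, whose proof exploits the metric-cone structure of all tangent spaces (Proposition~\ref{prn:PI09_1}) together with the density estimate $\int_{B(p,D)}\mathbf{hr}^{2\theta-4}\,dv<C_{hr}$ of Theorem~\ref{thm:density} to push shortest curves off a fixed neighborhood of $\mathcal{S}$. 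Granting this, all clauses of the Riemannian conifold definition hold; combined with (a), (b) of Theorem~\ref{thmin:a} and Proposition~\ref{prn:PI09_1}, $M_{\infty}$ is a Riemannian conifold, and since $g_{\infty}$ solves the shrinker equation on $\mathcal{R}$ with potential $f_{\infty}$, it is a Riemannian conifold Ricci shrinker, as asserted.
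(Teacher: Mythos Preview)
Your proposal is correct and follows essentially the same route as the paper: assemble Theorem~\ref{thmin:a} for the convergence and decomposition, Proposition~\ref{prn:PI09_1} for the metric-cone structure and gap of tangent spaces, and then import the convexity of $\mathcal{R}(M_{\infty})$ from~\cite{HLW} to verify the Riemannian conifold definition. The only minor divergence is in your speculation about what drives the proof in~\cite{HLW}: the paper attributes it to a nontrivial generalization of Colding-Naber~\cite{ColdNa}, rather than the density estimate you suggest, but since that argument is deferred to~\cite{HLW} anyway this does not affect the correctness of your outline here.
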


The proof of Theorem~\ref{thm:PH26_1} consists of a nontrivial generalization of the fundamental work of Colding-Naber~\cite{ColdNa}.
Partially because of its technical difficulty, we put it in a different paper~\cite{HLW}.

Then we move on to prove Theorem~\ref{thmin:b}.

\begin{proposition}
Same conditions as in Theorem~\ref{thmin:a}. Suppose $\theta \in (0,2]$.

Suppose the ball $B(q_{\infty},r) \subset M_{\infty}$ is the limit of the balls $B(q_i,r) \subset M_i$, then we have
\begin{align}
\lim_{i \to \infty} \int_{B(q_i, r)} |Rm|^{2-\theta} e^{-f_{i}} dv_{i}= \int_{B(q_{\infty}, r) \cap \mathcal{R}} |Rm|^{2-\theta} e^{-f_{\infty}} dv_{\infty}.
\label{eqn:PE20_5}
\end{align}
\label{prn:PE19_2}
\end{proposition}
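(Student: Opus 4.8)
The plan is to split the integral over $B(q_i,r)$ into a part near the singular set and a part away from it, and control each separately. First, I would fix a small $\sigma > 0$ and decompose $B(q_i, r) = U_{i,\sigma} \cup V_{i,\sigma}$, where $V_{i,\sigma}$ consists of points whose harmonic radius $\mathbf{hr}$ is at least $\sigma$ (a ``good'' region converging smoothly to a subset of $\mathcal R$) and $U_{i,\sigma}$ is the remaining ``bad'' region where $\mathbf{hr} < \sigma$. On $V_{i,\sigma}$, the pointed-$\hat C^\infty$-Cheeger-Gromov convergence of Theorem~\ref{thmin:a} (equation (\ref{eqn:CA25_10})), via the diffeomorphisms $\varphi_i$ constructed in Proposition~\ref{prn:PC25_4}, gives smooth convergence of $g_i$, $f_i$, and hence of $|Rm|_{g_i}^{2-\theta} e^{-f_i} dv_i$, on compact subsets of $\mathcal R \cap B(q_\infty, r)$; so the contribution of $V_{i,\sigma}$ converges to the corresponding integral over $\{x \in \mathcal R \cap B(q_\infty,r) : \mathbf{hr}(x) \gtrsim \sigma\}$ as $i \to \infty$ (with appropriate care about the boundary sphere $\partial B(q_\infty, r)$, handled by choosing $r$ generic so that the boundary has measure zero).

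The main obstacle is showing that the bad region $U_{i,\sigma}$ contributes negligibly, uniformly in $i$, as $\sigma \to 0$. Here I would invoke the density estimate, Theorem~\ref{thm:density}: by part (c) of Lemma~\ref{lma:PH04_1} (as used in the proof of Theorem~\ref{thm:density}), $|Rm|(x) < c\, \mathbf{hr}^{-2}(x)$, so on $U_{i,\sigma}$ one has $|Rm|^{2-\theta} \le c^{2-\theta}\, \mathbf{hr}^{2\theta - 4}$, and since $f_i \ge 0$ on $B(p_i, D) \supset B(q_i,r)$ by (\ref{eqn:PE03_6a}),
\begin{align*}
  \int_{U_{i,\sigma}} |Rm|^{2-\theta} e^{-f_i} dv_i \le c^{2-\theta} \int_{\{\mathbf{hr} < \sigma\} \cap B(q_i,r)} \mathbf{hr}^{2\theta-4}\, dv_i.
\end{align*}
The point is to bound this last integral by a quantity $\omega(\sigma)$ with $\omega(\sigma) \to 0$ as $\sigma \to 0$, uniformly in $i$. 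This follows from a covering argument exactly parallel to the proof of Proposition~\ref{prn:PC25_3}: cover $\{\mathbf{hr} < \sigma\} \cap B(q_i,r)$ by controlled-overlap balls of radius comparable to their harmonic radius, use the local Harnack inequality (\ref{eqn:PI10_2}) to see $\mathbf{hr}$ is comparable to the ball radius on each, and apply (\ref{eqn:PE18_1})/(\ref{eqn:PE18_2}) to bound the number of such balls at each dyadic scale below $\sigma$; summing the dyadic contributions gives a bound of the form $C(m,A,D,\theta)\, \sigma^{\eta}$ for some $\eta = \eta(\theta) > 0$ when $\theta < 2$. For $\theta = 2$ the integrand is literally $\mathbf{hr}^{0}$-weighted, i.e. just $dv_i$ up to the factor $c^0 = 1$, so the bad region has volume $\lesssim |\mathcal S_\sigma \cap B(q_\infty, D)| \to 0$ by (\ref{eqn:PC26_7}), and the same conclusion holds.

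Finally I would combine the two pieces: given $\varepsilon > 0$, choose $\sigma$ so that $\omega(\sigma) < \varepsilon$ (this also bounds the corresponding tail $\int_{\{\mathbf{hr}<\sigma\}\cap \mathcal R \cap B(q_\infty,r)} |Rm|^{2-\theta} e^{-f_\infty} dv_\infty < \varepsilon$, since the density estimate passes to the limit as noted in Proposition~\ref{prn:PI09_1}); then let $i \to \infty$ using the smooth convergence on the good region; then let $\sigma \to 0$. A standard $\varepsilon$-$\sigma$ diagonal argument yields (\ref{eqn:PE20_5}). One technical point worth isolating is the behavior at the boundary sphere $\partial B(q_i, r)$: since $|Rm|^{2-\theta} e^{-f} dv$ is a fixed uniformly-locally-finite measure (part (a) of Theorem~\ref{thmin:b}, i.e. (\ref{eqn:PE18_3})), for all but countably many $r$ the limit measure gives zero mass to $\partial B(q_\infty,r)$, and restricting to such $r$ (or approximating a general $r$ from inside and outside) removes any ambiguity; this suffices for the statement as used later.
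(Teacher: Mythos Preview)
Your proposal is correct and follows essentially the same approach as the paper: decompose into a good region with smooth convergence and a bad region near the singular set, then show the bad region contributes $O(\sigma^{2(\theta-\theta')})$ uniformly in $i$ by combining the density estimate (\ref{eqn:PE18_1})--(\ref{eqn:PE18_2}) at an auxiliary exponent $\theta'<\theta$ with a covering argument. The paper phrases the bad region as the pullback $\psi_i(\mathcal S_\epsilon)$ rather than the sublevel set $\{\mathbf{hr}_i<\sigma\}$ and handles the boundary sphere via explicit annulus terms $C_\epsilon\delta_i$ rather than genericity of $r$, but these are only bookkeeping differences.
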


\begin{proof}
Fix $\epsilon>0$ small. Recall that $\mathcal{S}_{\epsilon}$ is the $\epsilon$-neighborhood of $\mathcal{S}$ by (\ref{eqn:PE27_4}).
Note that $B(q_{\infty}, r) \backslash \mathcal{S}_{\epsilon}$ is a compact subset of $\mathcal{R}$.
It follows from the pointed-$\hat{C}^{\infty}$-Cheeger-Gromov convergence that there are diffeomorphisms $\varphi_i$ from $B(q_{\infty}, r) \backslash \mathcal{S}_{\epsilon}$ to its
image in $M_i$ such that
\begin{align*}
&h_i=\varphi_i^{*} g_i \longright{C^{\infty}} g_{\infty}, \\
&f_i \circ \varphi_i \longright{C^{\infty}} f_{\infty}
\end{align*}
on $B(q_{\infty}, r) \backslash \mathcal{S}_{\epsilon}$. Then we have
\begin{align*}
\int_{B(q_{\infty}, r) \backslash \mathcal{S}_{\epsilon}} |Rm|^{2-\theta} e^{-f_{\infty}} dv_{g_{\infty}}&=
\lim_{i \to \infty} \int_{B(q_{\infty}, r) \backslash \mathcal{S}_{\epsilon}} |Rm|_{h_i}^{2-\theta} e^{-f_i \circ \varphi_i} dv_{h_i}\\
&=\lim_{i \to \infty} \int_{\varphi_i \left( B(q_{\infty}, r) \backslash \mathcal{S}_{\epsilon} \right)} |Rm|_{g_i}^{2-\theta} e^{-f_i} dv_{g_i}.
\end{align*}
Since $\varphi_i$ is the restriction of the $\delta_i$-isometries $\psi_i$ on $B(q_{\infty}, r) \backslash \mathcal{S}_{\epsilon}$ and $\delta_i \to 0$, it is clear that
\begin{align}
B(q_i, r-\delta_i) \backslash \psi_{i}\left( \mathcal{S}_{\epsilon} \right)
\subset \varphi_i \left( B(q_{\infty}, r) \backslash \mathcal{S}_{\epsilon} \right) \subset B(q_i, r+\delta_{i}) \backslash \psi_{i}\left( \mathcal{S}_{0.5\epsilon} \right).
\label{eqn:PE20_6}
\end{align}
Now we choose $\theta' \in (0, \theta)$.
Similar to the deduction of (\ref{eqn:PE17_3}) in Proposition~\ref{prn:PC25_3}, we know that $B(q_i, r-\delta_i) \cap \psi_{i}(\mathcal{S}_{\epsilon})$ can be covered by
$\left\{ B(x_{i,\alpha}, 2j) \right\}_{\alpha=1}^{N}$ such that the number of balls $N$ is dominated by:
\begin{align}
N \leq C \epsilon^{-2\theta'+4-m}
\label{eqn:PE20_1}
\end{align}
for some $C$ independent of $i$ and $\theta'$.
Applying Theorem~\ref{thm:density} on each ball $B(x_{i,\alpha},2\epsilon)$ with respect to $\theta$, we obtain that
\begin{align}
\int_{B(x_{i,\alpha},2\epsilon)} |Rm|^{2-\theta} e^{-f_i} dv_i \leq C \epsilon^{m-4+2\theta}. \label{eqn:PE20_2}
\end{align}
Combining (\ref{eqn:PE20_1}) and (\ref{eqn:PE20_2}), we have
\begin{align}
&\quad \int_{B(q_i, r-\delta_i) \cap \psi_{i}(\mathcal{S}_{\epsilon})} |Rm|^{2-\theta} e^{-f_i} dv_i \notag\\
&\leq \sum_{\alpha=1}^{N} \int_{B(x_{i,\alpha},2\epsilon)} |Rm|^{2-\theta} e^{-f_i} dv_i
\leq C\cdot N \cdot \epsilon^{m-4+2\theta} \notag\\
&\leq C \cdot r^{-2\theta'+4-m} \cdot \epsilon^{m-4+2\theta}
\leq C \epsilon^{2\left( \theta-\theta' \right)}. \label{eqn:PE20_3}
\end{align}
The same argument implies that
\begin{align}
\int_{B(q_i, r+\delta_i) \cap \psi_{i}(\mathcal{S}_{0.5 \epsilon})} |Rm|^{2-\theta} e^{-f_i} dv_i \leq C \epsilon^{2(\theta-\theta')}.
\label{eqn:PE20_4}
\end{align}
On the other hand, it is clear that $|Rm|< C_{\epsilon}$ on
$B(q_i, r-\delta_i) \backslash \psi_{i}(\mathcal{S}_{\epsilon})$ and $B(q_i, r+\delta_i) \cap \psi_{i}(\mathcal{S}_{0.5\epsilon})$.
It follows from this estimate and (\ref{eqn:PE20_3}) that
\begin{align*}
&\quad \int_{B(q_{\infty}, r) \backslash \mathcal{S}_{\epsilon}} |Rm|^{2-\theta} e^{-f_{\infty}} dv_{g_{\infty}}\\
&\leq \liminf_{i \to \infty} \int_{B(q_i, r+\delta_{i}) \backslash \psi_{i}\left( \mathcal{S}_{0.5\epsilon} \right)} |Rm|_{g_i}^{2-\theta} e^{-f_i} dv_{g_i}\\
&\leq \liminf_{i \to \infty} \left\{ \int_{B(q_i, r)} |Rm|_{g_i}^{2-\theta} e^{-f_i} dv_{g_i} + C_{\epsilon} \delta_i + C \epsilon^{2(\theta-\theta')} \right\}\\
&\leq \liminf_{i \to \infty} \int_{B(q_i, r)} |Rm|_{g_i}^{2-\theta} e^{-f_i} dv_{g_i} + C \epsilon^{2(\theta-\theta')}.
\end{align*}
Similarly, by the first relationship in (\ref{eqn:PE20_6}) and (\ref{eqn:PE20_4}), we have
\begin{align*}
\int_{B(q_{\infty}, r) \backslash \mathcal{S}_{\epsilon}} |Rm|^{2-\theta} e^{-f_{\infty}} dv_{g_{\infty}}
\geq \limsup_{i \to \infty} \int_{B(q_i, r)} |Rm|_{g_i}^{2-\theta} e^{-f_i} dv_{g_i} -C \epsilon^{2(\theta-\theta')}.
\end{align*}
Letting $\epsilon \to 0$, the set $B(q_{\infty}, r) \backslash \mathcal{S}_{\epsilon}$ converges to $B(q_{\infty}, r) \backslash \mathcal{S}=B(q_{\infty}, r) \cap \mathcal{R}$,
and the number $\epsilon^{2(\theta-\theta')}$ tends to $0$. Then it follows from the combination of the previous two inequalities that
\begin{align*}
&\quad \limsup_{i \to \infty} \int_{B(q_i, r)} |Rm|_{g_i}^{2-\theta} e^{-f_i} dv_{g_i}\\
&\leq \int_{B(q_{\infty}, r) \cap \mathcal{R}} |Rm|^{2-\theta} e^{-f_{\infty}} dv_{g_{\infty}}\\
&\leq \liminf_{i \to \infty} \int_{B(q_i, r)} |Rm|_{g_i}^{2-\theta} e^{-f_i} dv_{g_i},
\end{align*}
which is equivalent to (\ref{eqn:PE20_5}).
\end{proof}

\begin{proposition}
Same conditions as in Theorem~\ref{thmin:a}. Then we have
\begin{align}
\boldsymbol{\mu}\left( M_{\infty}, g_{\infty} \right)
\coloneqq \log \int_{\mathcal{R}} (4\pi)^{-\frac{m}{2}} e^{-f_{\infty}} dv_{g_{\infty}}
=\lim_{i \to \infty} \boldsymbol{\mu}\left( M_i, g_i \right). \label{eqn:PE20_7}
\end{align}
\label{prn:PE20_1}
\end{proposition}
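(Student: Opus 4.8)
The plan is to unwind the definition of $\boldsymbol{\mu}$ into an integral and then control separately the escape of mass to spatial infinity and the possible concentration of mass on small neighborhoods of the singular set $\mathcal{S}$. By the normalization identity (\ref{eqn:PD15_1}) we have $e^{\boldsymbol{\mu}(M_i,g_i)}=\int_{M_i}(4\pi)^{-\frac{m}{2}}e^{-f_i}\,dv_i$, while the left-hand side of (\ref{eqn:PE20_7}) is by definition $\log$ of $\int_{\mathcal{R}}(4\pi)^{-\frac{m}{2}}e^{-f_\infty}\,dv_\infty$; this integral is finite (using $e^{-f_\infty}\le 1$, the volume bound, and Step~1 below) and positive (as $\mathcal{R}$ is a nonempty open set, by Lemma~\ref{lma:PE27_1} and Proposition~\ref{prn:PC25_2}). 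It therefore suffices to prove
\begin{align*}
\lim_{i\to\infty}\int_{M_i} e^{-f_i}\,dv_i=\int_{\mathcal{R}} e^{-f_\infty}\,dv_\infty.
\end{align*}

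\textbf{Step 1: a uniform tail estimate.} Fix $D>10m$. On $M_i\setminus B(p_i,D)$ the quadratic growth estimate (\ref{eqn:PE01_3}) gives $f_i\ge\frac{1}{16}\,d_i(\cdot,p_i)^2$, and Lemma~\ref{lma:CB09_1} gives $|B(p_i,\rho)|\le C(m)\rho^m$. Decomposing $M_i\setminus B(p_i,D)$ into the annuli $B(p_i,k+1)\setminus B(p_i,k)$ over integers $k\ge D$ and summing, we obtain
\begin{align*}
\int_{M_i\setminus B(p_i,D)} e^{-f_i}\,dv_i\le\sum_{k\ge D} e^{-k^2/16}\,C(m)\,(k+1)^m,
\end{align*}
and the right-hand side is independent of $i$ and tends to $0$ as $D\to\infty$. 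Since $f_\infty$ is the pointed limit of the $f_i$ it inherits the same quadratic lower bound, and the volume bound passes to the limit, so the same estimate holds for $\int_{M_\infty\setminus B(p_\infty,D)} e^{-f_\infty}\,dv_\infty$.

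\textbf{Step 2: convergence on a fixed ball.} I claim that for each fixed $D>10m$,
\begin{align*}
\lim_{i\to\infty}\int_{B(p_i,D)} e^{-f_i}\,dv_i=\int_{B(p_\infty,D)\cap\mathcal{R}} e^{-f_\infty}\,dv_\infty.
\end{align*}
This is proved exactly as Proposition~\ref{prn:PE19_2}, with the unbounded weight $|Rm|^{2-\theta}$ replaced by the bounded function $1$, which only simplifies matters. Fix $\epsilon>0$; then $B(p_\infty,D)\setminus\mathcal{S}_\epsilon$ is a compact subset of $\mathcal{R}$, so the pointed-$\hat{C}^{\infty}$-Cheeger-Gromov convergence (\ref{eqn:CA25_10})---the diffeomorphisms $\varphi_i$ with $\varphi_i^*g_i\to g_\infty$ and $\varphi_i^*f_i\to f_\infty$ on this set---gives
\begin{align*}
\int_{B(p_\infty,D)\setminus\mathcal{S}_\epsilon} e^{-f_\infty}\,dv_\infty=\lim_{i\to\infty}\int_{\varphi_i(B(p_\infty,D)\setminus\mathcal{S}_\epsilon)} e^{-f_i}\,dv_i.
\end{align*}
As in (\ref{eqn:PE20_6}), the set $\varphi_i(B(p_\infty,D)\setminus\mathcal{S}_\epsilon)$ differs from $B(p_i,D)$ only inside a boundary annulus of width $\delta_i\to 0$ and the set $\psi_i(\mathcal{S}_\epsilon\cap B(p_\infty,D))$; by the covering estimate (\ref{eqn:PE20_1}), used exactly as in the derivation of (\ref{eqn:PE20_3}), the latter has $g_i$-volume at most $C\epsilon^{4-2\theta'}$ for a fixed $\theta'\in(0,1)$, and $e^{-f_i}\le 1$ there. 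Letting first $i\to\infty$ and then $\epsilon\to 0$ proves the claim.

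\textbf{Step 3: conclusion.} Given $\epsilon>0$, use Step~1 to choose $D$ so large that $\int_{M_i\setminus B(p_i,D)}e^{-f_i}\,dv_i<\epsilon$ for all $i$ and $\int_{M_\infty\setminus B(p_\infty,D)}e^{-f_\infty}\,dv_\infty<\epsilon$; then use Step~2 to choose $i$ large. By the triangle inequality,
\begin{align*}
\left|\int_{M_i} e^{-f_i}\,dv_i-\int_{\mathcal{R}} e^{-f_\infty}\,dv_\infty\right|<3\epsilon,
\end{align*}
which gives (\ref{eqn:PE20_7}). The main obstacle is the \emph{simultaneous} uniformity in $i$ of the two tails: the escape of mass to spatial infinity, controlled by the uniform quadratic growth of $f$ (Lemma~\ref{L100}) against the uniform Euclidean volume bound (Lemma~\ref{lma:CB09_1}); and the concentration of mass on shrinking neighborhoods of $\mathcal{S}$, controlled by the uniform bound $e^{-f}\le 1$ on $B(p,D)$ together with the Minkowski codimension-four estimate of Proposition~\ref{prn:PC25_3}.
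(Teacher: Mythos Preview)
Your proof is correct and follows essentially the same route as the paper's: reduce to convergence of $\int e^{-f}\,dv$, control the tail outside $B(p,D)$ uniformly via the quadratic growth of $f$ (Lemma~\ref{L100}) against the Euclidean volume bound (Lemma~\ref{lma:CB09_1}), and handle the ball $B(p,D)$ by the smooth convergence plus the Minkowski estimate on $\mathcal{S}_\epsilon$. The only difference is that for Step~2 the paper simply invokes Proposition~\ref{prn:PE19_2} with $\theta=2$ (so that $|Rm|^{2-\theta}\equiv 1$), whereas you re-run that argument; you could have saved a paragraph by observing that your Step~2 is literally the special case $\theta=2$ of (\ref{eqn:PE20_5}).
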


\begin{proof}
Note that the first equation in (\ref{eqn:PE20_7}) is definition.
In order to show (\ref{eqn:PE20_7}), by the normalization condition (\ref{eqn:PD15_1}), it suffices to prove
\begin{align}
\int_{\mathcal{R}} e^{-f_{\infty}} dv_{g_{\infty}} =\lim_{i \to \infty} \int_{M_i} e^{-f_i} dv_{g_i}.
\label{eqn:PE20_8}
\end{align}
Fix $\epsilon>0$, by the estimate of $f$ and volume of $B(p,r)$, it is clear that there exists a $D>10m$ such that
\begin{align*}
\int_{M \backslash B(p, D)} e^{-f} dv <\epsilon
\end{align*}
for every Ricci shrinker. Same argument also implies that
\begin{align*}
\int_{\mathcal{R} \cap B(p_{\infty}, D)} e^{-f_{\infty}} dv_{g_{\infty}}< \epsilon.
\end{align*}
In (\ref{eqn:PE20_5}), if we choose $\theta=2$, then we obtain
\begin{align*}
\int_{\mathcal{R} \cap B(p_{\infty}, D)} e^{-f_{\infty}} dv_{g_{\infty}}=\lim_{i \to \infty} \int_{B(p_i, D)} e^{-f_i} dv_{g_i}.
\end{align*}
Combining the previous inequalities, we obtain
\begin{align*}
\limsup_{i \to \infty} \int_{M_i} e^{-f_i} dv_{g_i} -\epsilon \leq \int_{\mathcal{R}} e^{-f_{\infty}} dv_{g_{\infty}}
\leq \liminf_{i \to \infty} \int_{M_i} e^{-f_i} dv_{g_i} +\epsilon,
\end{align*}
whence we obtain (\ref{eqn:PE20_8}) by letting $\epsilon \to 0$ in the above inequalities.
\end{proof}

We now finish the proof of Theorem~\ref{thmin:b}.

\begin{proof}[Proof of Theorem~\ref{thmin:b}:]
Since $0 \leq f \leq \frac{D^2}{4}$ on $B(p, D)$ by (\ref{eqn:PE03_6a}), it is clear that $e^{-f}dv$ and $dv$ are equivalent on $B(p, D)$.
Therefore, Part (a) of Theorem~\ref{thmin:b} follows from~\ref{eqn:PE18_3}.
Part (b) is the same as Proposition~\ref{prn:PE19_2}. Part (c) follows from Proposition~\ref{prn:PE20_1}.
\end{proof}

Note that part (b) of Theorem~\ref{thmin:b} fails at the boundary case $\theta=0$. Since there are smooth Einstein complex cubic surfaces converging to an Einstein orbifold in the Gromov-Hausdorff topology, it is clear that there is
a drop of $\int |Rm|^2$ in this process. Part (a) of Theorem~\ref{thmin:b} can be further improved such that the constant $C$ in (\ref{eqn:PE26_4}) is independent of $D$ and $\theta$, even for $\theta=0$.
In other words, we have
\begin{align}
\int_{M} |Rm|^{2-\theta} e^{-f} dv<C(m,A), \quad \forall \; \theta \in [0, 2]. \label{eqn:PH26_3}
\end{align}
However, the estimate (\ref{eqn:PH26_3}) requires delicate heat kernel estimate on Ricci shrinkers, which has many other applications. We refer the interested readers to~\cite{LWkernel} for further development in this direction.


\section{Gap properties of Ricci shrinkers}
\label{sec:gap}

In this section, we shall explore the application of Theorem~\ref{thmin:a} and Theorem~\ref{thmin:b} on compact Ricci shrinkers.
In particular, we shall prove Theorem~\ref{thmin:c}. All the Ricci shrinkers in this section are compact by default.

\begin{proposition}
There is a constant $D=D(m)$ with the following properties.

Suppose $(M^{m},p,g,f)$ is a compact Ricci shrinker. Then we have
\begin{align}
sr(x) < D, \quad \forall \; x \in M. \label{eqn:PC30_5}
\end{align}
Here $sr(x)$ is the strongly-convex radius of $x$ defined in Definition~\ref{dfn:PD15_1}.
\label{prn:PC30_3}
\end{proposition}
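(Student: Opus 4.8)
The plan is to argue by contradiction, using the weak-compactness machinery built up in the earlier sections. Suppose the statement fails. Then for every $D>10m$ there is a compact Ricci shrinker $(M_i^m, p_i, g_i, f_i)$ and a point $x_i \in M_i$ with $sr_{g_i}(x_i) \geq i \to \infty$. Since a compact manifold carries a shrinker structure with $p_i$ a minimum point of $f_i$, and since $sr(x_i)$ large forces a large Euclidean ball around $x_i$, the volume of $B(x_i, i)$ grows like $\omega_m i^m$; in particular $|B(x_i,1)|$ is bounded below by a dimensional constant, so by Lemma~\ref{lma:PE01_3} (and the relative volume estimate (\ref{eqn:PE04_10}) transporting the lower bound from $x_i$ to $p_i$) we get $\boldsymbol{\mu}(M_i,g_i) \geq -A$ for a uniform $A=A(m)$. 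Hence $(M_i,p_i,g_i,f_i) \in \mathcal{M}_m(A)$, and Theorem~\ref{thmin:a} applies: after passing to a subsequence,
\begin{align*}
  (M_i, p_i, g_i, f_i) \longright{pointed-\hat{C}^{\infty}-Cheeger-Gromov} (M_{\infty}, p_{\infty}, g_{\infty}, f_{\infty}),
\end{align*}
with $M_{\infty} = \mathcal{R} \cup \mathcal{S}$, $\dim_{\mathcal{M}} \mathcal{S} \leq m-4$, and $g_{\infty}$ a smooth Ricci shrinker metric on $\mathcal{R}$.

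Next I would locate the limit of $x_i$. The difficulty is that a priori $x_i$ may run off to infinity. But a ball $B(x_i, i)$ that is $\xi$-close to a Euclidean ball of the same radius is, in particular, $\xi$-close at every scale $r \leq i$; combined with the quadratic growth of $f_i$ (Lemma~\ref{L100}) and the fact that $\boldsymbol\mu \geq -A$ bounds the total measure $\int e^{-f_i}(4\pi)^{-m/2}dv = e^{\boldsymbol\mu}$ from above and below, the region where $sr$ is comparable to a fixed constant cannot escape to the part of $M_i$ where $f_i$ is large. More concretely, one can translate the base point or simply observe that a point with $sr(x_i) \geq i$ and $f_i(x_i)$ small must exist (otherwise the measure estimate is violated), so after adjusting we may assume $x_i \in B(p_i, D_0)$ for a uniform $D_0$, hence $x_i \to x_{\infty} \in M_{\infty}$. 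Since $sr_{g_i}(x_i) \to \infty$, by the equivalence of radii on Ricci shrinkers (Theorem~\ref{thm:PH10_1}, via $\mathbf{sr} \sim \mathbf{gr}$) the restricted Gromov-Hausdorff radius $\mathbf{gr}(x_i)$ is also large, so every tangent-scale ball around $x_i$ is close to Euclidean; passing to the limit, every tangent space of $M_{\infty}$ at $x_{\infty}$ is $\R^m$, i.e. $x_{\infty} \in \mathcal{R}$ and moreover $hr_{g_{\infty}}(x_{\infty})$ is \emph{infinite} — or rather, since $M_{\infty}$ has bounded geometry only locally, $sr_{g_{\infty}}(x_{\infty})$ is as large as we like. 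By the smooth convergence (\ref{eqn:CA25_10}) near $\mathcal{R}$, this pushes back: for $i$ large, $sr_{g_i}(z_i) $ is large for $z_i$ near $x_i$ — which is consistent, so we need a genuine contradiction elsewhere.

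The real contradiction comes from running the argument one scale up. Rescale $\hat g_i = sr_{g_i}(x_i)^{-2} g_i$ so that $sr_{\hat g_i}(x_i) = 1$ but the ball $B_{\hat g_i}(x_i, R)$ is $o(1)$-close to the Euclidean ball of radius $R$ for every fixed $R$; equivalently, the pointed limit of $(M_i, x_i, \hat g_i)$ is flat $\R^m$. This is exactly the setup that the pseudo-locality theorem (applicable since each $M_i$ is \emph{compact}, hence has bounded curvature, so Perelman's Theorem 10.1 / its shrinker incarnation is available — this is the sole place compactness is used, as in the outline of Theorem~\ref{thmin:c}) forbids combined with the shrinker equation: a shrinker which near a point looks, at a scale going to infinity, like flat space must — by the uniform measure bound $\int e^{-f}(4\pi)^{-m/2}dv = e^{\boldsymbol\mu}$ and $f \geq \frac14(d-5m)_+^2$ — have $e^{\boldsymbol\mu}$ bounded below by $\omega_m sr(x_i)^m \cdot e^{-\text{const}}$, forcing $\boldsymbol\mu \to +\infty$, which contradicts the universal upper bound $\boldsymbol\mu \leq 0$ for any normalized shrinker (or $\boldsymbol\mu \leq \boldsymbol\mu(\R^m) = 0$). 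Thus $sr(x) < D(m)$ for all $x$.

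The main obstacle is the middle step: controlling \emph{where} the large-$sr$ point sits, i.e. ruling out that $x_i$ escapes into the far region where $f_i$ is huge and where the Euclidean-looking ball could coexist with $\boldsymbol\mu$ bounded. I would handle this by combining Lemma~\ref{L100} (quadratic bounds on $f$, hence on the integrand $e^{-f}$) with the absolute volume comparison of Lemma~\ref{lma:PE04_1}–Lemma~\ref{lma:PE04_2}: the product of the Euclidean-ball volume $\omega_m sr(x_i)^m$ and $e^{-f_{\max}}$ over that ball is at most $(4\pi)^{m/2} e^{\boldsymbol\mu}$, so if $sr(x_i) \to \infty$ while $\boldsymbol\mu$ stays bounded above, then $f$ must be large on the ball — but then $|\nabla f|$ is large there too (by (\ref{eqn:PE01_5})), which is incompatible with the ball being almost-Euclidean at unit scale (where $Hess\, f = \frac12 g - Rc$ would have to be close to $\frac12 g$, forcing a definite curvature of the level sets of $f$ at scale $1$, contradicting flatness). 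Making this last incompatibility quantitative is the technical heart; once it is in place, the rescaling-and-pseudo-locality finish is routine.
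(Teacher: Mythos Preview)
Your approach is overly elaborate and has a genuine gap at exactly the place you identify as the ``main obstacle.'' The paper's proof is much more direct and does \emph{not} use a sequence, weak compactness, or any $\boldsymbol{\mu}$-arguments at all. It works on a single compact shrinker $(M,g,f)$ with a point $x$ of large $sr(x)$ and runs as follows: large $sr$ gives $|Rm|\le L^{-2}$ and $inj\ge L$ on $B(x,L)$ at $t=0$; since $M$ is closed, Perelman's pseudo-locality yields $|Rm|(x,t)\le (\epsilon L)^{-2}$ for $t\in(0,\epsilon L)$, hence $|Rm|(x,t)\le 1$ for $t\in(0,1)$ once $L$ is large. The self-similar structure $g(t)=(1-t)\psi_t^*g$ translates this into $|Rm|_g(\psi_t(x))\le 1-t$. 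By compactness of $M$, $\psi_{t_i}(x)\to z$ for some $t_i\to 1^-$, so $|Rm|(z)=0$, hence $R(z)=0$. The strong maximum principle on $\Delta_f R=R-2|Rc|^2$ then forces $R\equiv 0$, $Rc\equiv 0$, $Hess\,f=\tfrac12 g$, so $M\cong\R^m$, contradicting compactness. Compactness is used twice here: to justify pseudo-locality on a closed flow, and to extract the accumulation point $z$.

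As for your proposal, the chain of implications after the rescaling is not a proof. You correctly observe that $\omega_m\,sr(x_i)^m\,e^{-f_{\max}}\le (4\pi)^{m/2}e^{\boldsymbol{\mu}}$ forces $f$ to be large on the big almost-Euclidean ball if $\boldsymbol{\mu}$ stays bounded; but the subsequent claim that large $|\nabla f|$ is ``incompatible with the ball being almost-Euclidean at unit scale'' is not substantiated. Large $|\nabla f|$ has no direct bearing on the metric geometry of $B(x_i,1)$; the shrinker equation only says $Hess\,f=\tfrac12 g-Rc$, and on an almost-flat ball this is close to $\tfrac12 g$, which is perfectly consistent with $|\nabla f|$ being large (think of the Gaussian $f=|x|^2/4$ far from the origin). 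So the ``incompatibility'' you need does not exist in the form you state it, and the location problem for $x_i$ remains unresolved. The paper sidesteps all of this by exploiting the flow direction $\nabla f$ via the diffeomorphisms $\psi_t$, which automatically carries $x$ toward a critical point of $f$ (morally, toward $p$) as $t\to 1^-$ --- that is the idea you are missing.
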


\begin{proof}
Suppose $x$ is a point on a Ricci shrinker $(M,p,g,f)$ such that $sr(x)>D$ for some $D$ very large.
Following the definition of $sr$ in Definition~\ref{dfn:PD15_1}, it is clear that
\begin{align*}
|Rm|(y) \leq L^{-2}, \quad inj(y)>L, \quad \forall\; y \in B(x,L)
\end{align*}
for some large $L=L(m,D)$ such that $L \to \infty$ if $D \to \infty$.
Now we naturally identify $(M, g)$ with the time-slice $t=0$ of the induced Ricci flow.
For this flow, we have
\begin{align}
|Rm|(y, 0) \leq L^{-2}, \quad inj(y, 0)>L, \quad \forall\; y \in B(x,L). \label{eqn:PH26_4}
\end{align}
Note that this Ricci flow evolves on a closed manifold. So we can apply Perelman's pseudo-locality theorem (cf. Theorem 10.3 of~\cite{Pe1}).
By the global pseudo-locality theorem of Perelman, we know that
\begin{align*}
|Rm|(x,t) \leq (\epsilon L)^{-2}, \quad \forall \; t \in (0,\epsilon L)
\end{align*}
for some $\epsilon=\epsilon(m)$. Let $L$ be so large such that $\epsilon L>1$. Then we have
\begin{align*}
|Rm|(x,t) \leq 1, \quad \forall \; t \in (0,1),
\end{align*}
which by definition \eqref{eqn:PH29_3} is equivalent to
\begin{align*}
|Rm|\left( \psi_{t}(x), 0 \right) \leq 1-t.
\end{align*}
Since $M$ is a compact manifold, we can choose $t_i \to 1^{-}$ such that $\psi_{t_i}(x) \to z \in M$. It follows from the above inequality that
\begin{align*}
|Rm|(z)=0, \quad \Rightarrow \quad R(z)=0.
\end{align*}
Recall that $\Delta_{f} R=R-2|Rc|^2$.
Then it follows from strong maximum principle that $R \equiv 0$ and consequently $Rc \equiv 0$.
Together with the Ricci shrinker equation, we obtain that $\text{Hess} f=\frac{g}{2}$.
As $M$ is a smooth manifold, we obtain that $(M,g)$ is isometric to $(\R^m, g_{E})$ (see, e.g., Theorem 3.3 of \cite{Li18}), which contradicts the assumption that $M$ is compact.
The proof of Proposition~\ref{prn:PC30_3} is complete.
\end{proof}

\begin{definition}
A metric cone $C(Y)$ of Hausdorff dimension $m$ is called a regular cone if there is a point $z \in C(Y)$ such that one tangent space of $C(Y)$ at $z$ is isometric to $\R^m$.
\label{dfn:PC30_1}
\end{definition}

\begin{theorem}
For each regular cone $C(Y)$ of Hausdorff dimension $m$, there is a constant $\epsilon=\epsilon(Y)>0$ such that
\begin{align}
d_{PGH} \left( \left( M^m,p,g \right), \left( C(Y), 0, g_{c} \right) \right)> \epsilon
\label{eqn:PC30_6}
\end{align}
where $(M^m,p,g)$ is a compact Ricci shrinker.

\label{thm:PC30_1}
\end{theorem}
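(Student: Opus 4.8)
The plan is to argue by contradiction, following the strategy already outlined for Theorem~\ref{thmin:c}. Suppose the statement fails for some fixed regular cone $C(Y)$. Then for every $\epsilon>0$ there is a compact Ricci shrinker $(M_i^m, p_i, g_i, f_i)$ with
\begin{align*}
  d_{PGH}\left( (M_i, p_i, g_i), (C(Y), 0, g_c) \right) < \frac{1}{i}.
\end{align*}
Since $C(Y)$ is a regular cone, it contains a point $z$ with one tangent space isometric to $\R^m$; by the definition of tangent space and the Gromov-Hausdorff convergence $(M_i, p_i, d_i) \to (C(Y), 0, d_c)$, one checks that $C(Y)$ itself has $\R^m$ as a tangent space at $z$ (rescaling the convergence). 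Hence Proposition~\ref{prn:PE29_3} applies: there is a uniform constant $A$ with $\boldsymbol{\mu}(M_i, g_i) \geq -A$, so $(M_i, p_i, g_i, f_i) \in \mathcal{M}_m(A)$ for all $i$.

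Now I would invoke Theorem~\ref{thmin:a}: after passing to a subsequence, $(M_i, p_i, g_i, f_i) \longright{pointed-\hat{C}^{\infty}-Cheeger-Gromov} (M_\infty, p_\infty, g_\infty, f_\infty)$, and by uniqueness of Gromov-Hausdorff limits the underlying metric space $(M_\infty, d_\infty)$ is isometric to $(C(Y), d_c)$. In particular $M_\infty$ is a metric cone, so it has a vertex $o$ (or, if $C(Y)$ splits off Euclidean factors, the cone structure still forces the singular set or the regular structure appropriately); the point $z$ above is a regular point of $M_\infty$ with tangent space $\R^m$, so $z \in \mathcal{R}(M_\infty)$. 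By part (b) of Theorem~\ref{thmin:a} and Proposition~\ref{prn:PI09_1}, there is a point in $\mathcal{R}(M_\infty)$ with harmonic radius as large as we like: indeed, since $M_\infty$ is a cone and $z$ is regular, moving $z$ out along the ray away from the vertex and rescaling, the cone structure together with the smooth Ricci shrinker equation on $\mathcal{R}$ and the openness/density of $\mathcal{R}$ produces a point $w_\infty \in \mathcal{R}(M_\infty)$ with $\mathbf{sr}(w_\infty) > 2D$, where $D = D(m)$ is the constant from Proposition~\ref{prn:PC30_3}. (Concretely: the cone metric near a regular ray point at distance $t$ from the vertex looks, after dividing by $t$, like a fixed regular picture, so its unrescaled convex radius grows linearly in $t$; letting $t \to \infty$ gives arbitrarily large $\mathbf{sr}$.)

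Finally, by the smooth convergence on the regular part (the pointed-$\hat{C}^{\infty}$-Cheeger-Gromov convergence of Theorem~\ref{thmin:a}, together with the equivalence of the radii in Theorem~\ref{thm:PH10_1} and the local Harnack inequality~(\ref{eqn:PI10_2})), for $i$ large there is a point $w_i \in M_i$ with $sr_{g_i}(w_i) > D$. But Proposition~\ref{prn:PC30_3} asserts precisely that every compact Ricci shrinker satisfies $sr(x) < D$ for all $x$, a contradiction. This completes the proof. The main obstacle I expect is the middle step: extracting a point of arbitrarily large convex (or harmonic) radius inside $\mathcal{R}(M_\infty)$ from the bare fact that $M_\infty$ is a metric cone which is regular at one point. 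One must make sure that the cone genuinely has a regular ray (not just an isolated regular point), use that the regular part is a smooth Ricci shrinker and carries the cone dilation as an ambient symmetry of the distance, and quantify how the convex radius scales along that ray; the density estimate~(\ref{eqn:PE18_1}) and Proposition~\ref{prn:CB08_1} (so that $d_\infty = d_{g_\infty}$ and geodesics behave well) are the tools that make this rigorous.
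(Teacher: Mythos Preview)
Your proposal is correct and follows essentially the same route as the paper: argue by contradiction, obtain a uniform $\boldsymbol{\mu}$-lower bound via Proposition~\ref{prn:PE29_3}, upgrade to pointed-$\hat{C}^{\infty}$-Cheeger-Gromov convergence via Theorem~\ref{thmin:a}, then exploit the cone's dilation to produce a regular point with convex radius exceeding the constant $D$ of Proposition~\ref{prn:PC30_3}, and pull that back to $M_i$ by smooth convergence for a contradiction. The paper's version of the ``middle step'' you worry about is exactly your parenthetical remark: it places the regular point $q$ on the unit link, takes the ray $\gamma$ through it, and uses that the cone homothety sends $q=\gamma(1)$ to $\gamma(L)$ while rescaling $sr$ by $L$, so $sr(\gamma(L)) \geq L \cdot sr(q) > 2D$ for $L$ large; no further machinery (density estimate, Proposition~\ref{prn:CB08_1}) is needed here. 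One cosmetic point: write $sr$ rather than $\mathbf{sr}$ in the key inequality, since the restricted radius is capped at scale $\frac{1}{100D}$ and cannot exceed $2D$.
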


\begin{proof}
We shall show (\ref{eqn:PC30_6}) by contradiction argument.
For otherwise, we can find a regular cone $C(Y)$ and a sequence of compact Ricci shrinkers $(M_i,p_i,g_i)$ violating (\ref{eqn:PC30_6}).
Namely, we have
\begin{align}
d_{PGH} \left( \left( M_i,p_i,g_i \right), \left( C(Y), 0, g_{c} \right) \right) \to 0.
\label{eqn:PC30_7}
\end{align}
Since $C(Y)$ is a regular cone, there is a point $q \in C(Y)$ such that one tangent space of $q$ is isometric to $\R^m$.
Without loss of generality, we may choose $q$ to be on the link, i.e., $d(q,0)=1$.
By Proposition~\ref{prn:PE29_3}, it is clear that $\boldsymbol{\mu}(M_i)$ is uniformly bounded from below.
Therefore, Theorem~\ref{thmin:a} applies. Then the convergence in (\ref{eqn:PC30_7}) can be improved as
\begin{align*}
(M_i,p_i,g_i) \longright{pointed-\hat{C}^{\infty}-Cheeger-Gromov} \left( C(Y), 0, g_{c} \right).
\end{align*}
Recall that $q \in \partial B(0,1) \subset C(Y)$ is a regular point, so $sr(q)>0$. Let $\gamma$ be a unit-speed ray starting from $0$ and passing through $q$.
It is clear that $\gamma(0)=0$ and $\gamma(1)=q$. Choose $L$ large enough so that $L \cdot sr(q)>2D$, where $D$ is the constant in Proposition~\ref{prn:PC30_3}.
Let $z=\gamma(L)$ and $z_i \in M_i$ such that $z_i \to z$. Clearly, we have $sr(z) \geq 2D$. By the smooth convergence in Theorem~\ref{thmin:a}, we obtain
\begin{align*}
\lim_{i \to \infty} sr(z_i) > D.
\end{align*}
In particular, for large $i$, we can find a point $z_i \in M_i$ such that $sr(z_i)>D$, which contradicts Proposition~\ref{prn:PC30_3}.
The proof of Theorem~\ref{thm:PC30_1} is complete.
\end{proof}

Clearly, each Euclidean quotient space $\R^m/\Gamma$ for a finite subgroup $\Gamma \subset O(m)$ is a regular cone. So we have the following rigidity result,
which confirms Conjecture 6.1 of~\cite{LWs1} by assuming $M$ being compact.

\begin{corollary}
For each $m,l$, there is a constant $\epsilon=\epsilon(m,l)$ with the following properties.
Suppose $\Gamma$ is a finite subgroup of $O(m)$ acting freely on $S^{m-1}$ with $|\Gamma| \leq l$.
Suppose $(M^{m},p,g)$ is a compact Ricci shrinker. Then we have
\begin{align}
d_{PGH}\left( (M,p,g), (\R^{m}/\Gamma, 0, g_{E}) \right)>\epsilon. \label{eqn:PC30_8}
\end{align}
\label{cly:PC30_1}
\end{corollary}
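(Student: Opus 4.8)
The plan is to deduce Corollary~\ref{cly:PC30_1} directly from Theorem~\ref{thm:PC30_1} together with a compactness argument on the relevant family of cones. First I would observe that each quotient $\R^m/\Gamma$ with $\Gamma \subset O(m)$ finite is a metric cone $C(Y_\Gamma)$ over the spherical space form $Y_\Gamma = S^{m-1}/\Gamma$, and that whenever $\Gamma$ acts freely on $S^{m-1}$ the link $Y_\Gamma$ is a smooth Riemannian manifold; in particular every point of $\R^m/\Gamma$ away from the vertex has a Euclidean tangent space, so $\R^m/\Gamma$ is a regular cone in the sense of Definition~\ref{dfn:PC30_1}. Hence Theorem~\ref{thm:PC30_1} applies and yields, for each such $\Gamma$, a constant $\epsilon(\Gamma) = \epsilon(Y_\Gamma) > 0$ such that no compact Ricci shrinker lies within $d_{PGH}$-distance $\epsilon(\Gamma)$ of $(\R^m/\Gamma, 0, g_E)$.

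The remaining issue is to upgrade the $\Gamma$-dependent constant to a constant depending only on $m$ and the order bound $l$. For this I would invoke the finiteness of the set of conjugacy classes of finite subgroups of $O(m)$ of order at most $l$ that act freely on $S^{m-1}$: up to conjugation in $O(m)$ there are only finitely many such $\Gamma$ (this is classical; one can also argue by a compactness argument on the closed subgroup $\{$such $\Gamma\} \subset O(m)$, using that order-$\le l$ subgroups form a closed subset of the space of closed subgroups and that nearby conjugate subgroups give isometric quotients). Since conjugate subgroups produce isometric quotient cones, the function $\Gamma \mapsto \epsilon(\Gamma)$ takes only finitely many values as $\Gamma$ ranges over all finite subgroups of $O(m)$ of order $\le l$ acting freely on $S^{m-1}$. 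Setting
\begin{align*}
  \epsilon(m,l) \coloneqq \min_{\Gamma} \epsilon(\Gamma) > 0,
\end{align*}
where the minimum is over this finite collection of conjugacy classes, gives the desired uniform constant, and (\ref{eqn:PC30_8}) follows immediately from (\ref{eqn:PC30_6}) applied to each $\Gamma$.

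I expect the main (and really the only) obstacle to be the bookkeeping around the finiteness statement: one must be careful that ``acting freely on $S^{m-1}$'' is a genuinely necessary hypothesis so that $Y_\Gamma$ is a smooth manifold and the cone is regular in the precise sense required by Theorem~\ref{thm:PC30_1}, and one must ensure that the reduction to finitely many conjugacy classes is legitimate — i.e., that the bound $|\Gamma| \le l$ is what forces finiteness (without an order bound there are infinitely many space form groups in general, e.g. cyclic groups of all orders in $O(2)$, so the constant genuinely cannot be made independent of $l$). Once the finiteness of conjugacy classes is granted, the rest is a one-line application of Theorem~\ref{thm:PC30_1} and taking a minimum over a finite set. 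I would also remark, as the paper does, that this confirms Conjecture~6.1 of~\cite{LWs1} under the compactness assumption on $M$.
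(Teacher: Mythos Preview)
Your proposal is correct and matches the paper's approach. The paper gives no explicit proof of the corollary beyond the one-line remark preceding it that ``each Euclidean quotient space $\R^m/\Gamma$ for $\Gamma \subset O(m)$ is a regular cone,'' leaving the reader to apply Theorem~\ref{thm:PC30_1}; your write-up supplies exactly this application together with the finiteness-of-conjugacy-classes argument needed to pass from the cone-dependent constant $\epsilon(Y_\Gamma)$ of Theorem~\ref{thm:PC30_1} to the uniform $\epsilon(m,l)$, which the paper leaves implicit.
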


Recall the normalization of $f$ and $\boldsymbol{\mu}$ from (\ref{eqn:PC27_0}) and (\ref{eqn:PC27_1}).
The following identity is known (cf. Corollary 4.1 of~\cite{CN09}) by the work of Carrillo and Ni.
\begin{align}
\boldsymbol{\mu}=\boldsymbol{\mu}(g,1).
\label{eqn:PD15_2}
\end{align}
where $\boldsymbol{\mu}(g,1)$ is Perelman's functional \cite{Pe1}. Furthermore, $f+\boldsymbol{\mu}$ is a minimizer for $\boldsymbol{\mu}(g,1)$.
The equality (\ref{eqn:PD15_2}) can be improved to the following property.

\begin{proposition}
For any compact Ricci shrinker $(M,f,g)$, we have
\begin{align}
\boldsymbol{\mu}(g)=\boldsymbol{\nu}(g) \coloneqq \inf_{\tau>0} \boldsymbol{\mu}(g, \tau). \label{eqn:PC27_2}
\end{align}
\label{prn:PC30_1}
\end{proposition}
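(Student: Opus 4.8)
The plan is to exploit Perelman's monotonicity of the $\boldsymbol{\mu}$-functional along the canonical Ricci flow $g(t)=(1-t)\psi_{t}^{*}g$ attached to the shrinker (c.f. (\ref{eqn:PH29_3})), together with the self-similar scaling of $g(t)$. Recall that on a closed manifold $\boldsymbol{\mu}(g,\tau)$ is finite, continuous (indeed locally Lipschitz) in $\tau>0$ and realized by a smooth positive minimizer of the $\mathcal{W}$-functional, and that it enjoys the two invariances $\boldsymbol{\mu}(\phi^{*}g,\tau)=\boldsymbol{\mu}(g,\tau)$ for any diffeomorphism $\phi$, and $\boldsymbol{\mu}(cg,\tau)=\boldsymbol{\mu}(g,\tau/c)$ for any $c>0$ (a direct change of variables in the $\mathcal{W}$-integral). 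Combining these two invariances with (\ref{eqn:PH29_3}) yields the key identity
\[
  \boldsymbol{\mu}(g(t),\tau)=\boldsymbol{\mu}\big((1-t)\psi_{t}^{*}g,\tau\big)=\boldsymbol{\mu}\Big(g,\tfrac{\tau}{1-t}\Big),\qquad t<1.
\]

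Next I would invoke Perelman's entropy monotonicity \cite{Pe1}: along the Ricci flow $\partial_{t}g=-2Rc$, if $\tau(t)$ satisfies $\dot\tau=-1$, then $t\mapsto\boldsymbol{\mu}(g(t),\tau(t))$ is non-decreasing. Fix an arbitrary $\tau>0$ and apply this on the interval $[-s,0]$ for $s>0$, with the normalization $\tau(0)=\tau$, so that $\tau(-s)=\tau+s>0$ and $g(t)$ restricted to $[-s,0]$ is a genuine smooth Ricci flow on the closed manifold $M$. Monotonicity then gives $\boldsymbol{\mu}(g(-s),\tau+s)\le\boldsymbol{\mu}(g(0),\tau)=\boldsymbol{\mu}(g,\tau)$, and the identity above rewrites the left-hand side as $\boldsymbol{\mu}\big(g,(\tau+s)/(1+s)\big)$. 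Hence $\boldsymbol{\mu}\big(g,(\tau+s)/(1+s)\big)\le\boldsymbol{\mu}(g,\tau)$ for every $s>0$. Letting $s\to\infty$, we have $(\tau+s)/(1+s)\to 1$, so continuity of $\tau\mapsto\boldsymbol{\mu}(g,\tau)$ at $\tau=1$ lets the inequality pass to the limit and yields $\boldsymbol{\mu}(g,1)\le\boldsymbol{\mu}(g,\tau)$. As $\tau>0$ was arbitrary, $\boldsymbol{\mu}(g,1)\le\inf_{\tau>0}\boldsymbol{\mu}(g,\tau)=\boldsymbol{\nu}(g)$; the reverse inequality is trivial, so $\boldsymbol{\nu}(g)=\boldsymbol{\mu}(g,1)$, and by the Carrillo--Ni identity (\ref{eqn:PD15_2}) this equals $\boldsymbol{\mu}(g)$.

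The argument is essentially routine; the only points needing care are the bookkeeping of the scaling convention $\boldsymbol{\mu}(cg,\tau)=\boldsymbol{\mu}(g,\tau/c)$ (so that the backward flow sends the auxiliary parameter toward $1$ rather than away from it), and the legitimacy of applying Perelman's monotonicity to the self-similar flow on each finite subinterval, which is standard since $M$ is closed. If one wishes to dispense even with the mild continuity input at $\tau=1$, one can instead observe that $f+\boldsymbol{\mu}$ is the smooth minimizer realizing $\boldsymbol{\mu}(g,1)$ by (\ref{eqn:PD15_2}) and track the corresponding minimizers along the self-similar flow directly; but the limiting argument above is cleaner and I expect no genuine obstacle in either route.
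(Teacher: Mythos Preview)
Your argument is correct and follows essentially the same strategy as the paper: combine the self-similar scaling identity $\boldsymbol{\mu}(g(t),\tau)=\boldsymbol{\mu}(g,\tau/(1-t))$ with Perelman's monotonicity to show $\boldsymbol{\mu}(g,\sigma)\le\boldsymbol{\mu}(g,\tau)$ whenever $\sigma$ lies strictly between $1$ and $\tau$, and then invoke continuity of $\tau\mapsto\boldsymbol{\mu}(g,\tau)$ at $\tau=1$ to conclude. The only cosmetic difference in Step~1 is that you run the flow backward on $[-s,0]$ and send $s\to\infty$, while the paper runs it forward on $(0,1)$ and treats the cases $\tau>1$ and $\tau<1$ separately; these yield the same monotonicity statement.

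The substantive difference is in how continuity at $\tau=1$ is handled. You cite it as a standard fact about $\boldsymbol{\mu}(g,\tau)$ on closed manifolds, which is legitimate. The paper, by contrast, proves it from scratch (its Step~2), and this is where the bulk of its effort goes: upper semicontinuity is immediate from plugging in $f+\boldsymbol{\mu}$ as a test function, but lower semicontinuity requires a uniform bound on $\int(4|\nabla u|^2+Ru^2)\,dv$ for the minimizers $u$ as $\tau\to1^{-}$, which the paper extracts from the $L^2$-Sobolev inequality and Jensen. Your route is cleaner if one is willing to quote continuity; the paper's route is more self-contained and, as the authors remark after the proof, the explicit argument for Step~2 is what fails to generalize directly to the non-compact case addressed in their companion paper.
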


\begin{proof}
Clearly, $\boldsymbol{\mu}(g, \tau)$ is a function defined on $(0, \infty)$. In order to prove (\ref{eqn:PC27_2}), it suffices to show
\begin{subequations}
\begin{align}[left = \empheqlbrace \,]
&\boldsymbol{\mu}(g,\tau) \geq \boldsymbol{\mu}(g, 1), \quad \forall \; \tau \in (0, 1);\label{eqn:PC27_8a}\\
&\boldsymbol{\mu}(g, \tau) \geq \boldsymbol{\mu}(g, 1), \quad \forall \; \tau \in (1, \infty). \label{eqn:PC27_8b}
\end{align}
\label{eqn:PC27_8}
\end{subequations}
We shall show (\ref{eqn:PC27_8}) as follows. First, we show that $\boldsymbol{\mu}(g,\tau)$ is an increasing function of $\tau$ on the interval $(1,\infty)$ and it is
a decreasing function of $\tau$ on the interval $(0,1)$. Second, we show that $\boldsymbol{\mu}(g, \tau)$ is a continuous function of $\tau$ at $\tau=1$.
See Figure~\ref{fig:mu} for intuition. The key ingredients of the proof are Perelman's monotonicity formula and a priori estimates of eigenfunctions. The details will be carried out in the following paragraphs separately.

\begin{figure}[h]
\begin{center}
\psfrag{A}[c][c]{$\boldsymbol{\mu}(g,\tau)$}
\psfrag{B}[c][c]{$\tau$}
\psfrag{C}[c][c]{$\boldsymbol{\mu}(g,\tau)=\boldsymbol{\mu}$}
\psfrag{D}[c][c]{$\tau=1$}
\includegraphics[width=0.5\columnwidth]{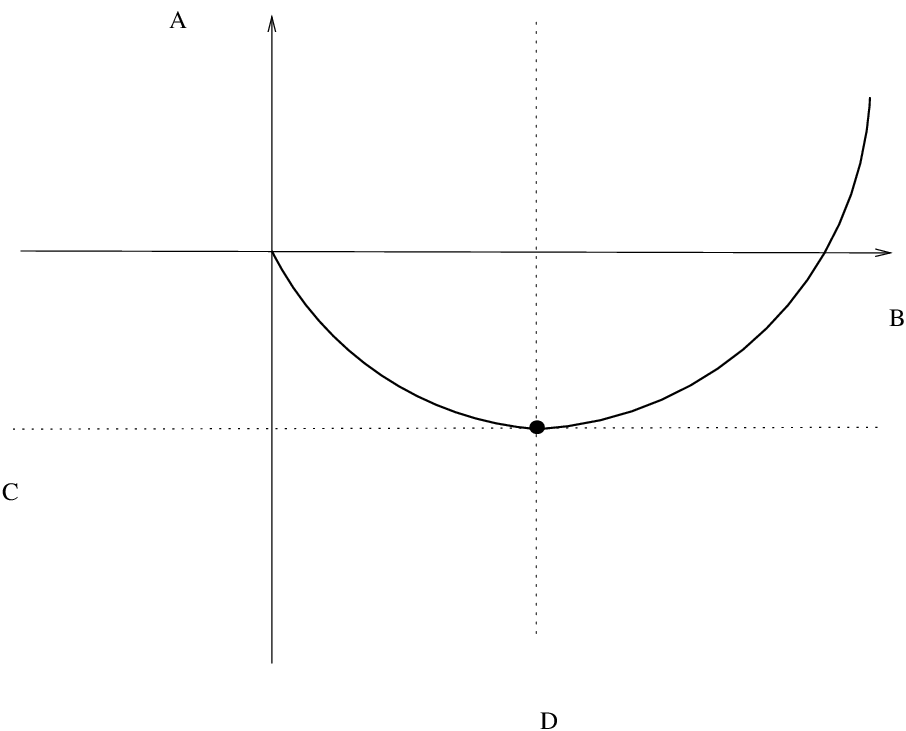}
\end{center}
\caption{$\boldsymbol{\mu}(g,\tau)$ as a function of $\tau$}
\label{fig:mu}
\end{figure}

\textit{Step 1. $\boldsymbol{\mu}(g, \tau)$ is increasing on $(1,\infty)$ and decreasing on $(0,1)$.}

Fix an arbitrary open domain $(\tau_0, \tau_1) \subset (1, \infty)$. We shall show that
\begin{align}
\boldsymbol{\mu}(g, \tau_1) \ge \boldsymbol{\mu}(g, \tau_0).
\label{eqn:PC27_13}
\end{align}
Since $(M, g)$ is compact Ricci shrinker, it can be extended as a Ricci flow solution on a closed manifold and the flow is induced by diffeomorphisms $\psi_{t}$:
\begin{align*}
g(t)=(1-t) \left( \psi_{t} \right)^{*} g, \quad -\infty<t<1,
\end{align*}
which implies that
\begin{align}
\boldsymbol{\mu}\left( g(t), \tau_0-t \right)&=\boldsymbol{\mu}\left( (1-t) \left( \psi_{t} \right)^{*} g, \tau_0-t \right)
=\boldsymbol{\mu}\left( \left( \psi_{t} \right)^{*} g, \frac{\tau_0-t}{1-t} \right) \notag\\
&=\boldsymbol{\mu}\left( g, \frac{\tau_0-t}{1-t} \right)=\boldsymbol{\mu}\left( g, s \right), \label{eqn:PC30_2}
\end{align}
where we define $s=s(t)$ by
\begin{align}
s(t) \coloneqq \frac{\tau_0-t}{1-t}, \quad \forall \; t \in (0,1). \label{eqn:PC27_11}
\end{align}
Clearly, $s=s(t)$ is an increasing function on $(0,1)$ and it maps $(0,1)$ onto $(\tau_0, \infty)$. Let $t=t(s)$ be the inverse map of $s=s(t)$.
Then
\begin{align}
t(s)=\frac{s-\tau_0}{s-1}, \quad \forall \; s \in (\tau_0, \infty). \label{eqn:PC27_12}
\end{align}
By the monotonicity formula of Perelman, $\boldsymbol{\mu}(g(t),\tau_0-t)$ is an increasing function of $t \in (0,1)$.
For each $s \in (\tau_0, \infty)$, we can
choose $t=t(s)$ by (\ref{eqn:PC27_12}) such that
\begin{align*}
\boldsymbol{\mu}(g, s)=\boldsymbol{\mu}\left( g(t), \tau_0-t \right) \ge \boldsymbol{\mu}\left( g, \tau_0 \right).
\end{align*}
In particular, by choosing $s=\tau_1$, we obtain (\ref{eqn:PC27_13}).
By the arbitrary choice of $(\tau_0, \tau_1) \subset (1,\infty)$, it is clear that $\boldsymbol{\mu}(g,\tau)$ is an increasing function of $\tau$ on $(1,\infty)$.

Similarly, we can show that $\boldsymbol{\mu}(g, \tau)$ is decreasing on $(0,1)$. In fact, fixing an arbitrary connected domain $(\eta_1, \eta_0) \subset (0, 1)$, we shall show that
\begin{align}
\boldsymbol{\mu}\left( g, \eta_0 \right) \le \boldsymbol{\mu}\left( g(t), \eta_1 \right).
\label{eqn:PC30_1}
\end{align}
Similar to (\ref{eqn:PC30_2}), we have
\begin{align}
\boldsymbol{\mu}\left( g(t), \eta_0-t \right)&=\boldsymbol{\mu}\left( g, \theta \right) \label{eqn:PC30_3}
\end{align}
where $\theta$ and $t$ satisfy the relationship
\begin{align*}
&\theta= \frac{\eta_0-t}{1-t}, \quad \forall \; t \in [0, \eta_0];\\
&t=\frac{\eta_0-\theta}{1-\theta}, \quad \forall \; \theta \in [0, \eta_0].
\end{align*}
Since $\boldsymbol{\mu}\left( g(t), \eta_0-t \right)$ is an increasing function of $t$, it follows from (\ref{eqn:PC30_3}) that
\begin{align*}
\boldsymbol{\mu}\left( g, \eta_0 \right) \le \boldsymbol{\mu}\left( g(t), \theta(t) \right), \forall \; t \in (0, \eta_0).
\end{align*}
Choose $t=\frac{\eta_1-\eta_0}{\eta_1-1} \in (0, \eta_0)$, then $\theta(t)=\eta_1$.
Therefore, (\ref{eqn:PC30_1}) follows from the above inequality by setting $t=\frac{\eta_1-\eta_0}{\eta_1-1}$.

\textit{Step 2. $\boldsymbol{\mu}(g,\tau)$ is continuous at $\tau=1$. Namely, we have}
\begin{align}
\lim_{\tau \to 1} \boldsymbol{\mu}(g, \tau)=\boldsymbol{\mu}(g, 1).
\label{eqn:PC27_10}
\end{align}

It is clear that (\ref{eqn:PC27_10}) is equivalent to the combination of the following three inequalities:
\begin{subequations}
\begin{align}[left = \empheqlbrace \,]
&\lim_{\tau \to 1} \boldsymbol{\mu}(g, \tau) \leq \boldsymbol{\mu}, \label{eqn:PC27_14a}\\
&\lim_{\tau \to 1^{+}} \boldsymbol{\mu}(g, \tau) \geq \boldsymbol{\mu}, \label{eqn:PC27_14b}\\
&\lim_{\tau \to 1^{-}} \boldsymbol{\mu}(g, \tau) \geq \boldsymbol{\mu}. \label{eqn:PC27_14c}
\end{align}
\label{eqn:PC27_14}
\end{subequations}
We shall prove (\ref{eqn:PC27_14a}), (\ref{eqn:PC27_14b}) and (\ref{eqn:PC27_14c}) separately.
The proof of (\ref{eqn:PC27_14a}) is straightforward.
In fact, for each $\tau>0$, we can choose $f_1=f+\boldsymbol{\mu}-\frac{m}{2} \log \tau$ to satisfy the normalization
\begin{align*}
\int_{M} (4\pi \tau)^{-\frac{m}{2}} e^{-f_1} dv=1.
\end{align*}
Then we calculate
\begin{align*}
\boldsymbol{\mu}(g,\tau) &\leq \int \left ( \tau(2\Delta f-|\nabla f|^2+R)+f+\boldsymbol{\mu}-\frac{m}{2}\log \tau-m \right ) (4\pi \tau)^{-\frac{m}{2}} e^{-f_1} dv\\
&=(\tau-1)e^{-\boldsymbol{\mu}} \int(m-f) (4\pi)^{-\frac{m}{2}} e^{-f} dv+ \boldsymbol{\mu}-\frac{m}{2}\log \tau.
\end{align*}
Letting $\tau \to 1$ and taking limit of both sides of the above inequality, we arrive at (\ref{eqn:PC27_14a}).

We proceed to show (\ref{eqn:PC27_14b}). Letting $u$ be the minimizer of $\boldsymbol{\mu}(g, \tau)$ for some $\tau>0$, we have
\begin{align*}
&\quad \boldsymbol{\mu}(g, \tau) +m + \frac{m}{2}\log(4\pi\tau)\\
&=\int_{M} \left\{ \tau(4|\nabla u|^2+Ru^2)-u^2\log u^2 \right\} dv\\
&\geq \int_{M} \left\{ (4|\nabla u|^2+Ru^2)-u^2\log u^2 \right\} dv + (\tau-1) \int_{M} \left( 4|\nabla u|^2 + Ru^2 \right) dv\\
&\geq \boldsymbol{\mu} +\frac{m}{2} +\frac{m}{2} \log (4\pi) + (\tau-1) \int_{M} \left( 4|\nabla u|^2 + Ru^2 \right) dv.
\end{align*}
In short, the above inequalities read as
\begin{align}
\boldsymbol{\mu}(g, \tau) -\boldsymbol{\mu} \geq -\frac{m}{2} \log \tau + (\tau-1) \int_{M} \left( 4|\nabla u|^2 + Ru^2 \right) dv.
\label{eqn:PC27_19}
\end{align}
Recall that $R \geq 0$ on $M$. Then (\ref{eqn:PC27_14b}) follows from the above inequality by letting $\tau \to 1^{+}$.

Finally, we focus on the proof of (\ref{eqn:PC27_14c}). This step needs some a priori estimates of the minimizer function $u$ of
$\boldsymbol{\mu}(g, \tau)$ whenever $\tau \in (0.5, 1)$. Actually, by the compactness of $M$, we have $R \geq \epsilon$ for some $\epsilon$ depending on $M$.
Then it follows from standard $L^2$-Sobolev inequality on compact manifold that
\begin{align}
\left(\int u^{\frac{2m}{m-2}}\,dv\right)^{\frac{m-2}{m}} \leq C\int \left( 4|\nabla u|^2+Ru^2 \right) dv \label{eqn:PC27_17}
\end{align}
for every $u \in W^{1,2}(M,g)$. Here $C=C(M,g)$ is independent of $u$.
By Jensen's inequality, we have
\begin{align}
\int u^2\log u^2 dv =\frac{m-2}{2}\int u^2\log u^{\frac{4}{m-2}} dv \leq \frac{m-2}{2}\log\left(\int u^{\frac{2m}{m-2}} dv \right).
\label{eqn:PC27_16}
\end{align}
Combining the previous two steps, we obtain
\begin{align*}
\int u^2\log u^2 dv \leq C'+\frac{m}{2} \log \left\{ \int \left(4|\nabla u|^2+Ru^2 \right) dv\right\}.
\end{align*}
Now we choose $\tau \in (0.5,1)$ and let $u$ be the minimizer of $\boldsymbol{\mu}(g,\tau)$.
It follows from the definition of $u$ and the above inequality that
\begin{align}
&\quad \boldsymbol{\mu}(g, \tau)+m +\frac{m}{2}\log(4\pi\tau) \notag\\
&=\int_{M} \left\{ \tau(4|\nabla u|^2+Ru^2)-u^2\log u^2 \right\} dv \notag\\
&\geq \frac{1}{2} \int_M (4|\nabla u|^2+Ru^2) dv -\frac{m}{2} \log \left\{ \int \left(4|\nabla u|^2+Ru^2 \right) dv\right\}-C'. \label{eqn:PC27_15}
\end{align}
Since power function and logarithmic function tends to infinity at different rates, by (\ref{eqn:PC27_14a}), it is clear that
\begin{align*}
\lim_{\tau \to 1^{-}} \int_M (4|\nabla u|^2+Ru^2) dv < C'',
\end{align*}
which, together with (\ref{eqn:PC27_17}), in turn implies that
\begin{align*}
\lim_{\tau \to 1^{-}} \int \left\{ 4|\nabla u|^2+Ru^2 \right\} dv <C'''.
\end{align*}
Plugging the above inequality into (\ref{eqn:PC27_19}), we arrive at (\ref{eqn:PC27_14c}).
\end{proof}

Step 2 of the proof of Proposition~\ref{prn:PC30_1} can be generalized to show that $\boldsymbol{\mu}(g, \tau)$ is a continuous function of $\tau$ on each closed manifold, without any new technical difficulty.
If the underlying manifold is non-compact, this step's technique does not apply since it is unclear whether there is a uniform $L^2$-Sobolev constant a priori.
Proposition~\ref{prn:PC30_1} indicates that for any compact Ricci shrinker, the log-Sobolev inequality holds for all scales, and the constant depends only on $\boldsymbol{\mu}$.
Actually, (\ref{eqn:PC27_2}) holds even if $M$ is a non-compact Ricci shrinker, which is proved in a separate paper~\cite{LWkernel}.
One key of this generalization in~\cite{LWkernel} is an entirely new argument for step 2 in the proof of Proposition~\ref{prn:PC30_1}.

From the equivalence between $L^2$-Sobolev inequality and the log-Sobolev inequality (cf. Chapter $2$ of \cite{Dav89}) for all scales of the operator $-4\Delta+R$, we immediately obtain the following $L^2$-Sobolev constant estimate from Proposition~\ref{prn:PC30_1} (see Corollary $4$ of \cite{LWkernel}).

\begin{corollary}
Given a compact Ricci shrinker $(M,g,f)$, then we have
\begin{align}
\left(\int u^{\frac{2m}{m-2}}\,dv\right)^{\frac{m-2}{m}} \le C\int \left( 4|\nabla u|^2+Ru^2 \right) \,dv
\label{E234}
\end{align}
for any smooth function $u$. Here the constant $C=C(m,\boldsymbol{\mu})$.
\label{cly:PD15_1}
\end{corollary}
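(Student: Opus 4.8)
The statement to prove is Corollary~\ref{cly:PD15_1}, which derives an $L^2$-Sobolev inequality with constant depending only on $m$ and $\boldsymbol{\mu}$ from Proposition~\ref{prn:PC30_1}. This is a short deduction, so the plan is brief.

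The plan is to exploit the equivalence, for the Schr\"odinger operator $-4\Delta + R$, between the family of logarithmic Sobolev inequalities at all scales $\tau > 0$ and a single $L^2$-Sobolev inequality; this equivalence is classical (e.g. Chapter 2 of Davies~\cite{Dav89}, or the Rothaus--Gross circle of ideas). First I would record what Proposition~\ref{prn:PC30_1} gives us: for a compact Ricci shrinker, $\boldsymbol{\mu}(g,\tau) \geq \boldsymbol{\mu}$ for every $\tau > 0$, i.e. $\boldsymbol{\nu}(g) = \boldsymbol{\mu}$. Unwinding the definition of $\boldsymbol{\mu}(g,\tau)$ as the infimum of $\int_M \{\tau(4|\nabla u|^2 + Ru^2) - u^2\log u^2\}\,dv$ over $u$ with $\int_M (4\pi\tau)^{-m/2} u^2\,dv = 1$, the bound $\boldsymbol{\mu}(g,\tau) \geq \boldsymbol{\mu}$ says precisely that
\begin{align*}
  \int_M u^2 \log u^2\,dv \leq \tau \int_M \left( 4|\nabla u|^2 + Ru^2 \right) dv - \frac{m}{2}\log(4\pi\tau) - m - \boldsymbol{\mu}
\end{align*}
holds for all $\tau > 0$ and all $u$ with $\int_M u^2\,dv = 1$. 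This is a scale-invariant family of log-Sobolev inequalities for the quadratic form $Q(u) = \int_M (4|\nabla u|^2 + Ru^2)\,dv$, with the defect constant controlled purely by $m$ and $\boldsymbol{\mu}$.

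Next I would invoke the standard implication: a logarithmic Sobolev inequality of the form $\int u^2\log u^2 \leq \tau Q(u) + \beta(\tau)$ valid for all $\tau>0$ (with $\|u\|_{L^2}=1$), where $\beta(\tau) = -\frac{m}{2}\log\tau + \text{const}$, is equivalent to the Sobolev inequality $\|u\|_{L^{2m/(m-2)}}^2 \leq C\, Q(u)$, with $C$ depending only on $m$ and the additive constant in $\beta$. One way to present this self-containedly: optimize over $\tau$ in the log-Sobolev inequality to get the tight log-Sobolev inequality $\int u^2\log u^2 \leq \frac{m}{2}\log\left( C_0\, Q(u)\right)$ for $\|u\|_{L^2}=1$ with $C_0 = C_0(m,\boldsymbol{\mu})$; then apply this on the normalized functions $u_k = u/\|u\|_{L^2}$ to a suitable iteration (Moser iteration / the Gross argument) to upgrade to the integral bound, or simply cite Davies~\cite{Dav89} as the paper already does. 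Since $R \geq 0$ on $M$ (indeed $R>0$ by the strong maximum principle), $Q(u) = \int (4|\nabla u|^2 + Ru^2)\,dv$ is the quantity appearing on the right of (\ref{E234}), so the Sobolev inequality produced is exactly (\ref{E234}) with $C = C(m,\boldsymbol{\mu})$.

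The only mild subtlety — and the one place requiring care rather than routine bookkeeping — is tracking that the constant in the equivalence genuinely depends only on $m$ and $\boldsymbol{\mu}$ and not on any further geometric data of $(M,g)$: this is automatic because the entire input is the scale-invariant log-Sobolev family above, whose defect function $\beta(\tau) + \frac{m}{2}\log\tau \equiv -m - \boldsymbol{\mu} - \frac{m}{2}\log(4\pi)$ depends only on $m$ and $\boldsymbol{\mu}$, and the abstract equivalence theorem converts this into a Sobolev constant depending only on that data and the dimension. I expect no real obstacle here; the proof is essentially a citation of the Davies equivalence combined with the scale-uniform log-Sobolev inequality that Proposition~\ref{prn:PC30_1} delivers, and it can be written in a few lines.
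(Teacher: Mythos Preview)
Your proposal is correct and matches the paper's approach exactly: the paper simply observes that Proposition~\ref{prn:PC30_1} gives a log-Sobolev inequality at all scales for the operator $-4\Delta+R$ with defect depending only on $m$ and $\boldsymbol{\mu}$, and then cites the equivalence in Chapter~2 of Davies~\cite{Dav89} to convert this into the $L^2$-Sobolev inequality~\eqref{E234}. Your write-up is in fact more detailed than the paper's one-line deduction.
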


Corollary~\ref{cly:PD15_1} is very useful for the space-time Moser iteration. Indeed, the $L^2$-Sobolev inequality \eqref{E234} also holds for $(M,g(t))$ if $t<1$, since $g(t)$ is self-similar. Here as usual, we regard a Ricci shrinker $(M, g,f)$ as the $t=0$ time-slice of the associated Ricci flow space-time $M \times (-\infty, 1)$.
Now we define
\begin{align}
P(r) \coloneqq\{(x,t) \mid f(x,t) \le mr,\, -mr \le t \le 0\}. \label{eqn:PH26_5}
\end{align}

\begin{lemma}[\textbf{Mean value inequality}]
Given a Ricci shrinker $(M^m,g,f)$ regarded as a space-time $M \times (-\infty,0]$ solution of the Ricci flow, suppose $u$ is a nonnegative function such that
$$\square u \coloneqq (\partial_t-\Delta)u \le 0,$$
then we have
\begin{align*}
\max_{P(1)}u^2 \le C \iint_{P(2)} u^2 \,dv dt
\end{align*}
where $C=C(m,\boldsymbol{\mu})$.

\label{lma:PD16_1}
\end{lemma}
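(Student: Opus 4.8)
The plan is to run a parabolic Moser iteration adapted to the Ricci flow space-time $M\times(-\infty,0]$, using the uniform $L^2$-Sobolev inequality of Corollary~\ref{cly:PD15_1} as the analytic engine. The key point is that the Sobolev constant in \eqref{E234} depends only on $m$ and $\boldsymbol{\mu}$, and that along the Ricci flow the operator $-4\Delta_t+R_t$ has a Sobolev inequality with a constant that degrades in a controlled way on the bounded time interval $[-2,0]$; indeed, by Perelman's monotonicity one has $\boldsymbol{\mu}(g(t),\tau)\geq\boldsymbol{\mu}$ for the relevant range of $\tau$, so the Sobolev constant stays bounded by $C(m,\boldsymbol{\mu})$ throughout $\Pi(2,-2)$. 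I would first record this uniform space-time Sobolev inequality, then set up cutoff functions in space and time supported in $\Pi(2,-2)$ and equal to $1$ on $\Pi(1,-1)$, making essential use of the estimates \eqref{X001} and the fact that $f(x,t)\leq f(x,0)$ to control $|\nabla f|$ and $|\partial_t f|$ on the relevant sublevel sets of $f$ (this is exactly why the parabolic neighborhoods are defined via $f$ rather than via distance balls, since $f$ behaves well under the flow).

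The iteration itself is standard once the setup is in place. For $p\geq 1$ one multiplies $\square u\leq 0$ by $u^{2p-1}\phi^2$, where $\phi$ is the space-time cutoff, and integrates over $M\times[a,0]$; integration by parts together with the evolution $\partial_t(dv_t)=-R\,dv_t$ produces the energy inequality
\begin{align*}
\sup_{t}\int_M u^{2p}\phi^2\,dv_t + \int\!\!\int |\nabla(u^p\phi)|^2\,dv_t\,dt \le C(m)\,p\int\!\!\int u^{2p}\left(|\nabla\phi|^2+|\partial_t\phi^2|+R\phi^2\right)dv_t\,dt.
\end{align*}
Feeding this into the Sobolev inequality \eqref{E234} applied to $u^p\phi$ on each time slice, and interpolating between the $\sup_t L^2$ bound and the $L^2$ bound on the gradient, one gains an $L^{2\kappa}$ bound from an $L^2$ bound with $\kappa=\frac{m+2}{m}>1$, with constant of the form $C(m,\boldsymbol{\mu})\,p$ and a loss absorbed by the difference of the nested cutoff radii. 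Iterating over a sequence of shrinking space-time neighborhoods $\Pi(r_k,a_k)$ with $r_k\downarrow 1$, $a_k\uparrow -1$, and $p_k=\kappa^k$, the product of constants $\prod_k (C p_k)^{1/p_k}$ converges, yielding $\max_{\Pi(1,-1)}u^2\leq C\iint_{\Pi(2,-2)}u^2\,dv\,dt$ with $C=C(m,\boldsymbol{\mu})$.

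The main obstacle I anticipate is not the iteration scheme but verifying that the geometry of the space-time cutoffs behaves well: one must check that $\Pi(2,-2)$ is a relatively compact, manageable region on which the cutoff $\phi=\eta(f)\cdot\chi(t)$ has $|\nabla\phi|_{g(t)}$ and $|\partial_t\phi|$ bounded by constants depending only on $m$ (using $|\nabla_{g(t)}f|^2=\partial_t f\leq f\leq 2$ on $\Pi(2,-2)$ from \eqref{X001}, and $|\Delta_{g(t)}f|$ controlled via $R+\Delta f=\frac{m}{2(1-t)}$). A secondary subtlety is the time-dependence of the metric in the Sobolev inequality: one needs that for $t\in[-2,0]$ the rescaled metric $g(t)$ still satisfies \eqref{E234} with a constant controlled by $\boldsymbol{\mu}(g(t),\cdot)$, which by the homothety relation \eqref{eqn:PC30_2} and Perelman monotonicity reduces to $\boldsymbol{\mu}(g,\tau)\geq\boldsymbol{\mu}$ for $\tau$ in a bounded range — available from Proposition~\ref{prn:PC30_1} and its proof. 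Once these uniformities are pinned down, the rest is routine Moser iteration bookkeeping.
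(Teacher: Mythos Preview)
Your proposal is correct and follows the same approach as the paper: parabolic Moser iteration on nested sets $\Pi(r_k,a_k)$ with cutoffs $\eta_k(x,t)=\psi_k(f(x,t))\psi_k(-t)$ whose derivatives are controlled via \eqref{X001}, driven by the uniform Sobolev inequality \eqref{E234}. One simplification worth noting: your concern about the time-dependence of the Sobolev constant dissolves because \eqref{E234} is scale-invariant and $g(t)=(1-t)\psi_t^*g$, so the same constant $C(m,\boldsymbol{\mu})$ applies at every $t\in[-2,0]$ without appealing to Proposition~\ref{prn:PC30_1}; the paper simply applies \eqref{E234} slice-by-slice and iterates with the spatial exponent $m/(m-2)$ rather than your parabolic $(m+2)/m$.
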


\begin{proof}
In the proof all $\nabla=\nabla_{g(t)}$, $\Delta=\Delta_{g(t)}$ and $dv=dv_{g(t)}$.

For any $k \in \mathbb N$, we set $\beta_k \coloneqq (\frac{m}{m-2})^k$. Let $\psi_k$ to be a monotone cutoff function such that $\psi_k=1$ on $(-\infty,1+2^{-k-1}]$ and $\psi_k=0$ on $[1+2^{-k},+\infty)$.
Moreover, $-2^{k+3} \leq \psi_k' \leq 0$.
We also define $\eta_k(x,t)=\psi_k(m^{-1}f(x,t))\psi_k(-m^{-1}t)$. Notice that $\eta_k$ is supported in $P(1+2^{-k})$. From a direct computation
\begin{align*}
|\nabla \eta_k|+|\partial_{t} \eta_k|\le C\left(1+|\nabla f(x,t)|+|\partial_t f(x,t)|\right )|\psi'_k| \le C2^k,
\end{align*}
where the last inequality follows from \eqref{X001}.

For each $\beta \ge 1$, we have
\begin{align}\label{E901}
\square u^{\beta }=\beta u^{\beta -1}\square u -\beta(\beta -1)u^{\beta-2}|\nabla u|^2 \le 0.
\end{align}
Multiplying both sides of (\ref{E901}) by $\eta_k^2 u^{\beta_k}$ and
integrating by parts, we obtain
\begin{align*}
\iint |\nabla (\eta_k u^{\beta_k})|^2\,dvdt &\le \iint (|\nabla \eta_k|^2+(\eta_k)_t^2/2)u^{2\beta_k}\,dv dt-\left. \int u^{2\beta}\eta_k^2/2\,dv \right|_{t=0} \\
&\le C4^k \iint_{P(1+2^{-k})} u^{2\beta_k} \,dvdt.
\end{align*}
It follows from $\eqref{E234}$, the $L^2$-Sobolev inequality, that
\begin{align*}
& \left(\iint_{P(1+2^{-k-1})} u^{2\beta_k} \,dVdt\right)^{\frac{m-2}{m}}\\
\le& \left(\iint (\eta_k u^{\beta_k} )^{\frac{2m}{m-2}} \,dVdt \right)^{\frac{m-2}{m}} \\
\le& C \iint \left\{ |\nabla (\eta_k u_k^{\beta_k})|^2+R \eta_k^2 u^{2\beta_k} \right\} dv dt \\
\le& C4^k \iint_{P(1+2^{-k})} u^{2\beta_k} \,dv dt,
\end{align*}
where for the last inequality we have used the fact that $R$ is uniformly bounded on $P(2)$, from \eqref{eqn:PH29_1}.
The above inequality can be rewritten as
\begin{align}\label{E902}
\left(\iint_{P(1+2^{-k-1})} u^{2\beta_k} \,dVdt\right)^{\frac{1}{2\beta_{k+1}}} \le C^{\frac{1}{\beta_k}}2^{\frac{k}{\beta_k}}\left(\iint_{P(1+2^{-k})} u^{2\beta_k} \,dv dt\right)^{\frac{1}{2\beta_k}}.
\end{align}
Then it follows immediately from \eqref{E902} by interation and taking $k \to +\infty$ that
\begin{align*}
\max_{P(1)}u \le C^{\sum_{k \ge 0} \frac{1}{\beta_k}}2^{\sum_{k \ge 0} \frac{k}{\beta_k}} \left(\iint_{P(2)} u^2 \,dv dt \right)^{\frac{1}{2}}\le C\left(\iint_{P(2)} u^2 \,dv dt \right)^{\frac{1}{2}}.
\end{align*}
\end{proof}

Now we prove the strong maximum principle in the integral form for the scalar curvature $R(x,t)$. We define
\begin{align}
P(r,\tau) \coloneqq P(r) \cap \left\{ (x,t) \mid\, R(x,t)<\tau \right\} \label{eqn:PH26_6}
\end{align}
and
\begin{align}
P^{a}(r,\tau) \coloneqq P(r,\tau) \cap M \times \{a\} \label{eqn:PH26_7}
\end{align}
for any $\tau>0$ and $a \le 0$.

\begin{lemma}
There is a constant $c=c(m,A)>0$ with the following properties.
Suppose $(M^{m},p,g,f)$ is a compact Ricci shrinker satisfying $\boldsymbol{\mu} \geq -A$ and
$$\sigma \coloneqq \inf_{B(p,1)} R(x).$$
Then we have
\begin{align}
|P^a(2,2\sigma)| \geq c \label{eqn:PC31_2}
\end{align}
for some $a \in [-2m,0]$.
\label{lma:PC31_1}
\end{lemma}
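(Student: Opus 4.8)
The plan is to argue by contradiction, exploiting the mean value inequality (Lemma~\ref{lma:PD16_1}) together with a quantity that controls $-R$ along the Ricci flow space-time. First I would set $u \coloneqq (2\sigma - R)_{+}$, or more conveniently work with a comparison to the constant $\sigma$, and observe that $R$ satisfies $\square R = 2|Rc|^2 - R \cdot \tfrac{?}{}$ — precisely, under the shrinker-adapted Ricci flow one has $\partial_t R = \Delta R + 2|Rc|^2$ on each time-slice, so that the function $w \coloneqq -R$ (after a suitable exponential weight $e^{\lambda t}$ to absorb lower-order terms, since $|Rc|^2 \geq \tfrac{1}{m} R^2$ gives a favorable sign) is a nonnegative subsolution $\square w \leq 0$ on the relevant space-time region. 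Then Lemma~\ref{lma:PD16_1} applied to $w$ (or to $(w - c)_{+}$ for the right constant $c$) would give
\begin{align*}
  \max_{\Pi(1,-1)} w^2 \leq C \iint_{\Pi(2,-2)} w^2 \, dv\,dt,
\end{align*}
with $C = C(m, \boldsymbol{\mu}) = C(m,A)$ since $\boldsymbol{\mu} \geq -A$.

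Next I would use the hypothesis that $\sigma = \inf_{B(p,1)} R$ is achieved at $y \in \overline{B(p,1)}$. At the point $(y,0)$ we have $R(y,0) = \sigma$, hence $w(y,0) = -\sigma$, which is the maximal value of $w$ over $B(p,1) \times \{0\}$, and this point lies in $\Pi(1,-1)$ because $f(y,0) \leq m < 1\cdot$ (actually $f \leq m$ on $B(p,1)$ by \eqref{eqn:PE01_11a}, so after the trivial observation that the sublevel set $\{f \le 1\}$ may be slightly too small one rescales the constants in the definition of $\Pi$, or equivalently replaces $1,2$ by $m, 2m$ throughout — a harmless bookkeeping change that I would flag but not belabor). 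Therefore the left-hand side of the mean value inequality is bounded below: $\max_{\Pi(1,-1)} w^2 \geq \sigma^2$. Combining,
\begin{align*}
  \sigma^2 \leq C \iint_{\Pi(2,-2)} w^2 \, dv\,dt = C \iint_{\Pi(2,-2)} R^2 \, dv\,dt.
\end{align*}
Now on $\Pi(2,-2)$ we have $0 \leq R \leq 2\sigma$ on the subregion $\Pi(r,-a,\tau)$ defined in \eqref{eqn:PH26_6} with $\tau = 2\sigma$, while on the complement $R$ can be large but $R^2 \leq f^2 \leq 4$ by \eqref{eqn:PE03_6c}-type bounds (here $f \le 2$). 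Splitting the integral,
\begin{align*}
  \sigma^2 \leq C \left\{ (2\sigma)^2 \cdot \Big| \Pi(2,-2) \setminus \Pi(2,-2,2\sigma) \Big|
  + \iint_{\Pi(2,-2,2\sigma)} R^2 \, dv\,dt \right\},
\end{align*}
and on $\Pi(2,-2,2\sigma)$ one has $R^2 < 4\sigma^2$, so the second term is at most $4\sigma^2 |\Pi(2,-2,2\sigma)|$ — wait, this is the wrong direction. I would instead argue: since $R^2 \le f^2 \cdot \mathbf{1}_{\{R \ge 2\sigma\}} + 4\sigma^2 \mathbf{1}_{\{R < 2\sigma\}}$ is not quite it either; the cleaner route is to note $\iint_{\Pi(2,-2)} R^2 \leq 4\sigma^2 |\Pi(2,-2)| + (\text{bounded by shrinker geometry}) \cdot \big|\{R \ge 2\sigma\} \cap \Pi(2,-2)\big|$ and then transfer, via the evolution of the measure $dv\,dt$ (the metrics $g(t)$ for $t\in[-2,0]$ are uniformly equivalent by \eqref{eqn:PH29_3} and the bound \eqref{X001}), to a lower bound on $|\Pi^a(2,2\sigma)|$ for some single time $a \in [-2,0]$ by a pigeonhole in $t$. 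The key inequality to extract is that $\iint_{\Pi(2,-2,2\sigma)} R^2\,dv\,dt$ has a lower bound proportional to $\sigma^2 |\Pi(2,-2,2\sigma)|$ from above is trivial — what I actually need is a \emph{lower} bound on the space-time measure of the set where $R$ is small, which follows by moving the ``$R$ large'' contribution to the other side: since $R \le f \le 2$ forces the large-$R$ region to contribute at most $C$ (a universal shrinker bound independent of $\sigma$, because $\int R\,e^{-f} \le \int f e^{-f}$ is controlled by $\boldsymbol{\mu} \ge -A$ — here Theorem~\ref{thmin:b} / Corollary~\ref{cly:PD15_1}-type estimates enter), and since we need $\sigma$ small only (the case $\sigma$ bounded below is vacuous, just take $c$ tiny), for $\sigma$ small the large-$R$ region's contribution $4\sigma^2 |\cdot|$ is negligible against $\sigma^2$, giving $\sigma^2 \le C\sigma^2 |\Pi(2,-2,2\sigma)| + o(\sigma^2)$, hence $|\Pi(2,-2,2\sigma)| \ge c_0 = c_0(m,A)$, and pigeonholing in $t \in [-2,0]$ yields \eqref{eqn:PC31_2}.

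The main obstacle I anticipate is the sign/weight bookkeeping in making $-R$ (or a modification thereof) a genuine subsolution $\square(\cdot) \le 0$ on the space-time: since $\partial_t R = \Delta R + 2|Rc|^2$ with $2|Rc|^2 \ge 0$, the function $-R$ satisfies $\square(-R) = \partial_t(-R) - \Delta(-R) = -2|Rc|^2 \le 0$ directly, so in fact \emph{no} weight is needed and $w = -R \ge -\sigma \ge$ well, $w$ need not be nonnegative — so I must instead apply Lemma~\ref{lma:PD16_1} to $w = (R_{\max} - R)$ or to $(2\sigma - R)_+$ after checking that truncation preserves the subsolution property (it does, by Kato's inequality, since $\square$ applied to $(v)_+$ of a supersolution $v$ of $\square v \ge 0$... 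I would need $2\sigma - R$ to satisfy $\square(2\sigma - R) \le 0$, i.e. $\square R \ge 0$, which holds). So $u = (2\sigma - R)_+$ is a nonnegative function with $\square u \le 0$; then $\max_{\Pi(1,-1)} u^2 \ge u(y,0)^2 = \sigma^2$ and $\iint_{\Pi(2,-2)} u^2 \le (2\sigma)^2 |\Pi(2,-2,2\sigma)|$ since $u$ is supported in the region $\{R < 2\sigma\}$ and bounded by $2\sigma$ there. This gives $\sigma^2 \le C(2\sigma)^2 |\Pi(2,-2,2\sigma)|$ at once, hence $|\Pi(2,-2,2\sigma)| \ge \tfrac{1}{4C} =: c_1(m,A)$, and a pigeonhole over the time interval (using $g(t)$ uniformly equivalent for $t \in [-2,0]$) produces $a \in [-2,0]$ with $|\Pi^a(2,2\sigma)| \ge c_1/2 =: c$. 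The remaining care is simply that $(y,0) \in \Pi(1,-1)$ requires $f(y,0) \le 1$, which may fail since only $f \le m$ holds on $B(p,1)$; I will handle this by enlarging the sublevel constants from $(1,2)$ to $(m, 2m)$ in the definition of $\Pi$ — the mean value inequality and all subsequent estimates go through verbatim with constants depending on $m$.
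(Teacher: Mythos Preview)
Your proposal is correct and, once you settle on $u = (2\sigma - R)_{+}$, follows exactly the paper's own proof: apply the mean value inequality (Lemma~\ref{lma:PD16_1}) to this subsolution, use $u(y,0) = \sigma$ for the lower bound and $u \leq 2\sigma$ supported in $\{R < 2\sigma\}$ for the upper bound, then pigeonhole in $t$. Your observation that $(y,0) \in \Pi(1,-1)$ requires $f(y,0) \leq 1$ while only $f \leq m$ is guaranteed on $\overline{B(p,1)}$ is a valid bookkeeping point the paper glosses over, and your proposed fix (enlarge the sublevel constants to $(m,2m)$) is exactly right.
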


\begin{proof}
Recall that the scalar curvature $R(x,t)$ satisfies the equation
\begin{align*}
\square R =2|Rc|^2 \ge 0.
\end{align*}
Fix $\tau>\sigma$. Then we have
\begin{align*}
\square \left( \tau-R \right) \le 0.
\end{align*}
Now we define
\begin{align*}
(\tau-R)_{+} \coloneqq \max\left\{ \tau-R, 0 \right\}.
\end{align*}
It is clear that we have
\begin{align*}
\square \left( \tau-R \right)_{+} \le \square \left( \tau-R \right) \le 0,
\end{align*}
where the first inequality is in the distribution sense.
Then Lemma~\ref{lma:PD16_1} implies that
\begin{align*}
\max_{P(1)} (\tau-R)_{+}^{2} \leq C \iint_{P(2)} (\tau-R)_{+}^{2} dv dt \le C\tau^2 |P(2,\tau)|,
\end{align*}
where the last volume is with respect to the measure $dv dt$.
Setting $\tau=2\sigma$, the above inequality implies that
\begin{align*}
\sigma^2 \le \max_{P(1)} (2\sigma-R)_{+}^{2} \leq C \iint_{P(2)} (2\sigma-R)_{+}^{2} dv dt \le C\sigma^2|P(2,2\sigma)|.
\end{align*}
Here, the first inequality holds since $B(p,1) \subset P(1)$ by \eqref{eqn:PE01_11a}. Therefore, there must exist an $a \in [-2m,0]$ such that
\begin{align*}
|P^a(2,2\sigma)| \geq c.
\end{align*}
\end{proof}

\begin{proposition}
Suppose $(M_{\infty}, p_{\infty}, g_{\infty}, f_{\infty})$ is a limit space in (\ref{eqn:CA25_10}).
Suppose there is a point $x \in \mathcal{R}(M_{\infty})$ such that $R(x)=0$, then we have
\begin{align}
Rc \equiv 0, \quad \textrm{on} \quad \mathcal{R}(M_{\infty}). \label{eqn:PC30_4}
\end{align}
Furthermore, $M_{\infty}$ is isometric to a metric cone $C(Y)$, with $p_{\infty}$ as vertex.
\label{prn:PC30_2}
\end{proposition}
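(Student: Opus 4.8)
The plan is to use the strong maximum principle on the (incomplete) smooth Riemannian manifold $\mathcal{R}(M_\infty)$ together with the cone structure of tangent spaces to propagate the vanishing of $R$ globally and then to upgrade ``$Hess\, f_\infty=\tfrac{g_\infty}{2}$ on $\mathcal{R}$'' to ``$M_\infty$ is a metric cone''. First I would recall that by Proposition~\ref{prn:PC25_2} (equivalently part (b) of Theorem~\ref{thmin:a}), $(\mathcal{R}(M_\infty),g_\infty)$ is a smooth Riemannian manifold on which $f_\infty$ is smooth and satisfies $Rc_{g_\infty}+Hess\,f_\infty-\tfrac{g_\infty}{2}=0$. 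Tracing this equation gives $R+\Delta_{g_\infty} f_\infty=\tfrac{m}{2}$, and the standard Ricci-shrinker identity yields the elliptic equation $\Delta_{f_\infty} R = R-2|Rc|^2$ on $\mathcal{R}(M_\infty)$, where $\Delta_{f_\infty}=\Delta_{g_\infty}-\langle\nabla f_\infty,\nabla\cdot\rangle$. Since $R\ge 0$ on $\mathcal{R}(M_\infty)$ (it is a limit of $R_i\ge 0$) and $R(x)=0$ at the interior point $x$, the strong maximum principle applied on a small geodesic ball around $x$ forces $R\equiv 0$ on a neighborhood of $x$, hence $|Rc|\equiv 0$ there; the set $\{R=0\}\cap\mathcal{R}(M_\infty)$ is then open and closed in the connected manifold $\mathcal{R}(M_\infty)$ (connectedness of $\mathcal{R}$ was established in Step 2 of the proof of Proposition~\ref{prn:PC25_2}, via Lemma~\ref{lma:PE27_1}), so $R\equiv 0$ and therefore $Rc\equiv 0$ on all of $\mathcal{R}(M_\infty)$. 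This proves (\ref{eqn:PC30_4}), and feeding it back into the shrinker equation gives $Hess_{g_\infty} f_\infty=\tfrac{g_\infty}{2}$ on $\mathcal{R}(M_\infty)$.

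Next I would show that the equation $Hess\,f_\infty=\tfrac{g_\infty}{2}$ forces a cone structure. Away from the critical set of $f_\infty$, set $u=2\sqrt{f_\infty}$ (recall from (\ref{eqn:PC06_3}) that on $\mathcal{R}$ we have $R+|\nabla f_\infty|^2=f_\infty$, so with $R=0$ one gets $|\nabla f_\infty|^2=f_\infty$ and $|\nabla u|=1$); a direct computation from $Hess\,f_\infty=\tfrac12 g_\infty$ shows $Hess\,u=\tfrac{1}{u}(g_\infty-du\otimes du)$, which is precisely the structural equation identifying $(\mathcal{R}(M_\infty),g_\infty)$ locally as a warped-product cone $dr^2+r^2 g_{link}$ with $r=u$. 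The vertex corresponds to the minimum of $f_\infty$. One must argue the minimum of $f_\infty$ is attained and that, after the metric completion, it lies at the single point which becomes the cone vertex; here I would use that $f_\infty=\lim f_i$ and that each $f_i$ attains its minimum near $p_i$, together with the quadratic growth estimate (\ref{E106}) passed to the limit, to locate the minimum of $f_\infty$ at $p_\infty$ and to control the level sets. Since by Proposition~\ref{prn:PC25_3} the singular set $\mathcal{S}$ has Minkowski codimension $\ge 4$ (in particular is small and cannot disconnect anything), and since by Proposition~\ref{prn:CB08_1} the length structure $d_\infty$ agrees with $d_{g_\infty}$, the local cone structure on $\mathcal{R}$ globalizes: $(M_\infty,d_\infty)$ is isometric to a metric cone $C(Y)$ with vertex $p_\infty$, where $Y$ is the metric completion of the level set $\{f_\infty=\tfrac14\}\cap\mathcal{R}$.

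The main obstacle I expect is the globalization/vertex argument: going from ``$\mathcal{R}(M_\infty)$ is locally a cone'' to ``$M_\infty$ is globally a metric cone with a single vertex $p_\infty$'' requires ruling out that the $f_\infty$-minimum set is a positive-dimensional set or that multiple components of $\mathcal{R}$ accumulate onto distinct would-be vertices. I would handle this by exploiting the smooth convergence $(M_i,p_i,g_i,f_i)\to(M_\infty,\dots)$ in the pointed-$\hat C^\infty$-Cheeger-Gromov sense (Proposition~\ref{prn:PC25_4}): the function $f_\infty$ on $\mathcal{R}$ is then the smooth limit of $f_i$, its level sets near the minimum are controlled by the uniform estimates (\ref{eqn:PE01_11}), and the codimension-4 bound on $\mathcal{S}$ prevents the singular set from carrying the would-be vertex region. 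A secondary technical point is the interaction of $\mathcal{S}$ with the radial variable $u$; since $\mathcal{S}$ is closed of high codimension and $\mathcal{R}$ is connected and dense (density following from the tangent-cone structure in Lemma~\ref{lma:PE27_0} and the density estimate), the warped-product coordinates extend across $\mathcal{S}$ in the metric sense, giving the claimed isometry $M_\infty\cong C(Y)$ with $p_\infty$ as vertex.
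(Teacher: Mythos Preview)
Your argument for $Rc\equiv 0$ on $\mathcal{R}(M_\infty)$ via the strong maximum principle applied to $\Delta_{f_\infty}R-R=-2|Rc|^2\le 0$ is exactly the paper's approach, and your identification of $Hess\,f_\infty=\tfrac{g_\infty}{2}$ follows immediately.

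For the cone structure, your route differs from the paper's. You attempt a direct construction: set $u=2\sqrt{f_\infty}$, compute the warped-product equation $Hess\,u=\tfrac{1}{u}(g_\infty-du\otimes du)$, and then globalize across $\mathcal{S}$ by hand using the codimension-$4$ bound and density of $\mathcal{R}$. The paper instead invokes the Cheeger--Colding cone rigidity theorem~\cite{ChCo96}, noting that it carries over to the present singular setting by Lemma~2.34 of~\cite{CW17A} (whose proof is purely Riemannian) and Section~10 of~\cite{Bam17}; the key inputs are the codimension-$4$ estimate on $\mathcal{S}$ and the geodesic convexity of $\mathcal{R}(M_\infty)$ from Theorem~\ref{thm:PH26_1}. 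Your direct approach is in principle viable and more self-contained, but---as you yourself flag---the globalization step (single vertex, extending the radial coordinate across $\mathcal{S}$, ruling out a positive-dimensional minimum set) is genuinely delicate and is precisely what the cited machinery packages. The paper's route buys brevity and offloads the subtle metric-geometric work to established references; your route would require essentially reproving a singular version of the Cheeger--Colding warped-product rigidity, which is more than a paragraph of work.
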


\begin{proof}
From the equation
\begin{align}\label{X002}
\Delta_{f_{\infty}}R-R=-2|Rc|^2 \le 0
\end{align}
and the strong maximum principle on the regular part of $M_{\infty}$, we conclude that $R$ is identically $0$ on $\mathcal{R}(M_{\infty})$. Moreover, it follows from \eqref{X002} that $Rc=0$ on $\mathcal{R}(M_{\infty})$ as well. It implies that
\begin{align*}
\text{Hess} f_{\infty}=\frac{g_{\infty}}{2}
\end{align*}
on $\mathcal{R}(M_{\infty})$ and $p_{\infty}$ is a minimum point of $f_{\infty}$.
The limit space has singularity of codimension at least four and $\mathcal{R}(M_{\infty})$ is geodesically convex (cf. Theorem~\ref{thm:PH26_1}).
Note that the cone rigidity theorem of Cheeger-Colding \cite{ChCo96} also holds in this setting, by Lemma 2.34 of~\cite{CW17A}, whose proof is purely Riemannian.
See also Section 10 of~\cite{Bam17}. Therefore $M_{\infty}$ is a metric cone.
\end{proof}

There are some similarities between Theorem~\ref{thmin:c} and earlier gap theorems for Ricci shrinkers in the literature.
For example, one can compare Theorem~\ref{thmin:c} with the main theorem of S. Zhang (cf. \cite{SJZhang17}).
However, Theorem~\ref{thmin:c} is based on an entirely different proof, which will be provided as follows to conclude this section.

\begin{proof}[Proof of Theorem~\ref{thmin:c}:]
For otherwise, there exists a sequence of compact Ricci shrinkers
$$(M_i, p_i, g_i,f_i) \in \mathcal{M}_m(A)$$
such that
\begin{align*}
\sigma_i =\inf_{x \in B(p_i, 1)} R_i(x) \to 0.
\end{align*}
It follows from Lemma~\ref{lma:PC31_1} that
\begin{align*}
|P_i^{a_i}(2,2\sigma_i)| \geq c
\end{align*}
where $a_i \in [-2m,0]$ and we add the subscript $i$ for $P$ to denote the set with respect to $g_i$.
Following the definition (\ref{eqn:PH26_7}) and the estimate of $f_i(x,t)$ (cf. \eqref{X001} and \eqref{E106}), it is clear that
\begin{align*}
P_i^{a_i}(2,2\sigma_i) \subset B_{g_i}(p_i, D)
\end{align*}
for some large $D$. Then we apply the density estimate (\ref{eqn:PE18_2}) for $\theta=0.5$ to obtain that
\begin{align*}
\int_{ P_i^{a_i}(2,2\sigma_i)} \mathbf{hr}_{g_i(a_i)}^{-3} dv_{g_i(a_i)} \le C \int_{B_{g_i}(p_i, D)} \mathbf{hr}_{g_i}^{-3} dv_{g_i} \le C,
\end{align*}
where the first inequality holds since $g_i(a_i)=(1-a_i) (\psi^{a_t})^* g_i$. Therefore, there is a point $z_i \in P_i^{a_i}(2,2\sigma_i)$ such that
\begin{align}
\mathbf{hr}_{g_i(a_i)} (z_i) \ge r_0:=\left( \frac{c}{C}\right)^{\frac{1}{3}}.
\label{eqn:PC31_3}
\end{align}
It follows from Theorem~\ref{thmin:a} and Corollary~\ref{cly:PH29_1} that
\begin{align}
&(M_i, p_i, g_i, f_i) \longright{pointed-\hat{C}^{\infty}-Cheeger-Gromov} (M_{\infty}, p_{\infty}, g_{\infty}, f_{\infty}), \\
&(M_i, p_i, g_i(a_i), f_i(a_i)) \longright{pointed-\hat{C}^{\infty}-Cheeger-Gromov} (M_{\infty}, p_{\infty}, g_{\infty}(a_{\infty}), f_{\infty}(a_{\infty})).\label{E602}
\end{align}
In light of (\ref{eqn:PC31_3}), we obtain a limit point $z_{\infty} \in \mathcal{R}(M_{\infty})$ such that
\begin{align*}
0 \leq R(z_{\infty}, a_{\infty}) \leq \lim_{i \to \infty} 2 \sigma_i=0.
\end{align*}
Then it follows from Proposition~\ref{prn:PC30_2} that $(M_{\infty},g_{\infty}(a_{\infty}))$ is isometric to a regular metric cone $C(Y)$, in the sense of Definition~\ref{dfn:PC30_1}.
However, since each $(M_i,g_i(a_i), f_i(a_i))$ is a compact Ricci shrinker up to scaling, this is impossible by Theorem~\ref{thm:PC30_1}.
Therefore, this contradiction establishes the proof of Theorem~\ref{thmin:c}.
\end{proof}

\section{Further discussion}
\label{sec:further}

It is not difficult to generalize Ricci shrinkers' weak compactness theory to more general metric measure spaces.

\begin{definition}
Given a positive smooth nondecreasing function $F(r)$ on $[0,\infty)$ and a constant $K$, we define $\mathcal M_m(F,K)$ to be the class of pointed smooth metric measure space $(M^m, p, g, \mu_f)$ satisfying the following conditions.
\begin{itemize}
\item[(a).] $(M^m, g, \mu_f)$ is a complete smooth Riemannian manifold with a weighted measure
\begin{align}
\mu_f \coloneqq e^{-f}dv. \label{eqn:PI04_2}
\end{align}
\item[(b).] $f$ is a $C^2$-function on $M$ such that
\begin{align}
\max \left\{ |f|(x), |R|(x)+|\nabla f|^2(x) \right\}\leq F^2(d_g(p,x)). \label{eqn:PI04_3}
\end{align}
\item[(c).] The Bakry-\'{E}mery Ricci tensor satisfies
\begin{align}
-Kg \le Rc_f \coloneqq Rc+\emph{Hess}\,f \le Kg. \label{eqn:PI04_4}
\end{align}
\end{itemize}
In addition, the subclass $\mathcal M_m(F,K;V_0) \subset \mathcal M_m(F,K)$ consists of all spaces satisfying further
\begin{itemize}
\item[(d).] (Noncollapsing condition)
\begin{align}
\left|B(p,1) \right|_{\mu_f} \geq V_0. \label{eqn:PI04_7}
\end{align}
\end{itemize}
\label{dfn:PI04_1}
\end{definition}

It is clear from (\ref{eqn:PC06_3}), (\ref{E106}) and the definition equation (\ref{eqn:PC27_0}) that every Ricci shrinker $(M^m, g, f)$ belongs to $\mathcal M_m(F,1/2)$ for the function $F(r)=\frac{1}{2}\left(r+\sqrt{2m} \right)$.
Therefore, $\mathcal M_m(F,1/2)$ is a generalization of the moduli of Ricci shrinkers.
Furthermore, by \eqref{eqn:PE01_11a} and \eqref{eqn:PE01_9}, we have
\begin{align*}
\mathcal{M}_{m}(A) \subset \mathcal{M}_{m}\left(F, \frac{1}{2}; c_m e^{-A} \right),
\end{align*}
where $c_m=(4\pi)^{\frac{m}{2}} e^{-2^{4m+7}-m}$.
Not surprisingly, a weak compactness theory similar to Theorem $1.1$ also holds for $\mathcal M_m(F,K;V_0)$.

\begin{theorem}[\textbf{Weak-compactness of $\mathcal M_m(F,K; V_0)$}]
Let $(M_i^m, p_i, g_i, \mu_{f_i})$ be a sequence of metric measure spaces in $\mathcal M_m(F,K;V_0)$.
Let $d_i$ be the length structure induced by $g_i$. By passing to a subsequence if necessary, we have
\begin{align}
(M_i^m, p_i, d_i, f_i) \longright{pointed-Gromov-Hausdorff} \left(M_{\infty}, p_{\infty}, d_{\infty},f_{\infty} \right),
\label{eqn:PI04_1}
\end{align}
where $(M_{\infty}, d_{\infty})$ is a length space, $f_{\infty}$ is a Lipschitz function on $(M_{\infty}, d_{\infty})$.
The space $M_{\infty}$ has a regular-singular decomposition $M_{\infty}=\mathcal{R} \cup \mathcal{S}$ with the following properties.
\begin{itemize}
\item[(a).] The singular part $\mathcal{S}$ is a closed set of Minkowski codimension at least $4$.
\item[(b).] The regular part $\mathcal{R}$ is an open manifold with a $C^{1,\alpha}$ metric $g_{\infty}$ and a $C^{1,\alpha}$ function $f_{\infty}$.
\end{itemize}
The convergence can be improved to
\begin{align}
(M_i, p_i, g_i,f_i) \longright{pointed-\hat{C}^{1,\alpha}-Cheeger-Gromov} \left(M_{\infty}, p_{\infty}, g_{\infty}, f_{\infty} \right). \label{eqn:CA25_101}
\end{align}
Furthermore, the metric structure induced by smooth curves in $(\mathcal{R}, g_{\infty})$ coincides with $d_{\infty}$.

\label{thm:PH04_1}
\end{theorem}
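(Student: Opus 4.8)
\textbf{Proof proposal for Theorem~\ref{thm:PH04_1}.}

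The plan is to follow the architecture already developed for Ricci shrinkers in Sections~\ref{sec:conformal}--\ref{sec:coincidence}, replacing the shrinker equation \eqref{eqn:PC27_0} by the weaker hypotheses (b) and (c) of Definition~\ref{dfn:PI04_1} and tracking where the loss of information costs regularity. The key structural observation is that conditions (b) and (c) together supply exactly the two ingredients that drove the shrinker argument: a controlled growth of the potential $f$ (which replaces Lemma~\ref{L100} and the estimates \eqref{eqn:PE03_6}), and a two-sided Bakry-\'Emery Ricci bound (which replaces the identity $Rc_f = \tfrac12 g$ used throughout Section~\ref{sec:pre}). First I would redo the ball-packing step: the Wei-Wylie comparison (Lemma~\ref{lma:PE01_1}) applies verbatim with $H=-K/(m-1)$ and $a$ controlled by $F(d_g(p,\cdot))$, and combined with the noncollapsing hypothesis (d) this yields uniform two-sided volume ratio bounds on $B(p,D)$ for every $D$, hence the pointed-Gromov-Hausdorff subconvergence \eqref{eqn:PI04_1} and the local Lipschitz limit $f_\infty$, exactly as in Proposition~\ref{prn:PC25_1}.

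Next I would recycle the local conformal transformation. Setting $\bar g = e^{\frac{2(f(q)-f)}{m-2}}g$ and $\bar f = f - f(q)$ as in \eqref{eqn:PE27_7}--\eqref{eqn:PE28_3}, the computation behind Lemma~\ref{lma:PE06_3} now uses the general formula of Besse~\cite{Besse} applied to $Rc_f$ rather than to $\tfrac12 g$; the upshot is that $\overline{Rc}$ is bounded (with bound depending on $F$, $K$, $D$) on a ball of definite $\bar g$-radius, and the distance comparison \eqref{eqn:PE27_8}--\eqref{eqn:PE27_10} is unchanged since it only uses the growth of $\bar f$. Then Theorem~\ref{thm:PE27_1} has a direct analogue, and Section~\ref{sec:weakexistence} goes through: Cheeger-Colding applies to $(\,\cdot\,,\bar g)$, every tangent cone is a metric cone (Lemma~\ref{lma:PE27_0}), the bi-Lipschitz maps $\Pi_\infty,\pi_\infty$ of Propositions~\ref{prn:PE29_1}--\ref{prn:PE29_2} are built the same way, and the codimension-four estimate for $\mathcal S$ follows from Cheeger-Naber~\cite{CN2} plus the conformal identification exactly as in Proposition~\ref{prn:PC25_3} and Theorem~\ref{thm:density}. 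The only place where I must be careful is the elliptic bootstrap of Lemma~\ref{lma:PH04_1}: in the shrinker case the system \eqref{eqn:CA25_9} closed up because $Rc_f$ was a smooth explicit tensor, letting one iterate Schauder to all orders. Here hypothesis (c) only controls $Rc_f = Rc + Hess\,f$ in $L^\infty$, with no a priori equation for $f$ beyond its $C^1$-growth (b); the analogue of \eqref{eqn:CA25_9a} reads, in a $\bar g$-harmonic chart, $\bar g^{kl}\partial_k\partial_l \bar g_{ij} = P(\partial\bar g,\partial\bar f) + (\text{bounded})\cdot\bar g_{ij}$ where the ``bounded'' term now carries the unknown combination $(Rc_f)_{ij}$ which is merely $L^\infty$. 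Elliptic regularity then yields $\bar g \in C^{1,\alpha}$ and $\bar f \in C^{1,\alpha}$ — and no further — so $\mathcal R$ acquires a $C^{1,\alpha}$ metric, $f_\infty$ is $C^{1,\alpha}$ on $\mathcal R$, and the convergence is $\hat C^{1,\alpha}$-Cheeger-Gromov rather than $\hat C^\infty$.

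The main obstacle, and the step that needs genuine care rather than transcription, is therefore precisely this regularity ceiling: I must check that the whole chain of radii equivalences in Theorem~\ref{thm:PH10_1} still closes with only $C^{1,\alpha}$ control, i.e.\ that the Anderson gap argument and the Colding volume continuity argument (Lemmas~\ref{lma:PD30_3}, \ref{lma:PH11_1}) only require a two-sided \emph{Ricci} bound — which under (c) we have for $\bar g$ up to the conformal factor, since $Rc_{\bar g}$ is controlled by $Rc_f$ and lower-order terms — and that the harmonic-coordinate Schauder estimate of step $3$ in Table~\ref{table:Ricci-flat-radii} is applied at the $C^{1,\alpha}\!\to\!C^{1,\alpha}$ level (which is the classical Anderson estimate and is exactly what is available). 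Once the $C^{1,\alpha}$ harmonic radius is shown equivalent to the Gromov-Hausdorff radius, the construction of the diffeomorphisms $\varphi_i$ in Proposition~\ref{prn:PC25_4} is purely topological/metric and survives verbatim, giving \eqref{eqn:CA25_101} with $C^{1,\alpha}$ convergence of $g_i$ and $f_i$ on compact subsets of $\mathcal R$. Finally the coincidence $d_\infty = d_{g_\infty}$ follows from Proposition~\ref{prn:CB08_1}, whose proof only uses that tangent cones are metric cones with connected dense regular part together with the density estimate — all of which we have retained. I would also remark that the definition of $\hat C^{1,\alpha}$-Cheeger-Gromov convergence is the obvious weakening of Definition~\ref{dfn:PC25_1}, replacing $\longright{C^\infty}$ by $\longright{C^{1,\alpha}}$ in \eqref{eqn:PC25_1} and \eqref{eqn:PC25_4}, and that no higher regularity can be expected in general since $F$ and $K$ impose no relation between $Rc$ and $Hess\,f$ individually.
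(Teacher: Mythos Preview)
Your proposal is correct and follows essentially the same approach as the paper's sketch: both arguments transport the shrinker machinery by observing that conditions (b) and (c) of Definition~\ref{dfn:PI04_1} suffice to bound $\overline{Rc}$ locally via the conformal system \eqref{eqn:PI04_5}, and both identify the regularity ceiling at $C^{1,\alpha}$ as arising precisely because $Rc_f$ is merely $L^\infty$ with no closed equation to iterate. One small refinement: you write that there is ``no a priori equation for $f$ beyond its $C^1$-growth,'' but in fact the paper does use the second equation \eqref{eqn:E103}, whose right-hand side is bounded (since $\Delta f = \mathrm{Tr}(Rc_f) - R$ is controlled by \eqref{eqn:PI04_3} and \eqref{eqn:PI04_4}), and it is this bounded-Laplacian information that delivers $\bar f \in C^{1,\alpha}$ via $L^p$ theory---so the first bootstrap step for the \emph{pair} $(\bar g, \bar f)$ does run, just not the second.
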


Theorem \ref{thm:PH04_1} follows almost verbatim from Theorem \ref{thmin:a}.
We will only sketch a proof with emphasis on the differences.

\begin{proof}[Sketch of proof of Theorem~\ref{thm:PH04_1}:]
Given a metric measure space $(M^m, p, g,\mu_f) \in \mathcal M_m(F,K)$, it follows from Lemma \ref{lma:PE01_1} that for any $D>10m$, there exists a constant $C=C(m,K,F,D)>0$ such that
\begin{align*}
\frac{|B(q,2r)|_{\mu_f}}{|B(q,r)|_{\mu_f}} \leq C, \quad \forall \; q \in B(p, D), \; r \in (0, D).
\end{align*}
In other words, the volume doubling condition holds uniformly locally. Then the standard ball-packing argument implies that
\begin{align}
(M_i^m, d_i, p_i) \longright{pointed-Gromov-Hausdorff} \left(M_{\infty}, d_{\infty}, p_{\infty}\right).
\end{align}
In addition, the functions $f_i$ have uniform increasing rate locally by (\ref{eqn:PI04_3}). Therefore, $f_i$ converges to a Lipschitz function $f_{\infty}$ and we arrive at (\ref{eqn:PI04_1}).
Note that we actually only used part of (\ref{eqn:PI04_3}) and the left hand side of (\ref{eqn:PI04_4}) to prove (\ref{eqn:PI04_1}).

The major difference between Theorem~\ref{thm:PH04_1} and Theorem~\ref{thmin:a} is the regularity issue.
In the situation of Theorem~\ref{thm:PH04_1}, by setting $\bar{f}=f-f(q)$ and $\bar{g}=e^{\frac{-2\bar{f}}{m-2}}g$, we have the system
\begin{subequations}
\begin{align}[left = \empheqlbrace \,]
&\overline{Rc}=Rc_f+\frac{1}{m-2} \left\{ df \otimes df +(\Delta f-|\nabla f|^2) e^{\frac{2\bar{f}}{m-2}} \bar{g} \right\}, \label{eqn:102}\\
&\bar{\Delta} f=e^{\frac{2\bar{f}}{m-2}} \left( \text{Tr}(Rc_f)-|\nabla f|^2-R \right ).\label{eqn:E103}
\end{align}
\label{eqn:PI04_5}
\end{subequations}
To improve the regularity, we fix a point $q$ such that $d(p,q) \leq D-1$.
It follows the definition that $\Delta f$ is locally bounded (cf. (\ref{eqn:PI04_3}) and (\ref{eqn:PI04_4})). Therefore, $\overline{Rc}$ is bounded on $B(q,1)$.
It is also easy to see from (\ref{eqn:PI04_3}), (\ref{eqn:PI04_4}) and (\ref{eqn:E103}) that $\left| \bar{\Delta} f \right|$ is uniformly bounded on $B(q,1)$.
Similar to the proof of Lemma~\ref{lma:PH04_1}, we can write the system (\ref{eqn:PI04_5}) in harmonic coordinate chart.
Studying the bootstrapping argument below system (\ref{eqn:CA25_9}), it is clear that the first step works under the conditions (\ref{eqn:PI04_3}) and (\ref{eqn:PI04_4}), i.e.,
the Bakry-\'{E}mery tensor $Rc+\text{Hess} f$ is uniformly bounded and $\norm{f}{C^1}$ is uniformly bounded.
In this case, the first step of the bootstrapping argument applies. For each $\alpha \in (0, 1)$, we have
\begin{align}
\norm{\bar{g}}{C^{1,\alpha}} + \norm{f}{C^{1,\alpha}} \leq C, \label{eqn:PI04_6}
\end{align}
where $C$ depends only on $m,D$ and the harmonic radius of $q$. Recall that $g=e^{\frac{2\bar{f}}{m-2}}\bar{g}$. Therefore, (\ref{eqn:PI04_6}) induces
uniform $C^{1,\alpha}$-estimate of $g$ and $f$.
The higher regularity fails since we do not have equations for manifolds in $\mathcal M_m(F,K;V_0)$.
After we obtain the regularity improvement estimate (\ref{eqn:PI04_6}), the same argument as in Proposition \ref{prn:PC25_2} shows that the regular part $\mathcal R \subset M_{\infty}$ is equipped with a $C^{1,\alpha}$ metric $g_{\infty}$.
Moreover, it follows from \eqref{eqn:PI04_6} that the limit function $f_{\infty}$ is also locally $C^{1,\alpha}$ on $\mathcal R$.
Modulo technical difference between $C^{1,\alpha}$-convergence (cf. Chapter 11 of P. Petersen~\cite{Petersen}) and $C^{\infty}$-convergence, the rest part of the proof follows identically from Theorem \ref{thmin:a}.
\end{proof}

Theorem~\ref{thmin:a} can also be easily modified to obtain a weak compactness theorem of some moduli of steady Ricci solitons.
Since the argument is almost identical, we omit the details here.
One can also replace (\ref{eqn:PI04_3}) by weaker condition $Rc_{f} \geq -Kg$ and drop the non-collapsing condition (\ref{eqn:PI04_7}).
Under such weak conditions, one can still discuss the regularity issue of the limit spaces, which is discussed in a separate paper~\cite{HLW}.


\vskip10pt

Haozhao Li, CAS Wu Wen-Tsun Key Laboratory of  Mathematics, 
 School of Mathematical Sciences, University of Science and 
 Technology of China, No. 96 Jinzhai Road, Hefei, Anhui Province, 230026,
  China; hzli@ustc.edu.cn.\\

Yu Li, The Institute of Geometry and Physics, University of Science and Technology of China, No. 96 Jinzhai Road, Hefei, Anhui Province, 230026, China; adterram.yu@gmail.com.\\

Bing Wang, The Institute of Geometry and Physics, 
School of Mathematical Sciences, University of Science and 
Technology of China, No. 96 Jinzhai Road, Hefei, Anhui Province, 230026, China; topspin@ustc.edu.cn.\\

\end{document}